\title{Wronskians, cyclic group actions, \\ and ribbon tableaux}
\author{Kevin Purbhoo%
\footnote{Department of Combinatorics and Optimization, University of Waterloo,
Waterloo, Ontario, Canada; {\tt kpurbhoo@math.uwaterloo.ca}.}
}
\newlength\circlesize
\newcommand{\bigmid}{\ \big|\ }
\newcommand{\Bigmid}{\ \Big|\ }
\newcommand{\middlemid}{\ \middle|\ }
\newcommand{\CC}{\mathbb{C}}
\newcommand{\FF}{\mathbb{F}}
\newcommand{\ZZ}{\mathbb{Z}}
\newcommand{\QQ}{\mathbb{Q}}
\newcommand{\RR}{\mathbb{R}}
\newcommand{\CP}{\mathbb{CP}}
\newcommand{\RP}{\mathbb{RP}}
\newcommand{\PP}{\mathbb{P}}
\newcommand{\Hy}{\hat y}
\newcommand{\Tlambda}{\widetilde \lambda}
\newcommand{\Tmu}{\widetilde \mu}
\newcommand{\Tnu}{\widetilde \nu}
\newcommand{\Tsigma}{\widetilde \sigma}
\newcommand{\Tx}{\widetilde x}
\newcommand{\Tw}{\widetilde w}
\newcommand{\Tboldw}{\widetilde \boldw}
\newcommand{\TF}{\widetilde F}
\newcommand{\nth}{\ensuremath{^\text{th}}\xspace}
\newcommand{\puiseux}[1]{\CC\{\!\{#1\}\!\}}
\newcommand{\Fpuiseux}[1]{\FF\{\!\{#1\}\!\}}
\newcommand{\psK}{\mathbb{K}}
\newcommand{\mirrc}{\mathcal{R}}
\newcommand{\calU}{\mathcal{U}}
\newcommand{\orbit}{\mathcal{O}}
\newcommand{\Orbits}{\mathrm{Orb}}
\newcommand{\poln}{\CC_{n-1}[z]}
\newcommand{\Rpoln}{\RR_{n-1}[z]}
\newcommand{\Kpoln}{\psK_{n-1}[z]}
\newcommand{\pol}[1]{\CC_{#1}[z]}
\newcommand{\Rpol}[1]{\RR_{#1}[z]}
\newcommand{\Kpol}[1]{\psK_{#1}[z]}
\newcommand{\PpolN}{\PP(\pol{N})}
\newcommand{\KPpolN}{\PP(\psK_N[z])}
\newcommand{\SL}{\mathrm{SL}}
\newcommand{\PGL}{\mathrm{PGL}}
\newcommand{\Wr}{\mathrm{Wr}}
\newcommand{\Gr}{\mathrm{Gr}}
\newcommand{\Rect}{{\mathchoice%
{\raisebox{.05ex}{$\sqsubset\!\!\sqsupset$}}
{\raisebox{.05ex}{$\sqsubset\!\!\sqsupset$}}
{\sqsubset\!\!\sqsupset}
{\sqsubset\!\!\sqsupset}
}}
\newcommand{\smallrect}{{\mathchoice%
   {\raisebox{.05ex}{\footnotesize $\sqsubset\!\!\!\sqsupset$}}
   {\raisebox{.05ex}{\footnotesize $\sqsubset\!\!\!\sqsupset$}}
   {\raisebox{.1ex}{\tiny $\sqsubset\!\!\!\sqsupset$}}
   {\raisebox{.1ex}{\tiny $\sqsubset\!\!\!\sqsupset$}}
}}
\newcommand{\SYT}{\mathsf{SYT}}
\newcommand{\ribbon}{\mathsf{SRT}^r}
\newcommand{\ribbonplain}{\mathsf{SRT}}
\newcommand{\rotribbon}{\mathsf{RRT}^r}
\newcommand{\rotribbonplain}{\mathsf{RRT}}
\newcommand{\longribbon}{\widehat{\mathsf{RRT}}\,\!^r}
\newcommand{\longribbonplain}{\widehat{\mathsf{RRT}}\,\!}
\newcommand{\bolda}{{\bf a}}
\newcommand{\boldb}{{\bf b}}
\newcommand{\bolds}{{\bf s}}
\newcommand{\boldp}{{\bf p}}
\newcommand{\boldw}{{\bf w}}
\newcommand{\boldlambda}{{\mathchoice
{\mbox{\boldmath{$\lambda$}}}
{\mbox{\boldmath{$\lambda$}}}
{\mbox{\scriptsize\boldmath{$\lambda$}}}
{\mbox{\scriptsize\boldmath{$\lambda$}}}
}}
\newcommand{\boldalpha}{{\mathchoice
{\mbox{\boldmath{$\alpha$}}}
{\mbox{\boldmath{$\alpha$}}}
{\mbox{\scriptsize\boldmath{$\alpha$}}}
{\mbox{\scriptsize\boldmath{$\alpha$}}}
}}
\newcommand{\smallidmatrix}
{\left(\begin{smallmatrix} 1 & 0 \\ 0 & 1\end{smallmatrix}\right)}
\newcommand{\stdreflect}
{\left(\begin{smallmatrix} 0 & 1 \\ 1 & 0\end{smallmatrix}\right)}
\newcommand{\altreflect}
{\left(\begin{smallmatrix} -1 & 0 \\ 0 & 1\end{smallmatrix}\right)}
\newcommand{\halfrotate}
{\left(\begin{smallmatrix} 0 & -1 \\ 1 & 0\end{smallmatrix}\right)}
\newcommand{\calT}{\mathcal{T}}
\newcommand{\mindeg}{\mathop{\mathrm{mindeg}}}
\newcommand{\rcore}{\mathrm{core}^r}
\newcommand{\rspec}{\mathrm{spec}^r}
\newcommand{\switch}{\mathsf{switch}}
\newcommand{\promote}{\textbf{\textit{j}}}
\newcommand{\evac}{\textbf{\textit{e}}}
\newcommand{\val}{\mathrm{val}}
\newcommand{\leadterm}{\text{\rm \footnotesize LT}}
\newcommand{\leadcoeff}{\text{\rm \footnotesize LC}}
\newcommand{\initial}{\mathrm{In}}
\newcommand{\weight}{\mathrm{wt}}
\newcommand{\identity}{\mathrm{id}}
\newcommand{\sign}{\mathrm{sign}}
\newcommand{\richvar}[1]{\widetilde X_{\widetilde\lambda_{#1}/\widetilde\mu_{#1}}}
\newcommand{\richid}[1]{I_{\widetilde\lambda_{#1}/\widetilde\mu_{#1}}}
\newenvironment{packedenum}{
\begin{enumerate}
  \setlength{\itemsep}{0pt}
}{\end{enumerate}}
\newtheorem{lemma}{Lemma}[section]
\newtheorem{theorem}[lemma]{Theorem}
\newtheorem{proposition}[lemma]{Proposition}
\newtheorem{corollary}[lemma]{Corollary}
\theoremstyle{definition}
\newtheorem{example}[lemma]{Example}
\newtheorem{remark}[lemma]{Remark}
\newtheorem{condition}[lemma]{Condition}
\numberwithin{equation}{section}
\numberwithin{figure}{section}
\numberwithin{table}{section}
\definecolor{DarkBlue}{rgb}{0, 0.1, 0.55}
\definecolor{DarkRed}{rgb}{0.45, 0, 0}
\newcommand{\defn}[1]{\textbf{#1}}
\begin{document}
\maketitle

\begin{abstract}
The Wronski map is a finite, $\PGL_2(\CC)$-equivariant morphism from 
the Grassmannian $\Gr(d,n)$ to a projective space (the projectivization
of a vector space of polynomials).
We consider the following problem.
If $C_r \subset \PGL_2(\CC)$ is a cyclic subgroup 
of order $r$, how may $C_r$-fixed points are in the
in a fibre of the Wronski map over a $C_r$-fixed point in the base?

In this paper, we compute a general answer in terms of $r$-ribbon 
tableaux.
When $r=2$, this computation gives the number of \emph{real} points 
in the fibre of the Wronski map over a real polynomial
with purely imaginary roots.
More generally, we can compute the number of real points in
certain intersections of Schubert varieties.

When $r$ divides $d(n-d)$ our main result says that 
the generic number of $C_r$-fixed points in the fibre is
the number of standard $r$-ribbon tableaux rectangular 
shape $(n{-}d)^d$.
Computing by a different method, we show that the answer in this
case is also given by the number of of standard Young tableaux 
of shape $(n{-}d)^d$ that are invariant under $\frac{N}{r}$ 
iterations of jeu de taquin promotion.  Together, these two results
give a new proof of Rhoades' cyclic sieving theorem for promotion 
on rectangular tableaux.  

We prove analogous results for dihedral group actions.
\end{abstract}


\section{Introduction}


\subsection{Wronskians}
\label{sec:introwronskian}

The Wronski map is a finite morphism from a Grassmannian to 
a projective space of the same dimension, which is remarkably
well-behaved.
In this paper, we consider the problem of counting points 
in a fibre of the Wronski map that are fixed by the action 
of a cyclic group.  Our motivation comes from questions in 
real algebraic geometry and combinatorics, which shall be discussed
later in the introduction.  We begin by defining the Wronski map
and recalling some of the of properties that make it an interesting 
object of study in enumerative geometry.

For any non-negative integer $m$, let $\pol{m}$ denote the vector space of 
polynomials of degree at most $m$.
Fix integers $0 < d < n$, and let
$X := \Gr(d,\poln)$ denote the Grassmannian of $d$-planes in the
$n$-dimensional vector space $\poln$.
The dimension of $X$ is $N := d(n-d)$.  

If $x \in X$ is spanned by polynomials
$f_1(z), \dots, f_d(z) \in \poln$, we consider the Wronskian
\begin{equation}
\label{eqn:wronskian}
  \Wr(x;z) :=
  \begin{vmatrix}
  f_1(z) & \cdots & f_d(z)\\
  f_1'(z) & \cdots  & f_d'(z) \\
  \vdots &  \vdots & \vdots \\, 
  f_1^{(d-1)}(z) & \cdots & f_d^{(d-1)}(z)
  \end{vmatrix}\,.
\end{equation}
It is easy to show that $\Wr(x;z)$ is a non-zero polynomial
of degree at most $N$, and
different choices of basis for $x$ give the same Wronskian up to a scalar 
multiple.  Thus $\Wr(x;z)$ determines a well-defined element of 
projective space $\PpolN$.  We adopt the convention that 
equations involving $\Wr(x;z)$ hold only up to a scalar multiple,
and should be properly interpreted in projective space.
The morphism $\Wr : X \to \PpolN$, $x \mapsto \Wr(x;z)$ is called 
the \defn{Wronski map}.  
There are many equivalent formulations.
For example, the map $z \mapsto [f_1(z) : \dots : f_d(z)]$ defines
a rational curve $\gamma : \CP^1 \to \CP^{d-1}$; the roots $\Wr(x;z)$
are the \emph{ramification points} of $\gamma$.
We refer the reader to the survey article~\cite{Sot-F} for a
detailed background.

Eisenbud and Harris showed that the Wronski map is a finite morphism
\cite{EH}, of degree is equal to the number of standard Young tableaux
of rectangular shape $\Rect = (n{-}d)^d$.   
By studying the asymptotic behaviour of the fibres, 
Eremenko and Gabrielov refined this result \cite{EG-deg}. 
They described a bijection 
between the set $\SYT(\Rect)$ of all standard Young tableaux of 
shape $\Rect$,
and points in the fibre $X(h(z)) := \Wr^{-1}(h(z))$ for certain polynomials
$h(z) \in \PpolN$.

More generally, one can consider the restriction of the
Wronski map to a Richardson variety.
Let $\Lambda$ denote the set of all partitions 
$\lambda : \lambda^1 \geq \lambda^2 \geq \dots \geq \lambda^d \geq 0$,
with at most $d$ parts and $\lambda^1 \leq n-d$,
and consider the complete flags 
\[
\begin{gathered}
F_\bullet : F_0 \subset F_1 \subset \dots \subset F_n \\
F_i = z^{n-i}\pol{i-1}
\end{gathered}
\qquad\qquad \text{and} \qquad\qquad
\begin{gathered}
\TF_\bullet : \TF_0 \subset \TF_1 
\subset \dots \subset \TF_n \\
\TF_i = \pol{i-1}\,.
\end{gathered}
\]
For partitions $\lambda \supset \mu$ in $\Lambda$, the 
\defn{Richardson variety}
\[
   X_{\lambda/\mu} 
   := \left\{x \in X \middlemid
   \begin{aligned}
   &\dim(x \cap F_{n-d-\mu^i+i}) \geq i & & \quad\text{and}\\
   &\dim(x \cap \TF_{\lambda^{d-i}+i}) \geq i,
    & & \quad\text{for }i = 1, \dots, d
   \end{aligned}
   \right\}\,.
\]
is the intersection of a Schubert variety (corresponding to $\mu$)
and and opposite Schubert variety (corresponding to $\lambda$).
We will say that $X_{\lambda/\mu}$ is \defn{compatible}
with a polynomial $h(z)$ if $|\lambda| = \deg h(z)$ and 
$|\mu| = \mindeg h(z) :=  
\max\{m \geq 0 \mid z^m \text{ divides }h(z)\}$.
This definition is motivated by the following fact.

\begin{proposition}[See Section \ref{sec:plucker}]
\label{prop:richardson}
Every point in $X(h(z))$
lies in a unique $h(z)$-compatible Richardson variety.
\end{proposition}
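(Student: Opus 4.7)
The plan is to reduce the proposition to two equalities that hold for every $x \in X$:
\[
\ord_0 \Wr(x;z) \;=\; |\mu(x)|, \qquad \deg \Wr(x;z) \;=\; |\lambda(x)|,
\]
where $\mu(x)$ and $\lambda(x)$ denote the exact Schubert positions of $x$ with respect to $F_\bullet$ and $\TF_\bullet$ respectively, in conventions chosen so that the unique open Richardson cell containing $x$ is $X^\circ_{\lambda(x)/\mu(x)}$. Once these equalities are in hand, the argument closes quickly: the open Richardson cells partition $X$, so $x$ lies in a unique one; and for any $h(z)$-compatible $X_{\lambda'/\mu'}$ containing $x$, the Richardson inclusions give $\mu' \subseteq \mu(x)$ and $\lambda(x) \subseteq \lambda'$, while compatibility forces $|\mu'| = \mindeg h = |\mu(x)|$ and $|\lambda'| = \deg h = |\lambda(x)|$. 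Together these force $\lambda' = \lambda(x)$ and $\mu' = \mu(x)$.

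To establish the equalities, I would choose a basis $g_1, \dots, g_d$ of $x$ in echelon form at $z = 0$, with strictly increasing orders of vanishing $b_1 < b_2 < \cdots < b_d$, obtained by row reduction from any basis. Writing $g_i = c_i z^{b_i} + O(z^{b_i+1})$ with $c_i \neq 0$, the $z$-adic leading term of the determinant $\Wr(g_1, \dots, g_d)$ is
\[
\Bigl(\prod_i c_i\Bigr)\cdot \det\bigl(b_i(b_i-1)\cdots(b_i - j + 2)\bigr)_{j,i}\cdot z^{\sum_i b_i - \binom{d}{2}}.
\]
The key non-vanishing statement is that the inner determinant reduces by standard row operations to the Vandermonde $\prod_{a < b}(b_b - b_a) \neq 0$. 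This gives $\ord_0 \Wr(x;z) = \sum b_i - \binom{d}{2}$, and translating the pivot set $\{b_1, \dots, b_d\}$ into a partition yields $\sum b_i - \binom{d}{2} = |\mu(x)|$. An entirely symmetric argument at $z = \infty$, with a basis having strictly increasing degrees $a_1 < \cdots < a_d$, will give $\deg \Wr(x;z) = |\lambda(x)|$.

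Specializing $\Wr(x;z) = h(z)$ then forces $|\mu(x)| = \mindeg h$ and $|\lambda(x)| = \deg h$, so $X_{\lambda(x)/\mu(x)}$ is the unique $h(z)$-compatible Richardson variety containing $x$. The principal bookkeeping hazard will be the indexing in the definition of $X_{\lambda/\mu}$: the condition involving $\TF_\bullet$ uses $\lambda^{d-i}+i$ rather than the more familiar $n-d+i-\lambda^i$, and one must check that with this convention the $\lambda(x)$ defined via the pivot set at $\infty$ is indeed the partition appearing in the Richardson index (rather than its complement, which would flip all the inclusions in the reduction step). Once the conventions are matched, the argument is a direct computation, and should be packaged into the Plücker-coordinate setup promised in Section~\ref{sec:plucker}.
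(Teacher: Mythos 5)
Your argument is correct, but it takes a different route from the one in Section~\ref{sec:plucker}. The paper works entirely in Pl\"ucker coordinates: it invokes the expansion $\Wr(x;z) = \sum_{\nu} q_\nu p_\nu(x) z^{|\nu|}$ of Proposition~\ref{prop:wrplucker} (cited from earlier work), takes $\mu$ and $\lambda$ to be the unique minimal and maximal partitions with $p_\mu(x)\neq 0$ and $p_\lambda(x)\neq 0$ (Proposition~\ref{prop:richardsonplucker1}), reads off $\mindeg h = |\mu|$ and $\deg h = |\lambda|$ from the extremal coefficients of that expansion, and gets uniqueness from Theorem~\ref{thm:richardsonplucker2}: a second compatible Richardson variety would force $x$ into a proper subvariety on which $p_\mu$ or $p_\lambda$ vanishes. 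Your echelon-basis computation of $\ord_0\Wr(x;z)$ and $\deg\Wr(x;z)$ is in effect a from-scratch derivation of the trailing and leading terms of that same expansion --- the constant $q_\lambda = \prod_{i<j}(j-i+\lambda^{d+1-j}-\lambda^{d+1-i})$ is exactly your Vandermonde determinant $\prod_{a<b}(b_b-b_a)$ evaluated on the pivot set $J(\lambda)$ --- and your uniqueness step (cell decomposition plus inclusion-reversal plus equality of sizes) is the cell-level analogue of the Pl\"ucker vanishing argument. What your version buys is self-containedness: it needs no Pl\"ucker machinery beyond the cell decomposition. What the paper's version buys is brevity, since the Pl\"ucker formalism is set up anyway for the rest of Section~\ref{sec:fixedpoints}, and it sidesteps the indexing hazard you correctly flag (matching the pivot set at $\infty$ to the convention $\lambda^{d-i}+i$ in the definition of $X_{\lambda/\mu}$), which in the paper is absorbed into the single indexing map $J(\lambda)$.
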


If we restrict the Wronski map to any Richardson variety
$X_{\lambda/\mu}$, we obtain a finite morphism over the closure of
of the space of compatible polynomials.  These restrictions interact
nicely with the combinatorics of tableaux.  The fibres are generically
reduced, and the degree of this map is equal to
the number of standard Young tableaux of skew shape $\lambda/\mu$.
The correspondence of Eremenko and Gabrielov extends to this situation,
giving a bijection for specially chosen polynomials.

One of the most remarkable facts about the Wronski map is the reality
theorem of Mukhin, Tarasov and Varchenko (formerly the Shapiro-Shapiro
conjecture).
We say that a point $x \in X$ is \defn{real} if the subspace of
$\poln$ represented by $x$ has a basis
$f_1(z), \dots, f_d(z) \in \Rpoln$.  

\begin{theorem}[Mukhin-Tarasov-Varchenko \cite{MTV1, MTV2}]
\label{thm:MTV}
Let $h(z) \in \Rpol{N}$ be a polynomial of degree $N$ or $N-1$ whose
roots are distinct real numbers.
Then the fibre $X(h(z))$ is reduced and every point in 
the fibre is real.
\end{theorem}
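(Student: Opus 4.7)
The plan is to realize the fibre $X(h(z))$ as the spectrum of a commutative algebra of self-adjoint operators. This is the approach of Mukhin-Tarasov-Varchenko, and the machinery is the Bethe ansatz for the Gaudin model associated to $\mathfrak{sl}_n$.

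First, I would set up the representation-theoretic side. Write the roots of $h(z)$ as distinct real numbers $z_1,\dots,z_N$ (I will treat only the case $\deg h = N$; the case $\deg h = N-1$ is obtained by sending one root to infinity). To each $z_i$ attach the $\mathfrak{sl}_n$-module $V$ with highest weight $\omega_1$ (or, more invariantly, the evaluation module $V(z_i)$), and consider the weight subspace of $V^{\otimes N}$ of weight $(n-d,n-d,\dots,n-d,0,\dots,0)$ (with $d$ nonzero entries). The relevant commuting family of operators is the image of the Bethe algebra $\mathcal{B}(z_1,\dots,z_N)$ acting on this subspace, built out of higher Gaudin Hamiltonians extracted from a universal differential operator with coefficients in the current algebra.

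The second step is the key correspondence: points of $X(h(z))$ are in bijection with joint eigenspaces of $\mathcal{B}(z_1,\dots,z_N)$ on this weight space, the bijection sending a point $x$ to the eigenspace on which $\mathcal{B}$ acts via the eigenvalues encoded in the coefficients of a scalar differential operator whose kernel is $x$. Under this identification, multiplicity in the fibre corresponds to the dimension of the generalized eigenspace, and $\Wr$ being ramified at $x$ corresponds to $\mathcal{B}$ failing to be diagonalizable on that eigenspace.

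The heart of the matter is reality: when all $z_i$ are real, the Bethe algebra acts by operators that are self-adjoint with respect to the positive-definite Shapovalov/tensor Hermitian form on $V^{\otimes N}$. Given this, $\mathcal{B}$ is diagonalizable with real eigenvalues, each eigenspace is defined over $\mathbb{R}$, and the count of eigenspaces equals $\dim \mathcal{B}$; by a degree count using Eisenbud-Harris this matches the number of standard Young tableaux of shape $\Rect$, forcing the fibre to be reduced with every point real.

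The main obstacle is the self-adjointness and completeness of the Bethe ansatz, i.e.\ showing both that the Bethe algebra acts by Hermitian operators with respect to the Shapovalov form and that one obtains enough joint eigenvectors to span the weight space. Self-adjointness reduces to a symmetry property of the universal differential operator under the anti-involution defining the Shapovalov form, which can be verified directly on generators once the form is shown to be positive-definite on the relevant tensor product of unitarizable modules. Completeness — equivalently, the statement that $\dim\mathcal{B}$ equals the weight-space dimension — is the deep input; I would establish it by a dimension argument comparing generic and special fibres, using the flatness of the Wronski map over an open subset together with a specialization (e.g.\ collision of the $z_i$) that allows explicit diagonalization via lower-rank Bethe vectors.
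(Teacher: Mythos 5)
This theorem is not proved in the paper at all: it is quoted from Mukhin--Tarasov--Varchenko \cite{MTV1,MTV2} and used as a black box throughout (indeed the whole of Section~\ref{sec:circle} is built on it), so there is no internal argument to compare yours against; what you have written is an outline of the original MTV proof. As an outline it identifies the right architecture --- realize the fibre as the spectrum of the Bethe algebra of a Gaudin model, prove symmetry with respect to a positive definite form when the $z_i$ are real, and conclude diagonalizability, reality, and reducedness --- but two points keep it from being a proof. First, the bookkeeping of the space is off: the points of $X(h(z))$ correspond to joint eigenvectors of the Bethe algebra on the subspace of \emph{singular} (highest weight) vectors of the rectangular weight, e.g.\ $\mathrm{Sing}\big[(\CC^{d})^{\otimes N}\big]$ in weight $(n-d)^d$ in the dual picture, whose dimension is exactly $|\SYT(\Rect)|$; the full weight space of $V^{\otimes N}$ that you name has multinomial dimension, and its Bethe eigenvectors correspond to a larger Schubert-type problem (osculating conditions at $\infty$), so the dimension count that your argument hinges on does not close as stated.

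Second, and more seriously, the two steps you defer are precisely the content of \cite{MTV1,MTV2}, and your proposed substitutes do not work. Self-adjointness with respect to the Shapovalov-type form is manageable, but diagonalizability plus the Eisenbud--Harris degree count does not by itself force reducedness: a joint eigenspace of dimension greater than one is not excluded unless you know the Bethe algebra separates the points of the fibre, i.e.\ unless you have the identification of the Bethe algebra with the coordinate ring of the \emph{scheme-theoretic} fibre (the main theorem of \cite{MTV2}); only then does ``commutative algebra of operators diagonalizable with real eigenvalues'' translate into ``reduced fibre with all points real.'' Your plan to obtain completeness ``by flatness and collision of the $z_i$ with explicit diagonalization via lower-rank Bethe vectors'' is a heuristic rather than an argument: the collided fibres are exactly where the Bethe operators fail to be diagonalizable and the fibre fails to be reduced, and flatness transports lengths, not semisimplicity, so no specialization argument of this shape yields the needed isomorphism. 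In short, the proposal correctly summarizes the strategy of the cited proof, but the parts it treats as pluggable are the theorem.
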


Apart from being a very striking and surprising statement --- one does
not normally expect an algebraic system with real parameters
to have only real solutions ---
this theorem has interesting combinatorial consequences.  
For example, 
it can be used to show
that the correspondence of Eremenko and Gabrielov can be unambiguously
extended to polynomials with real roots.  This extension
is not continuous, however it is discontinuous in an interesting way.  
In~\cite{Pur-Gr} we showed that the discontinuities encode the 
jeu de taquin theory on tableaux.  Many of the results and methods 
from this previous work will be used in our present paper.

\subsection{Cyclic group actions}

The Mukhin-Tarasov-Varchenko theorem does not hold if
$h(z) \in \Rpol{N}$ is a real polynomial with complex roots.  
Computer experiments by 
Hauenstein, Hein, Mart\'in del Campo and Sottile
 show that in general, the number of real points in $X(h(z))$
depends in a complicated way on the polynomial $h(z)$, and not just
on the number of real roots~\cite{HHMS}.  However, there are other 
situations where the number of real solutions is predictable.
The work in this paper was originally motivated by the following problem.
Suppose $h(z) \in \Rpol{N}$ is a real polynomial of degree at most $N$
with only \emph{pure imaginary} roots.  How many real points are
in the fibre $X(h(z))$?  Special cases of this problem were 
studied by Eremenko and Gabrielov, and used to construct instances
of the pole placement problem in control theory with no real 
solutions~\cite{EG-pole}.

To answer this (and other related questions), it is helpful to observe 
that the Wronski map is equivariant with respect to a $\PGL_2(\CC)$-action.
The group $\SL_2(\CC)$ acts on each vector space 
$\pol{m}$ by M\"obius transformations:
If $\phi = 
\left(\begin{smallmatrix} 
\phi_{11} & \phi_{12} \\ \phi_{21} & \phi_{22}
\end{smallmatrix}\right) \in \SL_2(\CC)$,
we let
\[
\phi f(z) := (\phi_{21} z + \phi_{11})^m 
f\big(\frac{\phi_{22} z + \phi_{12}}{\phi_{21} z + \phi_{11}}\big)
\]
for $f(z) \in \pol{m}$.
The action $\SL_2(\CC)$ on $\poln$ and $\pol{N}$ induces an action 
of $\PGL_2(\CC)$ on $X$, and on $\PpolN$, and the Wronski map
intertwines these two $\PGL_2(\CC)$-actions.  

The involution $z \mapsto -z$ is given by
$\phi = \altreflect \in \PGL_2(\CC)$,
and (up to a sign) fixes every real polynomial $h(z)$ with 
only imaginary roots.  
Hence for these polynomials, $\phi$ gives an involution on the 
fibre $X(h(z))$.  One can then show, using the
Mukhin-Tarasov-Varchenko theorem, that the fixed points of this 
involution are exactly the real points in the fibre.  We are left
with the problem of counting the number of $\phi$-fixed points
in the fibre over a $\phi$-fixed polynomial.

More generally, let $C_r \subset \PGL_2(\CC)$ be a cyclic subgroup of 
order $r$.  We can consider the problem of counting $C_r$-fixed points
in the fibre of a $C_r$-fixed polynomial in $\PpolN$.
As any two are of these cyclic subgroups are conjugate, we will 
generally take 
\[
  C_r :=  \left\{
  \begin{pmatrix}
  e^{\pi i j/r} & 0 \\
  0 & e^{-\pi i j/r}
  \end{pmatrix}
  \middlemid j=0, \dots, r-1\right\}
\,.
\]
Then, a polynomial $h(z)$ is $C_r$-fixed as an element of $\PpolN$ if
and only if it is of the form
\begin{equation}
\label{eqn:fixedpoly}
h(z) = z^m(z^r+h_1)(z^r+h_2) \dotsb (z^r+h_\ell)\,.
\end{equation}
where $h_1, \dots, h_\ell \in \CC^\times$, $m \geq 0$ and $m + r\ell \leq N$.
When we speak of a polynomial being $C_r$-fixed, we will always
mean that it is fixed as an element of the appropriate projective
space; indeed this is the only reasonable interpretation, since 
$C_r$ is a subgroup of $\PGL_2(\CC)$, not of $\SL_2(\CC)$.
 
Let $X^r$ denote the $C_r$-fixed subscheme of $X$.  For any
$h(z)$-compatible Richardson variety $X_{\lambda/\mu}$, we consider
\[
  X^r_{\lambda/\mu}(h(z)) 
  := X_{\lambda/\mu} \cap X^r \cap X(h(z))\,,
\]
the $C_r$-fixed points of the fibre of the Wronski map, 
that lie in the Richardson variety $X_{\lambda/\mu}$.

Our main result is a combinatorial formula for the number of 
points in $X^r_{\lambda/\mu}(h(z))$: we show that there is
a bijection between these points and the set of
of \emph{standard $r$-ribbon tableaux} of shape $\lambda/\mu$.
When $r=1$, these are just standard Young tableaux, and the group is
trivial; hence we recover the fact that $\Wr|_{X_{\lambda/\mu}}$
has degree $|\SYT(\lambda/\mu)|$.
The problem of counting real solutions
when $h(z)$ is a real polynomial with pure imaginary roots 
corresponds to the special case of an involution, i.e. $r=2$.  
In the case where $r$ divides $N$, $\mindeg h(z)=0$ and $\deg h(z) = N$, 
we can determine the answer in a second way --- in terms of the
jeu de taquin \emph{promotion} operator on rectangular tableaux --- 
giving a non-trivial combinatorial identity.
These results can also be extend to dihedral group actions.

\subsection{Ribbon tableaux}

We now recall the definitions of the combinatorial objects that are
relevant to the statements of our results.  We assume some
basic familiarity here, and refer the reader 
to \cite{Ful, Sag-sym, Sta} for a more thorough introduction.

Let $\lambda, \mu \in \Lambda$ be partitions, with $\lambda \supset \mu$.
By a \defn{tableau} of shape $\lambda/\mu$, we will mean a filling
of the boxes of the diagram of $\lambda/\mu$ with positive integer
entries that are weakly increasing along rows and down columns.  A 
tableau of shape $\lambda/\mu$ is a \defn{standard Young tableau} if 
its entries are 
exactly the numbers $1, \dots, |\lambda/\mu|$.   We denote the
set of all standard Young tableau of shape $\lambda/\mu$ 
by $\SYT(\lambda/\mu)$.  

More generally, suppose $|\lambda/\mu| = r\ell$, and consider
a tableau of shape $\lambda/\mu$ satisfying the following conditions:
\begin{packedenum}
\item[(1)] The entries are $1, 2, \dots, \ell$.
\item[(2)] The shape determined by the entries labelled $i$ is a
(connected) $r$-\defn{ribbon}; that is, a connected skew shape with
$r$ boxes that does not contain a $2 \times 2$ square.
\end{packedenum}
Such a tableau is called a \defn{standard $r$-ribbon tableau} (also
called a \emph{standard $r$-rim hook tableau}).
When $r=1$, this definition coincides with the definition of
a standard Young tableaux.  When $r=2$, the entries labelled $i$
must cover two adjacent boxes, and hence standard $2$-ribbon tableaux 
are often called \defn{domino tableaux}.
An example for $r=5$ is given in Figure~\ref{fig:ribbontableau}.
We denote by $\ribbon(\lambda/\mu)$ the set of all standard $r$-ribbon 
tableaux of shape $\lambda/\mu$.

\begin{figure}[tb]
\centering
\begin{young}
 , & !1 & !1 & !1  & ??3 & ??3 & !!!5 \\
!1 & !1 & ?2 & ??3 & ??3 & !!!5 & !!!5 \\
?2 & ?2 & ?2 & ??3 & !!!5 & !!!5 \\
?2 & !!4 & !!4 & !!4 \\
!!4 & !!4
\end{young}
\caption{A standard $5$-ribbon tableau of skew shape $77542/1$.}
\label{fig:ribbontableau}
\end{figure}

Ribbon tableaux have been well studied.  There are several formulae 
for $|\ribbon(\lambda)|$, including hook-length formulae
and character formulae (see~\cite{FL} and the references therein).
Lascoux, Leclerc and Thibon showed that evaluations of 
Kostka-Foulkes polynomials at roots of unity can be interpreted
as enumerations of ribbon tableau.
In our case, the relevant Kostka-Foulkes polynomial is
$K_{\lambda/\mu,1^k}(q)$ (where $k=|\lambda/\mu|$), which is
the generating function for $\SYT(\lambda/\mu)$ with respect
to the \emph{charge} statistic.  
The following is a special case of their result, which may be found
in \cite[Section 4]{LLT}. 

\begin{theorem}[Lascoux-Leclerc-Thibon]
\label{thm:LLT}
Suppose $|\lambda/\mu| = r\ell$.  Then 
\[
  |\ribbon(\lambda/\mu)| 
  = \pm K_{\lambda/\mu,1^{r\ell}}\big(e^{2\pi i /r}\big)\,.
\]
\end{theorem}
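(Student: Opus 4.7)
The plan is to use the interpretation of $K_{\lambda/\mu,1^{r\ell}}(q)$ as a charge generating function on standard Young tableaux, and then evaluate at the primitive $r$-th root of unity via the $r$-quotient / $r$-core machinery. The first step is to invoke the classical theorem of Lascoux and Sch\"utzenberger, which gives
\[
  K_{\lambda/\mu,1^{r\ell}}(q) = \sum_{T \in \SYT(\lambda/\mu)} q^{\mathrm{charge}(T)}.
\]
This reduces the problem to evaluating this polynomial at $q = \zeta := e^{2\pi i/r}$.

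Next, I would expand $s_{\lambda/\mu}$ in the basis of Hall--Littlewood polynomials $\{P_\nu(X;q)\}$, so that $K_{\lambda/\mu,\nu}(q)$ appears as the structure coefficient. At $q = \zeta$ the Hall--Littlewood polynomials $P_\nu(X;\zeta)$ collapse in a controlled way: by the plethystic identity $P_\nu[X(1+\zeta+\cdots+\zeta^{r-1})] = 0$ together with an induction on dominance, one can re-express $s_{\lambda/\mu}$ at $q=\zeta$ entirely in terms of data coming from the $r$-core and $r$-quotient of the shape $\lambda/\mu$. This is exactly the content of [LLT, Section 4].

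The third step is to match this expression with the enumeration $|\ribbon(\lambda/\mu)|$ via the Stanton--White bijection, which identifies $\ribbon(\lambda/\mu)$ with the set of $r$-tuples of standard Young tableaux of the shapes prescribed by the $r$-quotient of $\lambda/\mu$ (and which forces $\ribbon(\lambda/\mu) = \varnothing$ unless the $r$-cores of $\lambda$ and $\mu$ agree). Combining the two ingredients gives the desired equality up to a global sign $\epsilon \in \{\pm 1\}$.

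The main subtlety --- and the step I would expect to cost the most bookkeeping --- is extracting and verifying that sign. The natural sign attached to a standard $r$-ribbon tableau $T$ is $\prod_i (-1)^{\height(R_i)-1}$, where the product runs over the ribbons $R_i$ of $T$; the key shape-invariance fact is that this quantity depends only on $\lambda/\mu$ and $r$, not on the particular tableau $T$, which is what lets it factor out of the sum on the ribbon-tableau side. Once that invariance is in hand, this common sign is precisely the $\epsilon$ produced on the Kostka--Foulkes side, and the identity $K_{\lambda/\mu,1^{r\ell}}(\zeta) = \epsilon\,|\ribbon(\lambda/\mu)|$ follows. Since this theorem is attributed directly to LLT, I would ultimately cite [LLT, Section 4] for the straight-shape version and note that the skew case follows by the skew Murnaghan--Nakayama rule (equivalently, by applying the coproduct on symmetric functions before specialization).
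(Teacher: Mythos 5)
The paper does not prove this theorem; it is quoted verbatim from Lascoux--Leclerc--Thibon with a citation to \cite[Section 4]{LLT}, so there is no in-paper argument to compare against. Your outline reconstructs the standard LLT proof with the right ingredients in the right order: the Lascoux--Sch\"utzenberger charge formula for $K_{\lambda/\mu,1^{r\ell}}(q)$, the root-of-unity specialization of Hall--Littlewood functions, the Stanton--White / $r$-quotient bijection (including the vanishing when the $r$-cores of $\lambda$ and $\mu$ differ), and --- correctly identified as the crux --- the fact that the sign $\prod_i(-1)^{\height(R_i)-1}$ of a standard $r$-ribbon tableau depends only on the shape $\lambda/\mu$, which is what lets the signed Murnaghan--Nakayama-type count collapse to $\pm|\ribbon(\lambda/\mu)|$. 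One technical slip: the identity you quote, $P_\nu[X(1+\zeta+\cdots+\zeta^{r-1})]=0$, is vacuous (the inner alphabet is literally $0$ since $1+\zeta+\cdots+\zeta^{r-1}=0$) and cannot drive the induction; the actual LLT/Morris lemma concerns the factorization of the (modified) Hall--Littlewood functions $Q'_\nu(X;q)$ at $q=\zeta$ through the plethysm by $p_r$, indexed by the $r$-quotient. Since you defer the details to \cite[Section 4]{LLT} anyway, this does not sink the argument, but as stated that step would not compile into a proof.
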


In the case where $\lambda/\mu = \Rect$, the Kostka-Foulkes polynomial
is up to a power of $q$ given by the $q$-analogue of the 
hook-length formula
\[
   K_{\Rect,1^N}(q) = q^{N(d-1)/2} \frac{[N]!_q}
   {\displaystyle\prod_{\alpha \in \Rect} [h_\alpha]_q}\,,
\]
and the sign appearing in Theorem~\ref{thm:LLT} is $(-1)^{N(d-1)/2}$.

These same numbers appear in another combinatorial context.
For $\lambda \in \Lambda$, let
 $\promote: \SYT(\lambda) \to \SYT(\lambda)$ be the jeu de taquin 
\defn{promotion}
operator.  For $T \in \SYT(\lambda)$, $\promote(T)$ is defined by 
performing the 
following steps:
\begin{packedenum}
\item[(1)] Delete the entry $1$ from $T$, leaving an empty box in the
northwest corner.
\item[(2)] Slide the empty box through the remaining entries of $T$.
(The empty box repeatedly swaps places with either its neighbour 
to the
east or its neighbour to the south, whichever is smaller, until 
neither exists.)
\item[(3)] Decrement all entries by $1$ and place a new entry $|\lambda|$ 
in the empty box.  The result is $\promote(T)$.
\end{packedenum}
An example is given in Figure~\ref{fig:promotionex}.

\begin{figure}[tb]
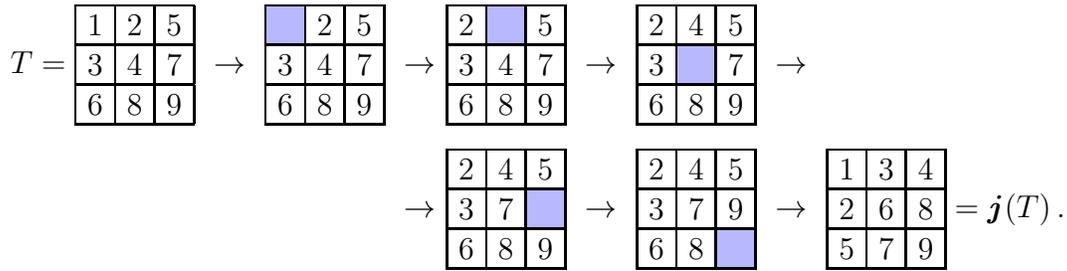

\centering
\begin{align*}
T = {\begin{young}[c]
1 & 2 & 5 \\
3 & 4 & 7 \\
6 & 8 & 9 
\end{young}} 
\ \to\ %
{\begin{young}[c]
? & 2 & 5 \\
3 & 4 & 7 \\
6 & 8 & 9 
\end{young}}
\ \to\ &%
{\begin{young}[c]
2 & ? & 5 \\
3 & 4 & 7 \\
6 & 8 & 9 
\end{young}}
\ \to\ %
{\begin{young}[c]
2 & 4 & 5 \\
3 & ? & 7 \\
6 & 8 & 9 
\end{young}} \ \to&%
\\[1ex]
\to\ &%
{\begin{young}[c]
2 & 4 & 5 \\
3 & 7 & ? \\
6 & 8 & 9 
\end{young}}
\ \to\ %
{\begin{young}[c]
2 & 4 & 5 \\
3 & 7 & 9 \\
6 & 8 & ? 
\end{young}} 
\ \to\ %
{\begin{young}[c]
1 & 3 & 4 \\
2 & 6 & 8 \\
5 & 7 & 9 
\end{young}} = \promote(T)\,.
\end{align*}
\caption{Promotion on a rectangular tableau $T \in \SYT(\protect\Rect)$,
with $n=6$, $d=3$.}
\label{fig:promotionex}
\end{figure}

It is not hard to see that $\promote: \SYT(\lambda) \to \SYT(\lambda)$ is 
invertible, and hence generates a cyclic group action on $\SYT(\lambda)$.
In the case where $\lambda = \Rect$, this operator has particularly
nice properties.
Haimain showed that $\promote^N(T) = T$ for all 
$T \in \SYT(\Rect)$ \cite{Hai}, 
hence we may think of this cyclic group as having order $N$.  
More generally, Rhoades proved the following
\emph{cyclic sieving theorem} for the promotion operator 
on $\SYT(\Rect)$~\cite{Rho} (see also~\cite{Sag-CSP,Wes}).

\begin{theorem}[Rhoades]
\label{thm:cyclicsieving}
Suppose $r$ divides $N$.  Then the number of $\promote^{N/r}$-fixed 
tableaux
in $\SYT(\Rect)$ is equal to $(-1)^{N(d-1)/r} K_{\Rect,1^N}(e^{2\pi i/r})$.
\end{theorem}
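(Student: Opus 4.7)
The plan is to derive Theorem~\ref{thm:cyclicsieving} by counting the $C_r$-fixed points in a generic fibre $X^r_\Rect(h(z))$ of the Wronski map in two independent ways, and then combining these counts with the LLT evaluation of Theorem~\ref{thm:LLT}.  Throughout, I would fix a generic $C_r$-fixed polynomial $h(z)$ of the form \eqref{eqn:fixedpoly} with $\deg h(z) = N$ and $\mindeg h(z) = 0$; by Proposition~\ref{prop:richardson} the unique compatible Richardson variety is then all of $X = X_{\Rect/\emptyset}$.

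The first enumeration is immediate from the main theorem of the paper applied to $\lambda = \Rect$, $\mu = \emptyset$: the asserted bijection between $X^r_\Rect(h(z))$ and $\ribbon(\Rect)$ gives
\[
|X^r_\Rect(h(z))| \;=\; |\ribbon(\Rect)|.
\]
Invoking Theorem~\ref{thm:LLT} together with the $q$-hook-length formula for $K_{\Rect, 1^N}(q)$ cited in the excerpt expresses this count as $\pm K_{\Rect,1^N}(e^{2\pi i/r})$, with the leading factor $q^{N(d-1)/2}$ controlling the sign.

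The second enumeration --- which is the crux and the main obstacle --- is to identify $X^r_\Rect(h(z))$ with the set of $\promote^{N/r}$-fixed tableaux in $\SYT(\Rect)$.  The plan is to extend the Eremenko--Gabrielov bijection $\SYT(\Rect) \leftrightarrow X(h_0(z))$, valid for special real polynomials $h_0(z)$, to a continuous bijection over a much larger locus, using the Mukhin--Tarasov--Varchenko theorem (Theorem~\ref{thm:MTV}) and the jeu de taquin monodromy framework developed in~\cite{Pur-Gr}. I would then connect a reference polynomial $h_0(z)$ to the $C_r$-symmetric polynomial $h(z)$ by a carefully chosen path in $\PpolN$, and track the action of the $C_r$-generator $\phi \in \PGL_2(\CC)$ on the fibre under this extended bijection. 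Because $\phi$ rotates the base by $2\pi/r$, and because the $2\pi$-monodromy corresponds to $\promote^N$ (which is the identity on $\SYT(\Rect)$ by Haiman's theorem), the action of $\phi$ on tableaux must be exactly $\promote^{N/r}$. The $C_r$-fixed points in the fibre therefore biject with the $\promote^{N/r}$-fixed standard Young tableaux, giving
\[
|X^r_\Rect(h(z))| \;=\; \bigl|\{T \in \SYT(\Rect) : \promote^{N/r}(T) = T\}\bigr|.
\]

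Combining the two enumerations identifies the promotion fixed-point count with $\pm K_{\Rect,1^N}(e^{2\pi i/r})$. The precise sign $(-1)^{N(d-1)/r}$ in Theorem~\ref{thm:cyclicsieving} is then forced by comparing the phase $e^{\pi i N(d-1)/r}$ arising from the leading factor $q^{N(d-1)/2}$ in the $q$-hook-length formula (evaluated at $q = e^{2\pi i/r}$) with the fact that the fixed-point count on the left must be a non-negative integer. The substantive difficulty throughout is the monodromy/continuity argument in the second enumeration: one needs to propagate the combinatorial Eremenko--Gabrielov labelling from a generic real polynomial, where tableaux arise naturally, along a path to the $C_r$-symmetric polynomial $h(z)$, and to verify that the resulting $C_r$-action on labels agrees with the iterated promotion operator.
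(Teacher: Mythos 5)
Your proposal is correct and takes essentially the same route as the paper: the paper deduces Rhoades' theorem by combining Theorem~\ref{thm:ribbon} (the generic $C_r$-fixed locus $X^r(h(z))$ is in bijection with $\ribbon(\Rect)$), Theorem~\ref{thm:promotion} (for $h(z)=z^N+(-1)^N$ with roots on the unit circle, the generator of $C_r$ acts on the tableau labels as $\promote^{N/r}$ via Lemma~\ref{lem:evacpromote}, so $C_r$-fixed points biject with $\promote^{N/r}$-fixed tableaux), and the Lascoux--Leclerc--Thibon evaluation of Theorem~\ref{thm:LLT}, with the sign handled exactly as you describe via the prefactor $q^{N(d-1)/2}$ in the $q$-hook-length formula.
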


When $r=1$, the evaluation of the Kostka-Foulkes polynomial
$K_{\Rect,1^N}(1)$ is just the ordinary Kostka number 
$K_{\Rect,1^N} = |\SYT(\Rect)|$, and hence we recover Haiman's result.
More generally this theorem gives a
complete description of the orbit structure of 
promotion on $\SYT(\Rect)$.

The proof of Theorem~\ref{thm:cyclicsieving} is quite non-elementary.  
The idea is to show that the long cycle acts by promotion
on the Kazhdan-Lusztig basis of the representation
of the symmetric group $S_N$ associated to the partition $\Rect$;
hence the number of fixed points is given as a character evaluation.
In light of Theorem~\ref{thm:LLT}, Rhoades' result is equivalent to 
the following purely combinatorial theorem, for which there is
no known bijective proof.

\begin{theorem}
\label{thm:combinatorial}
Suppose $r$ divides $N$.  The number of $\promote^{N/r}$-fixed 
tableaux in
$\SYT(\Rect)$ is equal to the number of standard $r$-ribbon tableaux of
shape $\Rect$.
\end{theorem}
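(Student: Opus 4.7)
The plan is to prove this combinatorial identity geometrically by counting the $C_r$-fixed points $X^r(h(z))$ in the fibre of the Wronski map over a generic $C_r$-fixed polynomial $h(z) \in \PpolN$ of the form $\prod_{j=1}^{N/r}(z^r+h_j)$ in two different ways. One count produces $|\ribbon(\Rect)|$, the other produces the number of $\promote^{N/r}$-fixed tableaux in $\SYT(\Rect)$, and the equality of the two quantities is the theorem.

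For the first count, I would invoke the main theorem of the paper, applied to the Richardson variety $X_{\Rect/\varnothing} = X$ and a generic $C_r$-fixed $h(z)$ of degree $N$ with $\mindeg h(z) = 0$. This asserts a bijection between $X^r(h(z))$ and $\ribbon(\Rect)$, giving $|X^r(h(z))| = |\ribbon(\Rect)|$, and is essentially a direct citation.

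For the second count I would use the extended Eremenko--Gabrielov correspondence developed in \cite{Pur-Gr}. Restricted to polynomials with only real roots, this correspondence identifies the fibre with $\SYT(\Rect)$, and its monodromy as $h(z)$ varies among such polynomials is governed by jeu de taquin slides. I would construct a $C_r$-equivariant path in $\PpolN$ from our $C_r$-fixed polynomial to a nearby polynomial with all real roots, along which the roots rotate by a phase; then, transporting the bijection along this path, the $C_r$-action on the fibre is carried to a combinatorial action on $\SYT(\Rect)$. Using the jeu de taquin description of monodromy from \cite{Pur-Gr}, together with Haiman's identity $\promote^N = \identity$ on $\SYT(\Rect)$ to pin down the correct exponent, I would identify this combinatorial action as $\promote^{N/r}$. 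This would give $|X^r(h(z))| = |\{T \in \SYT(\Rect) : \promote^{N/r}(T) = T\}|$, and comparing to the first count proves the theorem.

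The main obstacle is the second count: showing precisely that the linear $C_r$-action on the fibre corresponds to $\promote^{N/r}$ under the $\SYT(\Rect)$-identification. This requires tracking how the tableau-indexed points of the fibre deform as the root configuration rotates by $e^{2\pi i/r}$, and checking that the resulting cycling of tableaux agrees with the $N/r$-fold iterate of promotion rather than some other operator of order dividing $r$. The rest of the argument — the first count and the transport of the Eremenko--Gabrielov bijection along the equivariant deformation — should be routine within the framework already set up in the paper and in \cite{Pur-Gr}.
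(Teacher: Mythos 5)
Your overall strategy---cite Theorem~\ref{thm:ribbon} for one count, and identify the $C_r$-action on a suitable fibre with $\promote^{N/r}$ for the other---is exactly the paper's strategy: Theorem~\ref{thm:combinatorial} is obtained there by combining Theorem~\ref{thm:ribbon} with Theorem~\ref{thm:promotion}. The problem is that your second count is not a deferrable technicality: it \emph{is} Theorem~\ref{thm:promotion}, and you explicitly leave its key step (that the $C_r$-action corresponds to $\promote^{N/r}$) as ``the main obstacle'' rather than proving it. Moreover, the mechanism you sketch for it fails as stated. For $r\geq 3$ no polynomial of degree $N$ with distinct real nonzero roots is $C_r$-fixed, since its root set would have to be invariant under multiplication by $e^{2\pi i/r}$; so there is no ``nearby polynomial with all real roots'' to which you can deform $C_r$-equivariantly while keeping a $C_r$-action on the fibre, and transporting the Eremenko--Gabrielov bijection along such a path does not by itself produce a combinatorial model of the action.

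The paper resolves this by taking $h(z)=z^N+(-1)^N$, whose $N$ roots $e^{(2k-1)\pi i/N}$ lie on the unit circle: the circle is carried to $\RP^1$ by a fixed M\"obius transformation $\psi$, so Theorem~\ref{thm:MTV} gives reducedness of the fibre (which is what lets the count at this special $C_r$-fixed polynomial stand for the generic count demanded by Theorem~\ref{thm:ribbon}---another point you gloss over) and labels its points by $\SYT(\Rect)$, while $h(z)$ is simultaneously $C_r$-fixed because $C_r$ rotates the circle. The identification of the action is then done explicitly by jeu de taquin path-lifting: Theorem~\ref{thm:sliding} together with the loop of Figure~\ref{fig:modelpromote} shows that the rotation $\bigl(\begin{smallmatrix}e^{-\pi i/N}&0\\0&e^{\pi i/N}\end{smallmatrix}\bigr)$, which shifts the roots cyclically by one, acts on tableaux as a single $\promote$ (Lemmas~\ref{lem:fixeddihedral} and~\ref{lem:evacpromote}), so the generator of $C_r$, being its $(N/r)$-th power, acts as $\promote^{N/r}$. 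Note also that Haiman's identity $\promote^{N}=\identity$ cannot ``pin down the correct exponent'' for you: knowing only that the induced operator has order dividing $r$ does not single out $\promote^{N/r}$; what pins it down is precisely the computation that a one-step rotation of the roots is one promotion. Until that step is carried out, the proposal has a genuine gap.
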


One consequence of our present work will be an alternate proof
of Theorem~\ref{thm:combinatorial}.  Our proof is direct in the 
sense that it does not require any knowledge of representation theory,
Kostka-Foulkes polynomials, their evaluations at roots of unity, or any
other algebraic formulae for the two quantities involved.
On the other hand, the proof does not give a direct bijection between
the two sets, nor does it suggest how to construct one (except in the
$r=2$ case, where a bijection is 
known \cite{vLee}).  
From our point of view, the two sets come from two different
(and seemingly incompatible)
ways of thinking about the problem of counting $C_r$-fixed points 
in the fibre of the Wronski map.

There is another operator on standard Young 
tableaux, closely related to promotion.  For $\lambda \in \Lambda$,
the \defn{evacuation} operator $\evac: \SYT(\lambda) \to \SYT(\lambda)$
is
\[
   \evac 
    := \promote_1 \circ \promote_2 \circ \dots \circ \promote_{|\lambda|}\,,
\]
where $\promote_i$ is the promotion operator restricted to the
entries $1, \dots, i$.  In other words, to compute $\evac(T)$, for
$T \in \SYT(\lambda)$, we first apply the promotion operator 
$\promote = \promote_{|\lambda|}$
to $T$.  We then freeze the largest entry $|\lambda|$ in $\promote(T)$, and
apply promotion to the subtableau of $\promote(T)$ consisting of entries
$1, \dots, |\lambda|-1$.  At the next stage entries $|\lambda|$ and
$|\lambda|-1$ are frozen, and we continue in this way until all
entries are frozen.  The result is $\evac(T)$.

The operator $\evac$ is an involution on $\SYT(\lambda)$.  In
the case of a rectangular shape $\lambda = \Rect$, there is a very
simple description of this involution: it coincides with the operation
of rotating the tableau by $180^\circ$ and reversing the order
of the entries (replacing each entry $k$ by $N{+}1{-}k$).  From here, 
it is easy to see that 
$\evac \circ \promote \circ \evac = \promote^{-1}$,
and hence $\evac$ and $\promote$ generate a dihedral group $D_{N}$ 
of order $2N$ acting on $\SYT(\Rect)$.
We may therefore consider the problem of counting tableaux 
in $\SYT(\Rect)$
that are fixed by a dihedral subgroup $D_r \subset N$ of order $2r$.
If $N$ is odd, every dihedral subgroup of order $2r$ is conjugate, so
we may take $D_r$ to be generated by $\evac$ and $\promote^{N/r}$.
If $N$ is even, there are up to conjugacy two dihedral subgroups 
of order $2r$: one is generated
by $\evac$ and $\promote^{N/r}$, the other by
$\evac \circ \promote$ and  $\promote^{N/r}$.

To answer this question in a manner analogous to 
Theorem~\ref{thm:combinatorial}, we consider two special types of
ribbon tableaux.
For each $\lambda \in \Lambda$, let $\lambda^\vee \in \Lambda$
denote the dual partition:
$(\lambda^\vee)^i := n-d- \lambda^{d+1-i}$ for $i=1, \dots, d$.
We will say a tableau is \defn{rotationally-invariant} if it is
fixed by the operation of rotating $180^\circ$ inside the rectangle
$\Rect$, and replacing each entry $k$ by $m{+}1{-}k$, where $m$ is 
the largest entry.
For a tableau to be rotationally-invariant, its shape must
be of the form $\lambda/\lambda^\vee$, where $\lambda \supset \lambda^\vee$.

Suppose $|\lambda/\mu| = r\ell$.
We define $\rotribbon(\lambda/\mu)$
to be the set of all rotationally-invariant standard $r$-ribbon 
tableaux (this is empty if $\mu \neq \lambda^\vee)$.  If $\ell$ is even, 
we also define 
$\longribbon(\lambda/\mu)$ be 
the set of rotationally-invariant tableaux with entries
$1, 2, \dots, \ell{-}1$, in which the entries not equal to $\frac{\ell}{2}$ 
form
$r$-ribbons, and the entries equal to $\frac{\ell}{2}$ form a $2r$-ribbon.
Some examples are given in Figure~\ref{fig:rotribbon}.  We note that
$\longribbon(\lambda/\mu)$ is always empty if $d$ and $n$ are both even.

\begin{figure}[tb]
\centering
\begin{young}
!1 & !1 & !1 & !1 & !!4 & ?5 \\
!1 & ?2 & ??3 & !!4 & !!4 & ?5 \\
?2 & ?2 & ??3 & !!4 & ?5 & ?5 \\
?2 & ??3 & ??3 & !!4 & ?5 & !6 \\
?2 & ??3 & !6 & !6 & !6 & !6
\end{young}
$\qquad\qquad$
\begin{young}
!1 & !1 & !1 & !1 & ??3 & ??3 \\
!1 & ?2 & ?2 & ??3 & ??3 & ?4 \\
?2 & ?2 & ??3 & ??3 & ?4 & ?4 \\
?2 & ??3 & ??3 & ?4 & ?4 & !5 \\
??3 & ??3 & !5 & !5 & !5 & !5
\end{young}
\caption{Examples of tableaux in $\rotribbonplain^5(\protect\Rect)$
and $\longribbonplain^5(\protect\Rect)$, with $n=11$, $d=5$.}
\label{fig:rotribbon}
\end{figure}

\begin{theorem}
\label{thm:D-combinatorial}
Suppose $r$ divides $N$.
\begin{packedenum}
\item[(i)] The number of tableaux in
$\SYT(\Rect)$ fixed by both $\evac$ and $\promote^{N/r}$  
is equal to $|\rotribbon(\Rect)|$.
\item[(ii)] The number of tableaux in
$\SYT(\Rect)$ fixed by both $\evac \circ \promote$ and $\promote^{N/r}$ 
is equal to $|\rotribbon(\Rect)|$ if $\frac{N}{r}$ is odd, and
$\big|\longribbon(\Rect)\big|$ if $\frac{N}{r}$ is even.
\end{packedenum}
\end{theorem}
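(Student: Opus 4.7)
The plan is to prove Theorem \ref{thm:D-combinatorial} by the same two-sided counting argument used for Theorem \ref{thm:combinatorial}, enlarged to accommodate an additional involution from $\PGL_2(\CC)$. I would count $D_r$-fixed points in the fibre of the Wronski map over a suitable $D_r$-invariant polynomial $h(z) \in \PpolN$ in two ways: geometrically, using the ribbon-tableau bijection supplied by the main theorem of the paper; and combinatorially, using the jeu de taquin extension of the Eremenko-Gabrielov correspondence from \cite{Pur-Gr}.

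First I would embed $D_r$ into $\PGL_2(\CC)$ as the subgroup generated by $C_r$ together with an order-two normaliser $\sigma$. Up to conjugacy in the normaliser of $C_r$, the two available choices $\sigma = \stdreflect$ and $\sigma = \halfrotate$ correspond to the two dihedral subgroups of $\langle \promote, \evac\rangle$ appearing in parts (i) and (ii) of the theorem. A $D_r$-fixed polynomial of the form \eqref{eqn:fixedpoly} with full degree $N$ and no root at $0$ is then a $C_r$-fixed polynomial whose root multiset $\{h_1, \dots, h_\ell\}$ carries an additional palindromic or skew-palindromic symmetry; it is straightforward to produce such polynomials.

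For the geometric count, I would apply the main theorem to identify $C_r$-fixed points in $X(h(z))$ with $\ribbon(\Rect)$, and transport the induced $\sigma$-action across this bijection. By tracking how $\sigma$ interacts with the Richardson stratification used in the proof of the main theorem, the resulting combinatorial involution should be $180^\circ$-rotation of the ribbon tableau combined with reversal of the entry labels. Its fixed set is $\rotribbon(\Rect)$, except in the one case where the parity of $N/r$ and the conjugacy class of $\sigma$ combine to force the central ribbon of a $\sigma$-fixed tableau to be a $2r$-ribbon rather than an $r$-ribbon, yielding $\longribbon(\Rect)$ instead. The case split in the theorem's statement reflects precisely this parity constraint on the central ribbon.

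For the combinatorial count, I would invoke the jeu de taquin correspondence of \cite{Pur-Gr} to identify the fibre (after a suitable degeneration) with $\SYT(\Rect)$ in a way that intertwines the geometric $C_r$-action with $\promote^{N/r}$; this is the content of the proof of Theorem \ref{thm:combinatorial}. The new ingredient is to show that the geometric $\sigma$-action corresponds to $\evac$ when $\sigma = \stdreflect$ and to $\evac \circ \promote$ when $\sigma = \halfrotate$ (or vice versa). This is the principal obstacle of the proof: one must follow a $\PGL_2(\CC)$-symmetry through the degenerations defining the correspondence, and verify that it specialises to evacuation on rectangular tableaux, which is a delicate geometric calculation rather than a formal consequence of the cyclic case. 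Once this identification is established, equating the two counts yields both parts of the theorem.
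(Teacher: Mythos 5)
Your overall strategy is the one the paper itself uses: prove a dihedral analogue of the geometric/ribbon count (Theorem~\ref{thm:D-ribbon}) and a dihedral analogue of the promotion count (Theorem~\ref{thm:D-promotion}, which is proved exactly as you sketch, via roots on a circle, path lifting, and Lemma~\ref{lem:evacpromote}), then equate the two (with the mild caveat, shared by the paper, that the two counts are taken over different polynomials of the same type, so one needs the fixed-point count to depend only on the type). For part (i), and for part (ii) with $N/r$ odd, your plan is sound and coincides with the paper's: there the $D_r$-fixed polynomial can be taken of type (1) or (2), the $h_i$ can be made generic (distinct valuations), Theorem~\ref{thm:ribbon} applies, and the reflection acts on $\ribbon(\Rect)$ by $180^\circ$ rotation with label reversal (Lemma~\ref{lem:reflectrotate}), giving $\rotribbon(\Rect)$.

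The genuine gap is the geometric half of part (ii) when $N/r$ is even. In that case every reflection in $\langle \evac\circ\promote,\ \promote^{N/r}\rangle$ has its axis passing through two of the $N$ roots, so any geometric realization of this group forces a $D_r$-fixed polynomial of type (3): two of the $h_i$ are pinned at $\pm 1$ and cannot be made generic (over $\psK$ they cannot be given distinct valuations). Consequently the bijection of Theorem~\ref{thm:ribbon} is not available to ``transport the $\sigma$-action across,'' and the mechanism you describe is incoherent as stated: a standard $r$-ribbon tableau has only $r$-ribbons, so rotation-with-reversal on $\ribbon(\Rect)$ has fixed set exactly $\rotribbon(\Rect)$; no parity constraint inside $\ribbon(\Rect)$ can ``force a $2r$-ribbon,'' and $\longribbon(\Rect)$ is not a subset of $\ribbon(\Rect)$. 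The paper needs a genuinely new analysis here (Lemmas~\ref{lem:almostribbon}--\ref{lem:longribbon}): over a type (3) polynomial the $C_r$-fixed points are indexed by \emph{almost-standard} ribbon tableaux, with one reduced point over each tableau having a long ($2r$-)ribbon and two reduced points over each tableau without one (a Hensel's-lemma computation), and a sign/leading-coefficient argument shows that a reflection-fixed point must have the long ribbon; only then does the fixed set come out to $\longribbon(\Rect)$. A secondary imprecision: all dihedral subgroups of $\PGL_2(\CC)$ containing $C_r$ are conjugate via the torus, so the dichotomy between (i) and (ii) is not the conjugacy class of $\sigma$ (e.g.\ $\stdreflect$ versus $\halfrotate$) but whether the fixed points of the reflection are roots of $h(z)$ --- the paper's types (1)--(3); for a fixed root configuration the $\stdreflect$/$\halfrotate$ dichotomy captures this only for certain residues of $N$ modulo $4$, which is why the paper instead varies the polynomial ($z^N+(-1)^N$ versus $z^N-(-1)^N$) while keeping $D_r$ fixed.
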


In the $r=1$ case we are
computing the numbers of standard Young tableaux of
rectangular shape fixed by $\evac$, or $\evac \circ \promote$,
which were previously studied in~\cite{Rho,Ste}. 
Since these are involutions, the
quantities involved are related to the $r=2$ case of a cyclic group 
action.  The general case of Theorem~\ref{thm:D-combinatorial} 
appears to be new, and follows 
from dihedral group analogues of our main theorems.

\subsection{Statements of results}

Our main result relates the problem of counting $C_r$-fixed points in
the fibre of the Wronski map to enumeration of $r$-ribbon tableaux.

\begin{theorem}
\label{thm:ribbon}
Let $h(z)$ be a generic $C_r$-fixed polynomial of the 
form \eqref{eqn:fixedpoly}.
For any $h(z)$-compatible Richardson variety
$X_{\lambda/\mu}$,  $X^r_{\lambda/\mu}(h(z))$ is a reduced finite scheme.
The number of points in $X^r_{\lambda/\mu}(h(z))$ is equal to
the number of standard $r$-ribbon tableaux of
shape $\lambda/\mu$.
\end{theorem}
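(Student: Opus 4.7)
The plan is to decompose the $C_r$-fixed locus $X^r \subset X$ according to the weight spaces of the $C_r$-action on $\poln$, show that for a generic $C_r$-fixed polynomial $h(z)$ and compatible Richardson variety $X_{\lambda/\mu}$ the fibre $X^r_{\lambda/\mu}(h(z))$ is supported on a single balanced component of $X^r$ indexed by the $r$-quotient of $\lambda/\mu$, and then to reduce the count on this component to a product of classical Wronski-map fibre counts, matching $|\ribbon(\lambda/\mu)|$ via the James--Kerber bijection.

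First I would analyze the structure of $X^r$.  The $C_r$-action decomposes $\poln = \bigoplus_{j=0}^{r-1} (\poln)_j$ into $r$ weight spaces, each spanned by monomials $z^i$ with $i$ in a fixed residue class modulo $r$.  A $d$-plane $x \in X$ is $C_r$-fixed if and only if $x = \bigoplus_j x_j$ with $x_j \subset (\poln)_j$ of some dimension $d_j$; hence $X^r$ is a disjoint union of components $X^r_\bolda$ indexed by dimension vectors $\bolda = (d_0, \ldots, d_{r-1})$ with $\sum_j d_j = d$, each a product of smaller Grassmannians.  Since both flags $F_\bullet$ and $\TF_\bullet$ defining $X_{\lambda/\mu}$ are $C_r$-stable (each is spanned by monomials), each intersection $X^r_\bolda \cap X_{\lambda/\mu}$ is a product of smaller Richardson varieties on the individual weight spaces.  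The dimension vector $\bolda$ and the product Richardson shapes that arise are controlled by the $r$-core/$r$-quotient decomposition of $\lambda/\mu$: the intersection is nonempty precisely when $\lambda$ and $\mu$ share the same $r$-core, and in that case the factor shapes are the pieces $(\lambda^{(j)}/\mu^{(j)})$ of the $r$-quotient.

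Next I would study the Wronski map on each component.  The degree of $\Wr(x;z)$ in $z$ and its order of vanishing at $z=0$, for $x \in X^r_\bolda$, are polynomial functions of $\bolda$; for generic $h(z)$ of the form~\eqref{eqn:fixedpoly} compatible with $X_{\lambda/\mu}$, only one balanced component $X^r_\bolda$ has Wronskian-image reaching $h(z)$, while other components have images contained in proper subvarieties of the $C_r$-fixed polynomial space.  To count the fibre over this component I would apply the substitution $w = z^r$, which identifies each weight space with a monomial multiple of a polynomial space in $w$ and turns the $C_r$-fixed polynomial space into an ordinary projective space of polynomials in $w$.  The equation $\Wr(x;z) = h(z)$ then decouples into a system of ordinary Wronski equations on smaller Grassmannians (one per weight space), coupled only by the choice of how the $\ell$ root-orbits $\{-h_1, \ldots, -h_\ell\}$ are distributed among the $r$ weight-space factors.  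Applying the classical Eisenbud--Harris count to each factor contributes $\prod_j |\SYT(\lambda^{(j)}/\mu^{(j)})|$, and summing over the $\binom{\ell}{\ell_0, \ldots, \ell_{r-1}}$ distributions (where $\ell_j = |\lambda^{(j)}/\mu^{(j)}|$) gives a total of
\[
|X^r_{\lambda/\mu}(h(z))| \;=\; \binom{\ell}{\ell_0, \ldots, \ell_{r-1}} \prod_{j=0}^{r-1} |\SYT(\lambda^{(j)}/\mu^{(j)})| \;=\; |\ribbon(\lambda/\mu)|,
\]
where the last equality is the James--Kerber bijection between $r$-ribbon tableaux of shape $\lambda/\mu$ and interleaved tuples of standard Young tableaux on the $r$-quotient shapes.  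Reducedness of $X^r_{\lambda/\mu}(h(z))$ follows from Theorem~\ref{thm:MTV} applied to each factor polynomial in $w$, after choosing $h(z)$ so that each factor has real, distinct roots.

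The principal obstacle is making the decoupling precise.  The Wronskian of a mixed $C_r$-fixed subspace does not literally split as a product of Wronskians of the weight pieces --- differentiation shifts the $C_r$-weight by one, introducing cross-terms between the weight spaces --- and identifying the resulting structure after the $w$-substitution with the interleaving multinomial $\binom{\ell}{\ell_0, \ldots, \ell_{r-1}}$ requires a careful combinatorial-geometric argument.  This will likely draw on Pl\"ucker-coordinate calculations along the lines of Proposition~\ref{prop:richardson} and the jeu de taquin degeneration machinery developed in~\cite{Pur-Gr}.  A secondary technical point is verifying that the combinatorial data $(\bolda, (\lambda^{(j)}/\mu^{(j)}))$ arising from $X^r_\bolda \cap X_{\lambda/\mu}$ agrees exactly with the abacus-model description of the $r$-quotient of $\lambda/\mu$, and that non-balanced components of $X^r$ contribute nothing to the generic fibre.
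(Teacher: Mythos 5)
Your structural analysis of $X^r$ is correct and matches the paper: the components are indexed by the dimension vectors of the weight-space decomposition, each is a product of Grassmannians, the intersection with $X_{\lambda/\mu}$ is a product of Richardson varieties on the $r$-quotient shapes, and only the component indexed by the common $r$-core of $\lambda$ and $\mu$ can meet the fibre (since $p_\mu(x)\neq 0$ and $p_\lambda(x)\neq 0$ for any point of a compatible fibre). The gap is in the counting step. The equation $\Wr(x;z)=h(z)$ does \emph{not} decouple into independent Wronski equations on the factors, even after the substitution $w=z^r$, and this is not a removable technicality: already for $d=2$, $n=4$, $r=2$, the point $x=\langle a+bz^2,\ cz+ez^3\rangle$ has $\Wr(x;z)=ac+(3ae-bc)z^2+bez^4$, whereas the product of the two one-dimensional factor ``Wronskians'' in $w=z^2$ is $(a+bw)(c+ew)=ac+(ae+bc)w+bew^2$. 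The restricted Wronski map has the same monomial support in Pl\"ucker coordinates as the product of factor Wronski maps (both are linear in the $p_\nu$ with $\nu$ ranging over partitions with the given $r$-core), but the constants $q_\nu$ do not factor through the $r$-quotient, so the map is a genuinely different linear projection of the Segre-embedded product. Consequently you cannot invoke the Eisenbud--Harris count factor by factor, you cannot sum over distributions of root-orbits with a multinomial coefficient, and you cannot obtain reducedness from Theorem~\ref{thm:MTV} applied to ``factor polynomials in $w$'' --- no such factor polynomials exist. You flag this as the principal obstacle but supply no mechanism to overcome it, and the identity $|X^r_{\lambda/\mu}(h(z))|=\binom{\ell}{\ell_0,\dots,\ell_{r-1}}\prod_j|\SYT(\lambda^{(j)}/\mu^{(j)})|$ is exactly the content that still needs proof.

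The paper closes this gap by a different mechanism, which never decouples the Wronskian. It works over a field of Puiseux series and takes $h_1,\dots,h_\ell$ with distinct valuations. Each point of the fibre then determines a chain of partitions (hence a tableau) via the Pl\"ucker coordinates of uniquely minimal valuation in each degree; for a $C_r$-fixed point the successive steps of this chain must preserve the $r$-core and so are $r$-ribbons. Conversely, for each $r$-ribbon tableau, a toric (initial-ideal) degeneration of the component onto the distributive lattice $\Lambda^{r,\kappa}$, combined with Hensel's lemma in explicit local coordinates, produces exactly one solution, and Hensel's lemma simultaneously gives reducedness. The multinomial-times-product formula is never needed as an intermediate step --- the bijection with $\ribbon(\lambda/\mu)$ is direct, with the tableau itself recording both which factor each root-orbit lands in and the order in which it does so. If you want to salvage your outline, the valuation degeneration is precisely the device that makes the root-orbits ``choose'' factors one at a time; without it, the decoupling claim at the heart of your count is unsupported.
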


When $N = r\ell$, and 
\begin{equation}
\label{eqn:maxfixedpoly}
 h(z) = (z^r+h_1) (z^r+h_2) \dots (z^r + h_\ell)
\end{equation}
where $h_1, \dots, h_\ell \in \CC^\times$, there is only one 
compatible Richardson variety, $X = X_{\Rect}$ itself.  
In this case, we write $X^r(h(z))$ for the
$C_r$-fixed points in the fibre $X(h(z))$.  
Theorem~\ref{thm:ribbon} then states that $X^r(h(z))$ is generically
reduced, of size equal to $|\ribbon(\Rect)|$.
In this case, we will also show the following.

\begin{theorem}
\label{thm:promotion}
Let $h(z)$ be a generic polynomial of the form \eqref{eqn:maxfixedpoly}.
The number of points in $X^r(h(z))$ is equal to the 
number of $\promote^{N/r}$-fixed tableaux in $\SYT(\Rect)$.
\end{theorem}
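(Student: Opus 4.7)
My approach is to realize the $C_r$-action on $X(h(z))$ as the monodromy of a loop in $\PP(\pol{N})$, transport it to a real fibre identified with $\SYT(\Rect)$ via the (extended) Eremenko--Gabrielov correspondence, and identify the resulting bijection with $\promote^{N/r}$ up to conjugation, using the relationship between root motion and jeu de taquin developed in~\cite{Pur-Gr}. For $t \in [0,1]$, set
$\phi_t := \left(\begin{smallmatrix} e^{\pi i t/r} & 0 \\ 0 & e^{-\pi i t/r} \end{smallmatrix}\right) \in \PGL_2(\CC)$.
Because $h(z)$ is $C_r$-fixed and $\phi_1$ generates $C_r$, the family $\gamma(t) := \phi_t \cdot h(z)$ is a loop at $h(z)$ in $\PP(\pol{N})$, which for generic $h$ stays in the regular locus of $\Wr$ (the roots of $h$ lie in $C_r$-orbits and do not collide during rotation). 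The $\PGL_2$-equivariance of $\Wr$ means that lifting $\gamma$ via $x \mapsto \phi_t \cdot x$ realizes the $C_r$-generator's action on $X(h(z))$ as the monodromy along $\gamma$.

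Fix a generic $h_0(z) \in \Rpol{N}$ of degree $N$ with distinct real roots, and a path $\sigma$ from $h_0$ to $h$ in the regular locus. By Theorem~\ref{thm:MTV} the fibre $X(h_0(z))$ is real and reduced, and the extension of Eremenko--Gabrielov developed in~\cite{Pur-Gr} identifies $\SYT(\Rect) \cong X(h_0(z))$. Analytic continuation along $\sigma$ then produces an identification $X(h(z)) \cong \SYT(\Rect)$ and transports the $C_r$-action to a bijection $\tau_\sigma : \SYT(\Rect) \to \SYT(\Rect)$. Different choices of $\sigma$ conjugate $\tau_\sigma$, so
\[
|X^r(h(z))| = |\{T \in \SYT(\Rect) : \tau_\sigma(T) = T\}|,
\]
independently of $\sigma$. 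It remains to identify $\tau_\sigma$, up to conjugation, with $\promote^{N/r}$.

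For this, the two-parameter family $(s,t) \mapsto \phi_t \cdot \sigma(s)$ is a regular-locus homotopy from the conjugated loop $\sigma^{-1} \cdot \gamma \cdot \sigma$ (based at $h_0$) to the ``rotation'' path $t \mapsto \phi_t \cdot h_0(z)$, together with boundary corrections along $\sigma$ and $\phi_1 \cdot \sigma$. Hence $\tau_\sigma$ is conjugate to the monodromy on $X(h_0(z))$ of rotating every root of $h_0(z)$ by $e^{2\pi i / r}$ about the origin. In~\cite{Pur-Gr}, continuous root motion is matched to jeu de taquin slides on the extended EG correspondence; applied here, this identifies the rotation-monodromy with $\promote^{N/r}$. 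Heuristically, a full $2\pi$ rotation of all roots is a loop whose monodromy is $\promote^N$, which equals $\identity$ by Haiman~\cite{Hai}, and the $\frac{1}{r}$-arc of that loop advances the ``promotion clock'' by $\frac{N}{r}$ units.

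The main obstacle is this last step: showing that the partial-rotation monodromy is exactly $\promote^{N/r}$, rather than some other order-$r$ element that also $r$-th-powers to the identity. This requires a careful accounting of how the $\PGL_2$-action, the auxiliary path $\sigma$, and the extended EG correspondence of~\cite{Pur-Gr} interact --- in particular, verifying that the ``pacing'' of jeu de taquin slides induced by continuous root rotation produces the correct power of promotion. Once this identification is in hand, the fixed-point count immediately yields Theorem~\ref{thm:promotion}.
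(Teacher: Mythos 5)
There is a genuine gap, and you have put your finger on it yourself: the entire content of the theorem is the identification of the $C_r$-generator's action with $\promote^{N/r}$, and your argument stops exactly there. The heuristic "$\promote^N=\identity$, so a $\frac{1}{r}$-arc gives $\promote^{N/r}$" determines nothing, as you note. Worse, the gap cannot be closed within the setup you chose: the path $t\mapsto \phi_t\cdot h_0(z)$ rotates the real roots of $h_0$ off $\RP^1$ into the open complex plane, but the root-motion/jeu-de-taquin dictionary of~\cite{Pur-Gr} (Theorems~\ref{thm:sameorder} and~\ref{thm:sliding}) is only available for paths of roots \emph{inside} $\RP^1$, where Theorem~\ref{thm:MTV} guarantees reduced fibres and the $\preceq$-order makes sense. (Two smaller problems: your path $\rho(t)=\phi_t\cdot h_0$ is not a loop, since a generic real-rooted $h_0$ is not $C_r$-fixed, so "the monodromy of rotating every root of $h_0$" needs an extra identification of $X(\phi_1 h_0)$ with $X(h_0)$; and "roots do not collide" is not the criterion for staying in the regular locus of $\Wr$ --- non-reduced fibres occur over polynomials with distinct complex roots.)

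The paper closes the gap by changing which circle the roots live on. Take the specific polynomial $h(z)=z^N+(-1)^N$, whose roots $a_k=e^{(2k-1)\pi i/N}$ lie on the unit circle $S^1$; conjugating by the M\"obius transformation $\psi$ carrying $S^1$ to $\RP^1$ puts the roots in the range of the real theory, so the fibre is reduced and is identified with $\SYT(\Rect)$. The $C_r$-generator $\big(\begin{smallmatrix}e^{-\pi i/r}&0\\0&e^{\pi i/r}\end{smallmatrix}\big)$ now \emph{cyclically permutes the roots along the circle}, i.e.\ it is computed by lifting a path of roots within $(\psi$-transported$)$ $\RP^1$, and the "loop that models promotion" analysis (Lemmas~\ref{lem:fixeddihedral} and~\ref{lem:evacpromote}) pins its action down as exactly $\promote^{N/r}$, not merely some element of order dividing $r$. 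Genericity is then free: the fibre over this particular $C_r$-fixed polynomial is reduced, so its fixed-point count is the generic one. If you want to salvage your monodromy framework, you would still have to route the rotation through a circle-based correspondence of this kind, at which point you have reproduced the paper's argument.
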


Putting Theorems~\ref{thm:ribbon} and~\ref{thm:promotion} together, 
we deduce Theorem~\ref{thm:combinatorial}.

These two results are proved by considering the fibres over certain
special classes of polynomials.  To prove Theorem~\ref{thm:ribbon},
we change our field from the complex numbers to a field of puiseux
series $\Fpuiseux{u}$, where $\FF$ is an algebraically closed field
containing $\CC$.  Since both fields are algebraically closed
of characteristic zero, this does not change the number of points.  
However,
it allows us to make some non-algebraic assumptions about the roots:
specifically, we can consider polynomials $C_r$-fixed polynomials $h(z)$,
where $h_1, \dots, h_\ell$ have distinct valuations.  In this case,
we establish a bijection between $\ribbon(\lambda/\mu)$
and $X^r_{\lambda/\mu}(h(z))$.  These methods are related to the
asymptotic approach used by Eremenko and Gabrielov in \cite{EG-deg}, 
and build on the theory developed in \cite{Pur-Gr}.

To prove Theorem~\ref{thm:promotion}, we consider the case where
all $N$ roots of $h(z)$ lie on the unit circle.  As the unit circle is
$\PGL_2(\CC)$-equivalent to the real line, we can apply methods
based on Mukhin-Tarasov-Varchenko theorem.  The proof is essentially
a straightforward interpretation of results proved in \cite{Pur-Gr}.
From this point of view of this argument, it is clear why the tableau 
need to be of rectangular shape in Theorem~\ref{thm:cyclicsieving}: 
points corresponding to non-rectangular 
shaped tableaux lie in some proper Richardson variety of $X$; the
image under the Wronski map is a compatible polynomial, which
cannot have $N$ roots on the unit circle.

The case of an involution is rather special.  
There are a number of interpretations of the $C_2$-fixed points
in the fibre.  For example, if we assume that $h(z)$ is a real
polynomial with only pure imaginary roots, then the $C_2$-fixed 
points are precisely the real points in the fibre, and 
Theorem~\ref{thm:ribbon}
states that the number of these in any compatible Richardson
variety $X_{\lambda/\mu}$ is equal to the number of
domino tableaux of shape $\lambda/\mu$.
Alternatively may assume that the roots of $h(z)$ are 
all real, and thereby deduce Theorem~\ref{thm:ribbon} directly from 
results in \cite{Pur-Gr}.  We can also describe what happens
when the roots of $h(z)$ are not necessarily distinct.

Another consequence in real algebraic geometry concerns
maximally inflected real rational curves in projective space.  Consider
a real rational curve $\gamma : \RP^1 \to \RP^{d-1}$, 
$z \mapsto [f_1(z) : \dots : f_d(z)]$ where 
$f_1(z), \dots, f_d(z) \in \Rpoln$ are linearly independent. 
We say $\gamma$ is \defn{maximally inflected} if it has only 
simple, real ramification points.
Equivalently for the point $x \in X$ spanned by $f_1(z), \dots, f_d(z)$,
$\Wr(x;z)$ has $N$ or $N-1$ distinct roots, all of which are real.  
Consider the space $\mirrc$ of all maximally inflected real rational 
curves $\gamma : \RP^1 \to \RP^{d-1}$ of the above form.  
The results used to prove Theorem~\ref{thm:promotion} 
imply that the connected components of $\mirrc$ are in a one-or-two-to-one
correspondence with the $\promote$-orbits 
on $\SYT(\Rect)$.  

\begin{theorem}
\label{thm:mirrc}
Let $S$ be the group of order two
if $d$ is even, or the trivial group if $d$ is odd, and let $s \in S$ be 
the element of order $|S|$.
\begin{packedenum}
\item[(i)]
The connected components of $\mirrc$ are smooth, and are in bijection 
with the set $\Orbits(\promote,s)$ of orbits of $(\promote,s)$ acting 
on $\SYT(\Rect) \times S$.
\item[(ii)]
If $\theta \in \PGL_d(\RR)$ is a reflection, then the automorphism
$\gamma \mapsto \theta \circ \gamma$ of $\mirrc$ descends to the 
action of $(\identity, s)$ on $\Orbits(\promote,s)$.
\item[(iii)]
If $\psi \in \PGL_2(\RR)$ is a reflection, then the automorphism
$\gamma \mapsto \gamma \circ \psi$ of $\mirrc$ descends to the action of
$(\evac, s^{\lfloor d/2\rfloor})$ on $\Orbits(\promote,s)$.
\end{packedenum}
\end{theorem}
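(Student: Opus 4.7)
The plan is to realize $\mirrc$ as a smooth covering of a base $B$ of admissible Wronskians, compute $\pi_1(B)$ and the monodromy on the fibre, and then identify the automorphisms in parts (ii) and (iii) as explicit deck transformations.

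First, I would exhibit $\mirrc$ as a smooth covering of the base $B \subset \PpolN(\RR)$ consisting of real polynomials of degree $N$ or $N{-}1$ with all distinct real roots. By Theorem~\ref{thm:MTV}, the restriction of $\Wr$ to $X(\RR) \cap \Wr^{-1}(B)$ is a smooth covering map of $B$ whose fibre over any $h(z) \in B$ is in natural bijection with $\SYT(\Rect)$ via the Eremenko--Gabrielov labelling. The forgetful map $\mirrc \to X(\RR) \cap \Wr^{-1}(B)$ sending a parametrised curve to the subspace it spans is a $\PGL_d(\RR)$-torsor (distinct curves with the same span are related by post-composition with a unique element of $\PGL_d(\RR)$), and $\pi_0(\PGL_d(\RR)) = S$, detected by the sign of the determinant. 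Composing gives a smooth covering $\mirrc \to B$ with fibre $\SYT(\Rect) \times S$. Sending $h(z)$ to its root set in $\RP^1$ (including $\infty$ when $\deg h(z) = N{-}1$) identifies $B$ with $\mathrm{UConf}_N(\RP^1) \cong \mathrm{UConf}_N(S^1)$, which is connected with $\pi_1(B) \cong \ZZ$ generated by the loop $\tau$ that cyclically rotates the root configuration once around $S^1$.

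Second, I would show that the monodromy of $\tau$ on the fibre $\SYT(\Rect) \times S$ is $(\promote, s)$. The action on the $\SYT$-factor is the one established in the proof of Theorem~\ref{thm:promotion}: the jeu de taquin description of the Eremenko--Gabrielov correspondence in \cite{Pur-Gr} implies that pushing a root once around $\RP^1$ twists the associated label tableau by the promotion operator. The action on the $S$-factor is the determinant class of the continuous transport of a real basis $(f_1(z), \dots, f_d(z))$ around $\tau$, which one computes to be the generator $s \in S$ (trivially so when $d$ is odd). Hence the connected components of $\mirrc$ are in bijection with the orbits of $(\promote, s)$ on $\SYT(\Rect) \times S$, and are smooth because the covering $\mirrc \to B$ is smooth; this gives (i).

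For (ii), I would observe that post-composition $\gamma \mapsto \theta \circ \gamma$ preserves the span $x \in X(\RR)$ and hence the base point in $B$ and the $\SYT$-factor of the fibre; its action on the $S$-factor is multiplication by the component class of $\theta \in \pi_0(\PGL_d(\RR)) = S$, which is $s$ when $\theta$ is a reflection and $d$ is even (and trivial otherwise). For (iii), pre-composition $\gamma \mapsto \gamma \circ \psi$ covers the involution of $B \cong \mathrm{UConf}_N(\RP^1)$ induced by the orientation-reversing action of $\psi$ on $\RP^1$; on the $\SYT$-factor this is the evacuation operator $\evac$, by the $\PGL_2(\RR)$-equivariance of the Eremenko--Gabrielov bijection used in \cite{Pur-Gr}, and on the $S$-factor it is multiplication by the determinant sign of $\psi$ acting on $\poln$, which for a reflection equals $(-1)^{d(d-1)/2} = (-1)^{\lfloor d/2 \rfloor}$. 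This gives the action $(\evac, s^{\lfloor d/2 \rfloor})$, proving (iii).

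The main obstacle is the careful sign accounting on the $S$-factor. The qualitative statements about $\promote$ and $\evac$ are inherited from the analysis in \cite{Pur-Gr} underlying Theorem~\ref{thm:promotion}, but obtaining the precise exponents of $s$ in parts (i)--(iii) requires explicit control of how a continuous real frame transports along each of the relevant loops and symmetries. The key elementary input is the identity $d(d-1)/2 \equiv \lfloor d/2 \rfloor \pmod 2$, which allows the explicit determinantal computation for $\psi$ to be matched with the stated formula $s^{\lfloor d/2 \rfloor}$.
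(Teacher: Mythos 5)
Your proposal is, at bottom, the same argument as the paper's, recast in covering-space language: the paper also reduces everything to the monodromy of the moving root configuration acting on $\SYT(\Rect)\times S$, but instead of invoking $\pi_1$ of a configuration space it attaches to each curve the explicit invariant $\orbit_\gamma=\{(T_{\phi(x)},\epsilon_{\phi(\gamma)})\mid\phi\in M_x\}$, where $\epsilon_\gamma=\lim_{z\to 0^-}\sign\Wr(\gamma;z)$ for $d$ even, and proves three lemmas: that $\orbit_\gamma$ is a $(\promote,s)$-orbit, that every orbit occurs, and that it is a complete invariant of the connected component. Your packaging buys a cleaner conceptual statement ($\pi_0$ of the total space of a fibration equals $\pi_0$ of the fibre modulo the $\pi_1$-action), while the paper's version avoids having to identify $\pi_1(\mathrm{UConf}_N(\RP^1))\cong\ZZ$ and is more directly compatible with the path-lifting technology of Section~\ref{sec:lifting}.

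Two points need repair. First, $\mirrc\to B$ is not a covering: the fibre is $\SYT(\Rect)\times\PGL_d(\RR)$, which is not discrete. You want a locally trivial fibre bundle (the composition of the covering $X(\RR)\cap\Wr^{-1}(B)\to B$ from Theorem~\ref{thm:MTV} with a principal $\PGL_d(\RR)$-bundle) and then the action of $\pi_1(B)$ on $\pi_0$ of the fibre; the conclusion survives, but as stated the word ``covering'' is wrong. Second, and more substantively, the sign in (iii) is not ``the determinant of $\psi$ acting on $\poln$'' --- that is an $n\times n$ determinant which makes no reference to $d$ and is not what enters $\epsilon_\gamma$. The sign comes from the $d\times d$ Wronskian determinant itself: reducing to $\psi=\altreflect$, the $j$\nth derivative row of $\Wr\big(f_1(-z),\dots,f_d(-z)\big)$ contributes $(-1)^j$, giving the factor $(-1)^{0+1+\cdots+(d-1)}=(-1)^{d(d-1)/2}=s^{\lfloor d/2\rfloor}$; the paper implements this by connecting $\hat\phi(\hat\gamma)$ to $\altreflect\phi(\gamma)$ by a path along which the constant term of the Wronskian never vanishes. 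Your parity identity $d(d-1)/2\equiv\lfloor d/2\rfloor\pmod 2$ is correct and is indeed the final step, and your monodromy claim on the $S$-factor is also correct once one observes that $\epsilon_{\gamma_t}$ is the sign of the constant coefficient of $\Wr(\gamma_t;z)$ and hence flips exactly when a root crosses $0$.
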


Both Theorems~\ref{thm:ribbon} and~\ref{thm:promotion} have extensions 
to actions of a dihedral group.  Let $D_r \subset \PGL_2(\CC)$ be the 
dihedral subgroup of order $2r$ generated by $C_r$ and the
reflection $\stdreflect$.
A polynomial $h(z)$ is $D_r$-fixed if it is of the form \eqref{eqn:fixedpoly}
where $2m+r\ell=N$, and $\{h_1, \dots, h_\ell\}$ is invariant under
$z \mapsto \frac{1}{z}$.  There are three distinct generic types:
\begin{packedenum}
\item[(1)] $\ell$ is odd, one of the $h_i$ is equal to $\pm 1$, and the
rest form generic inverse pairs;
\item[(2)] $\ell$ is even, all of the $h_i$ form generic inverse pairs
(none are equal to $\pm 1$);
\item[(3)] $\ell$ is even, one of the $h_i$ is equal to $1$, another
is equal to $-1$, and the rest form generic inverse pairs.
\end{packedenum}

\begin{theorem}
\label{thm:D-ribbon}
Let $h(z)$ be a $D_r$-fixed polynomial of one the three generic types 
above, and let 
$X_{\lambda/\mu}$, be a compatible Richardson variety.  
Then $X^r_{\lambda/\mu}(h(z))$ is a reduced finite scheme.  Moreover:
\begin{packedenum}
\item[(i)] If $h(z)$ is of type (1) or (2),
the number of $D_r$-fixed points in
$X_{\lambda/\mu}(h(z))$ is equal to $|\rotribbon(\lambda/\mu)|$.
\item[(ii)] If $h(z)$ is of type (3), 
the number of $D_r$-fixed points in
$X_{\lambda/\mu}(h(z))$ is equal to 
$\big|\longribbon(\lambda/\mu)\big|$.
\end{packedenum}
\end{theorem}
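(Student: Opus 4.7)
The plan is to deduce Theorem~\ref{thm:D-ribbon} from Theorem~\ref{thm:ribbon} by tracking the reflection element of $D_r$ through the ribbon-tableau bijection. Let $\sigma = \stdreflect \in \PGL_2(\CC)$, so that $D_r = \langle C_r,\sigma\rangle$. Since the Wronski map is $\PGL_2(\CC)$-equivariant and $h(z)$ is $D_r$-fixed, $\sigma$ preserves $X(h(z))$ and commutes with the $C_r$-action. By Proposition~\ref{prop:richardson}, every $\sigma$-fixed point of $X(h(z))$ lies in a unique compatible Richardson variety, which must itself be $\sigma$-invariant; a short analysis of the exchange $\sigma : F_\bullet \leftrightarrow \TF_\bullet$ shows that $X_{\lambda/\mu}$ is $\sigma$-invariant exactly when $\mu = \lambda^\vee$. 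As $\rotribbon(\lambda/\mu)$ and $\longribbon(\lambda/\mu)$ are empty unless $\mu = \lambda^\vee$, we may assume this throughout; then $\sigma$ acts as an involution on $X^r_{\lambda/\mu}(h(z))$ whose fixed points are precisely the $D_r$-fixed points to be counted. Reducedness and finiteness are inherited from Theorem~\ref{thm:ribbon} applied to a generic polynomial of each type.

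Next I would transport the $\sigma$-involution across the bijection of Theorem~\ref{thm:ribbon}. Working over the Puiseux field $\Fpuiseux{u}$ as in that proof, I would choose the $h_i$ so that reciprocal pairs $\{h_i,1/h_i\}$ have matched valuations of opposite sign, with distinct pairs having distinct absolute valuations, together with the self-reciprocal values $\pm 1$ required by types (1) and (3). The bijection $\Phi : X^r_{\lambda/\mu}(h(z)) \to \ribbon(\lambda/\mu)$ indexes the ribbons by the $h_i$ ordered by valuation, and the $i$-th ribbon occupies a skew shape $\lambda_i/\mu_i$ dictated by the degeneration. Because $\sigma$ sends $h_i \mapsto 1/h_i$, negates valuations, and swaps the two flags, the induced action on $\ribbon(\lambda/\mu)$ must carry the $i$-th ribbon to the $180^\circ$-rotation of the $(\ell{+}1{-}i)$-th ribbon inside $\Rect$; this is precisely the rotation-reversal involution that defines $\rotribbonplain$ and $\longribbonplain$.

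In types (1) and (2), $\Phi$ then restricts to a bijection between the $D_r$-fixed locus and $\rotribbon(\lambda/\mu)$: in type (2) all ribbons pair off, while in type (1) the unique self-reciprocal root $\pm 1$ corresponds to a middle ribbon that is forced to be individually rotation-symmetric. In type (3), the two self-reciprocal roots $+1$ and $-1$ must occupy the central positions $\ell/2$ and $\ell/2{+}1$, which are interchanged by the rotational involution; in the Puiseux degeneration their valuations coincide, and the two central $r$-ribbons are glued by $\Phi$ into a single rotation-symmetric $2r$-ribbon, producing exactly $\longribbon(\lambda/\mu)$.

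The main obstacle is the middle step: verifying that the bijection of Theorem~\ref{thm:ribbon}, constructed by a delicate Puiseux/tropical argument in the spirit of~\cite{Pur-Gr, EG-deg}, really does intertwine the geometric reflection $\sigma$ with rotation-reversal on tableaux. Especially subtle is the type~(3) analysis, where two nominally distinct ribbons must be shown to merge into a single $2r$-ribbon at the centre, and where one must confirm that this coalescence does not cause the fibre to fail to be reduced. I expect both points to follow from a careful reading of the Puiseux construction after imposing the $\sigma$-symmetry on the $h_i$, but this compatibility check is the heart of the argument.
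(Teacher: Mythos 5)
Your overall strategy is the same as the paper's: show that the reflection $\stdreflect$ acts on the fibre by the rotation--reversal $T \mapsto T^\vee$ on associated tableaux (the paper's Lemma~\ref{lem:reflectrotate}, proved via the Pl\"ucker identity $p_\lambda(x^\vee) = p_{\lambda^\vee}(x)$), reduce to the case $\mu = \lambda^\vee$, and in types (1) and (2) conclude directly from the bijection of Theorem~\ref{thm:ribbon}, since there the reciprocal pairs $h_{\ell+1-k}=h_k^{-1}$ can be chosen with distinct valuations. That part of your argument is sound and complete.

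The gap is in type (3), and it is exactly the step you flag as ``the heart of the argument'' but do not carry out. Two concrete problems. First, reducedness is \emph{not} inherited from Theorem~\ref{thm:ribbon} in type (3): that theorem requires the $h_i$ to have distinct valuations, whereas type (3) forces $h_{\ell/2}=1$ and $h_{(\ell/2)+1}=-1$ to have equal valuation, so the Puiseux/Hensel machinery must be re-run from scratch for this case. Second, your picture that ``the two central $r$-ribbons are glued into a single rotation-symmetric $2r$-ribbon'' does not describe a general point of $X^r_{\lambda/\mu}(h(z))$: the entries labelled $\tfrac{\ell}{2}$ only form a $2r$-box skew shape supporting an $r$-ribbon (an ``almost-standard'' tableau), and there is a genuine dichotomy that your sketch misses. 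When that shape is a $2r$-ribbon there is exactly one reduced point above the tableau; when it is not, there are exactly two reduced points, and neither is $\stdreflect$-fixed. Establishing this requires (a) a degeneration argument bounding the number of points above each almost-standard tableau by one or two according to the dichotomy, (b) a separate Hensel's-lemma computation with a nontrivial $2\times 2$ Jacobian block to prove the two-point fibres are reduced, and (c) a sign computation on the leading coefficient of $p_{\lambda_{(\ell/2)-1}}(x)/p_{\lambda_{(\ell/2)+1}}(x)$ showing that a $\stdreflect$-fixed point must have a long ribbon. Without these three ingredients both the count $\big|\longribbon(\lambda/\mu)\big|$ and the reducedness assertion in type (3) remain unproven.
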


In the case where $m= 0$ (hence $h(z)$ is of the 
form \eqref{eqn:maxfixedpoly} and $\lambda/\mu = \Rect$), we
also prove:

\begin{theorem}
\label{thm:D-promotion}
Let $h(z)$ be a $D_r$-fixed polynomial of one of the three generic types,
with $m=0$.
\begin{packedenum}
\item[(i)]
If $h(z)$ is of type (1) or (2), 
the number of $D_r$-fixed points in $X(h(z))$ is equal to the 
number tableaux in $\SYT(\Rect)$ fixed by both $\evac$ and $\promote^{N/r}$.
\item[(ii)]
If $h(z)$ is of type (1) or (3), 
the number of $D_r$-fixed points in $X(h(z))$ is equal to the 
number tableaux in $\SYT(\Rect)$ fixed by both $\evac \circ \promote$ 
and $\promote^{N/r}$.
\end{packedenum}
\end{theorem}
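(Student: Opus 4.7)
The plan is to mirror the strategy outlined for Theorem~\ref{thm:promotion} by specializing to a $D_r$-fixed polynomial $h(z)$ of the chosen generic type whose $N = r\ell$ roots all lie on the unit circle. Any generic polynomial of the form \eqref{eqn:maxfixedpoly} in one of the three $D_r$-strata can be deformed within its stratum to such a unit-circle polynomial; since $X(h(z))$ and the $D_r$-fixed locus are finite and generically reduced by Theorems~\ref{thm:ribbon} and~\ref{thm:D-ribbon}, the cardinality of the $D_r$-fixed set is constant along a generic path. After applying a $\PGL_2(\CC)$ transformation that sends the unit circle to $\RP^1$, the group $D_r$ is conjugated to a dihedral subgroup of $\PGL_2(\RR)$, and the Mukhin-Tarasov-Varchenko theorem (Theorem~\ref{thm:MTV}) guarantees that the fibre is reduced and entirely real.

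Next, I would invoke the bijection from~\cite{Pur-Gr} between the real points of $X(h(z))$ and $\SYT(\Rect)$. As already established in the proof of Theorem~\ref{thm:promotion}, under this bijection the cyclic generator of $C_r$ corresponds to $\promote^{N/r}$ acting on tableaux. The new ingredient is to identify the action of the reflection $\stdreflect$ on the fibre as an operator on $\SYT(\Rect)$. The fixed points $\{\pm 1\}$ of $\stdreflect$ on the unit circle transport to two fixed points on $\RP^1$; depending on how these fixed points sit with respect to the $N$ roots of $h(z)$, the reflection acts either as $\evac$ (when both fixed points lie in a single connected arc of $\RP^1$ complementary to the roots) or as $\evac \circ \promote$ (when the fixed points separate the roots into two arcs of equal parity). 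Both identifications should follow from the $\PGL_2(\RR)$-equivariance of the tableau-labelling in~\cite{Pur-Gr}, together with the fact that on rectangular tableaux $\evac$ is realized by $180^{\circ}$ rotation combined with replacement of each entry $k$ by $N{+}1{-}k$.

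I would then read off the case distinction from the three generic types by examining whether $\pm 1$ are roots of $h(z)$. In types (1) and (2) either none or exactly one of $\pm 1$ is a root, and a direct analysis shows that after the $\PGL_2$ transformation both fixed points of the resulting reflection land in a common arc of the complement of the real roots, yielding the $\evac$ case; this gives part~(i). In type (3) both $\pm 1$ are roots of $h(z)$, and the two fixed points interleave with the resulting real roots, yielding the $\evac \circ \promote$ case; this gives part~(ii). The overlap of type~(1) between the two parts of the theorem is consistent, because $\ell$ is odd in that case and Theorem~\ref{thm:D-combinatorial} shows the two tableau counts coincide when $N/r$ is odd.

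The main obstacle will be the second step: rigorously establishing that the reflection action on real Wronski fibres descends to $\evac$ or $\evac \circ \promote$ on $\SYT(\Rect)$, and matching each of these two possibilities correctly with the three generic $D_r$-stratification types. This is essentially a geometric analogue of the dihedral combinatorics behind Theorem~\ref{thm:D-combinatorial}, and should follow from the $\PGL_2(\RR)$-equivariance built into the construction in~\cite{Pur-Gr} together with a careful accounting of the positions of the fixed points of $\stdreflect$ relative to the unit-circle roots of $h(z)$.
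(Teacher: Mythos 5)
Your strategy is essentially the paper's: put the $N$ roots on the unit circle, transport to $\RP^1$, invoke Theorem~\ref{thm:MTV} and the tableau correspondence of Section~\ref{sec:circle}, and read off the $D_r$-action on $\SYT(\Rect)$. The step you defer as the ``main obstacle'' is exactly what Lemma~\ref{lem:evacpromote} supplies: for the configuration $a_k=e^{(2k-1)\pi i/N}$ the reflection $\stdreflect$ acts as $\evac$, and the paper gets the $\evac\circ\promote$ case not by an arc analysis but by noting that the roots of $z^N-(-1)^N$ are the half-step rotation $\phi=\mathrm{diag}(e^{-\pi i/2N},e^{\pi i/2N})$ of those of $z^N+(-1)^N$, and that $\phi^{-1}\stdreflect\phi$ is $\stdreflect$ composed with a full-step rotation, hence acts as $\evac\circ\promote$. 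Since $z^N+(-1)^N$ is of type (1) or (2) and $z^N-(-1)^N$ is of type (1) or (3) (according to the parity of $N/r$), the paper treats type (1) directly in both parts, with no appeal to a coincidence of tableau counts.

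The one genuine flaw in your proposal is the handling of the type (1) overlap in part (ii): you establish the $\evac$ identification for type (1) and then cite Theorem~\ref{thm:D-combinatorial} for the equality of the $(\evac,\promote^{N/r})$- and $(\evac\circ\promote,\promote^{N/r})$-fixed counts when $N/r$ is odd. That is circular: Theorem~\ref{thm:D-combinatorial} is deduced in the paper from Theorems~\ref{thm:D-ribbon} and~\ref{thm:D-promotion}, the latter being precisely what you are proving. The fact you need does have an elementary, non-circular proof: if $N/r$ is odd, choose $j$ with $2j\equiv 1 \pmod{N/r}$; since $\promote^{-j}\circ\evac\circ\promote^{j}=\evac\circ\promote^{2j}$, conjugation by $\promote^{j}$ carries $\langle \evac,\promote^{N/r}\rangle$ onto $\langle \evac\circ\promote,\promote^{N/r}\rangle$, so the two fixed-point sets of $\SYT(\Rect)$ are in bijection. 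Either argue this way, or (cleaner) treat type (1) in part (ii) with a second root configuration in which the reflection acts as $\evac\circ\promote$, as the paper does. A smaller inaccuracy: it is not true in type (1) that ``either none or exactly one of $\pm1$ is a root'' --- with $h_i=-1$ and $r$ even the factor $z^r-1$ vanishes at both $\pm1$ --- so your fixed-point-versus-roots dichotomy should only be relied upon when $\ell=N/r$ is even (types (2) and (3)), which is exactly when the distinction between $\evac$ and $\evac\circ\promote$ affects the count.
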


Putting Theorems~\ref{thm:D-ribbon} and~\ref{thm:D-promotion} together,
we deduce Theorem~\ref{thm:D-combinatorial}.

The main idea behind the proof of Theorem~\ref{thm:D-ribbon} is
to show that the action of $\stdreflect$
on the fibre
models the action of $180^\circ$ rotation on tableaux.  This
follows fairly readily from the results we use to study $C_r$-fixed 
points.  In the type (3) case, a small amount of additional analysis
is needed, because we can't choose the $h_i$ to have distinct valuations.
Theorem~\ref{thm:D-promotion} uses essentially the
same ideas as Theorem~\ref{thm:promotion}.

\begin{remark}
It is worth mentioning that every virtually every result in this paper
should have an analogue for the Orthogonal Grassmannian 
$\mathrm{OG}(n,2n{+}1)$ that could be proved similarly
using the results of~\cite{Pur-Gr,Pur-OG}.
In particular, Theorems~\ref{thm:combinatorial} 
and~\ref{thm:D-combinatorial}
should have analogues where tableaux are replaced by shifted tableaux 
of staircase shape.  We have chosen not to check all of the details here.
An open question is whether it is possible to use our approach
to prove analogues of Theorems~\ref{thm:combinatorial}
and~\ref{thm:D-combinatorial}
for semistandard tableaux, such as \cite[Theorems 1.4 and 1.5]{Rho}.  
The geometry of this question is closely related the discussion 
in Section~\ref{sec:schubert}, where fibre of the Wronski map is
replaced by a Schubert intersection.  A more ambitious problem is
to generalize Thereom~\ref{thm:ribbon} to counting
$C_r$-fixed points of these Schubert intersections.
\end{remark}

\subsection{Outline}

The paper is effectively divided into two parts, which can be read
independently of each other.  The first part
comprises Sections~\ref{sec:circle} and~\ref{sec:involution}.
In these two sections, we will use arguments that apply in the case 
where $h(z)$ has $N$ roots that lie on a circle in $\CP^1$.
For such $h(z)$, one can define a correspondence between
$\SYT(\Rect)$ and points in $X(h(z))$.  We review the important
properties of this correspondence in Section~\ref{sec:circle},
and use it to prove Theorems~\ref{thm:promotion},~\ref{thm:mirrc} 
and~\ref{thm:D-promotion}.  
In Section~\ref{sec:involution}, we consider the special case of
counting points in $X(h(z))$ that are fixed by an involution 
in $\PGL_2(\CC)$.  We study this in a variety of ways, which 
include giving different interpretations of the problem,
and giving alternate proofs of some of our main results.  
More generally we explain how to count involution-fixed points in 
certain intersections of Schubert varieties.

Sections~\ref{sec:fixedpoints},~\ref{sec:ribbon} and~\ref{sec:dihedral}
form the second part of the paper.
The focus in these three sections is on understanding the restriction of 
the Wronski map to the fixed point scheme $X^r$, explicitly in terms
of the Pl\"ucker coordinates.
In Section~\ref{sec:fixedpoints}, we describe the components of $X^r$,
their coordinate rings in terms of Pl\"ucker coordinates, and their 
initial ideals with respect to certain weight functions. 
In Section~\ref{sec:ribbon}, we describe a very general way to 
associate a tableau to any point in $X$, working over a valuation ring.  
Using this association, we prove Theorem~\ref{thm:ribbon}.  
In Section~\ref{sec:dihedral}, 
we extend our results to dihedral group actions
and prove Theorem~\ref{thm:D-ribbon}.
 
\subsection*{Acknowledgements}
This work began with discussions at the October 2010 AIM workshop 
on algebraic systems with only real solutions.  I am grateful to all 
the organizers, staff and participants who made the workshop possible.
I particularly wish to thank Alexandre Eremenko, 
Frank Sottile and Alex Yong for the conversations that got everything 
started, and
Brendon Rhoades for valuable correspondence and corrections.
This research was partially supported by an NSERC discovery grant.


\section{Roots on a circle}
\label{sec:circle}


\subsection{Lifting paths and jeu de taquin}
\label{sec:lifting}

In the case where the roots of $h(z)$ all lie on a circle
in $\CP^1$, points in the fibre of the Wronski map $X(h(z))$ 
can be 
identified with 
standard Young tableaux of shape $\Rect$.  Moreover, any element 
of $\PGL_2(\CC)$ that fixes $h(z)$ can be identified with an
operator on $\SYT(\Rect)$.  We begin with a review these results,
wherein our circle is just $\RP^1 \subset \CP^1$.
The proofs may be found in~\cite{Pur-Gr}, but our presentation
here more closely follows~\cite{Pur-shifted}.

Let $\bolda = \{a_1, \dots, a_N\}$ be a multiset, where
$a_1, \dots, a_N \in \RP^1$.  Following the notation of~\cite{Pur-Gr},
we will think of a polynomial in terms of its roots, and 
write $X(\bolda) := X(h(z))$, where
\[
   h(z) = \prod_{a_i \neq \infty}(z+a_i)\,.
\]

Suppose $\bolda$ is a set (i.e. $a_i \neq a_j$ for $i \neq j$), and 
let $x \in X(\bolda)$.  We wish to associate to $x$, a tableau 
$T_x \in \SYT(\Rect)$.  
To accomplish this, we consider the total order $\preceq$ on $\RP^1$, 
defined by $a\preceq b$, if either $a=b$, $|a| < |b|$, or 
$0 < a = -b < \infty$.  Since $\bolda$ is a set, we may assume without
loss of generality, that 
\[
   a_1 \prec a_2 \prec \dots \prec a_N\,.
\]
Now, for $k = 0, \dots, N$ and $t \in [0,1]$, let
\begin{equation}
\label{eqn:tableaudefpath}
   \bolda_{k,t} = \{ta_1, ta_2, \dots, ta_k, a_{k+1}, a_{k+2}, \dots, a_N\}\,.
\end{equation}
For all $t \in (0,1]$, $\bolda_{k,t}$ is a set 
and hence
by Theorem~\ref{thm:MTV}, the fibres $X(\bolda_{k,t})$ are reduced.
It follows that there is unique (continuous) lifting of the path 
$\bolda_{k,t}$, $t \in [0,1]$ to a path $x_{k,t} \in X(\bolda_{k,t})$,
where $x_{k,1} = x$.  Now, since $0 \in \bolda_{k,0}$ 
with multiplicity $k$, and $x_{k,0} \in X(\bolda_{k,0})$ we have
$\mindeg \Wr(x_{k,0};z) = k$.
By Proposition~\ref{prop:richardson}, $x_{k,0}$ lies in a unique
Richardson variety $X_{\nu/\lambda_k}$, where $\lambda_k \vdash k$.

\begin{theorem}
The sequence $\lambda_0, \lambda_1, \dots, \lambda_N$ is a chain of 
partitions
\[
  \varnothing = \lambda_0 \subsetneq \lambda_1 \subsetneq \dots 
\subsetneq \lambda_N = \Rect\,.
\]
Moreover no two points of $X(\bolda)$ give rise to the same chain.
\end{theorem}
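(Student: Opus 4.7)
The plan is to verify the assertions of the theorem in stages: sizes, endpoints, containment, and injectivity. For the sizes and endpoints, note that $\bolda_{k,0}$ contains $0$ with multiplicity exactly $k$, so $\mindeg \Wr(x_{k,0};z)=k$ and Proposition~\ref{prop:richardson} forces $|\lambda_k|=k$. In particular $\lambda_0=\varnothing$, and since $\Rect$ is the only partition in $\Lambda$ of size $N$, $\lambda_N=\Rect$ (equivalently, $X(z^N)$ is the single point $F_d$). It then suffices to prove $\lambda_k\subseteq\lambda_{k+1}$ for each $k$, because the strict inclusion with $|\lambda_{k+1}/\lambda_k|=1$ follows automatically from the cardinalities.

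For the containment I would connect $x_{k,0}$ to $x_{k+1,0}$ through a deformation that stays inside the reduced regime of Theorem~\ref{thm:MTV}. For each small $t>0$, slide a single entry of the multiset via the one-parameter family
\[
  \boldb_t(\tau):=\{ta_1,\dots,ta_k,\tau,a_{k+2},\dots,a_N\},\qquad \tau\in[ta_{k+1},a_{k+1}].
\]
For generic $t$, these are $N$ distinct real numbers, so Theorem~\ref{thm:MTV} gives reduced fibres and a unique lift of $x_{k,t}$ across the $\tau$-interval. Uniqueness of this lift, together with the fact that at $t=1$ the family is constant and the lift starts at $x=x_{k+1,1}$, forces the endpoint at $\tau=ta_{k+1}$ to coincide with $x_{k+1,t}$. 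Sending $t\to 0^+$ produces a continuous degeneration from $x_{k,0}$ to $x_{k+1,0}$ along which a single point of the multiset is pulled to $0$. Semicontinuity of the closed Schubert conditions $\dim(x\cap F_{n-d-\mu^i+i})\geq i$, combined with the single-point-degeneration structure of the Wronski map worked out in~\cite{Pur-Gr}, then forces $\lambda_{k+1}\supseteq\lambda_k$ as Young diagrams.

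For injectivity, the map $x\mapsto(\lambda_0,\dots,\lambda_N)$ lands in the set of saturated chains in Young's lattice from $\varnothing$ to $\Rect$, which biject with $\SYT(\Rect)$. Both this target and $X(\bolda)$ have cardinality equal to the Wronski degree $|\SYT(\Rect)|$ (by Eisenbud--Harris and Theorem~\ref{thm:MTV}), so it suffices to establish surjectivity. I would do so by reversing the construction: given a tableau (hence a chain), start from the forced value $x_{N,0}=F_d$ and work back through the paths $\bolda_{k,\cdot}$, at each step using the single-box degeneration structure from the containment argument to select the unique lift consistent with the prescribed chain.

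The main obstacle lies in the semicontinuity/degeneration step inside the containment argument. Closed Schubert conditions alone only guarantee that the limit lies in \emph{some} Richardson variety of the correct size; the substance is to show that the new index grows by Young-diagram inclusion rather than by switching to an unrelated partition of the same size. This is where the jeu-de-taquin-compatible degeneration theory developed in~\cite{Pur-Gr} enters in an essential way, and it is also what makes the reverse construction in the injectivity step canonical rather than merely counting-based.
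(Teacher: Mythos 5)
The paper does not actually prove this theorem; it defers to \cite{Pur-Gr} (``The proofs may be found in~\cite{Pur-Gr}''), so I am judging your argument on its own terms. Your skeleton --- sizes and endpoints from $\mindeg\Wr(x_{k,0};z)=k$, containment from a monotone degeneration plus closedness of Schubert conditions, injectivity reduced to surjectivity by the cardinality match $|X(\bolda)|=|\SYT(\Rect)|$ --- is the right shape, and the two-parameter homotopy showing that the lift of the $\tau$-slide carries $x_{k,t}$ to $x_{k+1,t}$ for $t>0$ is fine (unique path lifting over the reduced locus, agreement at $t=1$). But there are two genuine gaps. First, the passage to $t=0$: you assert that sending $t\to 0^+$ ``produces a continuous degeneration from $x_{k,0}$ to $x_{k+1,0}$,'' but the fibres over $z^k(z+\tau)\prod_{j\geq k+2}(z+a_j)$ are non-reduced, which is exactly where unique path lifting fails; the existence, uniqueness and continuity in $\tau$ of this limiting path is the substantive claim, and naming ``the single-point-degeneration structure worked out in \cite{Pur-Gr}'' without identifying a specific result does not discharge it. (Granting that path, you should also say why the minimal partition cannot jump among partitions of size $k$ along the open part of the path --- this follows because the sets $\{\tau: y_\tau\in\Omega^{\nu}(0)\}$, $\nu\vdash k$, are closed, disjoint by Proposition~\ref{prop:richardson}, and cover a connected interval --- before invoking closedness of $\Omega^{\lambda_k}(0)$ at $\tau=0$.)

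Second, the surjectivity step is a one-sentence sketch of what is really the heart of the matter. Reversing the construction requires, at each backward step, that the fibre over $z^k\prod_{j>k}(z+a_j)$ contain a point with prescribed minimal partition $\lambda_k$ whose forward degeneration is the already-constructed $x_{k+1,0}$, that this point be unique, and that the process terminate at a genuine point of $X(\bolda)$ realizing the whole chain. Nothing in your containment argument supplies this; closed conditions only constrain limits, they do not produce or count preimages. In \cite{Pur-Gr} (and in the $r=1$ case of Lemmas~\ref{lem:valtableau} and~\ref{lem:valothers} of this paper, which is the Puiseux-series incarnation of the same statement) this existence-and-uniqueness is established constructively, by writing down leading-order solutions for each tableau and invoking an inverse-function/Hensel argument --- not by the soft topological route. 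Without that input, your cardinality argument only shows that injectivity and surjectivity are equivalent, not that either holds.
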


We define $T_x \in \SYT(\Rect)$ to be the tableau that encodes the
chain of partitions $\lambda_0 \subsetneq \lambda_1 \subsetneq 
\dots \subsetneq \lambda_N$: $T_x$ has entry $k$
in the box corresponding to $\lambda_k/\lambda_{k-1}$.

Informally, we are defining $T_x$ by moving the point $x$ so that $k$ 
smallest 
roots of $\Wr(x;z)$ are sent to zero, and seeing where $x$
ends up.  Alternatively we can obtain $T_x$ by sending the $N-k$ largest 
roots to $\infty$.  
For $k = 0, \dots, N$ and $t \in [0,1]$, let
\begin{equation}
\label{eqn:tableaualtdefpath}
   \bolda'_{k,t} = \{a_1, a_2, \dots, a_k, 
     t^{-1}a_{k+1}, t^{-1}a_{k+2}, \dots, t^{-1}a_N\}\,.
\end{equation}
and lift to $x'_{k,t} \in X(\bolda'_{k,t})$ where $x'_{k,1} = x$.
In this case, since $\infty \in \bolda_{k,0}$ with multiplicity $N-k$, we
have $\deg \Wr(x'_{k,0}; z) = k$, and hence
$x'_{k,0}$ lies in a unique Richardson variety $X_{\lambda'_k/\mu}$,
where $\lambda'_k \vdash k$.  It turns out that $\lambda'_k$ is 
equal to the partition $\lambda_k$ constructed above, and hence both
constructions yield the same tableau.  (This can be seen from
a third construction in which one considers
\[
   \bolda''_{k,t} = \{ta_1, ta_2, \dots, ta_k, 
     t^{-1}a_{k+1}, t^{-1}a_{k+2}, \dots, t^{-1}a_N\}\,.
\]
The lifted path $x''_{k,t} \in X(\bolda''_{k,t})$ (with $x''_{k,1} = x$)
is compatible
with the previous two, in that we must have $x''_{k,0} \in
X_{\lambda'_k/\lambda_k}$.  Hence this Richardson variety is 
non-empty, which implies $\lambda'_k = \lambda_k$. 
See \cite[Section 6]{Pur-Gr} for details.)

This definition of $T_x$ was based on the idea of lifting a 
path of subsets of $\RP^1$ to a path in $X$.
More generally, if
$(a_1)_t, \dots, (a_N)_t$, $t \in [0,1]$ are paths in
in $\RP^1$ and $\bolda_t = \{(a_1)_t, \dots, (a_N)_t\}$ has the
property that $X(\bolda_t)$ is reduced for all $t$,
we may consider lifting $\bolda_t$ to a path $x_t \in X(\bolda_t)$.  
We can understand such liftings in terms of the associated tableaux.

In the case where $(a_1)_t,\dots, (a_N)_t$ remain in the same 
$\preceq$-order, the tableau associated to $x_t$ remains constant.

\begin{theorem}
\label{thm:sameorder}
Suppose $(a_1)_t \prec (a_2)_t \prec \dots \prec (a_N)_t$
for all $t \in [0,1]$. If $x_t \in X(\bolda_t)$ is a lifting of
$\bolda_t$ to
$X$, then $T_{x_0} = T_{x_1}$.
\end{theorem}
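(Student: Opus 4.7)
The plan is to carry out the tableau-defining construction for all $t$ simultaneously. For each $k \in \{0, 1, \dots, N\}$, consider the two-parameter family
\[
  \bolda_{k,s}(t) := \{s(a_1)_t, \dots, s(a_k)_t,\, (a_{k+1})_t, \dots, (a_N)_t\},
  \qquad (s,t) \in [0,1]^2.
\]
At fixed $t$, the slice $s \in [0,1]$ is precisely the defining path~\eqref{eqn:tableaudefpath} that produces the partition $\lambda_k$ in the tableau $T_{x_t}$. Because the $\preceq$-order of $(a_1)_t, \dots, (a_N)_t$ is preserved for every $t$, and scaling by $s \in (0,1]$ also preserves this order while fixing entries with index $>k$, the $N$ entries of $\bolda_{k,s}(t)$ are distinct real numbers whenever $s > 0$. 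By Theorem~\ref{thm:MTV}, the fibre $X(\bolda_{k,s}(t))$ is therefore reduced throughout $U := (0,1] \times [0,1]$.

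Since $U$ is simply connected and $\Wr$ is unramified over $U$, lifting from the initial datum $x_{k,1}(1) = x_1$ along paths in $U$ produces a well-defined continuous map $x_{k,\cdot}(\cdot)\colon U \to X$, with $x_{k,1}(t) = x_t$ for every $t$. I would then extend this continuously across $s = 0$. Fix $t_0 \in [0,1]$. Because $\Wr$ is finite, a standard local decomposition gives $\Wr^{-1}(V) = \bigsqcup_{y \in X(\bolda_{k,0}(t_0))} V_y$ over a sufficiently small neighbourhood $V$ of $\bolda_{k,0}(t_0)$, with the $V_y$ disjoint open neighbourhoods each mapping finitely onto $V$. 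For fixed $t=t_0$ and $s\to 0^+$, the lift $x_{k,s}(t_0)$ has limit set contained in the finite fibre $X(\bolda_{k,0}(t_0))$; being the continuous image of a connected set, this limit set is a single point $x_{k,0}(t_0)$. The local decomposition shows that for $(s,t)$ near $(0,t_0)$ with $s>0$ the lift stays in one specific $V_y$, which extends the map continuously to $(0,t_0)$. Hence $t \mapsto x_{k,0}(t)$ is a continuous path in $X$.

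Finally, by Proposition~\ref{prop:richardson}, and because $\Wr(x_{k,0}(t);z)$ has degree $N$ and $\mindeg$ equal to $k$, the unique compatible Richardson variety containing $x_{k,0}(t)$ has the form $X_{\Rect/\lambda_k(t)}$ with $\lambda_k(t)\vdash k$. For each $\mu \vdash k$, the set $S_\mu := \{t \in [0,1] : x_{k,0}(t) \in X_{\Rect/\mu}\}$ is closed in $[0,1]$ (being the preimage of the closed Richardson variety $X_{\Rect/\mu} \subset X$ under a continuous map), and the uniqueness part of Proposition~\ref{prop:richardson} says that these finitely many sets are pairwise disjoint and cover $[0,1]$. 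Each $S_\mu$ is therefore also open, and connectedness of $[0,1]$ forces exactly one $S_\mu$ to equal $[0,1]$, so $\lambda_k(t)$ is constant in $t$. Running this for every $k$ yields $T_{x_0} = T_{x_1}$.

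The main technical obstacle is the continuous extension of the two-parameter lift across $s = 0$, where $\Wr$ may be ramified and its fibre need not be reduced; handling this properly requires the local branch decomposition of the finite morphism $\Wr$, rather than the straightforward path-lifting argument that suffices on the unramified region $U$.
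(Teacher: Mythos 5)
Your argument is sound, but note that the paper itself does not prove Theorem~\ref{thm:sameorder}: it is quoted from~\cite{Pur-Gr} (the presentation follows~\cite{Pur-shifted}), so there is no in-paper proof to compare against. Your two-parameter lifting is the natural way to do it, and the two places where care is genuinely needed are handled correctly: (a) on $U=(0,1]\times[0,1]$ the multiset $\bolda_{k,s}(t)$ really is a set (the only way $s(a_i)_t$ could collide with $(a_j)_t$ for $i\le k<j$ is in the case $0<(a_i)_t=-(a_j)_t$, which would force $s=-1$), so Theorem~\ref{thm:MTV} gives reduced fibres and the pullback of $\Wr$ over the simply connected $U$ is a covering, making the lift unique and forcing $x_{k,1}(t)=x_t$; and (b) the extension across $s=0$ via the local branch decomposition of the finite morphism $\Wr$, which is exactly what is implicitly needed even for the paper's definition of $T_x$ to make sense. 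Two cosmetic points. First, $\Wr(x_{k,0}(t);z)$ has degree $N$ only when $\infty\notin\bolda_t$; in general it has degree $N$ or $N-1$, but since only the lower partition $\lambda_k(t)$ enters the definition of $T_{x_t}$ this changes nothing. Second, the disjointness of the sets $S_\mu$ is better justified via Proposition~\ref{prop:richardsonplucker1} (uniqueness of the minimal $\mu$ with $p_\mu(x)\neq 0$, which must have $|\mu|=\mindeg\Wr(x;z)=k$) than by literally invoking the uniqueness clause of Proposition~\ref{prop:richardson}, since $X_{\Rect/\mu}$ need not be the $h(z)$-compatible Richardson variety when the degree drops; the conclusion is the same. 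With those small adjustments the proof is complete.
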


The relationship between $T_{x_0}$ and $T_{x_1}$ is more interesting 
if $(a_1)_t, \dots, (a_N)_t$ 
do not remain in the same $\preceq$-order.  
First note that
by Theorem~\ref{thm:MTV}, the relation between $x_0$ and $x_1$ 
(and hence between $T_{x_0}$ and $T_{x_1}$) depends 
only on the homotopy class of the path $\bolda_t$.
It suffices, therefore, to describe this relationship in the simplest
possible case:
\begin{equation}
\label{eqn:simplepath}
\begin{gathered}
   (a_1)_t \prec  \dots \prec (a_k)_t, (a_{k+1})_t 
   \prec \dots \prec (a_N)_t \quad\text{for all $t \in [0,1]$}\,, \\
    (a_k)_0 \prec (a_{k+1})_0 \quad \text{and} \quad
    (a_{k+1})_1 \prec (a_k)_1\,.
\end{gathered}
\end{equation}
Any path is homotopic to a concatenation of paths of the 
form \eqref{eqn:simplepath}, so the answer in this simple case 
gives a complete description of the relationship between $T_{x_0}$
and $T_{x_1}$ in general.

\begin{theorem}
\label{thm:sliding}
Suppose that $\bolda_t$ is a path of the form \eqref{eqn:simplepath},
and $x_t \in X(\bolda_t)$ is a lifting to $X$.
\begin{packedenum}
\item[(i)] If $(a_k)_0$ and $(a_{k+1})_0$ have the same sign, then
$T_{x_0} = T_{x_1}$.
\item[(ii)]
If $k$ and $k{+}1$ are in the same row or column of $T_{x_0}$ then
$T_{x_0} = T_{x_1}$.  
\item[(iii)]
If $(a_k)_0$ and $(a_{k+1})_0$ have opposite signs, and $k$ and
$k{+}1$ are in different rows and columns of $T_{x_0}$, then $T_{x_1}$
is obtained from $T_{x_0}$ by switching the positions of $k$ and $k{+}1$.
\end{packedenum}
\end{theorem}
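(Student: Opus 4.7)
My plan is to analyze the effect of the swap on the chain of partitions $\varnothing = \lambda_0 \subsetneq \lambda_1 \subsetneq \dots \subsetneq \lambda_N = \Rect$ underlying the construction of $T_{x_0}$. Because the $\preceq$-orderings of $\bolda_0$ and $\bolda_1$ differ only at positions $k$ and $k{+}1$, the chain for $T_{x_1}$ agrees with that for $T_{x_0}$ at every partition except possibly $\lambda_k$; and the two boxes containing the entries $k,k{+}1$ of $T_{x_0}$ together form the fixed skew shape $\lambda_{k+1}/\lambda_{k-1}$. The whole theorem thus reduces to deciding which intermediate partition with $\lambda_{k-1} \subsetneq \lambda_k \subsetneq \lambda_{k+1}$ is selected by the lift at each endpoint.

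Part (ii) is then essentially combinatorial: when $k$ and $k{+}1$ lie in the same row or column, $\lambda_{k+1}/\lambda_{k-1}$ is a horizontal or vertical strip, which admits a unique intermediate partition, so $T_{x_0} = T_{x_1}$ automatically. Part (iii) is the geometric heart: $\lambda_{k+1}/\lambda_{k-1}$ is a pair of boxes in different rows and columns, the two admissible intermediates correspond to the two sheets of the Wronski map restricted to the Richardson variety $X_{\lambda_{k+1}/\lambda_{k-1}}$, and the claim is that the swap path exchanges them. My plan is to localize: applying Theorem~\ref{thm:sameorder} to the other $N{-}2$ roots contracts the problem to a two-dimensional Richardson variety isomorphic to $\CP^1 \times \CP^1$ on which the restricted Wronski map is a $2$-to-$1$ branched cover (matching $|\SYT(\lambda_{k+1}/\lambda_{k-1})| = 2$). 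A direct Pl\"ucker-coordinate computation in the style of \cite{Pur-Gr} identifies the discriminant locus, and the swap path lifts to a small loop around a branch point whose deck transformation is a transposition --- precisely the exchange of entries $k$ and $k{+}1$.

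Part (i) requires a different kind of argument. When $(a_k)_0$ and $(a_{k+1})_0$ share a sign, I would homotope $\bolda_t$ to a path that first scales $(a_k)$ and $(a_{k+1})$ simultaneously toward $0$ by a common factor $s \in (0,1]$ so that both remain on the same sign-half-line with preserved $\preceq$-order; by Theorem~\ref{thm:sameorder} this initial portion cannot affect the associated tableau. After this simultaneous shrinking, the roots are arbitrarily close to $0$, and the remainder of the homotopy (which realizes the actual swap of $\preceq$-position) can be rerouted, using the reduced-fibre guarantee of Mukhin-Tarasov-Varchenko on intermediate configurations, into a path that factors through the strip situation of case (ii); this forces $T_{x_0} = T_{x_1}$. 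The main obstacle is part (iii): establishing the $\CP^1 \times \CP^1$ local model and verifying that the restricted Wronski map branches in the predicted way is the step requiring genuine geometric input, and is where the Pl\"ucker-coordinate machinery of \cite{Pur-Gr} is essential.
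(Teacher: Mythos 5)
You should first note that the paper does not actually prove Theorem~\ref{thm:sliding}: it is quoted from \cite{Pur-Gr} (``The proofs may be found in \cite{Pur-Gr}''), where the argument runs through the asymptotic Pl\"ucker-coordinate analysis that reappears in Sections~\ref{sec:fixedpoints}--\ref{sec:ribbon} of the present paper, not through topological monodromy. That said, your opening reduction is sound and is the right first move: for $j\neq k$ the $j$ $\preceq$-smallest roots of $\bolda_0$ and of $\bolda_1$ carry the same index set, so Theorem~\ref{thm:sameorder} and homotopy invariance force the chains underlying $T_{x_0}$ and $T_{x_1}$ to agree except possibly at $\lambda_k$; and part (ii) is then purely combinatorial, since a two-box skew shape lying in one row or column is a domino with a unique intermediate partition.

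The genuine gap is that you have attached the geometric difficulty to the wrong cases. In case (iii) the two roots have opposite signs and exchange $\preceq$-order by passing through a configuration with $(a_k)_t=-(a_{k+1})_t$; they never collide, the Wronskian never acquires a double root, and the path in the base stays away from the discriminant of the degree-two cover $X_{\lambda_{k+1}/\lambda_{k-1}}\to\PP^2$. There is no branch point to encircle and no deck transformation acts: the point $x_t$ never changes sheet, and what changes is the reading convention (which root is sent to $0$ first in the defining degeneration). What must actually be proved is that the two degeneration orders land in complementary boxes --- the compatibility the paper signals via the third construction $\bolda''_{k,t}$ and $x''_{k,0}\in X_{\lambda'_k/\lambda_k}$ --- which in the product model $\CP^1\times\CP^1$ amounts to matching the two factors with the two boxes. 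Indeed, if your mechanism did apply, moving $x$ to the other sheet combined with the relabelling would give $T_{x_0}=T_{x_1}$, the conclusion of (ii) rather than (iii). Conversely, case (i) is where the discriminant genuinely enters: two roots of the same sign can only swap $\preceq$-order, within the constraints of \eqref{eqn:simplepath} (except when $k=1$ or $k{+}1=N$), by colliding, so your plan to reroute the homotopy using ``the reduced-fibre guarantee of Mukhin--Tarasov--Varchenko on intermediate configurations'' fails --- Theorem~\ref{thm:MTV} requires distinct roots and says nothing at the collision, and no admissible rerouting avoids it. The correct input is that the fibre over a polynomial with one simple real double root $-a$ is still reduced: its two points are separated by the transverse Schubert conditions $\Omega^{(2)}(a)$ versus $\Omega^{(1,1)}(a)$ of Theorem~\ref{thm:MTV2}, each of multiplicity one since each of these partitions has a unique standard tableau. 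The lift through the collision is therefore unique, and one must then compute how these two labels match the box assignments on either side. Until the local analyses of (i) and (iii) are carried out in this form, the proposal does not close.
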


We will mainly be interested in the case where $(a_k)_0$ and 
$(a_{k+1})_0$ have opposite signs.
To see what is going on here, it is helpful to think of replacing the 
entries $1,  \dots , N$ of the tableau associated to a point
$x \in X(\bolda)$ with real entries from the
set $\bolda$.  Specifically, we will replace $k$ by $a_k$, where
$a_1 \prec a_2 \prec \dots \prec a_N$.  We refer to the resulting
object as a \defn{real valued tableau}, and write $T_x(\bolda)$
for the real valued tableau associated to $x$.

In case (ii), the tableau $T_x$ does not change; hence in the
real valued tableau $T_{x_t}(\bolda_t)$ the 
relative $\preceq$-order of 
entries does not change.  But
since the relative order of $(a_k)_t$ and $(a_{k+1})_t$ does
change, these must switch places in $T_{x_t}(\bolda_t)$.
In case (iii), 
the positions of $k$ and $k+1$ are changing along with the relative
order of $(a_k)_t$ and $(a_{k+1})_t$.  In the real valued tableau,
these two changes nullify each other, and the entries $(a_k)_t$
and $(a_{k+1})_t$ remain where they are in $T_{x_t}(\bolda_t)$.  
In both cases, this is 
a very small instance of a Sch\"utzenberger slide.  The box containing 
entry $(a_k)_t$ is sliding through the subtableau consisting of
the single box $(a_{k+1})_t$.  
As the next
example illustrates, the big picture emerges by considering what 
happens when a negative number passes by several positive numbers.

\begin{example}
\label{ex:promotion}
Consider a path
$\bolda_t = \{(a_1)_t,3,8,12,19,21,27,34,39\}$, where 
$(a_1)_0 = 0$, $(a_1)_1 = \infty$, and the path $(a_1)_t$ traverses
the negative real numbers.
Let $x_t \in X(\bolda_t)$ be a lifting such that $x_0$ is associated 
to the real valued tableau
\[
  T_{x_0}(\bolda_0) = 
  \begin{young}[c][auto=promotion]
  0 & 3 & 19  \\
  8 & 12 & 27 \\
  21 & 34 & 39
  \end{young}\,\,.
\]
As $(a_1)_t$ passes each of the other entries, the two entries switch 
places if
and only if they are in the same row or column.
\begin{align*}
  &{\begin{young}[c][auto=promotion]
  ?0 & 3 & 19  \\
  8 & 12 & 27 \\
  21 & 34 & 39
  \end{young}}
  \quad \to \quad
  {\begin{young}[c][auto=promotion]
  3 & ?-5 & 19  \\
  8 & 12 & 27 \\
  21 & 34 & 39
  \end{young}}
  \quad \to \quad
  {\begin{young}[c][auto=promotion]
  3 & ?-10 & 19 \\
  8 & 12 & 27 \\
  21 & 34 & 39
  \end{young}} \quad \to \\[2ex]
  \to \quad &
  {\begin{young}[c][auto=promotion]
  3 & 12 & 19  \\
  8 & ?-15 & 27 \\
  21 & 34 & 39
  \end{young}}
  \quad \to \quad
  {\begin{young}[c][auto=promotion]
  3 & 12 & 19  \\
  8 & ?-20 & 27 \\
  21 & 34 & 39
  \end{young}}
  \quad \to \quad
  {\begin{young}[c][auto=promotion]
  3 & 12 & 19  \\
  8 & ?-25 & 27 \\
  21 & 34 & 39
  \end{young}} \quad \to\\[2ex]
  \to \quad &
  {\begin{young}[c][auto=promotion]
  3 & 12 & 19  \\
  8 & 27 & ?-30 \\
  21 & 34 & 39
  \end{young}}
  \quad \to \quad
  {\begin{young}[c][auto=promotion]
  3 & 12 & 19  \\
  8 & 27 & ?-35 \\
  21 & 34 & 39
  \end{young}}
  \quad \to \quad
  {\begin{young}[c][auto=promotion]
  3 & 12 & 19  \\
  8 & 27 & 39 \\
  21 & 34 & ?\infty
  \end{young}}
\end{align*}
Comparing the steps here with the example in Figure~\ref{fig:promotionex},
we see that $T_{x_1} = \promote(T_{x_0})$, where $\promote$ is
the promotion operator on tableaux.
\end{example}

In general, we can model both the promotion and evacuation operators 
on $\SYT(\Rect)$, in terms of 
lifting a suitable path $\bolda_t$ that begins at a set of positive real 
numbers:
\[
   0 < (a_1)_0 < (a_2)_0 < \dots < (a_N)_0 < \infty\,.
\]

We will describe promotion as the lifting of a loop $\bolda_t$, 
$t \in [0,1]$.
Let $(a_1)_t$ be a path that first decreases from $(a_1)_0$ to $0$,
then moves along the \emph{negative} real axis to $\infty$, and
finally decreases from $\infty$ to the value $(a_1)_1 = (a_N)_0$.
Meanwhile for $k =2, \dots, N$,  let $(a_k)_t$ move monotonically
from $(a_k)_0$ to $(a_k)_1 = (a_{k-1})_0$.   Thus $\bolda_t$
is a loop that cyclically rotates the elements $(a_1)_0, \dots, (a_N)_0$.  
(See Figure~\ref{fig:modelpromote}.)
In terms of the $\preceq$-order, 
$(a_1)_t$ is changing places with each of $(a_k)_t$, $k \geq 2$,
and doing so with opposite signs.  Thus, in the real valued tableau 
associated to a lifting, the box containing $(a_1)_t$
begins in the northwest corner, and slides through all the other
entries, ending up in the southeast corner. 
Hence we see that for any lifting $x_t \in X(\bolda_t)$ of such 
a loop, we have $T_{x_1} = \promote(T_{x_0})$.

\begin{figure}[tb]
\centering
\includegraphics[height=\circlesize]{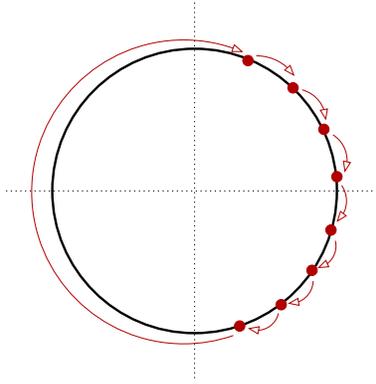}
\caption{A loop that models promotion.
The circle is $\RP^1$, drawn with $0$ south, $\infty$ north,
$1$ east, and $-1$ west.}
\label{fig:modelpromote}
\end{figure}

Evacuation, on the other hand, is described by lifting 
a path $\bolda_t$, $t \in [0,1]$, for which 
$\bolda_1 = - \bolda_0$.  For $k=1, \dots, N$, let 
$(a_k)_t$ be a path that begins at $(a_k)_0$, decreases to $0$, and 
continues moving monotonically until it reaches $(a_k)_1 = -(a_{N+1-k})_0$.  
(See Figure~\ref{fig:modelevac}.)
This describes a unique homotopy class of paths.
To show this homotopy class models evacuation, it is convenient
to consider a specific representative, which 
we will describe 
by dividing up the interval $[0,1]$ into $N$ subintervals
$[\frac{i-1}{N}, \frac{i}{N}]$, $i=1,\dots, N$.  On the first interval 
$[0, \frac{1}{N}]$, $(a_1)_t$ moves monotonically to 
$(a_1)_{1/N} = - (a_N)_0$.  Meanwhile for $k=2, \dots, N$, $(a_k)_t$
moves monotonically to $(a_k)_{1/N} = (a_{k-1})_0$.  Hence $(a_1)_t$
has changed places in the $\preceq$-order with each of
the other $(a_k)_t$, with opposite signs; this models the promotion operator 
$\promote = \promote_N$.  On the next interval 
$[\frac{1}{N},\frac{2}{N}]$, $(a_1)_t = -(a_N)_0$ remains constant,
while $(a_2)_t$ moves monotonically to $-(a_{N-1})_0$, and 
for $k=3, \dots, N$, $(a_k)_t$ moves monotonically to $(a_{k-2})_0$.
Hence $(a_2)_t$ is changing places in the $\preceq$-order, with each
$(a_k)_t$, $k \geq 3$, but does not pass $(a_1)_t$.  This models
the promotion operator restricted to the $N-1$ smallest entries of
the tableau,
which is $\promote_{N-1}$.  In general, on the $i$\nth interval 
$[\frac{i-1}{N}, \frac{i}{N}]$,
$(a_1)_t, \dots, (a_{i-1})_t$ have reached their final positions,
and $(a_i)_t$ moves to $-(a_{N+1-i})_t$, while $(a_k)_t$ moves
to $(a_{k-i})_0$; this models $\promote_i$.  Hence the complete
path models
$\evac = \promote_1 \circ \promote_2 \circ \dots \promote_N$;
that is, for any lifting $x_t \in X(\bolda_t)$, we have
$T_{x_1} = \evac(T_{x_0})$.

\begin{figure}[tb]
\centering
\includegraphics[height=\circlesize]{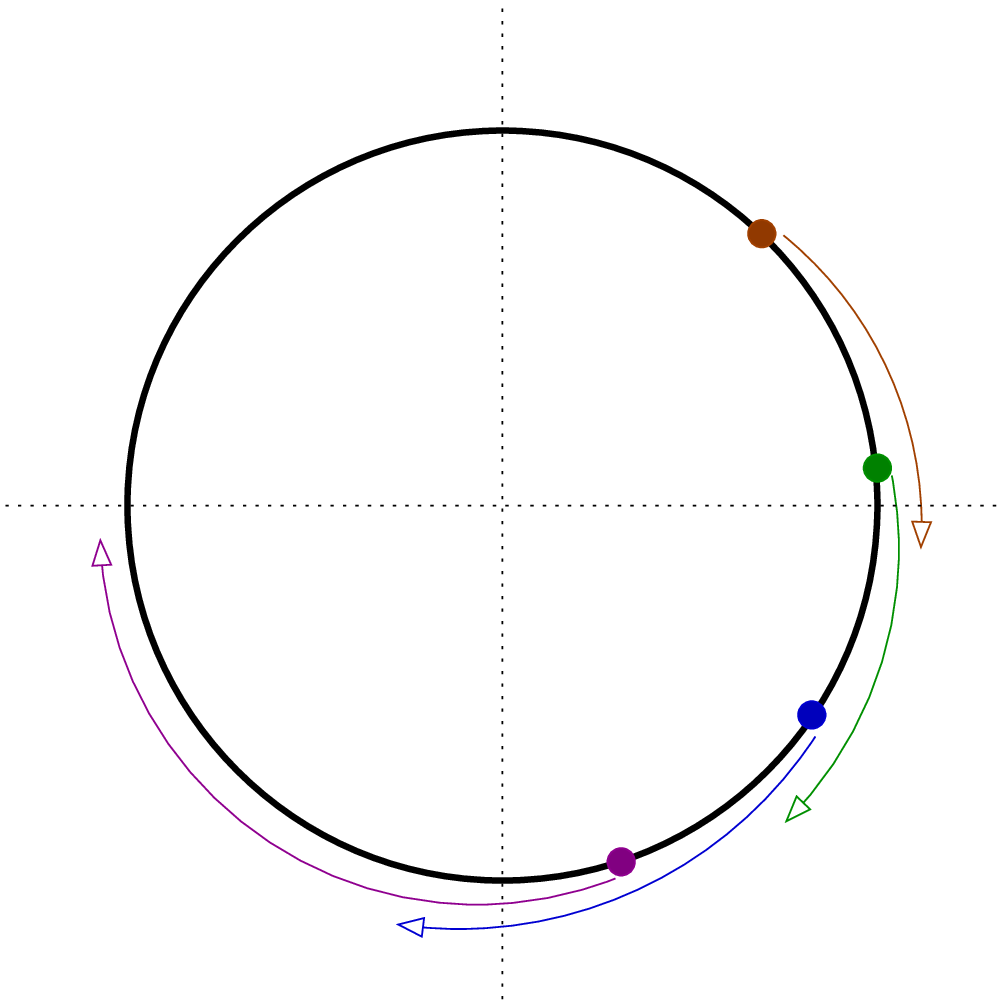}
\qquad\raisebox{0.5\circlesize}{\raisebox{-.4ex}{$\leadsto$}}\qquad
\includegraphics[height=\circlesize]{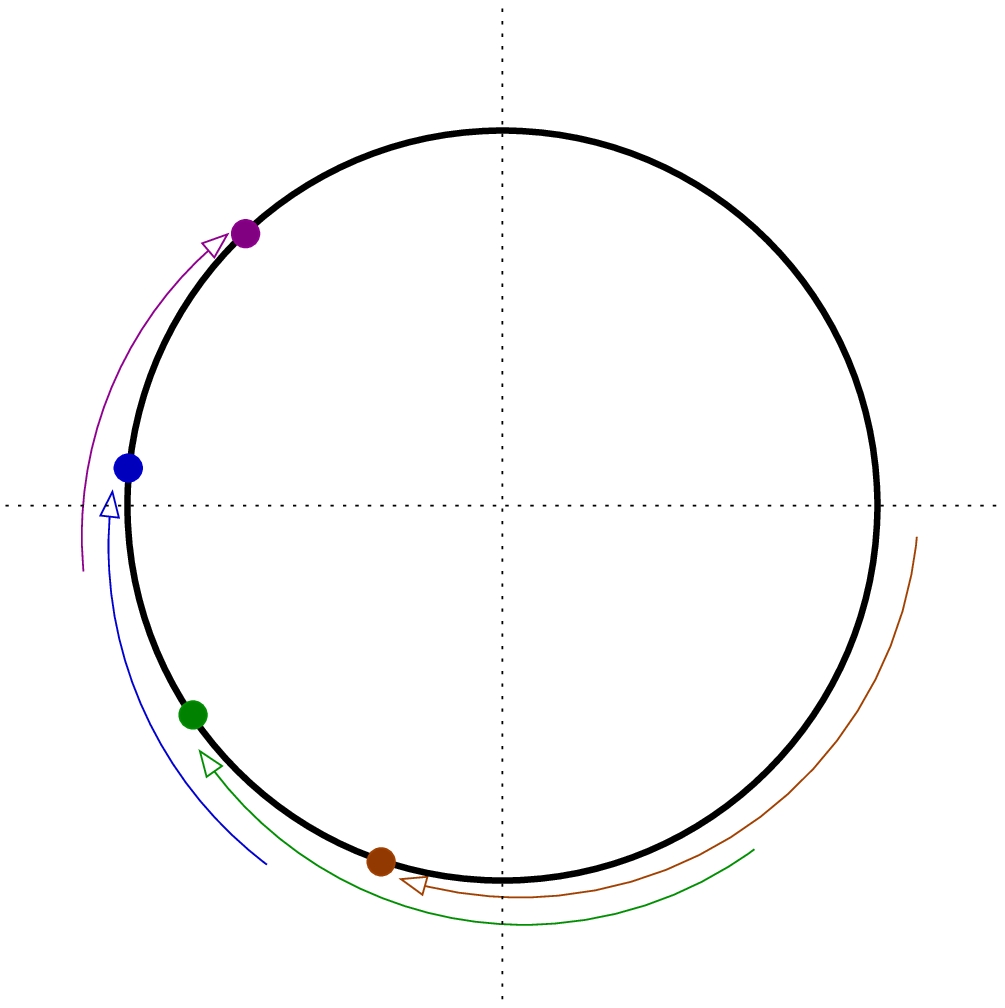}
\caption{A path that models evacuation.}
\label{fig:modelevac}
\end{figure}

From this perspective it is clear why $\evac$ is an involution:
if we reverse the roles of the positive and negative real numbers,
we see that the reverse of a path that models $\evac$ also models 
$\evac$, hence $\evac = \evac^{-1}$.

\subsection{The $\PGL_2(\RR)$-action on real valued tableaux}

The group $\PGL_2(\CC)$ acts on $\CP^1$ by M\"obius transformations.
If $a \in \CP^1$ and $\phi = 
\left(\begin{smallmatrix} 
\phi_{11} & \phi_{12} \\ \phi_{21} & \phi_{22}
\end{smallmatrix}\right)$, then
\[
  \phi(a) := \frac{\phi_{11} a + \phi_{12}}{\phi_{21} a + \phi_{22}}\,.
\]
This action restricts to an action of $\PGL_2(\RR)$ on $\RP^1$.

Let $\bolda = \{a_1, \dots, a_N\}$ be a subset of $\RP^1$, 
let $x \in X(\bolda)$ and let $\phi \in \PGL_2(\RR)$.  
Since the Wronski map is $\PGL_2(\CC)$-equivariant, we have
$\phi(x) \in X(\phi(\bolda))$, where 
$\phi(\bolda) = \{\phi(a_1), \dots, \phi(a_N)\}$ is also
a subset of $\RP^1$.  This defines an action on real valued
tableaux.  If $T = T_x(\bolda)$ is the real valued tableau 
associated to $x$, we define 
\[
  \phi(T) := T_{\phi(x)}(\phi(\bolda))\,.
\]
If $\phi$ is in the connected component of $\smallidmatrix$,
we can compute this action explicitly, using Theorem~\ref{thm:sliding}.
Let $\phi_t$, $t \in [0,1]$, be any path in $\PGL_2(\RR)$ 
from $\phi_0 = \smallidmatrix$ to $\phi_1 = \phi$.
Then $x_t = \phi_t(x)$ is a lifting of the path $\phi_t(\bolda)$.
Theorem~\ref{thm:sliding} therefore tells us exactly how the tableau
$T_{\phi_1(x)}(\phi_1(\bolda)) = \phi(T)$ is related to
$T_{\phi_0(x)}(\phi_0(\bolda)) = T$.

If $\phi$ is not in the connected component of the identity,
then we can write $\phi = \altreflect\phi' $, or 
$\phi = \stdreflect\phi''$, where $\phi', \phi''$ are in the
connected component of the identity matrix.  The next lemma
tells us how to compute the actions of $\altreflect$ and
$\stdreflect$.

\begin{lemma}
\label{lem:tworeflections}
Suppose that 
\begin{equation}
\label{eqn:absincreasing}
|a_1| < |a_2| < \dots < |a_N|\,,
\end{equation}
and let $T$ be a real valued tableau with entries from the set $\bolda$.  
\begin{packedenum}
\item[(i)] 
$\altreflect T = -T$, where $-T$ is obtained from $T$ by replacing
$a_k$ with $-a_k$, for $k=1, \dots, N$.
\item[(ii)]
$\stdreflect T = T^\vee$, where $T^\vee$ is obtained from $T$ by
rotating $180^\circ$ and 
replacing $a_k$ with $\frac{1}{a_k}$, for $k=1, \dots, N$.
\end{packedenum}
\end{lemma}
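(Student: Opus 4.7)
The plan is to verify both parts by acting on the defining paths~\eqref{eqn:tableaudefpath} and~\eqref{eqn:tableaualtdefpath}, using the $\PGL_2(\CC)$-equivariance of the Wronski map together with uniqueness of continuous lifts granted by Theorem~\ref{thm:MTV}.

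For part (i), the map $\altreflect$ preserves absolute values on $\RP^1$, so the hypothesis~\eqref{eqn:absincreasing} yields $-a_1 \prec -a_2 \prec \dots \prec -a_N$ as the $\preceq$-ordering on $-\bolda$. Because $\altreflect$ acts on $\pol{m}$ by $f(z) \mapsto (-1)^m f(-z)$, it preserves every $\pol{m}$, and in particular both flags $F_\bullet$ and $\TF_\bullet$ set-wise; hence it fixes every Richardson variety $X_{\lambda/\mu}$ set-wise. Applying $\altreflect$ termwise to the path~\eqref{eqn:tableaudefpath} gives exactly the defining path for $T_{\altreflect(x)}$ under the $\preceq$-ordering above, and by uniqueness of lifts the relevant lift is $\altreflect(x_{k,t})$. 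Since the Richardson variety at $t=0$ is unchanged, $T_{\altreflect(x)}$ has the same chain of partitions as $T_x$; relabeling the $k$-th entry as $-a_k$ then yields $-T$.

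For part (ii), the action $\stdreflect f(z) = z^{n-1}f(1/z)$ on $\poln$ sends $z^k \mapsto z^{n-1-k}$, and one checks directly that $\stdreflect F_i = \TF_i$ and $\stdreflect \TF_i = F_i$. Combined with the definition of the dual partition, this yields the identity $\stdreflect X_{\lambda/\mu} = X_{\mu^\vee/\lambda^\vee}$. Meanwhile $\stdreflect$ acts on $\RP^1$ by $a \mapsto 1/a$, so~\eqref{eqn:absincreasing} forces the induced $\preceq$-ordering on $1/\bolda$ to be the reversed order $1/a_N \prec 1/a_{N-1} \prec \dots \prec 1/a_1$.

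The crux is to identify the standard defining path for $T_{\stdreflect(x)}$ in terms of $x$: scaling the $k$ smallest elements of $1/\bolda$ toward $0$ is, under $\stdreflect$, exactly the same as scaling the $k$ largest elements of $\bolda$ toward $\infty$. Thus under $\stdreflect$ the defining path for $T_{\stdreflect(x)}$ at step $k$ pulls back to the alternative path $\bolda'_{N-k,t}$ of~\eqref{eqn:tableaualtdefpath}, and uniqueness of lifts identifies the $t=0$ endpoint $\stdreflect(x'_{N-k,0}) \in \stdreflect(X_{\lambda_{N-k}/\varnothing}) = X_{\Rect/\lambda_{N-k}^\vee}$. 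Hence the chain encoding $T_{\stdreflect(x)}$ is $\mu_k = \lambda_{N-k}^\vee$. A final combinatorial check---that the $180^\circ$ rotation of $\Rect \setminus \lambda$ inside $\Rect$ equals $\lambda^\vee$---shows that this chain together with the labeling $1/a_{N+1-j}$ at step $j$ is precisely $T^\vee$. The main bookkeeping hurdle is the identity $\stdreflect X_{\lambda/\mu} = X_{\mu^\vee/\lambda^\vee}$ and its compatibility with the two constructions of $T_x$; once these are established, both parts of the lemma follow.
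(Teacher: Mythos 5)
Your proposal is correct and follows essentially the same route as the paper's proof: act by the group element on the defining paths \eqref{eqn:tableaudefpath} (for $\altreflect$, which preserves the $\preceq$-order and every Richardson variety) and \eqref{eqn:tableaualtdefpath} (for $\stdreflect$, which reverses the order and sends $X_{\nu/\lambda_k}$ to $X_{\lambda_k^\vee/\nu^\vee}$), then invoke uniqueness of lifts. The one prerequisite you flag at the end---the agreement of the two constructions of $T_x$---is exactly the fact the paper also relies on, established via the third path $\bolda''_{k,t}$ in Section~\ref{sec:lifting}.
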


If we have a set $\bolda$ for which $a_i = -a_j$ for some $i \neq j$,
then Lemma~\ref{lem:tworeflections} cannot be used directly.  The
problem is that if we replace \eqref{eqn:absincreasing} by the
weaker condition $a_1 \prec a_2 \prec \dots \prec a_N$, then
$-T$ and $T^\vee$ may not be real valued tableaux:
the operation $a \mapsto -a$ is not $\preceq$-order preserving,
and $a \mapsto \frac{1}{a}$ is not $\preceq$-order reversing.
In this case, we can still compute $\altreflect T$ and $\stdreflect T$,
but we need to do so by perturbing the point $\bolda$, which introduces
additional sliding.  We consider examples of this 
in Section~\ref{sec:involution}.

\begin{proof}
Let $x \in X(\bolda)$, and
$\hat x = \altreflect x$.
Let $\hat \bolda = \altreflect\bolda = \{-a_1, \dots, -a_N\}$.
Recall from Section~\ref{sec:lifting} that the tableaux $T_x$ is computed 
by lifting a path $\bolda_{k,t}$ (see \eqref{eqn:tableaudefpath})
to a path $x_{k,t}$.
Similarly the tableau $T_{\hat x} = \altreflect T_x$ is computed by lifting 
a path $\hat \bolda_{k,t}$ to a path $\hat x_{k,t}$.  Now, since 
$a_1 \prec a_2 \prec \dots \prec a_N$ and 
$-a_1 \prec -a_2 \prec \dots \prec -a_N$, 
$\hat \bolda_{k,t} = \altreflect\bolda_{k,t}$
for all $k = 0, \dots, N$, $t \in [0,1]$, and therefore
$\hat x_{k,t} = x_{k,t}$.  Finally, since $\altreflect$ fixes every 
Richardson
variety,  $x_{k,0} \in X_{\nu, \lambda_k}$ if and only if
$\hat x_{k,0} \in X_{\nu, \lambda_k}$, and we see that 
$T_x = T_{\hat x}$, which is equivalent to (i).

For (ii), the argument is similar.  This time, let
$\hat x = \stdreflect x$, and
$\hat \bolda = \stdreflect\bolda = \{\frac{1}{a_1}, \dots, \frac{1}{a_N}\}$.
Since,
$\frac{1}{a_N} \prec \frac{1}{a_{N-1}} \prec \dots \prec \frac{1}{a_1}$,
the path $\hat \bolda_{N-k,t}$ is equal to $\stdreflect \bolda'_{k,t}$, 
where $\bolda'_{k,t}$ is the path \eqref{eqn:tableaudefpath} 
that appears in the \emph{alternate} 
definition of $T_x$.  Thus we 
obtain $\hat x_{N-k,t} = \stdreflect x'_{k,t}$, 
and deduce the result from the fact 
that $\stdreflect X_{\nu/\lambda_k} = X_{\lambda_k^\vee/\nu^\vee}$.
\end{proof}

Now suppose that $\phi(\bolda) = \bolda$ for some set $\bolda$.  
In this case, the real valued tableau $\phi(T_x(\bolda))$ has the same 
entries as $T_x(\bolda)$.  By identifying real valued tableaux
with entries from the set $\bolda$ with $\SYT(\Rect)$, 
we can regard $\phi$ as an operator on $\SYT(\Rect)$.  
We note that this identification is always defined with respect to 
a particular choice of $\phi$-fixed set $\bolda$, and that changing 
this set will change the identification.

We now show that if $a_1, \dots, a_N$ are positive real numbers,
then the operator defined by $\phi$ belongs to the dihedral 
subgroup $D_{2N}$ generated by the promotion and evacuation operators 
$\promote, \evac : \SYT(\Rect) \to \SYT(\Rect)$.
Moreover, for a suitable choice of $\bolda$, we will realize
$\promote$ and $\evac$ by elements of $\PGL_2(\RR)$.  From here 
it will be a simple matter to prove
Theorems~\ref{thm:promotion} and~\ref{thm:D-promotion}.

\begin{lemma}
\label{lem:fixeddihedral}
Let $\phi \in \PGL_2(\RR)$, and suppose that $\phi(\bolda) = \bolda$,
where $0 < a_1 < \dots < a_N < \infty$.  If $\phi$ is in the connected
component of $\smallidmatrix$ then the action of $\phi$ on $\SYT(\Rect)$
coincides with $\promote^{\,k}$ for some integer $k$.  If
$\phi$ is not in the connected component of $\smallidmatrix$, then
the action of $\phi$ on $\SYT(\Rect)$ coincides with 
$\evac \circ \promote^{\,k}$, for some integer $k$.
\end{lemma}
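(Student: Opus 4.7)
The plan is to analyze, as in Section~\ref{sec:lifting}, the lift to $X$ of a path in the configuration space of $\RP^1$ with endpoints at $\bolda$, using Theorems~\ref{thm:sameorder} and~\ref{thm:sliding} for Part~1 and Lemma~\ref{lem:tworeflections} for Part~2.

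For Part~1, suppose $\phi \in \PGL_2(\RR)^0$. Since the identity component preserves the cyclic orientation of $\RP^1$, $\phi$ must permute $\bolda$ as a cyclic rotation $\phi(a_i) = a_{i-k \bmod N}$ for some integer $k$. Choose any path $\phi_t$ in $\PGL_2(\RR)^0$ from the identity to $\phi$; then $x_t := \phi_t(x)$ lifts the path $\bolda_t := \phi_t(\bolda)$, so $\phi T_x = T_{x_1}$. The key step is to compare $\bolda_t$ with the promotion loops of Figure~\ref{fig:modelpromote}. Since $\pi_1(\PGL_2(\RR)^0) \cong \ZZ$, different choices of $\phi_t$ alter $\bolda_t$ by a full rotation of $\RP^1$; by iterating Example~\ref{ex:promotion} (deforming ``all points rotate simultaneously'' to ``each point in turn rotates fully''), a full rotation is homotopic in configuration space to $N$ promotion loops, whose lift acts as $\promote^N = \mathrm{id}$ by Haiman's theorem. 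The homotopy class of $\bolda_t$ is therefore well-defined modulo this ambiguity, and I would verify on one convenient $\phi_t$ (e.g., a one-parameter subgroup through $\phi$) that $\bolda_t$ is homotopic to $k$ promotion loops, so that $\phi$ acts as $\promote^k$.

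For Part~2, suppose $\phi \notin \PGL_2(\RR)^0$. The stabilizer of $\bolda$ in $\PGL_2(\RR)$ is a finite subgroup, hence cyclic or dihedral; a Lefschetz-number argument on $\RP^1$ shows that any finite-order orientation-reversing M\"obius transformation is an involution with two real fixed points, so $\phi$ is such an involution. Every such involution is $\PGL_2(\RR)^0$-conjugate to $\stdreflect$: pick $\psi \in \PGL_2(\RR)^0$ with $\phi = \psi \, \stdreflect \, \psi^{-1}$, equivalently such that $\boldb := \psi^{-1}(\bolda)$ is $\stdreflect$-invariant. Lemma~\ref{lem:tworeflections}(ii) shows that $\stdreflect$ acts on the tableau attached to $\boldb$ as rotation by $180^\circ$ combined with $b_k \mapsto 1/b_k$, which under the identification with $\SYT(\Rect)$ by increasing absolute value is $\evac$, possibly precomposed with a power of $\promote$ if the indexing of $\boldb$ requires a cyclic shift. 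Combining with Part~1 applied to the conjugating factor $\psi$ on each side, and using the dihedral relation $\evac \circ \promote \circ \evac = \promote^{-1}$ to collect exponents, one obtains that $\phi$ acts on $T$ as $\evac \circ \promote^{k'}$ for some integer $k'$.

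The main obstacle is the homotopy step in Part~1: one must argue that $\bolda_t$ is homotopic to $k$ promotion loops in configuration space, not merely that both induce the same cyclic permutation of $\bolda$. Identifying $\pi_1(\PGL_2(\RR)^0) \cong \ZZ$ as generated by a full rotation, and invoking Haiman's identity $\promote^N = \mathrm{id}$ to absorb the ambiguity, reduces this to verifying the claim for one specific $\phi_t$. A secondary technical point in Part~2 is that $\boldb$ may violate the strict inequality $|b_1| < \cdots < |b_N|$ required by Lemma~\ref{lem:tworeflections} when it contains opposite pairs $b_i = -b_j$; as indicated in the discussion following that lemma, this is handled by a small perturbation and the additional slides it induces.
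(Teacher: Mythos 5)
Your Part 1 is essentially the paper's argument: a path $\phi_t(\bolda)$ with $\phi_t$ in the identity component preserves cyclic order, hence is homotopic to a concatenation of promotion loops, and the winding ambiguity in the choice of $\phi_t$ only contributes full rotations, i.e.\ multiples of $N$ promotion loops, which act trivially. Your extra care about $\pi_1(\PGL_2(\RR)^\circ)$ is fine (and in fact unnecessary, since the conclusion is only ``$\promote^k$ for \emph{some} $k$'').

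Part 2, however, has a genuine gap at the transfer step. You conjugate $\phi = \psi\,\stdreflect\,\psi^{-1}$, compute the action of $\stdreflect$ on real-valued tableaux with entry set $\boldb = \psi^{-1}(\bolda)$ via Lemma~\ref{lem:tworeflections}(ii), and then assert that conjugating back contributes only powers of $\promote$ ``by Part~1 applied to $\psi$''. But Part~1 applies only to elements that \emph{stabilize} a set of positive reals; $\psi$ carries $\boldb$ to $\bolda$ with $\boldb \neq \bolda$, and $\boldb$ need not consist of positive reals at all. The fixed points of $\phi$ can lie inside $(0,\infty)$ among the $a_i$ (e.g.\ $\phi(z) = \frac{7z-20}{2z-7}$ fixes $2$ and $5$ and stabilizes $\{4, 4.5, 5.75, 8\}$), forcing $\psi^{-1}$ to carry points of $\bolda$ to both sides of $\{1,-1\}$, so that $\boldb$ contains negative entries. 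The identification of $\boldb$-valued tableaux with $\SYT(\Rect)$ then uses the order $\preceq$, which interleaves positive and negative entries by absolute value and does not match the cyclic order; consequently the map induced by lifting $\psi^{-1}_t(\bolda)$ (Theorem~\ref{thm:sliding}) is a composition of slides that is not a power of $\promote$, and the conjugation does not ``collect'' into $\evac\circ\promote^{k}$ without a separate argument. The paper sidesteps this entirely: it factors $\phi = \altreflect\phi'$ with $\phi'$ in the identity component, notes that $\phi'_t(\bolda)$ is a cyclic-order-preserving path from $\bolda$ to $-\bolda$ and hence homotopic to a rotation followed by the evacuation-modelling path of Figure~\ref{fig:modelevac}, so that $\phi'$ acts as $\evac\circ\promote^{k}$, and then uses Lemma~\ref{lem:tworeflections}(i) to see that the final $\altreflect$ does not change the underlying standard tableau. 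If you wish to keep the conjugation strategy, you must either arrange $\boldb\subset(0,\infty)$ (not always possible, e.g.\ when both fixed points of $\phi$ lie in $\bolda$) or prove directly that the transfer map induced by $\psi$ lies in $\langle\promote,\evac\rangle$ --- which is essentially the content being proved.
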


\begin{proof}
If $\phi$ is in the connected component of the identity, then
$\phi(T)$ is defined by lifting a path $\phi_t(\bolda)$, $t \in [0,1]$.
Such a path must cyclically rotate the elements of $\bolda$ by some
amount; i.e. there is some $k$ such that $\phi(a_i) = a_{i+k \!\!\pmod N}$.
Then $\phi_t(\bolda)$ is homotopic to the
path that models $\promote^{k}$, 
and hence $\phi(T) = \promote^{k}(T)$.

Now suppose $\phi$ is not in the connected component of the identity.
Write $\phi = \altreflect \phi'$, 
and consider the action of $\phi'$ on real valued tableau.
The path $\phi'_t(\bolda)$ used to define this action preserves 
cyclic order, and
$\phi'(\bolda) = \altreflect \bolda = \{-a_1, \dots, -a_N\}$.
Hence $\phi'_t(\bolda)$ must homotopic to a path that cyclically rotates
the elements of $\bolda$ by some amount, followed by a path that
models evacuation.  In other words, if $T_x$ is the tableau
corresponding to a point $x \in X(\bolda)$, then 
the tableau corresponding to $\phi'(x)$ is
$\evac \circ \promote^{\,k}(T_x)$, for some $k$.
By Lemma~\ref{lem:tworeflections}(i), this is also the tableau
corresponding to $\phi(x)$.
\end{proof}

In order to explicitly realize $\promote$ and $\evac$ by
elements of $\PGL_2(\RR)$ and relate these to the statements of
Theorems~\ref{thm:promotion} and~\ref{thm:D-promotion}, it will be 
convenient work with the unit circle $S^1 \subset \CP^1$, rather than $\RP^1$.
To relate the two, we will use a M\"obius transformation that maps 
``most'' of the unit circle to the positive real numbers.
Let $\eta := e^{i \varepsilon}$ where $0 < \varepsilon < \frac{\pi}{N}$,
and
\[
   \psi := 
  \begin{pmatrix}
   1+\overline{\eta} & -(1+\eta) \\
   1+\eta & -(1+\overline{\eta})  \\
   \end{pmatrix}\,.
\]
Then $\psi$ defines a M\"obius transformation which sends $S^1$
to $\RP^1$, with $\psi(\eta) = 0$, $\psi(\overline \eta) = \infty$
and $\psi(-1) = 1$.  We let $\PGL_2(S^1) := \psi \PGL_2(\RR) \psi^{-1}$
denote the subgroup of $\PGL_2(\CC)$ that fixes the unit circle.

For $k=1, \dots, N$, let $a_k = e^{(2k-1)\pi i/N}$.  
Since $a_1, \dots, a_N$ lie on the long
arc of the unit circle between $\eta$ and $\overline\eta$,
we have
\[
  0 < \psi(a_1) < \psi(a_2) < \dots < \psi(a_N) < \infty\,.
\]
For each point $x \in X(\bolda)$, we define the tableau 
$T_x \in \SYT(\Rect)$ associated to $x$ to be $T_{\psi(x)}$.  
Using this identification, any 
$\phi \in \PGL_2(S^1)$ that fixes $\bolda$ defines an operator 
on $\SYT(\Rect)$.

\begin{lemma}
\label{lem:evacpromote}
With $a_1, \dots, a_N$ as above,
the $\PGL_2(S^1)$ group elements
\[
\begin{pmatrix}e^{-\pi i/N} & 0 \\ 0 & e^{\pi i/N} \end{pmatrix}
\qquad\text{and}\qquad
\begin{pmatrix}0 & 1 \\ 1 & 0 \end{pmatrix}
\]
fix $\bolda$, and act on $\SYT(\Rect)$ as $\promote$ 
and $\evac$ respectively.
\end{lemma}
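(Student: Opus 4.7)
The plan is to check that both matrices fix $\bolda$ setwise and then apply Lemma~\ref{lem:fixeddihedral} to identify each induced operator, by constructing an explicit path from the identity and reading off which homotopy class it represents.

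For the first matrix $\phi = \left(\begin{smallmatrix}e^{-\pi i/N} & 0 \\ 0 & e^{\pi i/N}\end{smallmatrix}\right)$, the induced M\"obius transformation is $z \mapsto e^{-2\pi i/N}z$, which is a rotation of $S^1$; hence $\phi(a_k) = a_{k-1}$ (with $a_0 = a_N$) and $\phi$ fixes $\bolda$ setwise. I would then take the rotation path $\phi_t = \left(\begin{smallmatrix}e^{-\pi it/N} & 0 \\ 0 & e^{\pi it/N}\end{smallmatrix}\right)$ in the identity component of $\PGL_2(S^1)$ and verify that, transported to $\RP^1$ via $\psi$, the resulting trajectory $\psi(\phi_t(\bolda))$ is the promotion loop of Figure~\ref{fig:modelpromote}. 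For $k \geq 2$, the point $\phi_t(a_k)$ stays on the long arc from $\eta$ to $\overline\eta$, so $\psi(\phi_t(a_k))$ decreases monotonically in $\RR_{>0}$ from $\psi(a_k)$ to $\psi(a_{k-1})$. The point $\phi_t(a_1)$, in contrast, passes through $\eta$ (mapped by $\psi$ to $0$), traverses the short arc (mapped by $\psi$ to $\RR_{<0}$), passes through $\overline\eta$ (mapped by $\psi$ to $\infty$), and returns along the long arc to $a_N$. This matches the promotion loop exactly, so $\phi$ acts on $\SYT(\Rect)$ as $\promote$.

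For the second matrix $\stdreflect$, the M\"obius transformation is $z \mapsto 1/z$, which restricts on $S^1$ to complex conjugation; thus $\stdreflect(a_k) = \overline{a_k} = a_{N+1-k}$ and $\stdreflect$ fixes $\bolda$. The conjugate $\psi\stdreflect\psi^{-1}$ can be identified inside $\PGL_2(\RR)$ by tracking the three points $\psi(\eta)=0,\ \psi(\overline\eta)=\infty,\ \psi(-1)=1$, which are sent to $\infty,\ 0,\ 1$ respectively; it is again the M\"obius transformation $z \mapsto 1/z$, and in particular lies in the non-identity component of $\PGL_2(\RR)$. By Lemma~\ref{lem:fixeddihedral}, $\stdreflect$ therefore acts as $\evac \circ \promote^k$ for some integer $k$, and what remains is to show $k = 0$. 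Following the proof of that lemma I would write $\psi\stdreflect\psi^{-1} = \altreflect \cdot \halfrotate$ and connect the identity to $\halfrotate$ inside the identity component via the path $\phi'_t = \left(\begin{smallmatrix}\cos(\pi t/2) & -\sin(\pi t/2) \\ \sin(\pi t/2) & \cos(\pi t/2)\end{smallmatrix}\right)$. A short calculation gives $\frac{d}{dt}\phi'_t(z) = -\frac{\pi}{2}\cdot\frac{z^2+1}{(\sin(\pi t/2)z + \cos(\pi t/2))^2}$, so for each positive $z$ the trajectory $\phi'_t(z)$ is strictly decreasing in $t$ and moves from $z$ through $0$ at $t = \frac{2}{\pi}\arctan z$ to $-1/z$. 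Applied to $z=\psi(a_k)$, this means the entries cross $0$ in the order $\psi(a_1), \psi(a_2), \dots, \psi(a_N)$ and each arrives at $-\psi(a_{N+1-k})$, which is exactly the evacuation path of Figure~\ref{fig:modelevac}; no cyclic rotation occurs, so $k = 0$ and $\stdreflect$ acts as $\evac$.

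The main subtlety in both parts is pinning down the homotopy class of the chosen path: the identity component of $\PGL_2(\RR)$ has infinite cyclic fundamental group, and each extra winding would introduce an additional factor of $\promote$. For the first matrix this is resolved by the explicit topological identification of the two arcs of $S^1 \setminus \{\eta,\overline\eta\}$ with the two halves of $\RP^1 \setminus \{0,\infty\}$ under $\psi$; for the second, it is the strict monotonicity of $\phi'_t(z)$ in both $t$ and $z$ that rules out any spurious winding, so that only the naked evacuation path is traced.
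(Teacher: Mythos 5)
Your proposal is correct and follows essentially the same route as the paper: verify that both elements fix $\bolda$, identify the rotation path with the promotion loop of Figure~\ref{fig:modelpromote}, and decompose $\stdreflect = \altreflect\halfrotate$ with the quarter-rotation path modelling evacuation. You simply supply more detail than the paper does (the explicit derivative computation pinning down the homotopy class, and the arc-tracking under $\psi$), and your computation $\phi(a_k)=a_{k-1}$ is the one consistent with the paper's stated M\"obius-action convention, even though the paper's proof writes $a_i \mapsto a_{i+1}$; either way the trajectory is the promotion loop.
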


Together with Lemma~\ref{lem:tworeflections}(ii), 
Lemma~\ref{lem:evacpromote} shows that 
for $T \in \SYT(\Rect)$, $\evac(T)$ is 
the tableau obtained by rotating $T$ by $180^\circ$ and replacing each 
entry $k$ by $N{+}1{-}k$.  This is well known, but not entirely obvious.

\begin{proof}
We argue as in the proof of Lemma~\ref{lem:fixeddihedral}.
That both of these M\"obius transformations fix $\bolda$ is easy to
check.  In particular,
$\big(\begin{smallmatrix}e^{-\pi i/N} & 0 \\ 0 & e^{\pi i/N} 
\end{smallmatrix}\big) a_i = a_{i+1 \!\!\pmod N}$,  which implies
that $\phi$ acts as $\promote^1$.
For the action of $\stdreflect$, 
note that $\stdreflect$
commutes with $\psi$, so it is enough to show that 
$\stdreflect$ acts as $\evac$ on real valued tableaux with 
entries $\psi(\bolda)$.
But now, $\stdreflect = \altreflect\halfrotate$, and
the action of $\halfrotate$ on $\psi(\bolda)$ is computed by lifting
the path 
$\left(\begin{smallmatrix}
   \cos \pi t & - \sin \pi t \\
   \sin \pi t &  \cos \pi t 
\end{smallmatrix} \right)$, $t \in [0,1]$,
which models evacuation.
\end{proof}

We are now in a position to prove 
Theorems~\ref{thm:promotion} and~\ref{thm:D-promotion}.

\begin{proof}[Proof of Theorem~\ref{thm:promotion}]
Let $h(z) = (z+a_1)(z+a_2) \dotsb (z+a_N) = z^N + (-1)^N$.  
Then $h(z)$ is a $C_r$-fixed polynomial, and
the fibre $X(h(z)) = X(\bolda)$ is reduced.
Therefore the number of $C_r$-fixed points in $X(\bolda)$ 
is the generic answer.  A point $x \in X(\bolda)$ is $C_r$-fixed,
if and only if the corresponding tableau $T_x$ is $C_r$-fixed;
but by Lemma~\ref{lem:evacpromote}, the generator of $C_r$
acts as $\promote^{N/r}$ on $\SYT(\Rect)$.
Hence the $C_r$-fixed points in $X(\bolda)$ are
in bijection with $\promote^{N/r}$-fixed tableaux in $\SYT(\Rect)$.
\end{proof}

\begin{proof}[Proof of Theorem~\ref{thm:D-promotion}]
Case (i) is similar to the proof of Theorem~\ref{thm:promotion}.
The polynomial $h(z) = (z+a_1)(z+a_2) \dotsb (z+a_N) = z^N + (-1)^N$ 
is fixed
by $C_r$ and also by $\stdreflect$.  Moreover, $h(-1) \neq 0$,
so $h(z)$ is of type (1) or (2), depending on whether $\frac{N}{r}$ 
is odd or even.  By Lemma~\ref{lem:evacpromote}, the generators 
of $D_r$ act as $\promote^{N/r}$ and $\evac$, and hence the
$D_r$-fixed points of $X(\bolda)$ are in bijection with
tableaux in $\SYT(\Rect)$ fixed by $\promote^{N/r}$ and $\evac$.

For case (ii), we need to consider a different polynomial.
Let $h(z) = z^N - (-1)^N$.  Since $h(-1) = 0$, this is of type (1) or (3), 
depending on
whether $\frac{N}{r}$ is odd or even.  Let 
\[
   \phi = \begin{pmatrix}
          e^{-\pi i/2N} & 0 \\
          0 & e^{\pi i/2N} 
   \end{pmatrix}\,.
\]
Then $h(z) = (z+\phi(a_1))(z+\phi(a_2))\dotsb(z+\phi(a_N))$,
so $X(h(z)) = \phi (X(\bolda))$.  A point
$x \in X(h(z))$ is fixed by $D_r$ if and only if 
$\phi^{-1}(x) \in X(\bolda)$ is fixed by $\phi^{-1}D_r\phi$, i.e. by
\[
   \phi^{-1}
   \begin{pmatrix}e^{-\pi i/r} & 0 \\ 0 & e^{\pi i/r} \end{pmatrix}
   \phi =
   \begin{pmatrix}e^{-\pi i/r} & 0 \\ 0 & e^{\pi i/r} \end{pmatrix}
\]
and
\[
   \phi^{-1}
   \begin{pmatrix}0 & 1 \\ 1 & 0 \end{pmatrix}
   \phi =
   \begin{pmatrix}0 & 1 \\ 1 & 0 \end{pmatrix}
   \begin{pmatrix}e^{-\pi i/N} & 0 \\ 0 & e^{\pi i/N} \end{pmatrix}\,.
\]
By Lemma~\ref{lem:evacpromote}, the tableaux corresponding to 
points $\phi^{-1}(x)$ fixed by these two group elements
are those fixed by $\promote^{N/r}$ and $\evac \circ \promote$.
\end{proof}

\subsection{Maximally inflected real rational curves}
\label{sec:mirrc}

As an application of the results from this section, we will
briefly discuss the classification of maximally inflected real rational 
curves.
Let $\gamma : \RP^1 \to \RP^{d-1}$ be a real parameterized curve 
\[
    \gamma : z \mapsto [f_1(z) : \dotsb : f_d(z)]\,,
\]
with $f_1(z), \dots, f_d(z) \in \Rpoln$ linearly independent.  
A \defn{ramification point}
of $\gamma$ is a point $z \in \RP^1$ at which
$\gamma(z), \gamma'(z), \gamma''(z), \dots, \gamma^{(d-1)}(z)$ 
do not span $\RR^d$.
When $d=2$, a ramification point is
a critical point of the rational function $\frac{f_1(z)}{f_2(z)}$. 
When $d=3$, $\gamma$ is a planar curve and the simplest example
of a real ramification point of $\gamma$ is an inflection point.
In general, if $\langle \gamma \rangle$ denotes the vector space spanned
by $f_1(z), \dots, f_d(z)$,
it is not hard to see that a 
ramification point of $\gamma$ is a root of $\Wr(\langle\gamma\rangle;z)$; 
hence $\gamma$ can have at most $N$ real ramification points.  The space
$\mirrc$ of maximally inflected real rational curves is the algebraic 
set of all curves $\gamma$ of this form with exactly $N$ distinct, real
ramification points.  These curves and their relationship to 
Theorem~\ref{thm:MTV} were first studied in~\cite{KS}.

One interpretation of Theorem~\ref{thm:mirrc} is that the different ``types''
of curves in $\mirrc$ correspond to the $D_N$-orbits on $\SYT(\Rect)$,
where $D_N$ is the dihedral group of order $2N$ generated by $\promote$
and $\evac$.   Here, two curves are of the same type if 
one is obtained from the other by deforming continuously 
within $\mirrc$, and possibly composing with a reflection of 
$\RP^1$ or $\RP^{d-1}$ (or both).
The actual statement of Theorem~\ref{thm:mirrc} is about a finer 
invariant than this notion of type, in which one does not identify 
curves related by reflections.
On the other hand, curves of different types are fundamentally different 
because they
degenerate in different ways when the ramification points collide
(see~\cite{KS,Sot-F}); hence the type of a curve is the coarsest
invariant one wants for any classification.

The analysis of the components of $\mirrc$ is slightly different
depending on whether $d$ is odd or even.  This is because
$\RP^{d-1}$ is orientable if $d$ is even, and non-orientable if
$d$ is odd.  When $d$ is even, we put
$S := \{\pm 1\}$, and $s:=-1$.  Let $\Wr(\gamma;z)$ denote the
right hand side of~\eqref{eqn:wronskian}.
Although the coordiates of $\gamma(z)$ are
only defined up to a real scalar multiple, since $d$ is
even, the sign 
$\Wr(\gamma;z)$ is unchanged by any real rescaling 
of $[f_1(z) : \dots :f_d(z)]$.
We can therefore define the sign of $\gamma$ to be
\[
   \epsilon_\gamma 
:= \lim_{z\to 0^{-}} \sign(\Wr(\gamma, z))\,.
\]
If $\Wr(\gamma;0) \neq 0$, then $\epsilon_\gamma$ is just the
sign of $\Wr(\gamma; 0)$.  When $d$ is odd 
there is no well-defined notion of the sign of a curve:  
we put $S := \{1\}$, $s := 1$, and $\epsilon_\gamma := 1$ for
every $\gamma \in \mirrc$.

The group $\PGL_2(\RR)$ acts on $\mirrc$:  if
$\phi \in \PGL_2(\RR)$, then
$\phi(\gamma)(z) := [\phi(f_1)(z) : \dotsb : \phi(f_d)(z)]$ is
well-defined.  Let $\PGL_2(\RR)^\circ \subset \PGL_2(\RR)$ 
denote the connected component of the identity element.  It is
clear that if $\phi \in \PGL_2(\RR)^\circ$ then $\gamma$
and $\phi(\gamma)$ are in the same connected component of $\mirrc$.
We also have an action of $\PGL_d(\RR)$ on $\mirrc$, given 
by $\gamma \mapsto \theta \circ \gamma$ for $\theta \in \PGL_d(\RR)$.
If $\theta$ is in the connected component of the idenity element
then $\gamma$ and $\theta \circ \gamma$ are in the same component
of $\mirrc$.  Otherwise, we have
$\langle \gamma \rangle = \langle \theta \circ \gamma \rangle$
and $\epsilon_{\gamma} = s\,\epsilon_{\theta \circ \gamma}$.

We now associate to each $\gamma \in \mirrc$ a discrete invariant
$\orbit_\gamma$, which will be a subset of $\SYT(\Rect) \times S$.
Let $x = \langle \gamma \rangle$, and 
\[
   M_x := \{\phi \in \PGL_2(\RR)^\circ \mid \phi(x) \in X(\bolda)
   \text{ for }\bolda \subset \RR_{> 0}\}\,.
\]
We define $\orbit_\gamma$ to be the set of pairs 
\[
   \orbit_\gamma := 
   \big\{(T_{\phi(x)},\epsilon_{\phi(\gamma)}) 
   \bigmid \phi \in M_x\big\}\,.
\]
Our next goal is to show 
that $\orbit_\gamma$ has the properties needed to prove 
Theorem~\ref{thm:mirrc}.

\begin{lemma}
\label{lem:orbitinvariant1}
For every curve $\gamma \in \mirrc$, 
$\orbit_\gamma \in \Orbits(\promote,s)$;
i.e.  $\orbit_\gamma$ is an orbit of $(\promote,s)$ acting on
$\SYT(\Rect) \times S$.
\end{lemma}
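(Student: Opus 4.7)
The plan is to verify three facts about $\orbit_\gamma$: it is non-empty, it is contained in a single $(\promote, s)$-orbit, and it exhausts that orbit. Non-emptiness is immediate. Since $\bolda$ consists of $N$ distinct points on $\RP^1$, there is an open arc of $\RP^1$ disjoint from $\bolda$, and $\PGL_2(\RR)^\circ = \mathrm{PSL}_2(\RR)$ can carry this arc to one containing both $0$ and $\infty$. So some $\phi \in \PGL_2(\RR)^\circ$ satisfies $\phi(\bolda) \subset \RR_{>0}$, witnessing $M_x \neq \emptyset$ and $(T_{\phi(x)}, \epsilon_{\phi(\gamma)}) \in \orbit_\gamma$.

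For the single-orbit containment, fix $\phi_0, \phi_1 \in M_x$ and join them by a path $\phi_t$ in $\PGL_2(\RR)^\circ$. Then $\phi_t(x) \in X(\phi_t(\bolda))$ is the unique continuous lifting of $\phi_t(\bolda)$. Because $\phi_t$ stays in the identity component throughout, the cyclic order of $\phi_t(\bolda)$ on $\RP^1$ is preserved, so no reflection-type contribution can arise. Decomposing the path into small consecutive-element swaps (Theorem~\ref{thm:sliding}) and loops through $0$ and $\infty$ (which model $\promote$, as in Section~\ref{sec:lifting}), essentially the same argument used in the proof of Lemma~\ref{lem:fixeddihedral} yields $T_{\phi_1(x)} = \promote^{\,k}(T_{\phi_0(x)})$ for some $k \in \ZZ$.

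For the sign piece, when $d$ is even the quantity $\epsilon_{\phi_t(\gamma)}$ is locally constant in $t$ and flips precisely when a ramification point of $\phi_t(\gamma)$ crosses $z=0$; in the path model of $\promote$ from Section~\ref{sec:lifting}, exactly one ramification point crosses $0$ per loop. Therefore $\epsilon_{\phi_1(\gamma)} = s^{\,k}\,\epsilon_{\phi_0(\gamma)}$. When $d$ is odd, $s = 1$ and $\epsilon_\gamma \equiv 1$, so the equation holds trivially. Together this gives $(T_{\phi_1(x)}, \epsilon_{\phi_1(\gamma)}) = (\promote,s)^{\,k}(T_{\phi_0(x)}, \epsilon_{\phi_0(\gamma)})$, so $\orbit_\gamma$ is contained in a single $(\promote,s)$-orbit.

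To see the whole orbit is realized, I exhibit for any $\phi \in M_x$ some $\phi' \in M_x$ with $(T_{\phi'(x)}, \epsilon_{\phi'(\gamma)}) = (\promote,s)\cdot(T_{\phi(x)}, \epsilon_{\phi(\gamma)})$. A concrete $\phi'$ is obtained by composing $\phi$ with a rotation of $\RP^1$ (under the identification with $S^1$ from Section~2.2) that carries the smallest element of $\phi(\bolda)$ through one complete promotion loop --- past $0$, along the negative arc, through $\infty$, and back into $\RR_{>0}$ --- while leaving the other points inside $\RR_{>0}$. By the Step~2 analysis this shifts the tableau by $\promote$ and the sign by $s$; iterating (and running in reverse) produces every element of the orbit. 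The main obstacle is the decomposition argument in Step~2: one must argue carefully, using that $\phi_t$ never leaves the identity component, that only $\promote$ (and not $\evac$) can appear, and that each winding in $\pi_1(\PGL_2(\RR)^\circ) = \ZZ$ contributes exactly one $\promote$ and exactly one sign flip.
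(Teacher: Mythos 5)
Your proof is correct and follows essentially the same route as the paper: containment in a single orbit via the cyclic-order-preservation of $\PGL_2(\RR)^\circ$ together with the promotion-loop model and the root-crossing sign count, and the reverse inclusion by explicitly rotating the smallest element of $\phi(\bolda)$ around one full promotion loop. The only addition is your explicit verification that $M_x\neq\emptyset$, which the paper leaves implicit.
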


\begin{proof}
Let $x = \langle \gamma \rangle$, and $\phi \in M_x$.
Then $\phi(x) \in X(\bolda)$, 
where $\bolda$ is a set of the form 
$\bolda =\{0 < a_1 < \dots, a_N < 0\}$.  We must show that
$\orbit_\gamma$ is equal to the 
$(\promote,s)$-orbit of 
$(T_{\phi(x)},\epsilon_{\phi(\gamma)})$.

For any other $\phi' \in M_x$, we can write
$\phi' = \psi \phi$ where $\psi \in \PGL_2(\RR)^\circ$.  
Since $\phi'(x) \in X(\psi(\bolda))$ we must have 
\[
    0 < \psi(a_k) < \psi(a_{k+1}) < \dots < \psi(a_N) < \psi(a_1)
       < \psi(a_{k-1}) < \infty
\]
for some integer $k$.
As in the proof of Lemma~\ref{lem:fixeddihedral}, 
this implies that 
$T_{\phi'(x)} = T_{\psi(\phi(x))} = \promote^{k}(T_{\phi(x)})$.
If $d$ is even, the sign of $\Wr(\gamma;z)$ changes each time we
pass a root; hence we also have 
$\epsilon_{\phi'(\gamma)}= s^k \epsilon_{\phi(\gamma)}$.
Thus $\orbit_\gamma$ is contained in the $(\promote,s)$-orbit 
of $(T_{\phi(x)},\epsilon_{\phi(\gamma)})$.

For the reverse inclusion, choose any $\psi \in \PGL_2(\RR)^\circ$ 
such that 
\[
    0 < \psi(a_2) < \psi(a_3) < \dots < \psi(a_N) < \psi(a_1) < \infty\,.
\]
If $\phi' = \psi \phi$, then $\phi' \in M_x$ and 
$(T_{\phi'(x)}, \epsilon_{\phi'(\gamma)})
= (\promote(T_{\phi(x)}), s\,  \epsilon_{\phi(\gamma)})$.  
Thus $\orbit_\gamma$ contains
the $(\promote,s)$-orbit of 
$(T_{\phi(x)},\epsilon_{\phi(\gamma)})$.
\end{proof}

\begin{lemma}
\label{lem:orbitinvariant2}
For every orbit $\orbit \in \Orbits(\promote,s)$, there
exists a curve $\gamma$ such that $\orbit_\gamma = \orbit$.
\end{lemma}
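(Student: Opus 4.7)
The plan is to construct $\gamma$ directly by realizing the required tableau as the invariant of a specific point of $X$ lying over a positive real set of ramification points. Fix any representative $(T,\epsilon)\in\orbit$ and any strictly increasing set $\bolda=\{0<a_1<\dots<a_N\}$ of positive real numbers. By Theorem~\ref{thm:MTV} the fibre $X(\bolda)$ is reduced and consists entirely of real points, and the tableau correspondence $x\mapsto T_x$ of Section~\ref{sec:lifting} is a bijection onto $\SYT(\Rect)$; let $x\in X(\bolda)$ be the unique point with $T_x=T$.

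Because $x$ is real I would pick a basis $f_1,\dots,f_d\in\Rpoln$ of $x$ and set
\[
\gamma(z):=[f_1(z):\dots:f_d(z)].
\]
Then $\Wr(\gamma;z)$ agrees with $\Wr(x;z)$ up to a nonzero real scalar, so it has $N$ distinct real roots; this places $\gamma\in\mirrc$ with $\langle\gamma\rangle=x$. If $d$ is even and $\epsilon_\gamma\neq\epsilon$, I would simply replace $f_1$ with $-f_1$: this flips the sign of $\Wr(\gamma;z)$, and hence of $\epsilon_\gamma$, while leaving $\langle\gamma\rangle=x$ and the ramification data intact. When $d$ is odd, $S=\{1\}$ and there is nothing to adjust.

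Finally, since $\identity\in\PGL_2(\RR)^\circ$ and $\identity(x)=x\in X(\bolda)$ with $\bolda\subset\RR_{>0}$, the identity lies in $M_x$, so $(T_x,\epsilon_\gamma)=(T,\epsilon)\in\orbit_\gamma$ by the definition of $\orbit_\gamma$. By Lemma~\ref{lem:orbitinvariant1}, $\orbit_\gamma$ is a single $(\promote,s)$-orbit, and containing $(T,\epsilon)$ forces $\orbit_\gamma=\orbit$. The only genuine subtlety is the sign adjustment when $d$ is even, which works because $\sign\circ\det$ descends to a surjective homomorphism $\PGL_d(\RR)\to\{\pm 1\}$ in that case, so both signs are realized by an honest change of real basis of $x$. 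Everything else is immediate from Theorem~\ref{thm:MTV} and the tableau correspondence recalled in Section~\ref{sec:lifting}.
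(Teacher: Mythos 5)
Your proposal is correct and follows essentially the same route as the paper: choose a representative $(T,\epsilon)$, realize $T$ as $T_x$ for a point $x$ over a positive real set $\bolda$, take any real $\gamma$ with $\langle\gamma\rangle=x$, adjust the sign by a reflection in $\PGL_d(\RR)$ if needed, and invoke Lemma~\ref{lem:orbitinvariant1}. Your replacement of $f_1$ by $-f_1$ is exactly the paper's composition with a reflection $\theta\in\PGL_d(\RR)$, so the two arguments coincide.
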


\begin{proof}
Choose any $(T,\epsilon) \in \orbit$, 
any $\bolda = \{0 < a_1 < \dots < a_N < \infty\}$,
any $x \in X(\bolda)$ such that $T_x = T$, and any $\gamma$ such that
$\langle \gamma \rangle = x$.  If $\epsilon_\gamma \neq \epsilon$, then
replace $\gamma$ by $\theta \circ \gamma$ for any reflection 
$\theta \in \PGL_d(\RR)$.
Then $(T,\epsilon) \in \orbit_\gamma$, 
and
thus by Lemma~\ref{lem:orbitinvariant1}, $\orbit_\gamma = \orbit$.
\end{proof}

\begin{lemma}
\label{lem:orbitinvariant3}
Two curves $\gamma$ and $\gamma'$ are in the same connected component 
of $\mirrc$ if and only if $\orbit_\gamma = \orbit_{\gamma'}$.
\end{lemma}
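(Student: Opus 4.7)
My plan is to prove the two implications of the biconditional separately, combining the path-lifting results of Theorem~\ref{thm:sameorder} with the reality theorem Theorem~\ref{thm:MTV}. For the forward implication, I will show that $\orbit_\gamma$ is a locally constant function on $\mirrc$. Fix $\gamma_0 \in \mirrc$, let $x_0 = \langle\gamma_0\rangle$, and let $\bolda_0$ be its ramification set. Choose $\phi \in M_{x_0}$ so that $\phi(\bolda_0)$ is an increasing set of positive reals. For $\gamma$ in a small neighbourhood $U$ of $\gamma_0$, the set $\phi(\bolda)$ (where $\bolda$ is the ramification set of $\gamma$) remains an increasing set of positive reals, so $\phi \in M_x$ for $x = \langle\gamma\rangle$, and no ramification point of $\phi(\gamma)$ crosses $0$. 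Theorem~\ref{thm:sameorder} then forces $T_{\phi(x)}$ to be constant on $U$, and continuity of the Wronskian does the same for $\epsilon_{\phi(\gamma)}$. Hence $(T_{\phi(x)}, \epsilon_{\phi(\gamma)}) \in \orbit_\gamma$ is locally constant on $U$, and by Lemma~\ref{lem:orbitinvariant1} so is $\orbit_\gamma$. Since $\Orbits(\promote, s)$ is finite, local constancy yields constancy on connected components of $\mirrc$.

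For the reverse implication, suppose $\orbit_\gamma = \orbit_{\gamma'}$ and pick a common element $(T, \epsilon)$. By construction there exist $\phi \in M_x$ and $\phi' \in M_{x'}$ realising $(T_{\phi(x)}, \epsilon_{\phi(\gamma)}) = (T_{\phi'(x')}, \epsilon_{\phi'(\gamma')}) = (T, \epsilon)$. Since $\PGL_2(\RR)^\circ$ is path-connected, a path from the identity to $\phi$ (respectively $\phi'$) yields a path from $\gamma$ to $\phi(\gamma)$ (respectively $\gamma'$ to $\phi'(\gamma')$) inside $\mirrc$. After replacement we may therefore assume the ramification sets of $\gamma, \gamma'$ are themselves increasing sets $\bolda, \bolda' \subset (0,\infty)$, with $T_{\langle\gamma\rangle} = T_{\langle\gamma'\rangle} = T$ and $\epsilon_\gamma = \epsilon_{\gamma'} = \epsilon$.

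Now I deform $\bolda'$ to $\bolda$ through a path $\bolda_t$ of increasing $N$-element subsets of $(0,\infty)$. Each $X(\bolda_t)$ is reduced by Theorem~\ref{thm:MTV}, so $\langle\gamma'\rangle$ lifts uniquely to a continuous path $x_t \in X(\bolda_t)$; Theorem~\ref{thm:sameorder} gives $T_{x_t} = T$ for all $t$, and reducedness of the endpoint fibre (distinct points have distinct tableaux) forces $x_1 = \langle\gamma\rangle$. This path lifts further to a continuous path $\gamma_t \in \mirrc$ with $\gamma_0 = \gamma'$, using local triviality of the $\PGL_d(\RR)$-fibration $\gamma \mapsto \langle\gamma\rangle$ over the locus of real points of $X$ with real simple Wronskian roots. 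No ramification point of $\gamma_t$ crosses $0$, so $\epsilon_{\gamma_t}$ is continuous hence constant, giving $\epsilon_{\gamma_1} = \epsilon_\gamma$. Since $\langle\gamma_1\rangle = \langle\gamma\rangle$ and $\epsilon_{\gamma_1} = \epsilon_\gamma$, we have $\gamma_1 = \theta \circ \gamma$ for some $\theta \in \PGL_d(\RR)^\circ$; joining $\theta$ to the identity by a path in the connected group $\PGL_d(\RR)^\circ$ produces a path from $\gamma_1$ to $\gamma$ inside $\mirrc$, and concatenation delivers the required path from $\gamma'$ to $\gamma$. I expect the main obstacle to be the bookkeeping of the sign invariant $\epsilon$ in tandem with the tableau invariant $T$: in particular, one must verify carefully that the two connected components of the fibre of $\mirrc \to X$ (when $d$ is even) are precisely separated by $\epsilon$, and that the intermediate lifts in both directions interact correctly with the $(\promote, s)$-action built into the orbit invariant.
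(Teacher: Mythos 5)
Your proposal is correct and follows essentially the same route as the paper: both directions hinge on Theorem~\ref{thm:sameorder} (order-preserving paths preserve the tableau) together with continuity of the sign $\epsilon$, and your reverse implication — deform the root sets within $\RR_{>0}$, lift, identify the endpoint via injectivity of $x \mapsto T_x$ on reduced fibres, then use the sign to force the residual $\theta \in \PGL_d(\RR)$ into the identity component — is exactly the paper's argument. The only cosmetic difference is that you phrase the forward direction as local constancy of $\orbit_\gamma$ with a fixed $\phi$, whereas the paper transports a continuous family $\phi_t \in M_{x_t}$ along a global path; the content is the same.
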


\begin{proof}
Suppose $\gamma$, $\gamma'$ are in the same connected component
of $\mirrc$.  Let $\gamma_t$, $t \in [0,1]$, be a path in $\mirrc$
from $\gamma_0 = \gamma$ to $\gamma_1 = \gamma'$.  Let
$x_t = \langle\gamma_t \rangle$.  There exists a continuous path
$\phi_t$ in $\PGL_2(\RR)^\circ$ such that $\phi_t \in M_{x_t}$.  By
Theorem~\ref{thm:sameorder}, $T_{\phi_t(x_t)}$ is constant for
all $t \in [0,1]$.  Since $\Wr(\gamma_t; 0) \neq 0$ for all $t \in [0,1]$,
$\epsilon_{\phi_t(\gamma_t)}$ is also constant. 
Thus we have 
$(T_{\phi_0(x_0)}, \epsilon_{\phi_0(\gamma_0)}) 
= (T_{\phi_1(x_1)}, \epsilon_{\phi_1(\gamma_1)}) 
\in \orbit_\gamma \cap \orbit_{\gamma'}$,
which implies $\orbit_\gamma = \orbit_{\gamma'}$.

Conversely, suppose that $\orbit_\gamma = \orbit_{\gamma'}$.
Let $x = \langle \gamma \rangle$, $x' = \langle \gamma' \rangle$.
Choose any $(T,\epsilon) \in \orbit_\gamma$, and
any $\phi \in M_x$, $\phi' \in M_{x'}$, so that
$(T,\epsilon) = (T_{\phi(x)}, \epsilon_{\phi(\gamma)})
= (T_{\phi'(x')}, \epsilon_{\phi'(\gamma')})$
Choose a
path of sets $\bolda_t \subset \RR_{>0}$ such that $\phi(x) \in X(\bolda_0)$
$\phi'(x') \in X(\bolda_1)$, and let $x_t \in X(\bolda_t)$ be a lifting
of $\bolda_t$ such that $x_0 = \phi(x)$.  By
Theorem~\ref{thm:sameorder}, $T_{x_0} = T_{x_1} = T$, which shows
that $x_1 = \phi'(x')$.  Lifting $x_t$ to a path in $\mirrc$,
it follows that $\phi(\gamma)$ is in the same component as
$\theta \circ \phi'(\gamma')$ for some $\theta \in \PGL_d(\RR)$.
But since $\epsilon_{\phi(\gamma)} = \epsilon_{\phi'(\gamma')}$,
$\theta$ must be in the connected component of the identity element.
It follows that $\gamma$ and $\gamma'$ are in the same connected
component of $\mirrc$.
\end{proof}

\begin{proof}[Proof of Theorem~\ref{thm:mirrc}]
For (i), Lemmas~\ref{lem:orbitinvariant1},~\ref{lem:orbitinvariant2} 
and~\ref{lem:orbitinvariant3} imply that $\gamma \mapsto \orbit_\gamma$
descends to a bijection from the connected components of $\mirrc$ to
the set $\Orbits(\promote, s)$.
To see $\mirrc$ is smooth it is enough to show that
$\{\langle \gamma \rangle \mid \gamma \in \mirrc\}$ is smooth.  But
this follows from the fact that
$X$ is smooth at $x = \langle \gamma \rangle$, and the fibre
$\Wr^{-1}(\Wr(x))$ is reduced.
For any reflection $\theta \in \PGL_2(\RR)$ 
we have $\epsilon_\gamma = s\,\epsilon_{\theta\circ \gamma}$
and $\langle \gamma \rangle = \langle \theta \circ \gamma \rangle$; 
statement (ii) follows.
It is enough to prove (iii) for the reflection 
$\stdreflect \in \PGL_2(\RR)$.  
Let $\hat \gamma = \stdreflect \gamma$, 
$\hat x = \langle \hat \gamma \rangle$.  
If $\phi \in M_x$, then 
$\hat \phi = \stdreflect\phi\stdreflect \in M_{\hat x}$.
By Lemma~\ref{lem:tworeflections}(i), 
$T_{\hat \phi(\hat x)} = \evac(T_{\phi(x)})$.  To compute the
sign $\epsilon_{\hat \phi(\hat \gamma)}$, note that there is
a path $\gamma_t$ from 
$\hat \phi(\hat \gamma)$ to $\altreflect \phi(\gamma)$ such that
$\Wr(\gamma_t; 0) \neq 0$ for all $t$.  Thus the sign of
$\hat \phi(\hat \gamma)$ is the same as that of 
$\altreflect \phi(\gamma)$, which is 
$s^{\lfloor d/2\rfloor} \epsilon_{\phi(\gamma)}$.
Thus 
$\orbit_{\hat \gamma} = (\evac, s^{\lfloor d/2\rfloor}) \cdot \orbit_\gamma$,
as required.
\end{proof}


\section{The case of an involution}
\label{sec:involution}


\subsection{The generic cases}

Suppose $\phi \in \PGL_2(\CC)$ is an involution (other than the
identity element) and let $h(z) \in \PpolN$ be a $\phi$-fixed polynomial.
We now
consider the problem of counting the $\phi$-fixed points 
in $X(h(z))$.  This is, of course, a special case of our more general 
problem; however, it is an interesting one because there are 
several interpretations of its various ingredients.  
For example, special choices of $h(z)$ and $\phi$ lead to 
different interpretations of the problem.  
Also,  $\phi$ generates a group of order $2$, which we can think of
as either the cyclic group $C_2$,
or the dihedral group $D_1$.  Hence all four of our theorems 
about fixed points
(Theorems~\ref{thm:ribbon}, \ref{thm:promotion}, \ref{thm:D-ribbon}
and~\ref{thm:D-promotion}) will have something to say about this case.
Unlike the general case, we can explicitly
see how the different combinatorial objects that
arise are related to each other.  Finally, our 
results for involutions extend to the case where the roots of $h(z)$
are non-distinct.

Before we begin our investigations, we should note that
there will be different cases to consider.
Suppose $h(z) = \prod_{a_i \neq \infty} (z+a_i)$.  For now
we will assume that $\bolda = \{a_1, \dots, a_N\}$ is subset of $\CP^1$ 
such that $\phi(\bolda) = \bolda$, and also that $X(\bolda)$ is reduced.
There are exactly two points in $\CP^1$ that are fixed by $\phi$.
The number of non-fixed points of $\phi$ in $\bolda$ must be even.
Thus we have three cases:
\begin{packedenum}
\item[(1)] $N$ is odd,  and the set $\bolda$ contains exactly one
$\phi$-fixed point of $\CP^1$.
\item[(2)] $N$ is even, and the set $\bolda$ contains no
$\phi$-fixed points of $\CP^1$.
\item[(3)] $N$ is even, and the set $\bolda$ contains both $\phi$-fixed 
points $\CP^1$.
\end{packedenum}
When $\phi = \stdreflect$, these three cases correspond to the three 
generic types for $h(z)$ discussed in the introduction.  They are 
fundamentally different, and we should expect to see different behaviours 
in each one.  On the other hand, the answer only depends on which of 
the three cases we are in.

\begin{lemma}
\label{lem:involutioncases}
For any  $\bolda$ and $\phi$ as above, the number of $\phi$-fixed 
points in $X(\bolda)$ depends only on the 
number of $\phi$-fixed elements of the set $\bolda$.
\end{lemma}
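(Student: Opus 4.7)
The strategy is a deformation argument based on the irreducibility of the parameter space. Let $k \in \{0,1,2\}$ denote the number of $\phi$-fixed points of $\CP^1$ that lie in $\bolda$, and let $\mathcal{A}_k$ be the set of all $\phi$-invariant $N$-subsets of $\CP^1$ containing exactly $k$ of the two $\phi$-fixed points of $\CP^1$.

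First, I would show that $\mathcal{A}_k$ is an irreducible quasi-projective variety. The non-fixed elements of any $\bolda \in \mathcal{A}_k$ come in pairs $\{a, \phi(a)\}$, which are parametrized by points in the smooth quotient $\CP^1/\langle\phi\rangle \cong \CP^1$ minus the two branch points. Hence $\mathcal{A}_k$ is identified with an open subset of the $\frac{N-k}{2}$-fold symmetric product of an affine open subset of $\CP^1$, together with a choice of whether each of the $\phi$-fixed points of $\CP^1$ is included; this is an irreducible variety.

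Next, I would argue that the subset $\mathcal{A}_k^\circ \subset \mathcal{A}_k$ consisting of those $\bolda$ for which $X(\bolda)$ is reduced is a non-empty Zariski-open subset. Openness holds because the Wronski map is a finite morphism between smooth irreducible varieties of the same dimension, hence flat, so the locus in $\PpolN$ over which the scheme-theoretic fibre is reduced is Zariski-open. For non-emptiness, conjugate $\phi$ to the involution $z\mapsto 1/z$, whose fixed points are $\pm 1$; take $\bolda$ to consist of pairs $\{e^{i\theta_j}, e^{-i\theta_j}\}$ on the unit circle, together with $\pm 1$ as required to achieve the specified value of $k$. Transporting this configuration to $\RP^1$ via the M\"obius map $\psi$ from Section~\ref{sec:circle}, Theorem~\ref{thm:MTV} yields that $X(\bolda)$ is reduced.

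Finally, I would show that the function $f(\bolda) := \#\{x \in X(\bolda) \mid \phi(x) = x\}$ is locally constant on $\mathcal{A}_k^\circ$. Since $\Wr$ is finite and $X(\bolda)$ is reduced for $\bolda \in \mathcal{A}_k^\circ$, in a small open neighborhood $U$ of each such $\bolda$ the preimage $\Wr^{-1}(U)$ restricts to a topological covering space of $U$ in the classical topology; the involution $\phi$ acts on this cover by permuting the sheets, and the number of $\phi$-fixed sheets is constant on each connected component of $U$. Since $\mathcal{A}_k$ is irreducible, its non-empty Zariski-open subset $\mathcal{A}_k^\circ$ is connected in the classical topology, so $f$ is constant on $\mathcal{A}_k^\circ$, which is the content of the lemma. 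The most delicate point is the non-emptiness of $\mathcal{A}_k^\circ$ in each of the three cases; this is handled as above via Theorem~\ref{thm:MTV} together with the $\PGL_2(\CC)$-equivariance of the Wronski map.
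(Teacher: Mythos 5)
Your overall strategy --- deforming within the space of $\phi$-invariant root configurations, using finiteness/flatness of $\Wr$ to make the reduced locus Zariski-open, Theorem~\ref{thm:MTV} for non-emptiness, and a covering-space argument for local constancy of the fixed-point count --- is sound, and it differs from the paper, which instead notes that the $\phi$-fixed subscheme of $X$ is finite and flat over the space $\PpolN^\phi$ of $\phi$-fixed polynomials and then analyses the components of $\PpolN^\phi$. However, there is a genuine gap in the odd-$N$ case. Your space $\mathcal{A}_1$ is \emph{not} irreducible: writing $w_1, w_2$ for the two $\phi$-fixed points of $\CP^1$, the configurations containing $w_1$ and those containing $w_2$ form two disjoint pieces (your phrase ``together with a choice of whether each of the $\phi$-fixed points is included'' is exactly this discrete choice), each an open subset of a symmetric product. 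The single $\phi$-fixed element of $\bolda$ cannot move continuously from $w_1$ to $w_2$ while remaining fixed, so these are genuinely distinct connected components, and your monodromy argument only proves the count is constant on each component separately. But for $N$ odd the lemma asserts precisely that both components give the same answer, since ``the number of $\phi$-fixed elements of $\bolda$'' equals $1$ in both cases; that assertion is not established by connectedness.

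To close the gap you need an additional symmetry: choose an involution $\psi \in \PGL_2(\CC)$ commuting with $\phi$ and satisfying $\psi(w_1) = w_2$ (for $\phi = \stdreflect$ one may take $\psi = \altreflect$, which swaps $\pm 1$). By $\PGL_2(\CC)$-equivariance of the Wronski map, $\psi$ carries $X(\bolda)$ bijectively onto $X(\psi(\bolda))$, and since $\psi\phi\psi^{-1} = \phi$ it carries $\phi$-fixed points to $\phi$-fixed points; moreover it interchanges the two components of $\mathcal{A}_1$. Hence the counts on the two components agree. This is exactly how the paper treats the odd case (the two components of $\PpolN^\phi$ are isomorphic via such a $\psi$). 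With this addition, and with your argument as written for $k = 0$ and $k = 2$ (where the parameter space really is irreducible), the proof is complete.
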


\begin{proof}
Let $\PpolN^\phi$ denote the $\phi$-fixed polynomials in $\PpolN$.
Let $w_1, w_2 \in \CP^1$ denote the two $\phi$-fixed points.
The $\phi$-fixed point scheme of $X$ is flat and finite over $\PpolN^\phi$.  
Hence the number of points
in the reduced fibre $X(\bolda)$ depends only on the component of
of $\PpolN^\phi$ that $h(z)$ lies in.  

It is not hard to see that
$\PpolN^\phi$ has two exactly components.  If $N$ is odd, one 
component is the closure of the set of $h(z)$ identified with 
$\{\bolda \mid w_1 \in \bolda\}$;
the other is the closure of the set of all $h(z)$
identified with $\{\bolda \mid w_2 \in \bolda\}$.  These two
components are isomorphic --- any involution $\psi$ commuting with $\phi$ 
such that $\psi(w_1) = w_2$ gives an isomorphism --- so we obtain the
same answer for both components.
If $N$ is even, the two components
are non-isomorphic and correspond to cases (2) and (3).
\end{proof}

We now describe a few of the different interpretations of the
problem.  Call $x \in X$ an \defn{even-odd} point if $x$ has a 
basis $f_1(z), \dots, f_d(z)$ where each $f_i(z)$ is either an even 
polynomial or an odd polynomial.

\begin{proposition}
\label{prop:evenodd}
Let $h(z)$ be an even or an odd polynomial $N$ with distinct roots.  
The even-odd points in $X(h(z))$ are exactly the
the $\altreflect$-fixed points.
\end{proposition}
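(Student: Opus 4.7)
The plan is to reduce the proposition to a purely linear-algebraic statement about the $\altreflect$-action on $\poln$, independent of any particular fibre.  First I would choose a lift of $\altreflect \in \PGL_2(\CC)$ to $\SL_2(\CC)$, say $\psi = \left(\begin{smallmatrix} i & 0 \\ 0 & -i \end{smallmatrix}\right)$.  Applying the formula for the $\SL_2$-action to $f \in \pol{m}$ gives $\psi f(z) = i^m f(-z)$, so $\psi$ acts diagonalizably on $\poln$ with exactly two eigenspaces: the subspace of even polynomials, with eigenvalue $i^{n-1}$, and the subspace of odd polynomials, with eigenvalue $-i^{n-1}$.

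The easy direction of the equivalence follows at once: if $x \in X$ has a basis $f_1(z), \dots, f_d(z) \in \poln$ in which each $f_i$ is either even or odd, then $\psi$ scales each $\CC f_i$, so $\psi x = x$ as a subspace and $x$ is $\altreflect$-fixed.  For the other direction, suppose $x \in X(h(z))$ is $\altreflect$-fixed.  Then $\psi x = x$, so $\psi$ restricts to a linear automorphism of the $d$-dimensional space $x$.  Because $\psi$ is diagonalizable on $\poln$ with distinct eigenvalues on the two global eigenspaces, its restriction to $x$ is diagonalizable with eigenvalues in $\{i^{n-1},-i^{n-1}\}$, and every eigenvector of $\psi|_x$ must lie in one of the two global eigenspaces.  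Choosing a basis of such eigenvectors exhibits $x$ as an even-odd point.

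No step here is expected to present difficulty; the whole argument is a short eigenspace decomposition, and the hypothesis that $h(z)$ is an even or odd polynomial of degree $N$ with distinct roots plays no role in the equivalence itself.  Its purpose in the statement is contextual: it guarantees that $h(z)$ is itself $\altreflect$-fixed (so that $\altreflect$ acts on $X(h(z))$) and that the fibre is reduced, so that ``the $\altreflect$-fixed points of $X(h(z))$'' is an unambiguous set.  One can also observe, by factoring the parities $(-1)^{\epsilon_i}$ out of each column and $(-1)^{j-1}$ out of each row of the Wronskian matrix, that the Wronskian of any even-odd basis is itself even or odd, so that even-odd points genuinely do land in fibres of the type considered.
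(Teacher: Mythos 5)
Your proof is correct and is essentially the paper's own argument, made explicit: the paper likewise observes that an even-odd point is clearly $\altreflect$-fixed and that any $\altreflect$-fixed subspace of $\poln$ has a basis of $\altreflect$-eigenvectors (i.e.\ of even and odd polynomials), which is exactly your eigenspace decomposition. Your closing remarks on the role of the hypothesis on $h(z)$ are accurate but not needed for the equivalence, just as you say.
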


\begin{proof} An even-odd point in $X$ is clearly $\altreflect$-fixed.
Conversely, any $\altreflect$-fixed subspace of $\poln$ has a basis of
$\altreflect$-eigenvectors, and hence is an even-odd point.
\end{proof}

\begin{proposition}
\label{prop:imaginary}
Let $h(z)$ be a real polynomial whose roots are all pure imaginary
numbers.  Then the real points in $X(h(z))$ are exactly the
$\altreflect$-fixed points.
\end{proposition}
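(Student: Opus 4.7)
The plan is to transport the situation to one where the Mukhin-Tarasov-Varchenko theorem applies directly, by conjugating with a M\"obius transformation $\psi \in \SL_2(\CC)$ that carries the imaginary axis to the real axis. If I do this carefully, the composition ``conjugation through $\psi$'' will become precisely $\altreflect$ (in $\PGL_2(\CC)$), which is exactly the identification that relates reality with $\altreflect$-invariance.

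Concretely, I would set
\[
\psi := \begin{pmatrix} e^{i\pi/4} & 0 \\ 0 & e^{-i\pi/4} \end{pmatrix} \in \SL_2(\CC),
\]
so that as a M\"obius transformation $\psi^{-1}$ acts on $\CP^1$ by $z \mapsto iz$. Thus if $h(z)$ has pure imaginary roots, then $\tilde h(z) := \psi^{-1} h(z)$ is a polynomial of the same degree with only real roots. Assuming the roots of $h(z)$ are distinct (the hypothesis needed to invoke MTV), the roots of $\tilde h(z)$ are also distinct. Theorem~\ref{thm:MTV} therefore applies to $\tilde h(z)$: every point in $X(\tilde h(z))$ is real.

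The key observation is now a little algebraic identity. Since the entries of $\psi$ are complex conjugates of those of $\psi^{-1}$, we have $\bar\psi = \psi^{-1}$, and hence
\[
   \psi \bar\psi^{-1} \;=\; \psi^2 \;=\;
   \begin{pmatrix} i & 0 \\ 0 & -i \end{pmatrix} \;=\; \altreflect
   \quad \text{in } \PGL_2(\CC).
\]
For any $x \in X(h(z))$, the point $\tilde x := \psi^{-1} x$ lies in $X(\tilde h(z))$ and is therefore real, i.e.\ $\overline{\psi^{-1}x} = \psi^{-1}x$. Because $\psi^{-1}$ has the same entries as $\bar\psi$, this rearranges to $\bar x = \psi \bar\psi^{-1} x = \altreflect\,x$. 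So \emph{every} point of $X(h(z))$ satisfies $\bar x = \altreflect x$, from which the proposition follows at once: $x$ is real ($\bar x = x$) if and only if $\altreflect x = x$.

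The only delicate point is the compatibility of complex conjugation with the $\PGL_2(\CC)$-action on $X$: one must verify that for any $\phi \in \SL_2(\CC)$ and any $y \in X$, $\overline{\phi \cdot y} = \bar\phi \cdot \bar y$, and that passing to $\PGL_2(\CC)$ only introduces real scalar ambiguity, which is invisible for the reality question. This is a routine check from the definition of the action on $\poln$ given in Section~\ref{sec:introwronskian}, so the proof is essentially a one-line consequence of Theorem~\ref{thm:MTV} once the correct $\psi$ has been identified.
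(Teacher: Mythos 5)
Your proof is correct and follows essentially the same route as the paper's: both conjugate by the rotation $z \mapsto iz$ to obtain a polynomial with real roots and then invoke Theorem~\ref{thm:MTV}. The only difference is presentational: the paper passes through the intermediate notion of even-odd points (Proposition~\ref{prop:evenodd}) and argues with explicit bases, whereas you package the same information as the single identity $\bar x = \altreflect x$ holding on all of $X(h(z))$, which is arguably cleaner.
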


\begin{proof}
Let $\psi = \left(\begin{smallmatrix} i & 0 \\ 0 & 1 \end{smallmatrix}\right) 
\in \PGL_2(\CC)$.  We claim that $x \in X(h(z))$ is real point if and 
only if $\psi(x)$ is an even-odd point.  The result then follows from
the fact that $\psi$ commutes with $\altreflect$.

The polynomial $\psi(h(z)) = h(iz)$ has real roots, and hence by 
Theorem~\ref{thm:MTV} every point $\psi(x) \in X(h(iz))$ is real.  
Suppose $\psi(x)$ has basis $f_1(z), \dots, f_d(z)$ where each
$f_j(z)$ is either an even polynomial or an odd polynomial.
Then $x$ has basis $f_1(iz), \dots, f_d(iz)$, each of which
is either real, or $i$ times a real polynomial.  Hence $x$ is a real
point.  

Conversely, suppose $f_1(iz), \dots, f_d(iz)$ are real polynomials
that form a basis for $x \in X(h(z))$.  Since $f_1(z), \dots, f_d(z)$
span a real vector space $\psi(x) \in X(h(iz))$, so do their complex 
conjugates.
Since $f_j(iz)$ is real, $\overline{f_j}(z) = f_j(-z)$, and so we deduce 
that $f_1(-z), \dots, f_d(-z)$ span $\psi(x)$.  Thus
$\psi(x)$ is spanned by $f_1(z) \pm f_1(-z), \dots, f_d(z) \pm f_d(-z)$,
which are odd or even polynomials, i.e. $\psi(x)$ is an even-odd point.
\end{proof}

\begin{proposition}
\label{prop:onsomecircle}
Let $h(z)$ be a real polynomial with $N$ roots lying 
on a circle $O$ that is symmetrical with respect to the real axis.  
There is an involution $\phi \in \PGL_2(\CC)$ that restricts to 
complex conjugation on the circle $O$.  
The real points in $X(h(z))$ are the $\phi$-fixed points.
\end{proposition}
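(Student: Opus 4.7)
The plan is to reduce this proposition to the pure-imaginary case (Proposition~\ref{prop:imaginary}) by means of a real Möbius transformation that straightens the circle $O$ onto the imaginary axis, and then to transport everything back by conjugation.

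First I would construct the involution $\phi$. If $O = \RP^1$, then complex conjugation on $O$ is the identity, $h(z)$ has real roots, and Theorem~\ref{thm:MTV} already says every point of $X(h(z))$ is real, so $\phi = \identity$ satisfies the conclusion trivially. Otherwise, I would note that a generalized circle in $\CP^1$ that is stable under complex conjugation and is not $\RP^1$ is either a vertical line in $\CC$ together with $\infty$, or a genuine circle with center on $\RR$; in either case $O$ meets $\RP^1$ in exactly two points $w_1, w_2$. Choose any $\psi \in \PGL_2(\RR)$ with $\psi(w_1) = 0$ and $\psi(w_2) = \infty$. Then $\psi(O)$ is a generalized circle through $0$ and $\infty$, stable under complex conjugation (since $\psi$ has real coefficients), so $\psi(O)$ is a real line through the origin. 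Because $O \neq \RP^1$, it must be the imaginary axis $i\RP^1$. Define $\phi := \psi^{-1} \altreflect \psi \in \PGL_2(\RR)$, which is an involution. For $z \in O$, $\psi(z)$ is pure imaginary, so $\altreflect\psi(z) = -\psi(z) = \overline{\psi(z)}$; since $\psi^{-1}$ has real coefficients it commutes with complex conjugation, and hence $\phi(z) = \psi^{-1}(\overline{\psi(z)}) = \overline{\psi^{-1}(\psi(z))} = \bar z$, as required.

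Next I would identify real points with $\phi$-fixed points. Let $\tilde h(z) := \psi(h(z))$ (acting on $\pol{N}$ by the $\SL_2(\CC)$-action of the proof of Proposition~\ref{prop:imaginary}). Because $\psi$ has real coefficients and $h$ is real, $\tilde h$ is a real polynomial; because the roots of $\tilde h$ are the $\psi$-images of the roots of $h$, they lie on $\psi(O) = i\RP^1$. Thus $\tilde h$ is a real polynomial with pure imaginary roots, and Proposition~\ref{prop:imaginary} applies: the real points of $X(\tilde h(z))$ are precisely the $\altreflect$-fixed points. The $\PGL_2(\CC)$-equivariance of $\Wr$ gives a bijection $\psi : X(h(z)) \to X(\tilde h(z))$. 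Because $\psi$ has real coefficients, a point $x$ is real iff $\psi(x)$ is real; because $\psi \phi \psi^{-1} = \altreflect$, $x$ is $\phi$-fixed iff $\psi(x)$ is $\altreflect$-fixed. Chaining these equivalences gives the claim.

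The only delicate step will be the verification that $\phi$ really restricts to complex conjugation on $O$ (as opposed to some other antiholomorphic involution with the same fixed locus on $\RP^1$); this is settled by the explicit computation above, which uses that $\psi$ has real entries so that $\psi^{-1}$ commutes with $\bar{\,\cdot\,}$ on $\CP^1$. Everything else is routine transport of structure along $\psi$.
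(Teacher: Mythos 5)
Your proof is correct and follows essentially the same route as the paper: the paper likewise picks a real M\"obius transformation $\psi$ identifying $O$ with the imaginary axis, sets $\phi = \psi^{-1}\altreflect\psi$ (the paper writes the conjugation in the other direction), and reduces to Proposition~\ref{prop:imaginary} via the observation that $x$ is real and $\phi$-fixed iff its $\psi$-image is real and $\altreflect$-fixed. Your write-up simply supplies more detail (the $O=\RP^1$ edge case and the verification that $\phi$ really is conjugation on $O$) than the paper's three-line argument.
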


Hence, by Lemma~\ref{lem:involutioncases} the number of real points 
in $X(h(z))$ depends only the number of real roots of $h(z)$.
If $O = S^1$ is the unit circle, then this involution is
$\phi = \stdreflect$.

\begin{proof}
There is a M\"obius transformation $\psi \in \PGL_2(\RR)$ that
sends the imaginary line to $O$.  The involution 
$\phi = \psi \altreflect \psi^{-1}$ restricts to complex conjugation
on $O$.  Since $x \in X(h(z))$ is real and $\phi$-fixed if and only 
if $\psi^{-1}(x)$ is real and $\altreflect$-fixed,
this result is equivalent to Proposition~\ref{prop:imaginary}.
\end{proof}

\begin{proposition}
\label{prop:evenreal}
Suppose $N$ is even.
Let $h(z)$ be a real, even polynomial with $N$ distinct non-real 
roots on the
unit circle.  Then the number of even-odd points in $X(h(z))$
is equal to the number of real points in $X(h(z))$.
\end{proposition}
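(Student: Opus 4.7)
The plan is to combine characterizations of the two sets as fixed-point sets of involutions, then transfer between the involutions using conjugacy in $\PGL_2(\CC)$.

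Since $h(z)$ is even with $N$ distinct roots, Proposition~\ref{prop:evenodd} identifies the even-odd points of $X(h(z))$ with the $\altreflect$-fixed points. Since $h(z)$ is real with its $N$ distinct roots on the unit circle $S^1$ (which is symmetric about the real axis), Proposition~\ref{prop:onsomecircle} identifies the real points of $X(h(z))$ with the $\phi$-fixed points, where $\phi$ is the involution of $\CP^1$ that restricts to complex conjugation on $S^1$. As $z \mapsto 1/z$ is complex conjugation on $S^1$, this involution is exactly $\stdreflect$. The proposition therefore reduces to showing that $X(h(z))$ contains the same number of $\altreflect$-fixed and $\stdreflect$-fixed points.

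For this, any two involutions in $\PGL_2(\CC)$ are conjugate, so pick $\psi \in \PGL_2(\CC)$ with $\psi \altreflect \psi^{-1} = \stdreflect$; for instance, $\psi = \left(\begin{smallmatrix} 1 & 1 \\ -1 & 1 \end{smallmatrix}\right)$ works, whose M\"obius action sends the $\altreflect$-fixed pair $\{0,\infty\}\subset\CP^1$ to the $\stdreflect$-fixed pair $\{1,-1\}$. By $\PGL_2(\CC)$-equivariance of the Wronski map, the action of $\psi$ on $X$ restricts to a bijection from the $\altreflect$-fixed points in $X(h(z))$ to the $\stdreflect$-fixed points in $X(\psi(h(z)))$. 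It remains to show $X(h(z))$ and $X(\psi(h(z)))$ contain the same number of $\stdreflect$-fixed points, which follows from Lemma~\ref{lem:involutioncases} applied to $\phi=\stdreflect$, provided both polynomials sit in the same component of $\PpolN^{\stdreflect}$. The roots of $h(z)$ avoid $\pm 1$ since they are non-real, and the roots of $\psi(h(z))$ are related to those of $h(z)$ by a M\"obius transformation sending $\{0,\infty\}$ to $\{1,-1\}$ (or its inverse); since the roots of $h(z)$ avoid $\{0,\infty\}$ (lying on $S^1$), the roots of $\psi(h(z))$ avoid $\{1,-1\}$. Both polynomials thus fall in case (2) of Lemma~\ref{lem:involutioncases} (which applies because $N$ is even), and their $\stdreflect$-fixed-point counts coincide.

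The main obstacle is essentially bookkeeping: one must check that $h$ being $\altreflect$-fixed implies $\psi(h)$ is $\stdreflect$-fixed (immediate from $\stdreflect = \psi \altreflect \psi^{-1}$ and the group action) and be careful about the convention for how $\PGL_2(\CC)$ acts on polynomials versus on $\CP^1$. A minor technicality is that Lemma~\ref{lem:involutioncases} is phrased for reduced fibres; the cleanest framing is that the $\phi$-fixed-point scheme is flat and finite over $\PpolN^{\phi}$, so the scheme-theoretic degree is constant on each component and agrees with the geometric count whenever the fibre is reduced.
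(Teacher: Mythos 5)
Your proof is correct and takes essentially the same route as the paper: both identify the even-odd points with the $\altreflect$-fixed points (Proposition~\ref{prop:evenodd}), the real points with the $\stdreflect$-fixed points (Proposition~\ref{prop:onsomecircle} with $O=S^1$), and then invoke Lemma~\ref{lem:involutioncases} to equate the two counts. The only difference is bookkeeping: the paper applies the lemma directly to the single root set $\bolda$, which is invariant under both involutions and contains no fixed point of either, whereas you conjugate $\altreflect$ to $\stdreflect$ by an explicit $\psi$ and compare $h(z)$ with $\psi(h(z))$ inside one component of $\PpolN^{\stdreflect}$, which simply makes explicit the conjugation that is implicit in the paper's cross-involution use of the lemma.
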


\begin{proof}
We consider two involutions: $\altreflect$, whose fixed
points in $\CP^1$ are $0$, $\infty$, and $\stdreflect$, whose
fixed points in $\CP^1$ are $\pm 1$.  If 
$h(z) = \prod_{a_i \neq \infty} (z+a_i)$ then the set $\bolda$ is 
fixed by both $\altreflect$ and $\stdreflect$, and none of the
elements of $\bolda$ are fixed by either.  It follows from
Lemma~\ref{lem:involutioncases} that the number of $\altreflect$-fixed 
points
in $X(h(z))$ is equal to the number $\stdreflect$-fixed points
in $X(h(z))$.  But the former are even-odd points, and the
latter are real points.
\end{proof}

This last result has an interesting combinatorial interpretation.
By Lemma~\ref{lem:evacpromote} the real points 
in $X(h(z))$ correspond to $\evac$-fixed tableaux in $\SYT(\Rect)$. 
Note that $\evac$-fixed
tableaux are rotationally-invariant tableaux, so this statement
is in agreement with both Theorems~\ref{thm:D-ribbon} 
and~\ref{thm:D-promotion} (here we
are considering the case where $r=1$, and $h(z)$ is of type (2)).
On the other hand, by Theorem~\ref{thm:promotion}, the odd-even points 
correspond to $\promote^{N/2}$-fixed tableaux.  
Hence from Proposition~\ref{prop:evenreal}, we deduce:

\begin{corollary}
\label{cor:combinv1}
If $N$ is even, the number of $\evac$-fixed tableaux in $\SYT(\Rect)$ 
is equal to the number the number of $\promote^{N/2}$-fixed tableaux
in $\SYT(\Rect)$.
\end{corollary}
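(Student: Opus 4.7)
The plan is to combine Proposition~\ref{prop:evenreal} with the two translations that identify each side of the desired combinatorial identity with a geometric fixed-point count. The key is to produce a single polynomial $h(z)$ that simultaneously meets all the relevant hypotheses.

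First I would choose $h(z) = z^N + 1$. Since $N$ is even, this polynomial is real, even, and has $N$ distinct non-real roots $a_k = e^{(2k-1)\pi i/N}$ all lying on the unit circle $S^1$. Grouping complex-conjugate pairs $\{a_k, \overline{a_k}\}$ into real quadratic factors $z^2 + h_i$ with $h_i > 0$ exhibits $h(z)$ in the form \eqref{eqn:maxfixedpoly} with $r=2$ and $\ell = N/2$, which makes it an instance of the polynomials considered in Theorem~\ref{thm:promotion}; the fibre $X(h(z))$ is reduced because the unit circle is $\PGL_2(\CC)$-equivalent to $\RP^1$ and so Theorem~\ref{thm:MTV} applies after a suitable M\"obius change of variable.

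Next I would apply Proposition~\ref{prop:evenreal} to this $h(z)$ to conclude that the number of even-odd points of $X(h(z))$ equals the number of real points of $X(h(z))$, and then translate each side into a tableau count. For the even-odd points, Proposition~\ref{prop:evenodd} identifies them with the $\altreflect$-fixed points, i.e.\ the $C_2$-fixed points, and Theorem~\ref{thm:promotion} (with $r=2$) then identifies these with the $\promote^{N/2}$-fixed tableaux in $\SYT(\Rect)$. For the real points, Proposition~\ref{prop:onsomecircle} applied to the circle $O = S^1$ identifies them with the $\stdreflect$-fixed points, and Lemma~\ref{lem:evacpromote} identifies the $\stdreflect$-action on $\SYT(\Rect)$ with $\evac$, so these are in bijection with the $\evac$-fixed tableaux in $\SYT(\Rect)$. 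Stringing these equalities together gives the corollary.

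There is essentially no obstacle in the argument once the single-polynomial trick is spotted; the only point requiring attention is verifying that $h(z) = z^N+1$ genuinely lies in the generic locus for Theorem~\ref{thm:promotion} so that the combinatorial count applies, and this is immediate from the distinctness of its roots together with the $\PGL_2(\CC)$-equivariance of the Wronski map and Theorem~\ref{thm:MTV}.
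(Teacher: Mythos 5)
Your argument is correct and is essentially the paper's own proof: the paper derives Corollary~\ref{cor:combinv1} by applying Proposition~\ref{prop:evenreal} to exactly this situation, identifying the even-odd points with $\promote^{N/2}$-fixed tableaux via Theorem~\ref{thm:promotion} and the real points with $\evac$-fixed tableaux via Proposition~\ref{prop:onsomecircle} and Lemma~\ref{lem:evacpromote}. One small slip: a complex-conjugate pair on the unit circle gives $(z+a)(z+\overline{a}) = z^2 + 2\operatorname{Re}(a)z + 1$, which is \emph{not} of the form $z^2+h_i$ (and the $h_i$ need not be positive); to exhibit $z^N+1$ in the form \eqref{eqn:maxfixedpoly} with $r=2$ you should instead pair each root $a$ with its negative $-a$ (the root set is closed under negation since $N$ is even), or simply observe that $z^N+1$ is a polynomial in $z^2$. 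This does not affect the validity of the rest of the argument.
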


The $r=2$ case of Theorem~\ref{thm:ribbon} asserts that the fixed
points of an involution are also counted domino tableaux.
We can prove the $r=2$ case of Theorem~\ref{thm:ribbon} using 
Lemma~\ref{lem:evacpromote}.

\begin{theorem}[$r=2$ case of Theorem~\ref{thm:ribbon}]
\label{thm:specialribbon}
Let $h(z)$ be an even or an odd polynomial of degree $N$ or $N-1$,
with distinct real roots.  There is a unique $h(z)$-compatible Richardson
variety $X_{\lambda/\mu}$, and the number of even-odd points in 
$X_{\lambda/\mu}(h(z))$ is the number of domino tableaux of shape
$\lambda/\mu$.
\end{theorem}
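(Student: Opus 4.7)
The plan is to identify the unique compatible Richardson variety, use the tableau encoding of Section~\ref{sec:lifting} (extended to Richardson varieties, as in~\cite{Pur-Gr}) to parameterize $X_{\lambda/\mu}(h(z))$ by $\SYT(\lambda/\mu)$, determine the induced action of $\altreflect$ on this set, and identify its fixed tableaux with domino tableaux.

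First, uniqueness of $X_{\lambda/\mu}$ follows from a short parity argument: $h$ even with distinct roots forces $h(0) \ne 0$, while $h$ odd forces $h(0) = 0$, so $|\mu| = \mindeg h \in \{0,1\}$ and $|\lambda| = \deg h \in \{N{-}1, N\}$ each determine a unique partition in $\Lambda$. By Theorem~\ref{thm:MTV}, $X(h(z))$ is reduced with only real points, and the tableau construction of~\cite{Pur-Gr} gives a bijection $x \leftrightarrow T_x$ between $X_{\lambda/\mu}(h(z))$ and $\SYT(\lambda/\mu)$. By Proposition~\ref{prop:evenodd} the even-odd points are exactly the $\altreflect$-fixed ones, so I reduce to counting $\altreflect$-fixed tableaux in $\SYT(\lambda/\mu)$.

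The heart of the argument is to describe the induced action of $\altreflect$. The roots of $h$ split as $\altreflect$-fixed singletons (each of $0, \infty$, if present, absorbed into $\mu$ or into $\Rect \setminus \lambda$) and two-element orbits $\{\pm b_k\}$ with $b_k > 0$. In the $\preceq$-order on $\bolda$, these orbits occupy consecutive pairs of positions $(2k{-}1, 2k)$ with $+b_k$ first by the tiebreaker. Since $|a_{2k-1}| = |a_{2k}|$, Lemma~\ref{lem:tworeflections}(i) does not apply directly to $\bolda$. I would perturb $\bolda$ to $\bolda^\epsilon$ by replacing each $-b_k$ with $-b_k - \epsilon$; this preserves the $\preceq$-order (so by Theorem~\ref{thm:sameorder} the tableau is unchanged under the $\SYT$-identification) but achieves strictly increasing absolute values, so Lemma~\ref{lem:tworeflections}(i) gives $T_{\altreflect(y)}(-\bolda^\epsilon) = -T_y(\bolda^\epsilon)$ for $y$ lifting $x$. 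To compare with $T_{\altreflect(x)}$, I would lift the path from $\bolda$ to $-\bolda^\epsilon$ obtained by moving only the positive roots $b_k \mapsto b_k + \epsilon$: along this path, for each $k$ the pair $\{b_k, -b_k\}$ reverses its $\preceq$-order independently and with opposite signs, so Theorem~\ref{thm:sliding}(ii)--(iii) yields a swap of the positions of entries $2k{-}1$ and $2k$ in $T_x$ precisely when they lie in different rows and different columns.

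Letting $\epsilon \to 0$, I conclude that $T_{\altreflect(x)}$ is obtained from $T_x$ by performing, for each $k$, the swap of entries $2k{-}1$ and $2k$ whenever these lie in distinct rows and distinct columns. Hence $x$ is $\altreflect$-fixed if and only if, in $T_x$, every pair $\{2k{-}1, 2k\}$ occupies a single row or column, that is, forms a $2$-ribbon. Such tableaux biject with domino tableaux of shape $\lambda/\mu$ by collapsing each pair $\{2k{-}1, 2k\}$ into a single domino labelled $k$; the order of entries within a horizontal or vertical domino is forced by the $\SYT$ increasing condition, so this assignment is a bijection. The principal obstacle is the interplay between the two failures of Lemma~\ref{lem:tworeflections}(i) at $\bolda$ (equal absolute values in each pair) and the path from $\bolda$ to $-\bolda^\epsilon$ (where each pair must reverse exactly once without interference); the asymmetric choice of perturbation is arranged so that the resulting Sch\"utzenberger slides decompose pair-by-pair, allowing Theorem~\ref{thm:sliding} to be applied one pair at a time.
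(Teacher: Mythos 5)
Your proposal is correct and follows essentially the same route as the paper: reduce to counting $\altreflect$-fixed points via Proposition~\ref{prop:evenodd}, break the ties $|a_{2k-1}|=|a_{2k}|$ by a small $\varepsilon$-perturbation so that Lemma~\ref{lem:tworeflections}(i) applies, and use Theorem~\ref{thm:sliding} to show the induced action on tableaux swaps each pair $\{2k{-}1,2k\}$ exactly when they lie in different rows and columns, so fixed points correspond to domino tableaux. The paper implements the same perturbation idea by factoring $\altreflect$ through translations by $\varepsilon$ rather than by shifting only the negative roots, but the mechanism and conclusion are identical, including the brief treatment of the three degenerate shapes involving $0$ or $\infty$.
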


Here the involution we use is $\altreflect$, which is the
generator for $C_2$.
The four possibilities for $\lambda/\mu$ are: $\Rect$ (if $h(z)$ is even
and $N$ is even), $\Rect/1$ (if $h(z)$ is odd and $N$ is odd),
$1^\vee$ (if $h(z)$ is even and $N$ is odd), $1^\vee/1$ (if $h(z)$ is 
odd and $N$ is even).  The two cases where $N$ is odd give the same
answer; here $\bolda$ contains either $0$ or $\infty$, but not both.
When $N$ is even and $\lambda/\mu = \Rect$, we are in the
case where $\bolda$ contains neither $0$ nor $\infty$.
When $N$ is even and $\lambda/\mu = 1^\vee/1$, $\bolda$ contains both 
$0$ and $\infty$.  Hence these cases correspond exactly to the
different possibilities for the number of $\altreflect$-fixed elements
in $\bolda$.

\begin{proof}
First, suppose that $\lambda/\mu = \Rect$.
Then we can write $\bolda = \{a_1, \dots, a_N\}$, where 
$0 < a_1 < a_3 < \dots < a_{N-1} < \infty$, 
and $a_{2k} = -a_{2k-1}$ for all $k$.
Let $T = T_x(\bolda)$ be the real valued tableau associated to a point 
$x \in X(\bolda)$.  To compute $\altreflect T_x$, we write
\[
  \begin{pmatrix} -1 & 0 \\ 0 & 1 \end{pmatrix} =
  \begin{pmatrix} 1 & \varepsilon \\ 0 & 1 \end{pmatrix}
  \begin{pmatrix} -1 & 0 \\ 0 & 1 \end{pmatrix}
  \begin{pmatrix} 1 & \varepsilon \\ 0 & 1 \end{pmatrix}
\]
where $\varepsilon$ is a small positive real number.
Then $\left(\begin{smallmatrix} 
1 & \varepsilon \\ 0 & 1 
\end{smallmatrix}\right) \bolda 
= \{a_1 + \varepsilon, \dots, a_N + \varepsilon\}$.  Note that
\begin{gather*}
 a_1 \prec a_2 \ \prec\  a_3 \prec a_4 \ \prec\  \dotsb\  \prec\  a_{N-1} \prec a_N \\
 a_2 + \varepsilon \prec a_1 + \varepsilon \ \prec \ a_4 + \varepsilon 
\prec a_3 + \varepsilon \ \prec\ \dotsb\ \prec\ a_N + \varepsilon \prec a_{N-1} + 
\varepsilon
\end{gather*}
By Theorem~\ref{thm:sliding}, 
$\left(\begin{smallmatrix} 
1 & \varepsilon \\ 0 & 1 
\end{smallmatrix}\right)T$ is obtained from $T$ 
by switching the positions of $a_{2k-1}$ and $a_{2k} = -a_{2k-1}$
if they are in the same row and column; then adding $\varepsilon$
to each of the entries.  The action of $\altreflect$ on the
result simply negates each of the entries by 
Lemma~\ref{lem:tworeflections}(i); finally acting by 
$\left(\begin{smallmatrix} 
1 & \varepsilon \\ 0 & 1 
\end{smallmatrix}\right)$ the second time does not change 
the $\preceq$-order
of the entries, so we simply add $\varepsilon$ to each entry.  
The net result is that $\altreflect T$ is obtained 
from $T$
by switching the positions of $a_{2k-1}$ and $a_{2k}$, whenever
these are \emph{not} in the same row or column.

It follows that $\phi(x) = x$ if and only if the corresponding 
tableau $T_x(\bolda)$
has the property that $a_{2k-1}$ and $a_{2k}$ are in the same
row or column for all $k = 1, \dots, \frac{N}{2}$.  These real
valued tableaux are in bijection with domino tableaux of shape $\Rect$.

In the other three cases, $\bolda$ is of the form
\newsavebox{\mysavebox}
\sbox{\mysavebox}{$\{0, a_1, \dots, a_{N-2}, \infty\}$}
\begin{alignat*}{3}
&&& \makebox[\wd\mysavebox][c]{$\{0, a_1, \dots, a_{N-1}\}$}  
&\qquad&\text{if }\lambda/\mu = \Rect/1\,, \\
&&& \makebox[\wd\mysavebox][c]{$\{a_1, \dots, a_{N-1}, \infty\}$} 
&\qquad&\text{if }\lambda/\mu = 1^\vee\,, \\
&\text{or}&\qquad& \usebox{\mysavebox}
&\qquad&\text{if }\lambda/\mu = 1^\vee/1\,,
\end{alignat*}
where $0 < a_{2k-1} = -a_{2k} < \infty$ for all $k$.   
A similar argument applies.
\end{proof}

\begin{corollary}
\label{cor:combinv2}
If $N$ is even, the number of rotationally-invariant tableaux in 
$\SYT(\Rect)$
is equal to the number of domino tableaux of shape $\Rect$.
\end{corollary}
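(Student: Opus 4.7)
The plan is to realise both quantities as the number of involution-fixed points in one and the same Wronski fibre. Choose $h(z) \in \Rpol{N}$ to be a real, even polynomial of degree $N$ whose $N$ roots are distinct non-real points on the unit circle (such polynomials exist since $N$ is even). Then $h(z)$ is simultaneously fixed by the two involutions $\altreflect$ and $\stdreflect$ in $\PGL_2(\CC)$, and both fixed-point counts in $X(h(z))$ are meaningful.

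First I would count the $\stdreflect$-fixed points of $X(h(z))$. Under the identification $X(\bolda) \leftrightarrow \SYT(\Rect)$ described in Section~\ref{sec:circle} (via the M\"obius transformation $\psi$ that sends the unit circle to $\RP^1$), Lemma~\ref{lem:evacpromote} says that $\stdreflect$ acts as the evacuation operator $\evac$. Since $\Rect$ is rectangular, $\evac$ coincides with the operation of rotating $180^\circ$ and replacing each entry $k$ by $N{+}1{-}k$; in other words, the $\evac$-fixed tableaux in $\SYT(\Rect)$ are exactly the rotationally-invariant tableaux. Hence the number of $\stdreflect$-fixed points of $X(h(z))$ equals the number of rotationally-invariant tableaux in $\SYT(\Rect)$.

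Next I would count the $\altreflect$-fixed points of $X(h(z))$. For this $h(z)$ the set $\bolda$ of roots contains neither $0$ nor $\infty$, placing us in case (2) of the involution trichotomy. By Lemma~\ref{lem:involutioncases}, the number of $\altreflect$-fixed points depends only on this combinatorial datum, so I may replace $h(z)$ by any other $\altreflect$-fixed polynomial of the same type without changing the count. I would choose an even polynomial of degree $N$ with distinct nonzero real roots; then $\bolda$ again misses $0$ and $\infty$, but now the $r=2$ case of Theorem~\ref{thm:ribbon} — namely Theorem~\ref{thm:specialribbon}, applied with $\lambda/\mu = \Rect$ — identifies the $\altreflect$-fixed (i.e. even-odd) points of the fibre with the domino tableaux of shape $\Rect$.

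Finally, Proposition~\ref{prop:evenreal} asserts that for our original $h(z)$ (real, even, with $N$ distinct non-real roots on the unit circle) the number of $\altreflect$-fixed points in $X(h(z))$ equals the number of $\stdreflect$-fixed points. Chaining the two identifications above through this equality yields the desired bijection of cardinalities. There is no genuine obstacle here: every ingredient is already established, and the only point requiring care is that both involutions must act on the same polynomial $h(z)$, so that the two interpretations of the fixed-point count in $X(h(z))$ literally coincide.
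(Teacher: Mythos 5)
Your proposal is correct and follows essentially the same route as the paper: the corollary is deduced there by chaining Theorem~\ref{thm:specialribbon} (even-odd points $=$ domino tableaux), Proposition~\ref{prop:evenreal} together with Lemma~\ref{lem:involutioncases} (even-odd count $=$ real/$\stdreflect$-fixed count), and Lemma~\ref{lem:evacpromote} ($\stdreflect$-fixed points $=$ $\evac$-fixed $=$ rotationally-invariant tableaux). The only difference is cosmetic: the paper also records a later bijective proof via the map $\varphi$, but that is supplementary to the counting argument you give.
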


A similar argument gives the $r=1$ case of Theorem~\ref{thm:D-ribbon}.

\begin{theorem}[$r=1$ case of Theorem~\ref{thm:D-ribbon}]
\label{thm:specialD-ribbon}
Let $h(z)$ be polynomial of degree $N$, whose roots are distinct real
numbers invariant under $z \mapsto \frac{1}{z}$.  If $h(1) \neq 0$
or $h(-1) \neq 0$, then the the number of
$\stdreflect$-fixed points in $X(h(z))$ is the number of
rotationally-tableaux in $\SYT(\Rect)$.  If $h(1) = h(-1) = 0$,
then the number of $\stdreflect$-fixed points in $X(h(z))$ is
the number of rotationally-invariant tableaux in $\SYT(\Rect)$
where $\frac{N}{2}$ and $\frac{N}{2}{+}1$ are in the same row
or column.
\end{theorem}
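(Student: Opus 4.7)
The plan is to mirror the three-step perturbation argument from the proof of Theorem~\ref{thm:specialribbon}, with the perturbation tailored to $\stdreflect$ rather than $\altreflect$. By Lemma~\ref{lem:involutioncases} the count of $\stdreflect$-fixed points depends only on the number of $\stdreflect$-fixed elements of $\bolda$, so there are three generic configurations to address: neither of $\pm 1$ lies in $\bolda$ (so $h(1)h(-1) \neq 0$ and $N$ is even), exactly one of $\pm 1$ lies in $\bolda$ ($N$ odd), or both lie in $\bolda$ (so $h(1)=h(-1)=0$ and $N$ is even).

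For the first two configurations one can choose $\bolda$ so that $|a_1| < |a_2| < \cdots < |a_N|$ strictly: the pairs $\{a,1/a\}$ with $a \neq \pm 1$ contribute distinct absolute values, and any single fixed element $\pm 1$ sits by itself at the middle position. Lemma~\ref{lem:tworeflections}(ii) then applies directly to give $\stdreflect T = T^\vee$; since $\bolda$ is $\stdreflect$-invariant one has $1/a_k = a_{N+1-k}$, so on the rectangular shape $\Rect$ the operation $T \mapsto T^\vee$ coincides with $\evac$. Hence the $\stdreflect$-fixed points biject with the rotationally-invariant tableaux in $\SYT(\Rect)$.

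For the third configuration, the equality $|1| = |{-1}|$ blocks a direct application of Lemma~\ref{lem:tworeflections}(ii), so I would perturb using $\phi = \left(\begin{smallmatrix} 1 & \varepsilon \\ -\varepsilon & 1 \end{smallmatrix}\right) \in \PGL_2(\RR)$ for small $\varepsilon > 0$. A direct matrix computation shows $\phi\stdreflect\phi = (1+\varepsilon^2)\stdreflect$, so $\phi\stdreflect\phi = \stdreflect$ in $\PGL_2$, while $|\phi(1)| = (1+\varepsilon)/(1-\varepsilon) > 1 > |\phi(-1)|$ breaks the degeneracy. I would track the real valued tableau through this three-step composition. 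In Step~1 the only $\preceq$-exchange is between $1$ and $-1$, which have opposite signs, so Theorem~\ref{thm:sliding} says the labels at the positions of entries $N/2$ and $N/2+1$ in $T$ swap in the real valued tableau if those entries lie in the same row or column of $T$, and otherwise stay put. In Step~2 the perturbed set has strict absolute-value ordering and Lemma~\ref{lem:tworeflections}(ii) produces the $180^\circ$ rotation together with each label replaced by its reciprocal. In Step~3 a monotonicity computation of $|\phi_{s\varepsilon}(1/\phi(\pm 1))|$, together with the fact that every $|a_i|$ with $i \neq N/2,N/2{+}1$ is bounded away from $1$, shows that no further $\preceq$-exchange occurs for $\varepsilon$ sufficiently small, so every label evolves continuously from $1/\phi(a_i)$ to $1/a_i$.

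Composing the three steps, the net effect is that $\stdreflect T = \evac T$ when entries $N/2$ and $N/2+1$ of $T$ lie in the same row or column, and otherwise $\stdreflect T$ differs from $\evac T$ by the transposition of these two entries. In the first subcase, $\stdreflect$-fixedness is exactly rotational invariance together with the same-row-or-column condition, matching the count in the theorem. In the second subcase, $\stdreflect T = T$ would require both entries $N/2$ and $N/2+1$ to sit at rotation-fixed boxes of $\Rect$; since $N$ is even here, at least one of $d, n{-}d$ is even, $\Rect$ has no rotation-fixed box, and this subcase contributes nothing. The main technical obstacle is the Step~3 monotonicity check, which ensures that unwinding the perturbation after the central reflection introduces no spurious $\preceq$-crossings between $1/\phi(\pm 1)$ and their nearest neighbours.
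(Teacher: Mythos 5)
Your proposal is correct and follows essentially the same route as the paper: the cases with $h(1)\neq 0$ or $h(-1)\neq 0$ are handled by a direct application of Lemma~\ref{lem:tworeflections}(ii), and the case $h(1)=h(-1)=0$ by factoring $\stdreflect$ through a small perturbation that breaks the tie $|1|=|{-1}|$ and then applying Theorem~\ref{thm:sliding}, yielding rotation-plus-reciprocal composed with a conditional swap of the entries $\frac{N}{2}$ and $\frac{N}{2}{+}1$. The paper's factorization is $\stdreflect = \left(\begin{smallmatrix} 1 & 0 \\ -\varepsilon & 1 \end{smallmatrix}\right)\stdreflect\left(\begin{smallmatrix} 1 & \varepsilon \\ 0 & 1 \end{smallmatrix}\right)$ rather than your conjugation by $\left(\begin{smallmatrix} 1 & \varepsilon \\ -\varepsilon & 1 \end{smallmatrix}\right)$, but this difference is immaterial.
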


\begin{proof}
Write $\bolda = \{a_1, \dots, a_N\}$, where
$a_1 \prec a_2 \prec \dots \prec a_N$.  Let $x \in X(\bolda)$, with
corresponding real valued tableau $T = T_x(\bolda)$
It is enough to prove this when $h(z)$ is generic, so that
$a_i \neq -a_j$ unless $\{a_i, a_j\} = \{1, -1\}$.

In the first case, when $h(1) \neq 0$ or $h(-1) \neq 0$, this implies
that $|a_1| < |a_2| < \dots < |a_N|$.  By 
Lemma~\ref{lem:tworeflections}(ii), $\stdreflect T = T^\vee$.
and so $\stdreflect x = x$ if and only if the corresponding
tableau is rotationally-invariant.

In the second case, when $h(1) = h(-1) = 0$, we write
\[
  \begin{pmatrix} 0 & 1 \\ 1 & 0 \end{pmatrix} =
  \begin{pmatrix} 1 & 0 \\ -\varepsilon & 1 \end{pmatrix}
  \begin{pmatrix} 0 & 1 \\ 1 & 0 \end{pmatrix} 
  \begin{pmatrix} 1 & \varepsilon \\ 0 & 1 \end{pmatrix}\,,
\]
where $\varepsilon$ is a small positive real number.  Arguing as
in the proof of Theorem~\ref{thm:specialribbon}, we see that 
$\stdreflect T$ is obtained by rotating $T$ by $180^\circ$,
replacing each entry $a_k$ by $\frac{1}{a_k}$, and switching
the positions of $1$ and $-1$ if and only if they are in the
same row or column. For $T$ to be invariant under this operation,
we require rotation-invariance of $T_x$, and the
entries $a_{N/2} = 1$ and $a_{(N/2)+1} = -1$ of $T_x(\bolda)$ must 
be in the same row or column. 
\end{proof}

Corollaries~\ref{cor:combinv1} and~\ref{cor:combinv2} can also
be proved bijectively.  The bijection between $\evac$-fixed and
$\promote^{N/2}$-fixed tableaux in $\SYT(\Rect)$ is given by
the involution $\evac_{N/2}: \SYT(\Rect) \to \SYT(\Rect)$, which
is the evacuation
operator restricted to the entries $1, \dots, \frac{N}{2}$.

\begin{proposition}
If $N$ is even, we have
\[
\evac \circ \evac_{N/2} = 
\evac_{N/2} \circ \promote^{N/2}\,.
\]
Hence $\evac_{N/2}$ gives a bijection between $\evac$-fixed 
and $\promote^{N/2}$-fixed tableaux in $\SYT(\Rect)$.
\end{proposition}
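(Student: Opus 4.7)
The plan is first to prove the identity $\evac \circ \evac_{N/2} = \evac_{N/2} \circ \promote^{N/2}$; the bijective statement is then a formal consequence. Indeed, $\evac_{N/2}$ is an involution on $\SYT(\Rect)$ (this is Schützenberger's classical theorem applied to the subtableau of entries $1, \dots, N/2$), so the identity rearranges to $\evac_{N/2} \circ \evac \circ \evac_{N/2} = \promote^{N/2}$. If $\evac(T) = T$, then $\promote^{N/2}(\evac_{N/2}(T)) = \evac_{N/2}(\evac(T)) = \evac_{N/2}(T)$, so $\evac_{N/2}$ carries $\evac$-fixed tableaux into $\promote^{N/2}$-fixed tableaux; applying the same argument to $\evac_{N/2}(T)$ in reverse gives a two-sided inverse.

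To prove the identity I would work in the geometric model of Section~\ref{sec:circle}, using the correspondence $X(\bolda) \leftrightarrow \SYT(\Rect)$ with $a_k = e^{(2k-1)\pi i/N}$. By Lemma~\ref{lem:evacpromote}, the M\"obius transformation $\stdreflect \in \PGL_2(S^1)$ realizes $\evac$ and $\left(\begin{smallmatrix} e^{-\pi i/2} & 0 \\ 0 & e^{\pi i/2} \end{smallmatrix}\right)$ realizes $\promote^{N/2}$. The next step is to model $\evac_{N/2}$ by a path of $N$-element subsets of $S^1$ based at $\bolda$: paralleling the construction at the end of Section~\ref{sec:lifting} that interprets $\evac$ as the concatenation of $N$ partial-promotion paths, I would build a path in $N/2$ subintervals whose $i$\nth piece models $\promote_{N/2-i+1}$ acting on the first $N/2 - i + 1$ entries, while the entries with label larger than $N/2$ stay fixed throughout.

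Once $\evac_{N/2}$ is realized by such a path, the identity reduces to showing that the concatenated path modelling $\evac \circ \evac_{N/2}$ is homotopic (in the appropriate stratum of the space of $N$-element subsets with reduced Wronski fibre) to the one modelling $\evac_{N/2} \circ \promote^{N/2}$. The geometric content is the familiar fact that a product of two reflections of $S^1$ is a rotation: the partial reflection encoded in $\evac_{N/2}$, composed with the full reflection $\stdreflect$, rotates the circle by $\pi$, which is precisely the M\"obius transformation realizing $\promote^{N/2}$. Uniqueness of path lifts through the Wronski map (via Theorem~\ref{thm:MTV}) then transfers this homotopy equivalence into the desired operator identity on $\SYT(\Rect)$.

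The main obstacle I expect is rigorously identifying the correct path modelling $\evac_{N/2}$ and verifying that it performs only the Schützenberger involution on the first $N/2$ entries, without perturbing the larger entries. The entries labelled ${>}N/2$ are geometrically pinned, but still interact with the moving entries through the $\preceq$-order whenever a moving entry sweeps close to them; one must argue, via case (ii) of Theorem~\ref{thm:sliding}, that these interactions cancel out over the full path so that the net action on the large entries is trivial. A purely combinatorial alternative would be to expand both sides using $\evac = \promote_1 \circ \dots \circ \promote_N$ and $\evac_{N/2} = \promote_1 \circ \dots \circ \promote_{N/2}$ and manipulate the product using standard Schützenberger relations together with Haiman's identity $\promote^N = \identity$, but this route appears more technical and less illuminating than the geometric one.
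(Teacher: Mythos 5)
Your proposal follows essentially the same route as the paper: both reduce the identity to showing that the two compositions are induced by lifting homotopic paths of point configurations on the circle (with the bijection statement then a formal consequence of $\evac_{N/2}$ being an involution). The only differences are cosmetic --- the paper first rewrites the identity as $\evac'_{N/2}\circ\evac = \evac_{N/2}\circ\promote^{N/2}$ with $\evac'_{N/2}:=\evac\circ\evac_{N/2}\circ\evac$ so that both concatenated paths start from the standard positive configuration, exhibiting the homotopy in Figure~\ref{fig:modelbijection1}; and your ``product of two reflections is a rotation'' heuristic should not be taken literally, since $\evac_{N/2}$ is not induced by any element of $\PGL_2(\RR)$ fixing $\bolda$ but only modelled by a path (which, chosen so the moving points stay smaller in absolute value than the pinned ones, never disturbs the large entries), exactly as you set out to do.
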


\begin{proof}
Let
$\evac'_{N/2} := \evac \circ \evac_{N/2} \circ \evac$.
For $T \in \SYT(\Rect)$,
one can think of $\evac'_{N/2}(T)$ as performing ``reverse evacuation'' 
on the largest $\frac{N}{2}$ entries of $T$.
It is enough to show that
$\evac'_{N/2} \circ \evac = 
\evac_{N/2} \circ \promote^{N/2}$.
But as shown in Figure~\ref{fig:modelbijection1}
the two sides of this equation are modelled by lifts of homotopic paths.
\end{proof}

\begin{figure}[tb]
\centering
\setlength\unitlength{.05\circlesize}
\begin{picture}(60,50)(0,0)
\put(0,15){\includegraphics[height=\circlesize]{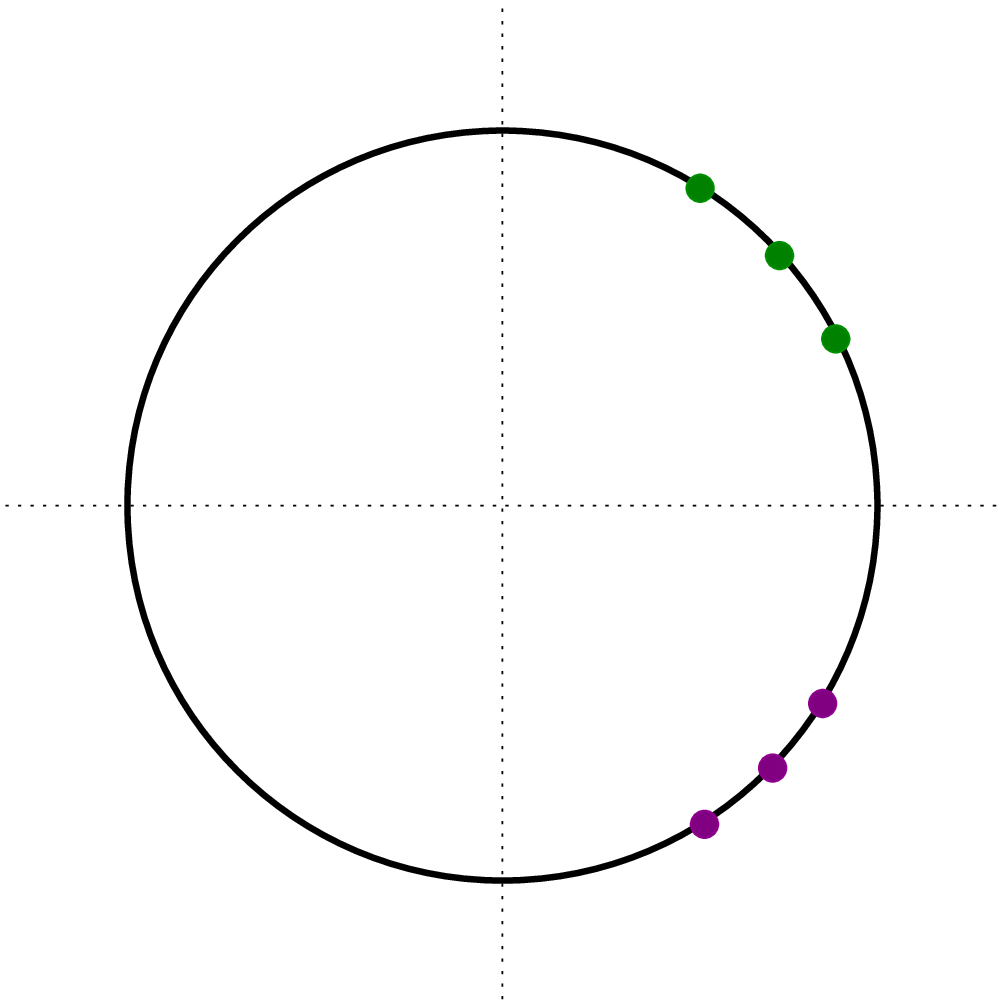}}
\put(16,30){\includegraphics[height=\circlesize]{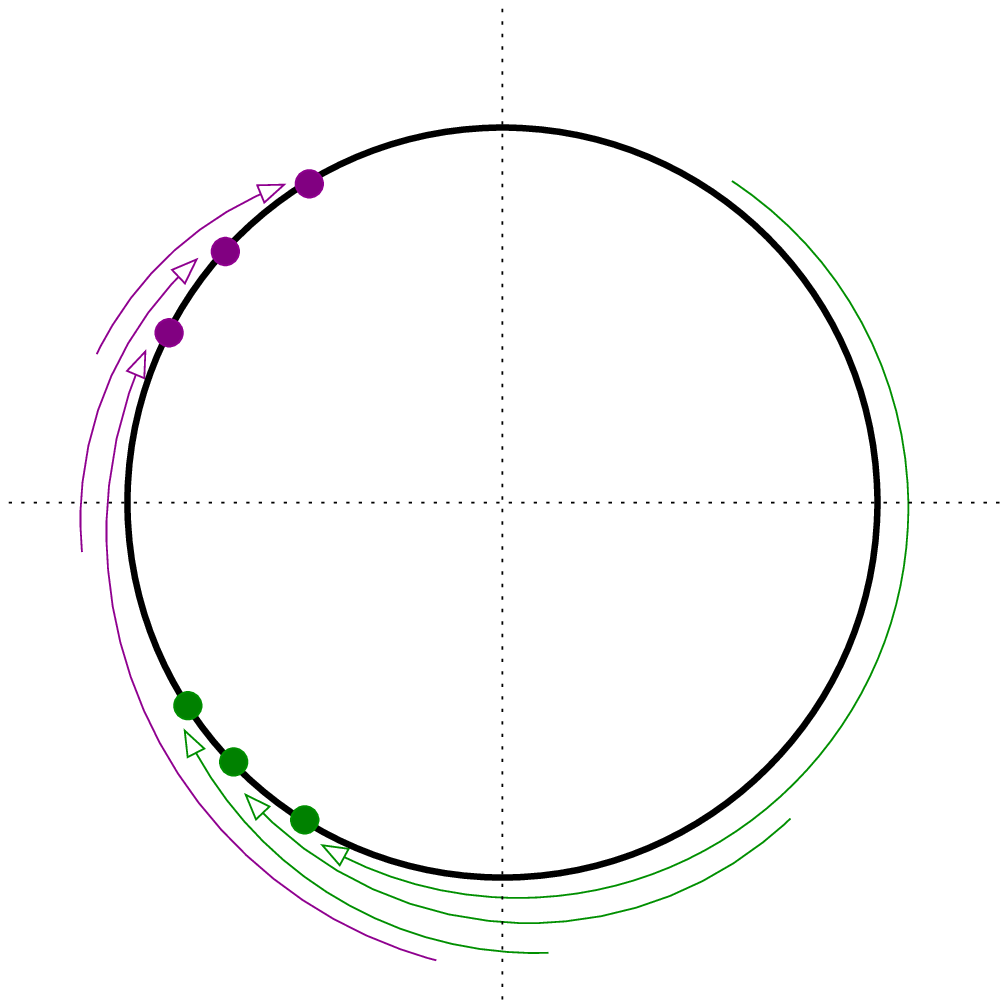}}
\put(40,27){\includegraphics[height=\circlesize]{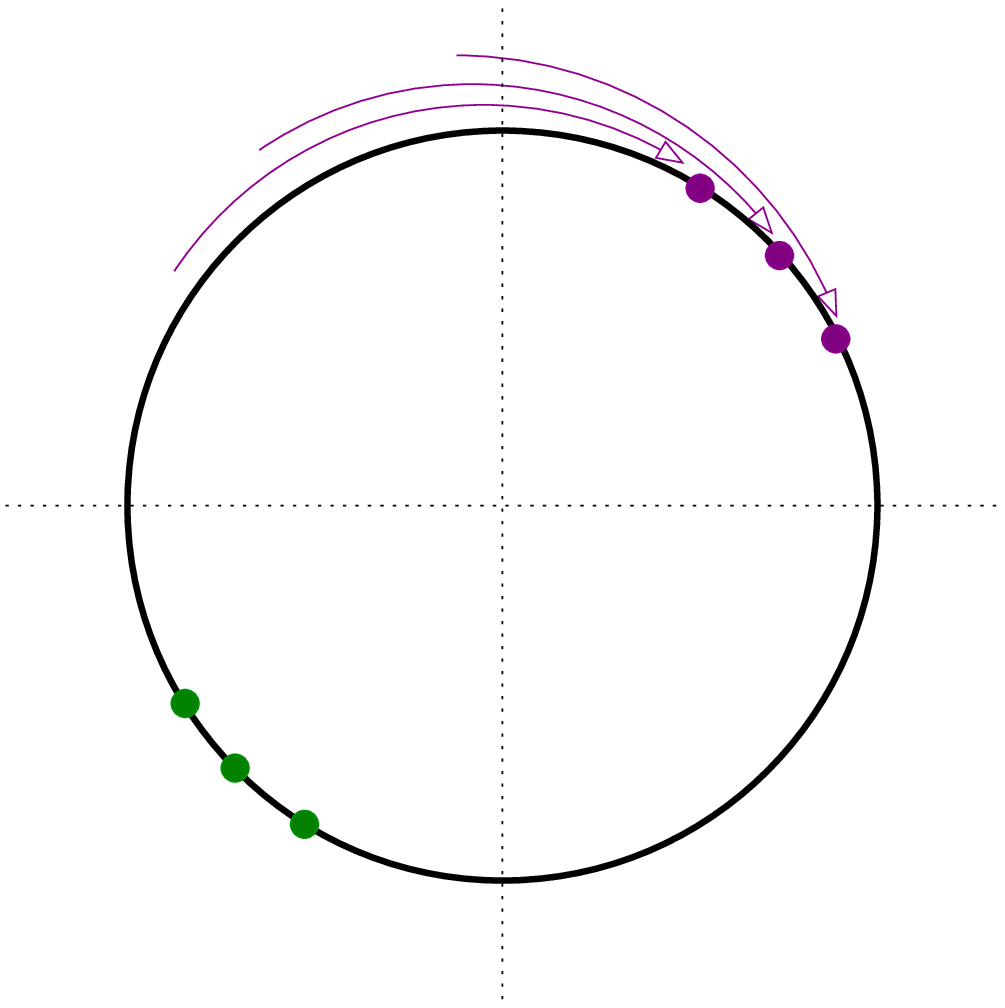}}
\put(16,0){\includegraphics[height=\circlesize]{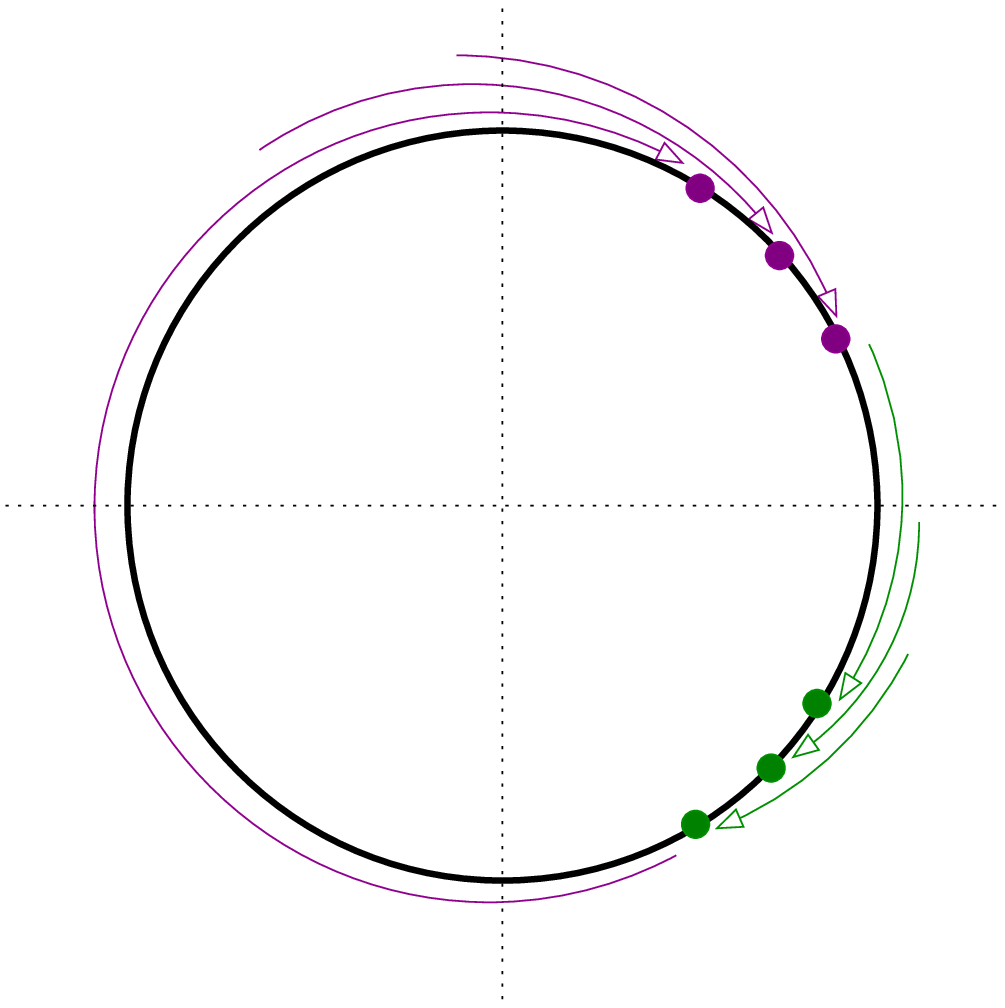}}
\put(40,3){\includegraphics[height=\circlesize]{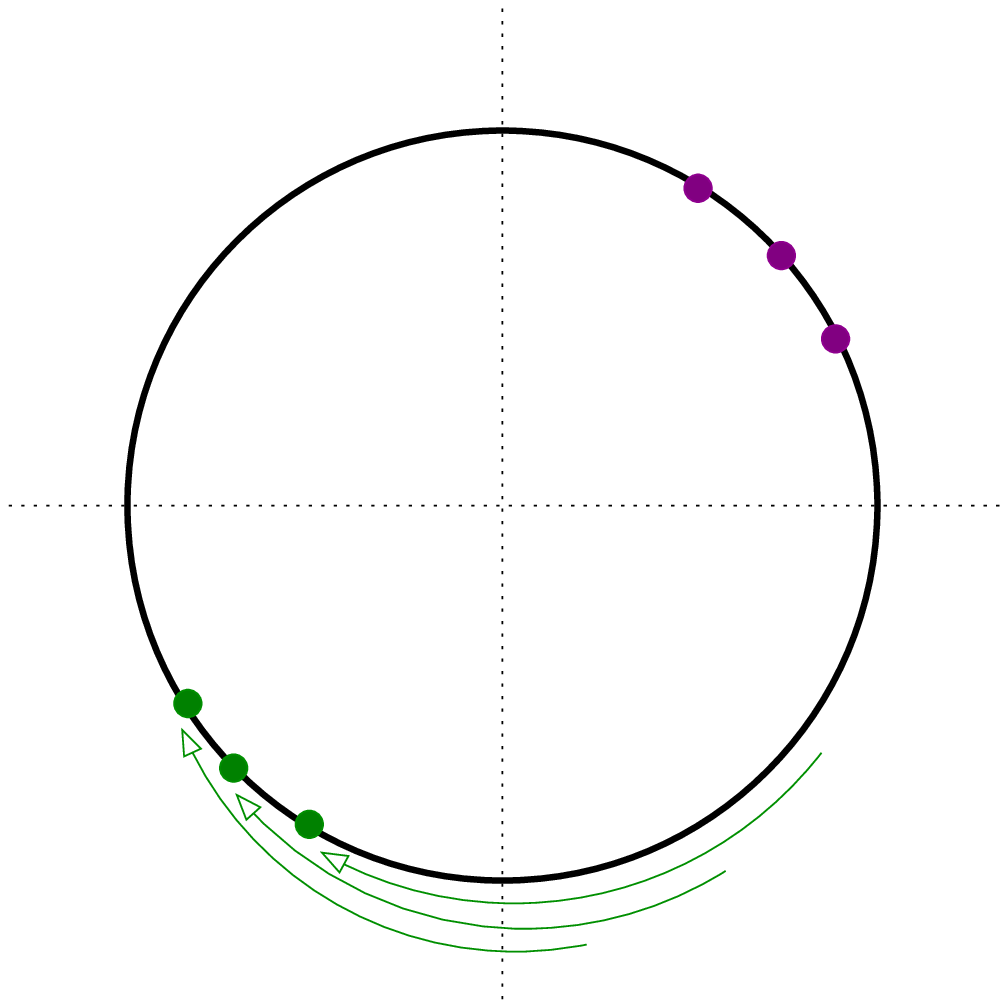}}
\put(13,27.5){\makebox(10,10){\begin{turn}{45}$\leadsto$\end{turn}}}
\put(16.5,33.5){$\evac$}
\put(13,12.5){\makebox(10,10){\begin{turn}{-45}$\leadsto$\end{turn}}}
\put(18.25,18.75){$\promote^{N/2}$}
\put(33,33.5){\makebox(10,10){\begin{turn}{-10}$\leadsto$\end{turn}}}
\put(37,40){$\evac'_{N/2}$}
\put(33,6.5){\makebox(10,10){\begin{turn}{10}$\leadsto$\end{turn}}}
\put(36,12.75){$\evac_{N/2}$}
\end{picture}
\caption{Homotopic paths that model
$\evac'_{N/2} \circ \evac$ (top) and 
$\evac_{N/2} \circ \promote^{N/2}$ (bottom).}
\label{fig:modelbijection1}
\end{figure}

The bijection for Corollary~\ref{cor:combinv2} is a bit more involved.
If $T$ is a domino tableau of shape $\Rect$, there are two entries labelled
$k$ for each $k=1, \dots, \frac{N}{2}$.  One of these is closer to the
northwest corner; colour this entry blue and the other one red.
With each $k$, starting at $k= \frac{N}{2}$ and descending to 
$k=1$, slide the red $k$ through the blue entries 
$k{+}1, \dots, \frac{N}{2}$, and then replace the red $k$ by
$k+\frac{N}{2}$.  The result will be a standard Young tableau of shape
$\Rect$.  Finally apply $\evac_{N/2}$ to this tableau, and denote
the result $\varphi(T)$.

\begin{proposition}
The map $T \mapsto \varphi(T)$ gives a bijection between domino tableaux 
of shape $\Rect$ and rotationally-invariant tableaux in $\SYT(\Rect)$.
\end{proposition}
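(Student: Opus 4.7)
The plan is to combine Corollary~\ref{cor:combinv2} (which provides equal cardinalities of the two sets) with (i) injectivity of $\varphi$ and (ii) the fact that $\varphi(T)$ is rotationally-invariant. Injectivity is immediate: each jeu de taquin slide in Stage~A can be reversed once one records which cells received the new large labels $k{+}N/2$, and $\evac_{N/2}$ is an involution. All the substantive content therefore lies in (ii).

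For (ii) I would mimic the strategy of the preceding proposition, whose proof consisted of observing that the two sides of an identity are modelled by homotopic paths (Figure~\ref{fig:modelbijection1}). Given a domino tableau $T$, first form the SYT $T^\dagger$ by splitting each domino $k$ into the entries $2k{-}1$ at the blue cell and $2k$ at the red cell. By Theorem~\ref{thm:specialribbon}, $T^\dagger = T_x(\bolda)$ for an $\altreflect$-fixed point $x \in X(\bolda)$ with $\bolda = \{\pm a_1, \pm a_3, \dots, \pm a_{N-1}\}$ and $0 < a_1 < a_3 < \cdots < a_{N-1}$. I would then interpret Stage~A of $\varphi$ as the lift of a path in which, processed in the order $k = N/2, N/2{-}1, \dots, 1$, each negative entry $-a_{2k-1}$ is transported continuously through $\infty$ to a large positive value; Theorem~\ref{thm:sliding}(ii)--(iii) would then show that the induced motion on the real-valued tableau matches, step for step, the sliding described in the definition of $\varphi$. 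Stage~B ($\evac_{N/2}$) I would realize by the loop of Figure~\ref{fig:modelevac}, applied only to the smallest $N/2$ entries.

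The composite path would be arranged, up to homotopy within the locus of reduced fibres, so that its terminal configuration $\bolda_1$ is $\stdreflect$-invariant and the lift of the $\altreflect$-fixed point $x$ ends at a $\stdreflect$-fixed point. By Lemmas~\ref{lem:evacpromote} and~\ref{lem:tworeflections}(ii), this $\stdreflect$-fixedness of the endpoint is exactly rotational invariance of $\varphi(T)$, which finishes (ii). The hard part will be the step-by-step case analysis verifying that the geometric sliding matches the combinatorial sliding in $\varphi$'s definition, particularly tracking the sign of each $-a_{2k-1}$ in the $\preceq$-order as it crosses through $\infty$; this is analogous to, but more intricate than, the sign bookkeeping already used in the proof of Theorem~\ref{thm:specialribbon}.
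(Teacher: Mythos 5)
Your scaffolding (show $\varphi$ is injective, show $\varphi(T)$ is rotationally invariant, then invoke the equal cardinalities from Corollary~\ref{cor:combinv2}) is a legitimate alternative to the paper's argument, and your geometric setup for step (ii) uses the right ingredients. But there is a genuine gap at the decisive moment. You write that the composite path "would be arranged, up to homotopy, so that its terminal configuration $\bolda_1$ is $\stdreflect$-invariant \emph{and} the lift of the $\altreflect$-fixed point $x$ ends at a $\stdreflect$-fixed point." The first condition can be arranged; the second cannot be arranged --- it is a property of the lift that either holds or fails, and it does not follow from $\stdreflect$-invariance of $\bolda_1$ alone (a path returning to an $\altreflect$- and $\stdreflect$-invariant configuration lifts back to the $\altreflect$-fixed point $x$, which need not be $\stdreflect$-fixed). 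The missing idea, which is the heart of the paper's proof, is that the composite path modelling $\varphi$ is homotopic to $\phi_t(\bolda)$ for a path $\phi_t$ in $\PGL_2(\RR)^\circ$ ending at $\phi = \frac{1}{\sqrt2}\big(\begin{smallmatrix}1 & -1\\ 1 & 1\end{smallmatrix}\big)$, the $90^\circ$ rotation of the circle $\RP^1$. Uniqueness of lifts then forces the lift to be $\phi_t(x)$, so its endpoint is $\phi(x)$, and the conjugation identity $\phi\altreflect\phi^{-1} = \stdreflect$ gives $\stdreflect\phi(x) = \phi(\altreflect x) = \phi(x)$.

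Once you have this one observation, your auxiliary apparatus becomes unnecessary: $\phi$ is a bijection from $X(\bolda)$ to $X(\phi(\bolda))$ carrying $\altreflect$-fixed points onto $\stdreflect$-fixed points, so bijectivity of $\varphi$ between domino tableaux and rotationally invariant tableaux (via Theorem~\ref{thm:specialribbon} on one side and Lemma~\ref{lem:tworeflections}(ii) on the other) comes for free, with no appeal to Corollary~\ref{cor:combinv2} and no separate injectivity argument. So I would not pursue the counting route: identify the Möbius transformation first, and the step-by-step verification that the geometric sliding matches the combinatorial definition of $\varphi$ (which you correctly flag as the labour-intensive part) is the only thing left to check.
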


\begin{proof}
Let $\bolda = \{a_1, \dots, a_N\}$, where 
$0 < a_1 < a_3 < \dots < a_{N-1} < 1$, and $a_{2k} = -a_{2k-1}$ for
$k=1, \dots, \frac{N}{2}$.  As we saw in the proof of 
Theorem~\ref{thm:specialribbon}, domino tableaux are in bijection with
real valued tableaux of shape $\Rect$ with entries $\bolda$, where
$a_{2k-1}$ and $a_{2k}$ are in the same row or column, and these
tableaux correspond to $\altreflect$-fixed points in $X(\bolda)$.
In this set-up, the map $\varphi$ is modelled by the path 
illustrated in Figure~\ref{fig:modelbijection2}.

Let
\[
  \phi =  \begin{pmatrix}
  \frac{1}{\sqrt{2}} & - \frac{1}{\sqrt{2}} \\
  \frac{1}{\sqrt{2}} & \frac{1}{\sqrt{2}} 
  \end{pmatrix}\,.
\]
When $\RP^1$ is drawn as a circle 
(as in Figure~\ref{fig:modelpromote}), $\phi \in \PGL_2(\RR)$ acts by 
rotating the circle $90^\circ$ clockwise.  
The path illustrated in Figure~\ref{fig:modelbijection2} is homotopic
to a path that computes the action of $\phi$; hence under the
identifications above, the action of $\phi$ coincides with the 
map $\varphi$.

Let $T = T_x(\bolda)$ be the real valued tableau corresponding to a
point $x \in X(\bolda)$. 
Note that $\phi \altreflect \phi^{-1} = \stdreflect$.
Thus $x$ is an $\altreflect$-fixed point if and only if
$\phi(x)$ is a $\stdreflect$-fixed point in $X(\phi(\bolda))$.
By Lemma~\ref{lem:tworeflections}, the tableau corresponding to 
$\stdreflect\phi(x)$ is $\phi(T)^\vee$; hence $\phi(T)$
corresponds to a $\stdreflect$-fixed point if and only if
$\phi(T) = \phi(T)^\vee$.
It follows that $T$ is identified with a domino tableau if and only if
$\phi(T)$ is identified with a rotationally invariant tableau, 
and the result follows.
\end{proof}

\begin{figure}[tb]
\centering
\setlength\unitlength{.05\circlesize}
\begin{picture}(60,30)(0,0)
\put(0,0){\includegraphics[height=\circlesize]{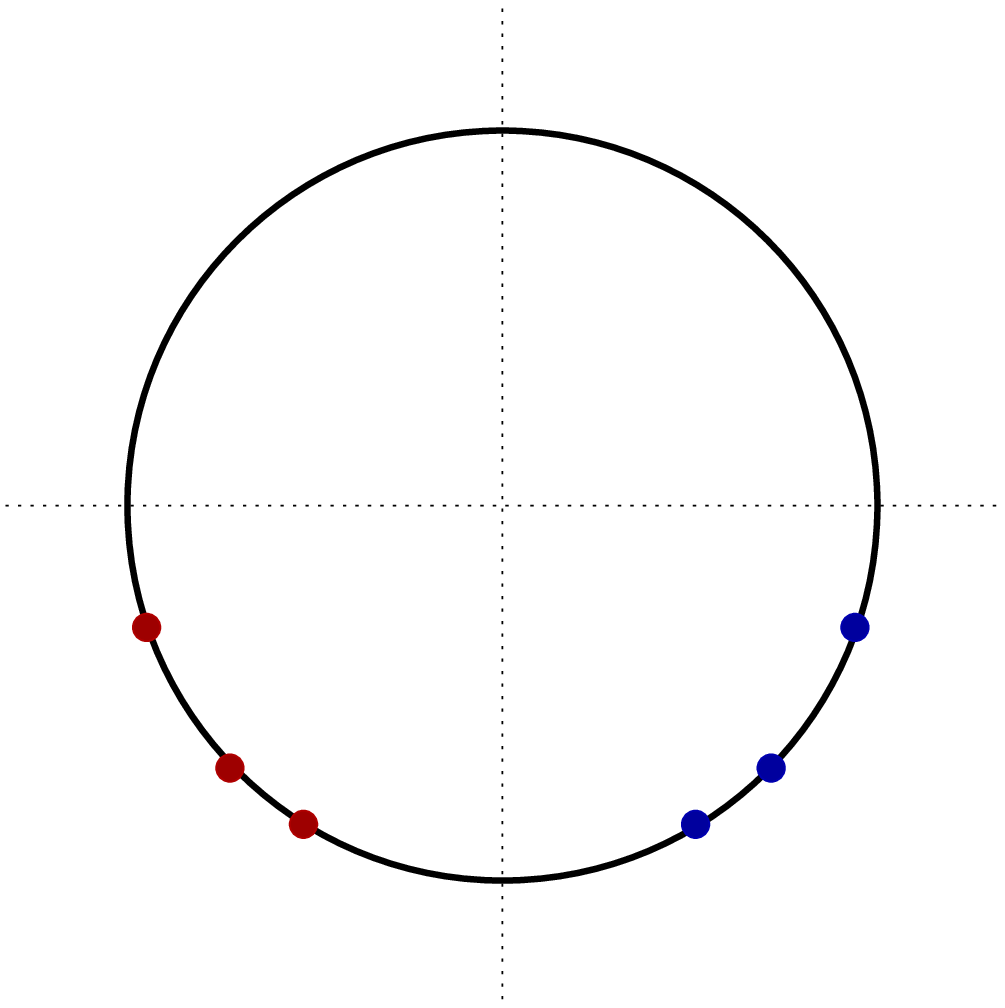}}
\put(20,10){\includegraphics[height=\circlesize]{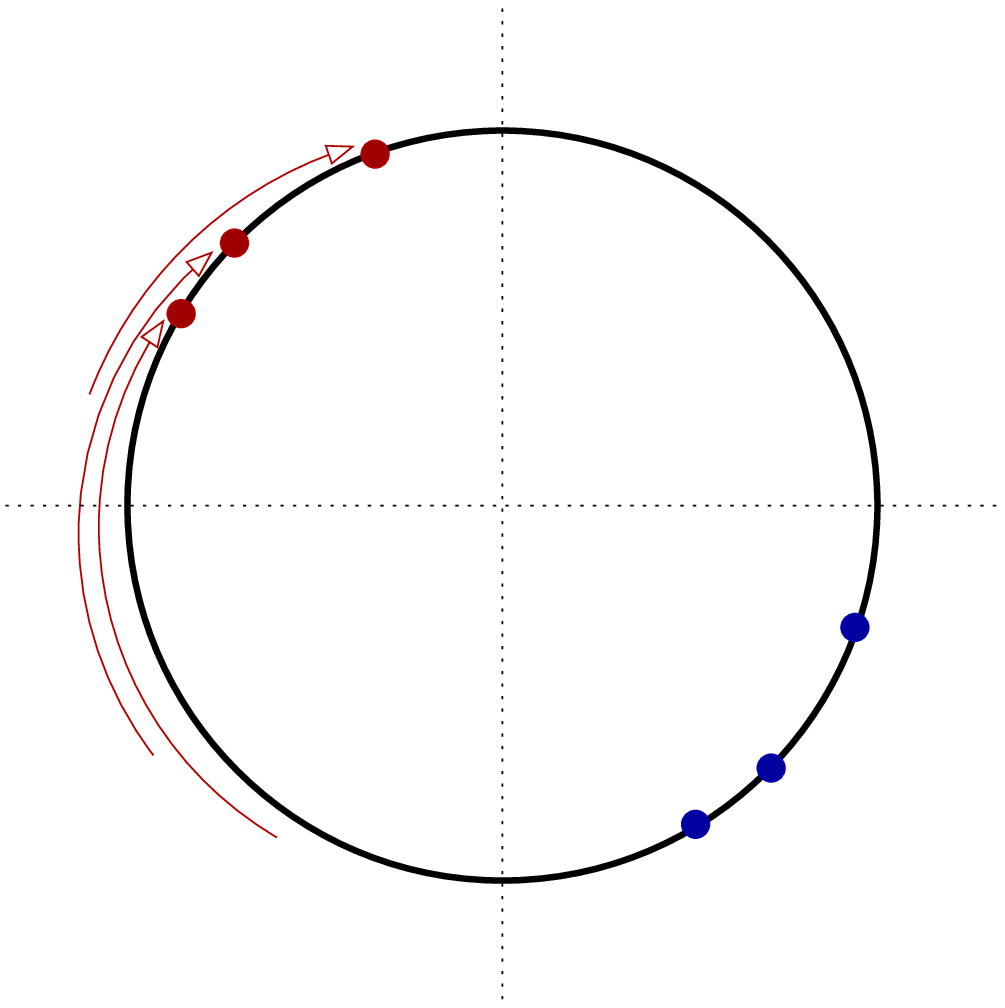}}
\put(40,0){\includegraphics[height=\circlesize]{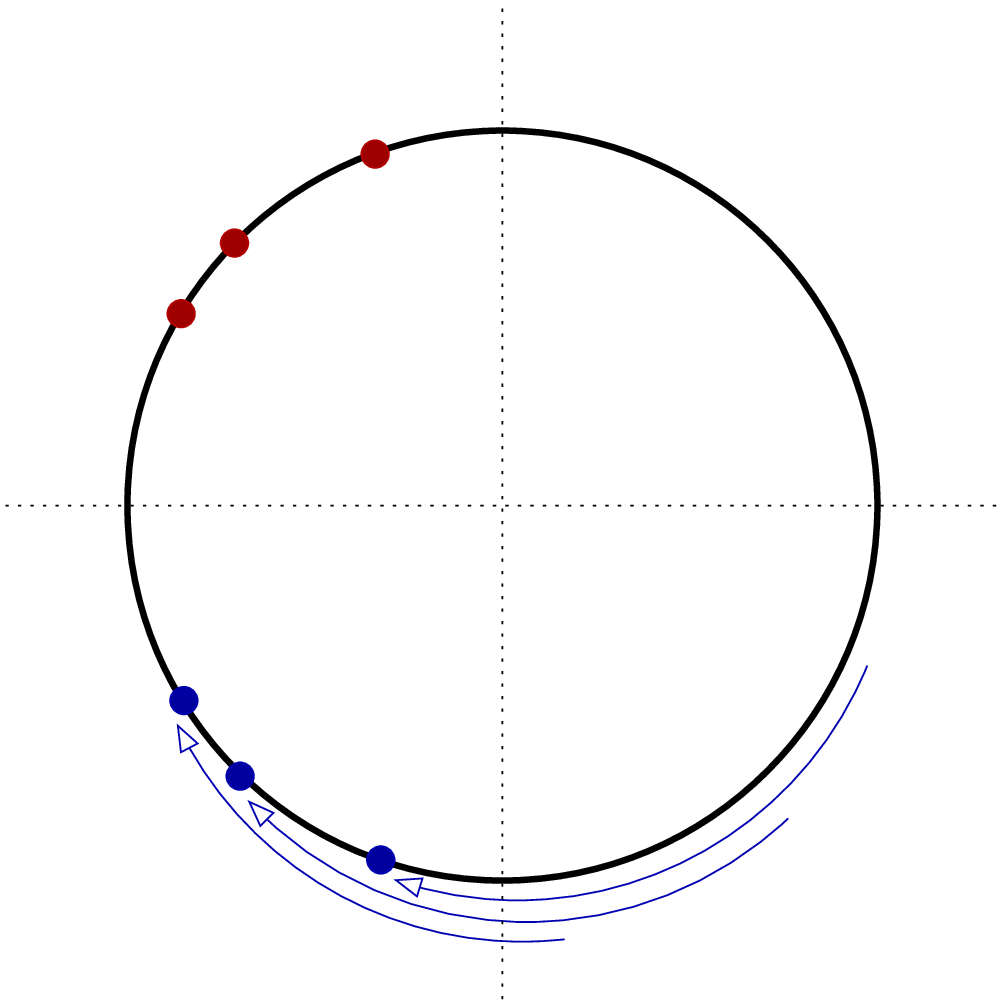}}
\put(15,10){\makebox(10,10){\begin{turn}{25}$\leadsto$\end{turn}}}
\put(35,10){\makebox(10,10){\begin{turn}{-25}$\leadsto$\end{turn}}}
\end{picture}
\caption{A path that models the bijection between domino tableaux
of shape $\protect\Rect$, and rotationally invariant tableaux of shape 
$\protect\Rect$, when $N$ is even.}
\label{fig:modelbijection2}
\end{figure}

Similar arguments can be used to establish explicit bijections
between domino tableaux and rotationally-invariant tableaux (of the
appropriate type) in cases (1) and (3).  
One can also show that the procedure used to define
$\varphi(T)$ gives a bijection between 
domino tableau of shape $\lambda$
and the $\evac$-fixed tableaux in $\SYT(\lambda)$ for any partition
$\lambda$ of even size.
We leave the details of these generalizations to the reader.

\subsection{Involutions and Schubert intersections}
\label{sec:schubert}

The correspondence between points in $X(\bolda)$ and tableaux can be
extended to the case where $\bolda$ is a multiset of points in $\RP^1$.
In this more general situation, 
the correspondence is no longer bijective.
Nevertheless, we can still use it to obtain generalizations of
Theorems~\ref{thm:specialribbon} and~\ref{thm:specialD-ribbon} to the 
case where $\bolda$ is a multiset fixed by an involutions 
$\phi \in \PGL_2(\CC)$.
Although the problem of counting $\phi$-fixed points in $X(\bolda)$ 
still makes some sense, the fibre $X(\bolda)$ is not necessarily 
reduced when $\bolda$ is a multiset.  
We resolve this by refining our problem, and considering
$\phi$-fixed points in an intersection of Schubert varieties.

For each $a \in \CP^1$, we define a flag
in $\poln$:
\[
  F_\bullet(a) 
  \ :\ \{0\} = F_0(a) \subset F_1(a) \subset \dots 
  \subset F_{n-1}(a)  = \poln\,.
\]
If $a \in \CC$, 
\[
   F_i(a) := (z+a)^{n-i}\CC[z] \cap \poln\,.
\]
In particular $F_\bullet(0)$ is the flag $F_\bullet$ defined in
the introduction.  We also define $F_\bullet(\infty)$ to be the flag 
$\TF_\bullet$ (defined in the Section~\ref{sec:introwronskian}), 
which is equal to $\lim_{a \to \infty} F_\bullet(a)$.
For every $\lambda \in \Lambda$, we have a \defn{Schubert variety} in $X$
relative to the flag $F_\bullet(a)$:
\[
  \Omega^\lambda(a) 
  := \{x \in X \mid \dim \big(x \cap F_{n-d-\lambda^i+i}(a) \big) \geq i\,,
  \text{ for $i=1, \dots, d$}\}\,.
\]
The codimension of $\Omega^\lambda(a)$ in $X$ is $|\lambda|$.

\begin{proposition}
\label{prop:wronskischubert}
The Wronskian $\Wr(x;z)$ is divisible by
$(z+a)^k$ if and only if $x \in \Omega^\lambda(a)$ for some partition
$\lambda \vdash k$.  Also, $x \in \Omega^\mu(\infty)$ for
some $\mu \vdash \big(N - \deg \Wr(x;z)\big)$.
\end{proposition}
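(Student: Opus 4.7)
The plan is to attach to each $x \in X$ and each $a \in \CP^1$ a canonical partition $\lambda_a \in \Lambda$ (I write $\mu_\infty$ when $a = \infty$) such that $x \in \Omega^\lambda(a)$ iff $\lambda \subseteq \lambda_a$, and then to show that $|\lambda_a| = \ord_{z = -a} \Wr(x;z)$ for finite $a$, and $|\mu_\infty| = N - \deg \Wr(x;z)$. Granting these, the proposition is immediate: for finite $a$, $(z+a)^k \mid \Wr(x;z)$ iff $k \leq |\lambda_a|$, iff some $\lambda \vdash k$ satisfies $\lambda \subseteq \lambda_a$ (just remove boxes from $\lambda_a$); and at $\infty$ one may take $\mu = \mu_\infty$.

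First I would define $\lambda_a$ via the jumping indices of $x$ with respect to $F_\bullet(a)$: let $j_1 < \cdots < j_d$ be the unique indices at which $\dim(x \cap F_{j_i}(a))$ increases by one, and set $\lambda_a^i := n-d+i-j_i$. The inequalities $i \leq j_i \leq n-d+i$ (the upper bound coming from the fact that an $i$-dimensional subspace must have $d-i$ remaining jumps within the flag) translate into $\lambda_a \in \Lambda$, and unwinding the definition of $\Omega^\lambda(a)$ shows $x \in \Omega^\lambda(a)$ iff $\lambda^i \leq \lambda_a^i$ for all $i$.

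The main computation is the equality $|\lambda_a| = \ord_{z=-a} \Wr(x;z)$. Choose a basis $g_1, \dots, g_d$ of $x$ with $g_i \in F_{j_i}(a) \setminus F_{j_i-1}(a)$. In the local coordinate $w = z + a$, each $g_i$ expands as $g_i(w) = c_i w^{e_i} + O(w^{e_i+1})$ with $c_i \neq 0$ and $e_i := n - j_i$, so $e_1 > e_2 > \cdots > e_d \geq 0$. By column-multilinearity of the Wronskian determinant,
\[
  \Wr(g_1,\dots,g_d)(w) = \Wr(c_1 w^{e_1}, \dots, c_d w^{e_d}) + (\text{terms of strictly higher order in } w),
\]
and an explicit expansion yields
\[
  \Wr(c_1 w^{e_1}, \dots, c_d w^{e_d}) = c_1 \cdots c_d \cdot w^{\sum_i e_i - \binom{d}{2}} \cdot \det\bigl((e_j)_{i-1}\bigr)_{i,j=1}^d,
\]
where $(e_j)_{i-1} := e_j(e_j-1)\cdots(e_j-i+2)$ is a falling factorial. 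Since falling factorials and ordinary powers are related by a unipotent upper-triangular change of basis, this determinant equals $\pm\prod_{i<j}(e_j - e_i)$, which is nonzero as the $e_i$ are distinct. Hence $\ord_{w=0} \Wr = \sum_i e_i - \binom{d}{2}$, and since $\lambda_a^i = e_i + i - d$, this equals $|\lambda_a|$.

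The case $a = \infty$ is formally dual. Let $b_1 < \cdots < b_d$ be the jumping indices of $x$ with respect to $F_\bullet(\infty)$, and choose a basis with $\deg g_i = b_i - 1$ and nonzero leading coefficient $\alpha_i$. The same kind of Vandermonde calculation---now applied to the top-degree rather than lowest-order coefficients, and giving a determinant $\det((b_j-1)_{i-1})$---yields $\deg \Wr(x;z) = \sum_i b_i - \binom{d+1}{2}$; setting $\mu_\infty^i := n-d+i-b_i$ then gives $|\mu_\infty| = N - \deg\Wr(x;z)$, and $x \in \Omega^{\mu_\infty}(\infty)$ by the analogue of the $\lambda \subseteq \lambda_a$ analysis. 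The only nontrivial step in either case is identifying the leading term of the Wronskian with a nonvanishing Vandermonde-type determinant; everything else is bookkeeping with jumping indices.
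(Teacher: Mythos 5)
Your proof is correct. Note that the paper states Proposition~\ref{prop:wronskischubert} without proof, treating it as standard; the nearest argument it actually supplies is the proof of Proposition~\ref{prop:richardson} in Section~\ref{sec:plucker}, which handles the cases $a=0$ and $a=\infty$ (the flags $F_\bullet$ and $\TF_\bullet$) through Pl\"ucker coordinates: one reads off $\mindeg \Wr(x;z)$ and $\deg \Wr(x;z)$ from the expansion $\Wr(x;z)=\sum_\nu q_\nu p_\nu(x)z^{|\nu|}$ of Proposition~\ref{prop:wrplucker}, combined with the fact that the minimal and maximal $\nu$ with $p_\nu(x)\neq 0$ determine the Richardson variety containing $x$; general finite $a$ would then follow by $\PGL_2(\CC)$-equivariance. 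Your route is different and more self-contained: you choose a basis adapted to the flag $F_\bullet(a)$ and compute $\ord_{z=-a}\Wr(x;z)$ directly, identifying the leading coefficient with a Vandermonde determinant in the distinct vanishing orders $e_1>\dots>e_d$. This avoids invoking Proposition~\ref{prop:wrplucker} (whose proof requires essentially the same Vandermonde computation anyway), at the cost of not producing the coefficient-level Pl\"ucker information the paper uses elsewhere. The individual steps all check out: the dictionary between jumping indices and the partition $\lambda_a$ (so that $x\in\Omega^\lambda(a)$ iff $\lambda\subseteq\lambda_a$), the bound $\ord\Wr(f_1,\dots,f_d)\geq\sum_j\ord f_j-\binom{d}{2}$ that lets you discard the non-leading terms of the multilinear expansion, the nonvanishing of $\det\bigl((e_j)_{i-1}\bigr)=\pm\prod_{i<j}(e_j-e_i)$, and the arithmetic $|\lambda_a|=\sum_i e_i-\binom{d}{2}$ and $|\mu_\infty|=N-\deg\Wr(x;z)$.
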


Hence every point in $X(\bolda)$ lies in some intersection of Schubert
varieties relative to the flags $F_\bullet(a_i)$.  
(We will not discuss the multiplicities of such points here, but
more information can be found in \cite{Pur-Gr}.)
We will are left to consider the problem of counting $\phi$-fixed points 
in an intersection
\begin{equation}
\label{eqn:schubertintersection}
\Omega^{\lambda_1}(a_1) \cap \dots \cap \Omega^{\lambda_s}(a_s)\,,
\end{equation}
where $a_1, \dots, a_s$ are distinct, and 
$|\lambda_1| + \dots + |\lambda_s| = N$.
Mukhin, Tarasov
and Varchenko proved that if $\bolda$ is real, such intersections
of Schubert varieties are always transverse \cite{MTV2}.

\begin{theorem}
\label{thm:MTV2}[Mukhin-Tarasov-Varchenko]
If $a_1, \dots a_s \in \RP^1$ are distinct real points, and
$\lambda_1, \dots \lambda_s \in \Lambda$ are partitions with
$|\lambda_1| + \dots + |\lambda_s| = N$, then the
intersection \eqref{eqn:schubertintersection}
is finite, transverse, and real.
\end{theorem}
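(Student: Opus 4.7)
}
The plan is to deduce Theorem~\ref{thm:MTV2} from Theorem~\ref{thm:MTV} by a degeneration argument, treating a general Schubert intersection as a limit of codimension-one intersections. By Proposition~\ref{prop:wronskischubert}, the condition $x \in \Omega^{(1)}(a)$ is exactly $\Wr(x;-a)=0$, so Theorem~\ref{thm:MTV} is the special case of Theorem~\ref{thm:MTV2} in which every $\lambda_i = (1)$. I would resolve each multi-box condition $\Omega^{\lambda_i}(a_i)$ as a confluence of $|\lambda_i|$ single-box conditions at nearby distinct real points, and then transport reducedness and reality across the limit.

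Concretely, for each $t \in (0,1]$, pick real parameters $a_{i,j}(t)$ (for $i=1,\dots,s$ and $j=1,\dots,|\lambda_i|$), all distinct, with $a_{i,j}(t) \to a_i$ as $t \to 0$, and form
\[
h_t(z) \ :=\ \prod_{i,j}\bigl(z+a_{i,j}(t)\bigr),\qquad Y_t \ :=\ \Wr^{-1}(h_t(z)).
\]
By Theorem~\ref{thm:MTV}, for every $t>0$ the scheme $Y_t$ is reduced, finite, and entirely real, of length $|\SYT(\Rect)|$. Since the Wronski map is finite and flat, $\{Y_t\}$ extends to a flat family over $[0,1]$, so the limit $Y_0 \subseteq \Wr^{-1}(h_0(z))$ has the same total length $|\SYT(\Rect)|$. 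Proposition~\ref{prop:wronskischubert} forces the support of $Y_0$ to lie in $\bigcup \bigcap_i \Omega^{\mu_i}(a_i)$, the union taken over tuples with $\mu_i \vdash |\lambda_i|$.

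Next I would invoke Schubert calculus. Iterating Pieri's rule gives
\[
\bigl[\Omega^{(1)}\bigr]^{N}\ =\ \prod_{i=1}^{s}\bigl[\Omega^{(1)}\bigr]^{|\lambda_i|}\ =\ \sum_{(\mu_1,\dots,\mu_s)}\Big(\prod_{i}|\SYT(\mu_i)|\Big)\prod_{i}\bigl[\Omega^{\mu_i}\bigr]
\]
in $H^{*}(X)$, and evaluating on $[X]$ yields
\[
|\SYT(\Rect)|\ =\ \sum_{(\mu_1,\dots,\mu_s)}\Big(\prod_{i}|\SYT(\mu_i)|\Big)\cdot n_{\mu_1,\dots,\mu_s},
\]
where $n_{\mu_1,\dots,\mu_s}$ is the generic (Littlewood--Richardson-type) number of points in $\bigcap_i \Omega^{\mu_i}(a_i)$. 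A local computation at each point of $\bigcap_i \Omega^{\mu_i}(a_i)$ shows that its contribution to the length of $Y_0$ is at least $\prod_i |\SYT(\mu_i)|$ times the scheme-theoretic length of the Schubert intersection there --- the factor $|\SYT(\mu_i)|$ counts the saturated chains of partitions $\varnothing = \nu_0 \subset \nu_1 \subset \cdots \subset \nu_{|\lambda_i|}=\mu_i$ by which $|\lambda_i|$ simple conditions can specialize to $\mu_i$. Summing and comparing with $|\SYT(\Rect)|$, every inequality is an equality: each $\bigcap_i \Omega^{\mu_i}(a_i)$ is reduced of size exactly $n_{\mu_1,\dots,\mu_s}$, and specializing to $(\mu_1,\dots,\mu_s)=(\lambda_1,\dots,\lambda_s)$ gives the finite, transverse conclusion. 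Reality follows automatically, since every point of $Y_0$ is a limit of real points of $Y_t$ and reducedness prevents distinct real limits from colliding into a non-real point.

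The main obstacle is justifying the local multiplicity $\prod_i |\SYT(\mu_i)|$ for the contribution of $Y_t$ to $\bigcap_i \Omega^{\mu_i}(a_i)$. The cleanest route is to collide roots one $a_i$ at a time: for a single $a$ and $m$ points $a_1(t),\dots,a_m(t)\to a$, the flat limit of $\bigcap_{j}\Omega^{(1)}(a_j(t))$ decomposes along the Pieri expansion $[\Omega^{(1)}]^m = \sum_{\mu\vdash m}|\SYT(\mu)|\,[\Omega^{\mu}]$, reflecting the fact that chains of partitions of length $m$ ending at $\mu$ are counted by $|\SYT(\mu)|$. Iterating across all $a_i$ and using Schubert-class associativity closes the counting argument, and Theorem~\ref{thm:MTV} supplies the reality input at every stage.
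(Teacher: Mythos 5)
The paper does not actually prove Theorem~\ref{thm:MTV2}; it is imported from \cite{MTV2}, where the proof is representation-theoretic: the scheme-theoretic intersection is identified with the spectrum of a Bethe algebra which, for distinct \emph{real} $a_i$, is generated by operators symmetric with respect to a positive definite form, hence acts semisimply; reducedness plus the length count then gives transversality. Judged on its own, your degeneration argument has a genuine gap at its core. Your local input is a \emph{lower} bound (length of $Y_0$ at $x$ is at least $\prod_i|\SYT(\mu_i)|$ times the local length of the Schubert intersection), and feeding it into the global count $|\SYT(\Rect)|=\sum\prod_i|\SYT(\mu_i)|\,n_{\mu_1,\dots,\mu_s}$ can only force equalities of \emph{lengths}, namely $\mathrm{length}\bigl(\bigcap_i\Omega^{\mu_i}(a_i)\bigr)=n_{\mu_1,\dots,\mu_s}$. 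That identity carries no information about reducedness (indeed it holds automatically: the intersection is zero-dimensional, Schubert varieties are Cohen--Macaulay, and by Eisenbud--Harris all partial intersections for osculating flags at distinct points have expected dimension, so length equals the intersection number whether or not the scheme is reduced). A single point of length $2$ in $\bigcap_i\Omega^{\lambda_i}(a_i)$ would simply absorb $2\prod_i|\SYT(\lambda_i)|$ of the length of $Y_0$ and your count would still balance, so the sentence ``every inequality is an equality: each $\bigcap_i\Omega^{\mu_i}(a_i)$ is reduced of size exactly $n_{\mu_1,\dots,\mu_s}$'' conflates length with number of points. To squeeze out transversality you would need the opposite, much harder bound: an \emph{upper} bound $\mathrm{length}_x(Y_0)\le\prod_i|\SYT(\mu_i)|$ at points of the open cells, which bounds the number of points of the intersection from below.

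That upper bound cannot be obtained by the purely length-theoretic flat-limit reasoning you propose, because nowhere in your derivation of reducedness does the reality of the $a_i$ enter: Theorem~\ref{thm:MTV} is invoked only to say the nearby fibres $Y_t$ are reduced and real, and reality is used only to conclude that limit points are real --- the easy part, since the real locus is closed (your remark that ``reducedness prevents distinct real limits from colliding into a non-real point'' has it backwards: collisions threaten reducedness, not reality). If the counting argument worked as written, it would prove transversality of \eqref{eqn:schubertintersection} for arbitrary \emph{distinct complex} $a_i$, which is false: already with all $\lambda_i=(1)$ the statement is exactly the reducedness assertion of Theorem~\ref{thm:MTV}, and over $\CC$ the branch locus of the Wronski map meets the locus of polynomials with distinct roots (e.g.\ in $\Gr(2,4)$ the two lines transversal to four tangent lines of the rational normal curve coincide for a suitable non-real cross-ratio of four distinct points). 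So any correct proof must make reality do work inside the local multiplicity estimate --- this is exactly what the Bethe-ansatz argument of \cite{MTV2} supplies and what your outline is missing; the auxiliary chain-counting justification of the local factor $\prod_i|\SYT(\mu_i)|$ is also only heuristic as stated, but that is secondary to the structural gap.
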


Let us assume that $a_1, \dots, a_s$ are real, and
$a_1 \prec a_2 \prec \dots \prec a_s$.
Let $x$ be a point in the intersection \eqref{eqn:schubertintersection}.
Instead of associating a unique tableau to the point $x$, we will
associate a set of tableaux $\calT_x \subset \SYT(\Rect)$.  We do this 
by the 
reverse of the construction described in~\cite{Pur-shifted}.
Let $\bolda$ denote the multiset containing $a_1, \dots, a_s$
with multiplicities $|\lambda_1|, \dots, |\lambda_s|$ respectively,
and let $x \in X(\bolda)$.
Consider a path
$\bolda_t = \{(a_1)_t \preceq \dots \preceq (a_N)_t\}$, $t \in [0,1]$, 
starting at $\bolda_0 = \bolda$, and ending at a set $\bolda_1$.
(Essentially we want to perturb $\bolda$ without changing
the relative $\preceq$-order of the entries.)
For any lifting to a path
$x_t \in X(\bolda_t)$ with $x_0 = x$, it is reasonable to think
of $T_{x_1}$ as being a tableau associated to $x$.  However, since
$\bolda_0$ is a multiset, the fibre $X(\bolda_0)$ need not
be reduced, and so there may be more than one such lifting.
We define
\begin{equation}
\label{eqn:setoftableaux}
  \calT_x := \{T_{x_1} \mid x_t \in X(\bolda_t),\ x_0 =x\}\,,
\end{equation}
the set of tableaux coming all possible liftings of
the path $\bolda_t$.

The sets $\calT_x$ that arise in this way are characterized in 
terms of Haiman's dual equivalence relation \cite{Hai}.
We will not review all the relevant definitions here but refer the reader
to~\cite[Section 2]{Pur-shifted}, where they may be found along with
the proof of Theorem~\ref{thm:dualequiv}, below.
Let $\boldlambda$ denote the sequence of 
partitions $(\lambda_1, \dots, \lambda_s)$.
For $T \in \SYT(\Rect)$, $k =1, \dots, s$, 
let $T_{\boldlambda}[k]$ be the subtableau of $T$ consisting of entries
\[
|\lambda_1| + \dots +|\lambda_{k-1}| +1\,,\,%
|\lambda_1| + \dots +|\lambda_{k-1}| +2\,,\, \dots\,,\,
|\lambda_1| + \dots +|\lambda_k|\,.
\]
Write $T \sim^*_{\boldlambda} T'$ if $T_{\boldlambda}[k]$ is dual equivalent
to $T'_{\boldlambda}[k]$ for all $k = 1, \dots, s$.  This is an 
equivalence relation on $\SYT(\Rect)$.  
We will say that $T$ has \defn{type} $\boldlambda$ 
if $T_{\boldlambda}[k]$ has rectification shape $\lambda_k$ for all $k$.  
Note that 
if $T$ has type $\boldlambda$ and 
$T \sim^*_{\boldlambda} T'$, then $T'$ also has type $\boldlambda$.

\begin{theorem}
\label{thm:dualequiv}
For every point $x \in X(\bolda)$,
the associated set of tableaux $\calT_x$
is an equivalence classes of the relation $\sim^*_{\boldlambda}$.
The point $x$ is in the intersection \eqref{eqn:schubertintersection}
if and only if $T$ has type $\boldlambda$ for
some (equivalently for every) $T \in \calT_x$.
\end{theorem}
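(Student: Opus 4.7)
The plan is to combine the path-lifting/jeu de taquin machinery from Section~\ref{sec:lifting} with the dual equivalence characterization in~\cite{Pur-shifted}, and essentially run the construction there in reverse. I break the proof into three logical steps: well-definedness of $\calT_x$, identification with a $\sim^*_{\boldlambda}$-equivalence class, and the Schubert intersection characterization.

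First, I would establish that $\calT_x$ does not depend on the choice of perturbation path $\bolda_t$. Any two order-preserving perturbations $\bolda_t$ and $\bolda'_t$ from $\bolda_0 = \bolda$ to reduced sets $\bolda_1$, $\bolda'_1$ can be joined by a one-parameter family of order-preserving paths whose endpoints trace out a path between $\bolda_1$ and $\bolda'_1$ through reduced fibres. Theorem~\ref{thm:sameorder} then gives a bijection between lifts ending at $\bolda_1$ and lifts ending at $\bolda'_1$ that preserves the associated tableau, so the sets of terminal tableaux agree. This also shows $\calT_x$ is finite: it has at most $\deg(X(\bolda) \to \PpolN)$ elements locally at $x$.

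Next, I would show that $\calT_x$ is exactly one equivalence class of $\sim^*_{\boldlambda}$. The key is that tableaux in the same class $\calT_x$ differ only by swaps of entries within the same block $T_{\boldlambda}[k]$ (i.e.\ entries coming from the same cluster of roots converging to $a_k$). The geometric mechanism is as follows: two lifts $x_t, x'_t$ of $\bolda_t$ with $x_0 = x'_0 = x$ can be connected by a small loop $\bolda_t$ in the space of sets near $\bolda$. Such a loop, up to homotopy, is a product of elementary moves that transpose two entries $(a_j)_t, (a_{j+1})_t$ lying in the same block, and by Theorem~\ref{thm:sliding} these correspond precisely to the elementary dual-equivalence moves on the subtableau $T_{\boldlambda}[k]$. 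Hence $T, T' \in \calT_x$ implies $T_{\boldlambda}[k]$ and $T'_{\boldlambda}[k]$ are dual equivalent for each $k$, so $T \sim^*_{\boldlambda} T'$. Conversely, given any $T \in \calT_x$ and any $T'$ dual equivalent to $T$ block-by-block, one realizes the elementary moves geometrically as loops and lifts them to exhibit a different path yielding $T'$, which gives $T' \in \calT_x$; this is just the content of~\cite{Pur-shifted} read backwards.

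Finally, and this is the main obstacle, I would connect tableau type with Schubert intersections. By Proposition~\ref{prop:wronskischubert}, for each $a_k$ there is a unique partition $\mu_k$ with $x \in \Omega^{\mu_k}(a_k)$ and $|\mu_k| = |\lambda_k|$. The claim is that $\mu_k$ is exactly the rectification shape of $T_{\boldlambda}[k]$ for every $T \in \calT_x$. To prove this, consider a perturbation $\bolda_t$ that separates the cluster at $a_k$ while freezing all other $a_j$; the lifted path $x_t$ stays inside the Schubert variety $\Omega^{\mu_k}(a_k)$ (which is closed and compatible with the limiting multiplicity pattern). Restricting the tableau-construction of Section~\ref{sec:lifting} to just this cluster then identifies $T_{\boldlambda}[k]$ up to dual equivalence with a standard Young tableau of shape $\mu_k$ modulo rectification, so $\mathrm{rect}(T_{\boldlambda}[k]) = \mu_k$. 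The iff statement follows immediately: $x$ lies in the intersection \eqref{eqn:schubertintersection} exactly when $\mu_k = \lambda_k$ for all $k$, i.e.\ when any (equivalently every, by block-invariance of rectification shape under $\sim^*_{\boldlambda}$) $T \in \calT_x$ has type $\boldlambda$. The main technical burden here is the limiting argument identifying $\mu_k$ with the rectification shape, which requires invoking the jeu de taquin description of Richardson varieties from~\cite{Pur-Gr}.
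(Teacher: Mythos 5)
A point of reference first: the paper does not prove Theorem~\ref{thm:dualequiv}. It explicitly defers both the relevant definitions and the proof to Section~2 of \cite{Pur-shifted}, so your argument has to stand on its own, and as written it has two genuine gaps.

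The central gap is in your second step. You claim that two liftings of $\bolda_t$ with the same starting point $x$ are connected by a small loop in the space of configurations, that such a loop factors into transpositions of adjacent roots within a single cluster, and that Theorem~\ref{thm:sliding} converts each such transposition into an elementary dual equivalence move on the block $T_{\boldlambda}[k]$. But the roots perturbed off a cluster centre $a_k \neq 0,\infty$ all have the same sign, and Theorem~\ref{thm:sliding}(i) says that a real path transposing two adjacent \emph{same-signed} roots leaves the standard tableau \emph{unchanged}. Since $x\mapsto T_x$ is injective on reduced fibres, every real loop permuting a cluster therefore has trivial monodromy and cannot connect two distinct lifts. (If you intend complex braid-type loops instead, Theorem~\ref{thm:sliding} and the whole Section~2 machinery, which lives on $\RP^1$, do not apply to them.) The multiplicity of lifts is caused by the non-reducedness of $X(\bolda_0)$ --- the multivaluedness of the limit as $t\to 0$ --- not by monodromy near $\bolda$, so the proposed mechanism proves neither the containment of $\calT_x$ in a single $\sim^*_{\boldlambda}$-class nor that it exhausts one. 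The argument this paper relies on instead analyzes the degeneration directly and closes with a count: by transversality (Theorem~\ref{thm:MTV2}) the local multiplicity of $x$ in $X(\bolda)$ is $\prod_k|\SYT(\mu_k)|$, where $\mu_k$ is the Schubert position of $x$ at $a_k$, and by Haiman's theory this is exactly the size of a $\sim^*_{\boldlambda}$-class with rectification data $(\mu_1,\dots,\mu_s)$.

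The second gap is in your third step: the assertion that after separating the cluster at $a_k$ ``the lifted path $x_t$ stays inside $\Omega^{\mu_k}(a_k)$'' is false. For $t>0$ the Wronskian of $x_t$ has only simple roots near $-a_k$, so $x_t$ leaves $\Omega^{\mu_k}(a_k)$ immediately whenever $|\mu_k|\geq 2$; closedness of the Schubert variety constrains only the limit point, which is where the path starts, so it yields no information. The statement you are after --- that the Schubert position of $x$ relative to $F_\bullet(a_k)$ equals the rectification shape of $T_{\boldlambda}[k]$ --- is the correct one, but it requires the degeneration analysis of \cite[Section 6]{Pur-Gr} (transporting $a_k$ to $0$ and comparing with the paths $\bolda_{k,t}$ used to define $T_x$), not the containment argument you give. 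Your first step, the path-independence of $\calT_x$ via Theorem~\ref{thm:sameorder}, is fine.
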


For $T \in \SYT(\Rect)$, \defn{switching} the
subtableaux $T_{\boldlambda}[k]$ and $T_{\boldlambda}[k{+}1]$
is the following procedure (see e.g.~\cite{BSS}).
First add $|\lambda_k|$ to each of
the entries of $T_{\boldlambda}[k]$, and subtract $|\lambda_k|$ from
each of the entries of $T_{\boldlambda}[k{+}1]$.  
Then, slide each of the boxes
of $T_{\boldlambda}[k]$, from largest entry to smallest, through 
$T_{\boldlambda}[k{+}1]$.  
The result is
a new tableau in $\SYT(\Rect)$.
Let $\switch(T, \bolda)$ be the tableau obtained by switching
$T_{\boldlambda}[k]$ and $T_{\boldlambda}[k{+}1]$ for each $k$
such that $a_k = -a_{k+1}$.

\begin{theorem}
\label{thm:tworeflectionsgeneral}
Let $x$ be a point in the intersection \eqref{eqn:schubertintersection}.
\begin{packedenum}
\item[(i)] The point $\altreflect x$ lies in the intersection
\begin{equation}
\label{eqn:altreflectintersection}
\Omega^{\lambda_1}(-a_1) \cap \dots \cap \Omega^{\lambda_s}(-a_s)\,,
\end{equation}
and the associated set of tableaux
is $\{\switch(T,\bolda) \mid T \in \calT_x\}$.
\item[(ii)] The point $\stdreflect x$ lies in the intersection
\begin{equation}
\label{eqn:stdreflectintersection}
\Omega^{\lambda_1}(a_1^{-1}) \cap \dots \cap \Omega^{\lambda_s}(a_s^{-1})\,,
\end{equation}
and the associated set of tableaux
is $\{\switch(T,\bolda)^\vee \mid T \in \calT_x\}$.  
\end{packedenum}
Here $T \mapsto T^\vee$ is the involution on $\SYT(\Rect)$ which rotates 
a tableau by $180^\circ$ and replaces each entry $i$ by $N{+}1{-}i$.
\end{theorem}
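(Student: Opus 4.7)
The plan is to prove both parts in parallel by combining the $\PGL_2(\CC)$-equivariance of the Wronski map with the perturbation description \eqref{eqn:setoftableaux} of $\calT_x$ and the sliding calculus of Section~\ref{sec:lifting}. The Schubert containments are immediate from equivariance: under $z \mapsto -z$, $(z+a)^{n-i}$ pulls back to a scalar multiple of $(z-a)^{n-i}$, so $\altreflect F_\bullet(a) = F_\bullet(-a)$ and $\altreflect\Omega^\lambda(a) = \Omega^\lambda(-a)$; similarly $(z+a)^{n-i}$ pulls back under $z \mapsto 1/z$ to a scalar multiple of $(z+a^{-1})^{n-i}$, giving $\stdreflect\Omega^\lambda(a) = \Omega^\lambda(a^{-1})$.

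For the identification of $\calT_{\altreflect x}$ and $\calT_{\stdreflect x}$, I would choose a sufficiently general perturbation path $\bolda_t$ satisfying \eqref{eqn:setoftableaux} and ending at a set $\bolda_1$ whose entries have pairwise distinct absolute values. Any lift $x_1 \in X(\bolda_1)$ of $x$ produces a representative $T = T_{x_1}\in \calT_x$, and every element of $\calT_x$ arises this way. Applying the involution gives $\altreflect x_1 \in X(-\bolda_1)$, respectively $\stdreflect x_1 \in X(\bolda_1^{-1})$. By Lemma~\ref{lem:tworeflections}(i),(ii), the real valued tableau of $\altreflect x_1$, read in the order inherited from $\bolda_1$, is obtained from $T$ by negating entries, and the real valued tableau of $\stdreflect x_1$ is obtained from $T$ by the combinatorial operation $T \mapsto T^\vee$ (i.e.\ $180^\circ$ rotation with entry inversion).

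The subtle point is that the inherited labeling is not always the $\preceq$-order of $-\bolda_1$ (resp.\ $\bolda_1^{-1}$). In case (i), the two orders agree except that consecutive blocks coming from paired points $a_k = -a_{k+1}$ are transposed. In case (ii) the total reversal of order is absorbed into the rotation $T \mapsto T^\vee$; but the two blocks coming from a paired pair $a_k = -a_{k+1}$ have equal absolute values and opposite signs under $a \mapsto a^{-1}$, so they fail to reverse under inversion and must instead be transposed again after the rotation. In both cases the correction is identical: for each $k$ with $a_k = -a_{k+1}$, interchange the blocks $T_{\boldlambda}[k]$ and $T_{\boldlambda}[k+1]$ in the labeling. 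Geometrically this correction is realized by a homotopy of the perturbation within the connected component of the identity in $\PGL_2(\RR)$, exactly as in the proofs of Theorems~\ref{thm:specialribbon} and~\ref{thm:specialD-ribbon} where $\altreflect$ and $\stdreflect$ were conjugated by the small shift $\left(\begin{smallmatrix} 1 & \varepsilon \\ 0 & 1 \end{smallmatrix}\right)$. Each elementary opposite-sign transposition of a single entry pair from the two blocks is a jeu de taquin move by Theorem~\ref{thm:sliding}(iii), and concatenating these moves across the two blocks reproduces the Benkart--Sottile--Stroomer switching of $T_{\boldlambda}[k]$ and $T_{\boldlambda}[k+1]$. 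Carrying this out for every paired $k$ yields $T_{\altreflect x_1} = \switch(T,\bolda)$ and $T_{\stdreflect x_1} = \switch(T,\bolda)^\vee$; ranging over all lifts $x_1$ produces the claimed set-level identities.

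The main obstacle is verifying that the iterated elementary slides really do assemble into the block-level switching operation. Theorem~\ref{thm:dualequiv} is what makes this tractable: both the geometric slide procedure and the combinatorial switch preserve $\sim^*_{\boldlambda}$-equivalence and depend only on the rectification types of the blocks, so the identification can be checked on any convenient representative of $\calT_x$ and any convenient ordering of the elementary slides. A short induction on the block sizes, applying Theorem~\ref{thm:sliding}(iii) to each single-entry opposite-sign swap and Theorem~\ref{thm:sliding}(ii) to rule out unintended same-row-or-column cancellations, then shows that the total effect of the block swap is exactly the Benkart--Sottile--Stroomer switching by definition.
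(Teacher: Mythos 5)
Your proposal follows essentially the same route as the paper: perturb the multiset to a set, apply Lemma~\ref{lem:tworeflections} to the endpoint, and account for the mismatch between the negated (resp.\ inverted) perturbation and an admissible perturbation of $\altreflect\bolda$ (resp.\ $\stdreflect\bolda$) by a short homotopy whose effect, via Theorem~\ref{thm:sliding}, is the block switching at each pair $a_k=-a_{k+1}$; the paper proves (i) this way and leaves (ii) to the reader. One imprecision worth fixing: you locate the discrepancy in ``the inherited labeling is not always the $\preceq$-order of $-\bolda_1$,'' but if $\bolda_1$ has pairwise distinct absolute values then $-\bolda_1$ does too and the two $\preceq$-orders coincide, so Lemma~\ref{lem:tworeflections}(i) applies verbatim with no relabeling; the switching actually arises because the negated perturbation separates each tied pair $a_k=-a_{k+1}$ in the direction opposite to the one required by the convention \eqref{eqn:setoftableaux} for computing $\calT_{\altreflect x}$, and the short path correcting this forces the two clusters to cross in $\preceq$-order with opposite signs --- your correction procedure and its net effect ($\switch(T,\bolda)$, resp.\ $\switch(T,\bolda)^\vee$) are nonetheless the right ones.
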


\begin{proof}
We will prove (i), leaving (ii), which is similar, to the reader.
Let $\hat \bolda = \altreflect \bolda$
and let $\hat x = \altreflect x$.  
Call a path $\bolda_t = \{(a_1)_t, \dots, (a_N)_t\}$ \defn{short} 
if $(a_k)_t \in \RP^1$ is in some sufficiently small neighbourhood 
of $(a_k)_0$, for all $t \in [0,1]$, $k = 1, \dots, N$.

The definition \eqref{eqn:setoftableaux} of $\calT_x$
requires us to consider a ``suitable'' path
$\bolda_t$, i.e. a path 
$\bolda_t = \{(a_1)_t \preceq \dots \preceq (a_N)_t\}$, $t \in [0,1]$, 
where $\bolda_0 = \bolda$ and $\bolda_1$ is a set. 
It is possible to choose this to be a short path, in which 
$(a_k)_1 < (a_k)_0$ for all $k$,
and we will assume that this is the case.  
This ensures that for any lifting $x_t \in X(\bolda_t)$,
the tableau associated to $\altreflect x_1$ is just $T_{x_1}$
(by Lemma~\ref{lem:tworeflections}(i)).  
Thus $\calT_x$ is also obtained from all possible liftings of the
path $\altreflect \bolda_t$, starting at $\hat x$.

Similarly, to obtain $\calT_{\hat x}$ using \eqref{eqn:setoftableaux}
we need a suitable 
path $\hat \bolda_t = \{(\hat a_1)_t , \dots ,(\hat a_N)_t\}$, $t \in [0,1]$. 
Again, we assume this is a short path, and that
$(\hat a_k)_1 < (\hat a_k)_0$ for all $k$.
Note that we have
\begin{equation}
\label{eqn:directionofperturbation}
(\hat a_k)_1 < (\hat a_k)_0 = -(a_k)_0 < - (a_k)_1
\qquad \text{for $k=1, \dots, N$}\,.
\end{equation}
  
Now, any two short paths from $\hat\bolda$ to $\hat \bolda_1$
are homotopy equivalent to each other.
In particular, the path $\hat \bolda_t$ is homotopy 
equivalent to the concatenation of $\altreflect \bolda_t$
with any short path of sets from $\altreflect \bolda_1$ to $\hat \bolda_1$;
hence we can compute $\calT_{\hat x}$ by lifting this concatenation of
paths, starting at $\hat x$.  As noted above, the first part 
(lifting $\altreflect \bolda_t$ to a path starting at $\hat x$) 
gives us $\calT_x$.
It follows that the relationship between $\calT_x$ and 
$\calT_{\hat x}$ is described by lifting a short path from
$\altreflect \bolda_1$ to $\hat \bolda_1$.
From \eqref{eqn:directionofperturbation}, we see that
along such a path the
points close to $a_k$ must change places (in the $\preceq$-order)
with points close to $a_{k+1}$, whenever $a_{k+1} = -a_k$.
By Theorem~\ref{thm:sliding}, the effect of this on a tableau 
$T \in \calT_x$ is to switch the
subtableaux $T_{\boldlambda}[k]$ and $T_{\boldlambda}[k{+}1]$, and
the result follows.
\end{proof}

Theorem~\ref{thm:tworeflectionsgeneral}
gives us two ways of computing the number of fixed points of
an involution $\phi \in \PGL_2(\CC)$ in the 
intersection \eqref{eqn:schubertintersection}.
First note, that for there to any fixed points, the intersection
itself must be $\phi$-invariant, which is to say we must have
\[
  \big\{(\lambda_1, a_1), \dots, (\lambda_s, a_s)\big\} = 
  \big\{(\lambda_1, \phi(a_1)), \dots, (\lambda_s, \phi(a_s))\big\}\,.
\]
If we take $\phi = \altreflect$, the number of fixed points is
the number of equivalence classes of $\sim^*_{\boldlambda}$
of type $\boldlambda$ that are invariant under 
$T \mapsto \switch(T,\bolda)$.
If we take $\phi = \stdreflect$, the number of fixed points is
the number of equivalence classes of $\sim^*_{\boldlambda}$
of type $\boldlambda$ that are invariant under 
$T \mapsto \switch(T,\bolda)^\vee$.

We note that the equivalence classes of $\sim^*_{\boldlambda}$ are
in bijection with sequences of Littlewood-Richardson tableaux,
which makes it quite manageable to compute moderate sized 
examples by hand, by either method.

\begin{example}
Let $d=3$, $n=7$, and $\lambda = 21$.
Suppose $w_1,w_2$ are distinct non-real complex numbers.
We consider the problem of counting the number of real points in the
intersection
\begin{equation}
\label{eqn:realschubertex}
   \Omega^\lambda(w_1) \cap \Omega^\lambda(\bar w_1)
   \cap \Omega^\lambda(w_2) \cap \Omega^\lambda(\bar w_2)\,.
\end{equation}
The points $w_1, \bar w_1, w_2, \bar w_2$ lie on a circle in $\CP^1$,
so by Proposition~\ref{prop:onsomecircle}, the real points in
\eqref{eqn:realschubertex} are the fixed points of an involution 
$\phi \in \PGL_2(\CC)$.  Hence this problem is equivalent to counting
the $\altreflect$-fixed points in the intersection
\[
   \Omega^\lambda(a_1) \cap \Omega^\lambda(-a_1)
   \cap \Omega^\lambda(a_2) \cap \Omega^\lambda(-a_2)\,,
\]
where $0 < a_1 < a_2 < \infty$.  
By Theorem~\ref{thm:tworeflectionsgeneral}(i), the number of these points
is the number of equivalence classes
of $\sim^*_{\boldlambda}$ of type 
$\boldlambda = (\lambda, \lambda, \lambda, \lambda)$ that are invariant
under $T \mapsto \switch(T, \bolda)$.

There are eight equivalence classes of $\sim^*_{\boldlambda}$
of type $\boldlambda$; representatives of these classes are shown
in Figure~\ref{fig:eighttableaux}.
To compute $T \mapsto \switch(T,\bolda)$, we switch 
$T_{\boldlambda}[1]$ with $T_{\boldlambda}[2]$,
and switch $T_{\boldlambda}[3]$ 
with $T_{\boldlambda}[4]$.
The four tableaux in the top row are invariant under
$T \mapsto \switch(T, \bolda)$; therefore so are the
equivalence classes they represent. 
For each tableaux $T$ in the bottom 
row, one can check that $T \nsim^*_{\boldlambda} \switch(T,\bolda)$.
Thus, exactly four of our equivalence classes are fixed by
$T \mapsto \switch(T, \bolda)$, and hence there four real points in the 
intersection \eqref{eqn:realschubertex}.  This answer agrees with
the experimental calculation in~\cite{HHMS}.

We can also obtain this answer by counting 
$\stdreflect$-fixed points in
the intersection
\[
   \Omega^\lambda(a_1) \cap \Omega^\lambda(a_2)
   \cap \Omega^\lambda(a_2^{-1}) \cap \Omega^\lambda(a_1^{-1})\,,
\]
where $|a_1| < |a_2| < 1$.  
By Theorem~\ref{thm:tworeflectionsgeneral}(ii), these are counted
by equivalence classes of $\sim^*_{\boldlambda}$ of type 
$\boldlambda$ that are invariant under
$T \mapsto \switch(T^\vee, \bolda) = T^\vee$.  
The four tableaux on the left
side of Figure~\ref{fig:eighttableaux} are representatives of these
invariant classes.
\end{example}
\begin{figure}[tb]
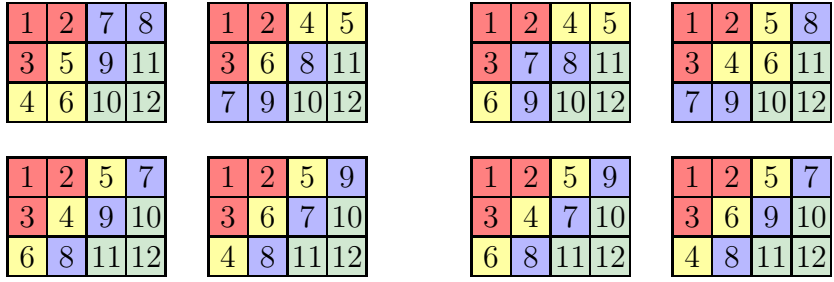

\centering
\begin{young}
!1 & !2 & ?7 & ?8   \\
!3 & ??5 & ?9 & !!11  \\
??4 & ??6 & !!10  & !!12
\end{young}
\quad
\begin{young}
!1 & !2 & ??4 & ??5   \\
!3 & ??6 & ?8 & !!11  \\
?7 & ?9 & !!10  & !!12
\end{young}
\qquad\quad
\begin{young}
!1 & !2 & ??4 & ??5   \\
!3 & ?7 & ?8 & !!11  \\
??6 & ?9 & !!10  & !!12
\end{young}
\quad
\begin{young}
!1 & !2 & ??5 & ?8   \\
!3 & ??4 & ??6 & !!11  \\
?7 & ?9 & !!10  & !!12
\end{young}\bigskip%
\\
\begin{young}
!1 & !2 & ??5 & ?7   \\
!3 & ??4 & ?9 & !!10  \\
??6 & ?8 & !!11  & !!12
\end{young}
\quad
\begin{young}
!1 & !2 & ??5 & ?9   \\
!3 & ??6 & ?7 & !!10  \\
??4 & ?8 & !!11  & !!12
\end{young}
\qquad\quad
\begin{young}
!1 & !2 & ??5 & ?9   \\
!3 & ??4 & ?7 & !!10  \\
??6 & ?8 & !!11  & !!12
\end{young}
\quad
\begin{young}
!1 & !2 & ??5 & ?7   \\
!3 & ??6 & ?9 & !!10  \\
??4 & ?8 & !!11  & !!12
\end{young}
\caption{Eight tableaux that are representatives of 
the eight $\sim^*_{\boldlambda}$ equivalence
classes of type $\boldlambda = (21,21,21,21)$, when $d=3$, $n=7$.
The subtableaux $T_{\boldlambda}[1]$, $T_{\boldlambda}[2]$, 
$T_{\boldlambda}[3]$ and $T_{\boldlambda}[4]$ 
consist of entries $(1,2,3)$, $(4,5,6)$, $(7,8,9)$ and $(10,11,12)$ 
respectively.}
\label{fig:eighttableaux}
\end{figure}
%


\section{$C_r$-fixed points of the Grassmannian}
\label{sec:fixedpoints}

\subsection{Pl\"ucker coordinates}
\label{sec:plucker}

Virtually all of our calculations in the remaining sections
of this paper will be done using the Pl\"ucker coordinates
for the Grassmannian.  Both
the Richardson variety $X_{\lambda/\mu}$ and the fixed point
set $X^r$ are characterized by the fact that certain Pl\"ucker 
coordinates are zero.  In describing of the latter, we begin
to see how $r$-ribbons and $r$-ribbon tableaux fit into the picture.

There are several different ways to index Pl\"ucker coordinates:
we will use $\Lambda$ as our indexing set.
For $\lambda \in \Lambda$, set
\[
  J(\lambda) := \{i-1+\lambda^{d+1-i} \mid 1 \leq i \leq d\}\,.
\]
Suppose $x \in X$ is the subspace spanned by 
polynomials $f_1(z), \dots, f_d(z)$.
Consider the $d \times n$ matrix $A_{ij} := [z^j]f_i(z)$, whose
entries are the coefficients of the polynomials $f_i(z)$.  Our convention
will be that
the rows of $A_{ij}$ are indexed by $i = 1, \dots, d$, while 
the columns are indexed $j = 0, \dots, n-1$.
The \defn{Pl\"ucker coordinates} of a point $x \in X$ are 
$[p_\lambda(x)]_{\lambda \in \Lambda}$, where
$p_\lambda(x) := A_{J(\lambda)}$ is the maximal minor of $A$ with
column set $J(\lambda)$.  These are homogeneous coordinates:
up to a scalar multiple, $[p_\lambda(x)]_{\lambda \in \Lambda}$ does 
not depend on the choice of basis.

We also define positive integer constants
\[
  q_\lambda := 
   \prod_{1 \leq i<j \leq d} (j-i+\lambda^{d+1-j}-\lambda^{d+1-i})\,,
\]
for $\lambda \in \Lambda$. 
The Wronski map can be written explicitly in terms of the Pl\"ucker
coordinates and the constants $q_\lambda$.  
\begin{proposition}[\protect{See \cite[Proposition 2.3]{Pur-Gr}}]
\label{prop:wrplucker}
For any point $x \in X$, 
the Wronskian $\Wr(x;z)$ is 
given in terms of the Pl\"ucker coordinates of $x$ by
\begin{equation}
\label{eqn:pluckerwronskian}
\Wr(x;z) = \sum_{\lambda \in \Lambda}  q_\lambda p_\lambda(x) z^{|\lambda|}\,.
\end{equation}
\end{proposition}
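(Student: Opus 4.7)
The plan is to write the Wronskian as the determinant of a product of two matrices, apply the Cauchy--Binet formula to express it as a sum over Pl\"ucker minors $p_\lambda(x)$, and then extract $z^{|\lambda|}$ together with a combinatorial determinant which will turn out to equal $q_\lambda$.

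First, I would fix a basis $f_1(z),\dots,f_d(z)$ for $x$, set $A_{ij}=[z^j]f_i(z)$ as in the text, and observe that
\[
f_i^{(k-1)}(z)=\sum_{j=0}^{n-1} A_{ij}\,\frac{j!}{(j-k+1)!}\,z^{\,j-k+1},
\]
with the convention that $\frac{j!}{(j-k+1)!}=0$ when $j<k-1$. Writing $C_{kj}:=\frac{j!}{(j-k+1)!}z^{\,j-k+1}$, the Wronskian matrix equals $C\,A^{\mathrm T}$, so Cauchy--Binet yields
\[
\Wr(x;z)=\det(CA^{\mathrm T})=\sum_{J\subset\{0,\dots,n-1\},\,|J|=d}\det(C_J)\,\det(A_J)=\sum_{\lambda\in\Lambda}\det(C_{J(\lambda)})\,p_\lambda(x),
\]
by our indexing convention $p_\lambda(x)=\det A_{J(\lambda)}$.

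Next I would compute $\det C_{J(\lambda)}$ explicitly. Writing $m_i:=i-1+\lambda^{d+1-i}$ for the elements of $J(\lambda)$, the $(k,i)$-entry of $C_{J(\lambda)}$ is $\frac{m_i!}{(m_i-k+1)!}z^{\,m_i-k+1}$. Pulling $z^{m_i}$ out of column $i$ and $z^{-(k-1)}$ out of row $k$ extracts a factor $z^{\sum_i m_i-\binom{d}{2}}=z^{|\lambda|}$, using $\sum_i(i-1+\lambda^{d+1-i})=\binom{d}{2}+|\lambda|$. What remains is the purely numerical determinant
\[
\det\!\left[m_i^{\underline{k-1}}\right]_{k,i=1}^{d},\qquad m_i^{\underline{k-1}}=m_i(m_i-1)\cdots(m_i-k+2).
\]

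Finally, since $m^{\underline{k-1}}$ is a monic polynomial in $m$ of degree $k-1$, one may perform row operations to replace the falling-factorial matrix with the ordinary Vandermonde matrix $[m_i^{k-1}]$ without changing the determinant. Hence
\[
\det\!\left[m_i^{\underline{k-1}}\right]_{k,i}=\prod_{1\le i<j\le d}(m_j-m_i)=\prod_{1\le i<j\le d}\bigl(j-i+\lambda^{d+1-j}-\lambda^{d+1-i}\bigr)=q_\lambda,
\]
which combined with the previous steps gives $\Wr(x;z)=\sum_{\lambda\in\Lambda}q_\lambda\,p_\lambda(x)\,z^{|\lambda|}$. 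No step is really an obstacle here; the only moderate bookkeeping is tracking the power of $z$ produced by the row/column scalings and verifying the sum $\sum_i m_i=\binom{d}{2}+|\lambda|$, both of which are straightforward.
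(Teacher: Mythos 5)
Your proof is correct, and it is essentially the standard argument used in the cited source \cite[Proposition 2.3]{Pur-Gr}: expand the Wronskian over the monomial basis (your Cauchy--Binet step) and evaluate the resulting Wronskian of monomials $z^{m_1},\dots,z^{m_d}$ as the generalized Vandermonde determinant $\prod_{i<j}(m_j-m_i)\,z^{|\lambda|}=q_\lambda z^{|\lambda|}$. The bookkeeping of the power of $z$ and the identification of $\prod_{i<j}(m_j-m_i)$ with $q_\lambda$ both check out.
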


Both sides of \eqref{eqn:pluckerwronskian} are only defined up to a 
scalar multiple, hence this should rightly be interpreted as
an equation in $\PpolN$.

We note a few useful results about the relationship between
Pl\"ucker coordinates and Richardson varieties.  The next two
results are standard facts.

\begin{proposition}
\label{prop:richardsonplucker1}
Let $x \in X$.  There is a unique maximal $\lambda \in \Lambda$ 
such that $p_\lambda(x) \neq 0$, and a unique minimal $\mu \in \Lambda$,
such that $p_\mu(x) \neq 0$.  For these partitions we have
$x \in X_{\lambda/\mu}$. 
\end{proposition}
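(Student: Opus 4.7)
The plan is to analyze the reduced row echelon form of a matrix representing $x$, exploiting the correspondence $\nu \mapsto J(\nu)$ between $(\Lambda, \subseteq)$ and the $d$-subsets of $\{0, 1, \ldots, n{-}1\}$ under the componentwise order on their sorted listings: we have $\mu \subseteq \nu$ if and only if the $k$-th smallest element of $J(\mu)$ is at most the $k$-th smallest element of $J(\nu)$ for each $k = 1, \ldots, d$.

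For the unique minimum $\mu$: choose any basis of $x$, form the $d \times n$ coefficient matrix $A$, and put it into reduced row echelon form, reordering rows so that pivot columns increase top-to-bottom. This preserves the row span and rescales every Pl\"ucker coordinate by a common nonzero unit. The pivot columns constitute $J(\mu)$ for a unique $\mu \in \Lambda$, and the pivot submatrix is the identity, so $p_\mu(x) \neq 0$. Conversely, suppose $p_\nu(x) \neq 0$; if the $k$-th smallest element of $J(\nu)$ were strictly less than the $k$-th smallest element of $J(\mu)$ for some $k$, then the bottom $d-k+1$ pivot rows of the echelon form would all vanish on the leftmost $k$ columns of the submatrix indexed by $J(\nu)$, producing a $(d-k+1) \times k$ zero block. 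The full $d \times d$ submatrix would then have rank at most $(k-1) + (d-k) = d-1$, contradicting $p_\nu(x) \neq 0$. Hence $J(\mu)_k \leq J(\nu)_k$ for every $k$, i.e., $\mu \subseteq \nu$, so $\mu$ is the unique minimum. A symmetric argument --- reducing $A$ to an anti-echelon form with pivots at the rightmost positions, or equivalently applying the above after the order-reversing involution $\nu \mapsto \nu^\vee$ on $\Lambda$ --- produces the unique maximum $\lambda$.

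To conclude $x \in X_{\lambda/\mu}$, I would translate each Schubert-type inequality in the Richardson definition into a rank condition on a submatrix of $A$, using that $\dim(x \cap \TF_j)$ equals $d$ minus the rank of the submatrix of $A$ on columns indexed $\geq j$, and similarly $\dim(x \cap F_j)$ equals $d$ minus the rank of the submatrix on columns indexed $< n-j$. Each such rank is in turn controlled by the Pl\"ucker coordinates of $x$: if any required rank bound failed, then some Pl\"ucker $p_\nu(x)$ with $\nu$ outside the interval $[\mu, \lambda]$ would have to be nonzero, contradicting the min/max characterization established above. The main obstacle is the bookkeeping needed to align the specific indices $\lambda^{d-i}+i$ and $n-d-\mu^i+i$ appearing in the Richardson definition with the appropriate rank conditions and Pl\"ucker-vanishing conditions; once this dictionary is set up, each inequality is an immediate consequence, and the substantive content of the proof lies entirely in the row-reduction calculation above.
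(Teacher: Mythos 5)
The paper does not actually prove this proposition --- it is stated as one of two ``standard facts'' and left to the reader --- so there is no authorial argument to compare yours against. Your proof is correct. The reduction to echelon form, the identification of the pivot set with $J(\mu)$, and the zero-block rank argument showing that $p_\nu(x)\neq 0$ forces $J(\mu)_k \leq J(\nu)_k$ for every $k$ are all carried out completely and accurately, and the dualization via $\nu \mapsto \nu^\vee$ (column reversal) handles the maximum correctly. The only part you leave as ``bookkeeping'' --- the membership $x \in X_{\lambda/\mu}$ --- is indeed routine, but you could finish it more directly than by the contradiction you sketch: the echelon forms themselves witness the required intersection dimensions. In the leftmost-pivot form, row $i$ lies in $F_{n-j_i}$ (it is supported on columns $\geq j_i$), so rows $i,\dots,d$ give $\dim(x\cap F_{n-j_i})\geq d-i+1$, and substituting $j_{d-i'+1}=d-i'+\mu^{i'}$ recovers exactly the condition $\dim(x\cap F_{n-d-\mu^{i'}+i'})\geq i'$; the rightmost-pivot form does the same for the $\TF_\bullet$ conditions with $\lambda$. (When you do this, note that the exponent $\lambda^{d-i}$ in the paper's displayed definition of $X_{\lambda/\mu}$ should read $\lambda^{d+1-i}$, as the $i=d$ case otherwise refers to the undefined $\lambda^0$.) This is marginally cleaner than arguing that a failed rank bound would produce a nonzero $p_\nu$ with $\nu\notin\Lambda_{\lambda/\mu}$, though that route also works.
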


Let $\Lambda_{\lambda/\mu} := \{\nu \in \Lambda \mid 
\mu \subseteq \nu \subseteq \lambda\}$ denote the interval in the
poset $\Lambda$ between $\mu$ and $\lambda$.
The following theorem can be found in~\cite{HP}.

\begin{theorem}
\label{thm:richardsonplucker2}
As a subscheme of $X$, $X_{\lambda/\mu}$ is defined 
by the the equations $\{p_\nu = 0 \mid \nu \notin \Lambda_{\lambda/\mu}\}$.
In particular, for any point $x \in X_{\lambda/\mu}$, if
$p_\nu(x) \neq 0$ then $\nu  \in \Lambda_{\lambda/\mu}$.
\end{theorem}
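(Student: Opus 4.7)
The plan is to reduce the theorem to the analogous scheme-theoretic statement for a single Schubert variety, then recombine via a standard intersection argument. First I would rewrite $X_{\lambda/\mu}$ as the scheme-theoretic intersection $X_\mu \cap X^\lambda$, where $X_\mu$ is the Schubert variety in $X$ defined by $\dim(x \cap F_{n-d-\mu^i+i}) \geq i$ and $X^\lambda$ is the opposite Schubert variety defined by $\dim(x \cap \TF_{\lambda^{d-i}+i}) \geq i$. The flags $F_\bullet$ and $\TF_\bullet$ are opposite in the sense that their stabilizers in $\PGL(\poln)$ are opposite Borel subgroups containing the diagonal torus $T$ (acting via $t \cdot z^j = t_j z^j$), so that the $T$-fixed points of $X$ are the coordinate points $e_\nu$ (with $p_\nu(e_\nu) = 1$, all other Pl\"ucker coordinates zero), and the $T$-fixed points of $X_\mu$ (resp.\ $X^\lambda$) are exactly $\{e_\nu : \nu \supseteq \mu\}$ (resp.\ $\{e_\nu : \nu \subseteq \lambda\}$).

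The main step is then the classical Pl\"ucker description of a single Schubert variety: $X_\mu$ is cut out as a subscheme of $X$ by $\{p_\nu = 0 : \nu \not\supseteq \mu\}$, and dually $X^\lambda$ by $\{p_\nu = 0 : \nu \not\subseteq \lambda\}$. Set-theoretically this is routine from the $T$-fixed point analysis above, together with the fact that $X_\mu$ is the closure of the $B$-orbit through $e_\mu$ (and Pl\"ucker coordinates are $T$-weight vectors, so their vanishing sets are $B$-stable). For the scheme-theoretic statement I would appeal to standard monomial theory: the homogeneous coordinate ring of $X_\mu$ has a basis of standard monomials in $\{p_\nu : \nu \supseteq \mu\}$, which together with the Pl\"ucker straightening law forces the defining ideal of $X_\mu$ in $\PP(\bigwedge^d \poln)$ to be generated by the Pl\"ucker relations and the complementary Pl\"ucker coordinates.

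Finally I would combine the two Schubert descriptions. Taking the scheme-theoretic intersection gives that $X_{\lambda/\mu}$ is cut out in $X$ by
\[
  \{p_\nu = 0 : \nu \not\supseteq \mu\} \cup \{p_\nu = 0 : \nu \not\subseteq \lambda\}
  \ =\ \{p_\nu = 0 : \nu \notin \Lambda_{\lambda/\mu}\},
\]
from which the "in particular" clause is immediate. The main obstacle, and the only non-formal point, is verifying that this scheme-theoretic intersection is in fact reduced (so that the two ideals generated cut-wise actually sum to the ideal of $X_{\lambda/\mu}$). This is where I would lean on the extension of standard monomial theory to Richardson varieties: one shows that the standard monomials in $\{p_\nu : \nu \in \Lambda_{\lambda/\mu}\}$ restrict to a basis for the homogeneous coordinate ring of the reduced Richardson variety, and a Hilbert-series count matches the quotient by the ideal generated by $\{p_\nu : \nu \notin \Lambda_{\lambda/\mu}\}$ (together with Pl\"ucker relations). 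Equivalently, one may invoke that Richardson varieties are Cohen-Macaulay and normal, so the scheme structure is determined by the set-theoretic intersection plus equidimensionality, which is readily verified by comparing dimensions with $\dim X_{\lambda/\mu} = |\lambda/\mu|$.
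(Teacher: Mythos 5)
The paper does not prove this statement at all: it is quoted as a classical fact with a citation to Hodge--Pedoe \cite{HP}, so there is no internal proof to compare against. Your main line of argument --- write $X_{\lambda/\mu}$ as the intersection of the Schubert variety for $\mu$ with the opposite Schubert variety for $\lambda$, cut each one out by the vanishing of the Pl\"ucker coordinates $p_\nu$ with $\nu \not\supseteq \mu$ (resp.\ $\nu \not\subseteq \lambda$), and then control the scheme structure of the intersection by exhibiting a standard monomial basis of the quotient by the ideal generated by $\{p_\nu \mid \nu \notin \Lambda_{\lambda/\mu}\}$ together with the Pl\"ucker relations --- is exactly the classical standard monomial theory proof, and it is in substance what the cited reference does. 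One small caution on the set-theoretic step: the vanishing locus of a single Pl\"ucker coordinate is not $B$-stable; what is $B$-stable is the linear span of the coordinates indexed by $\{\nu \mid \nu \supseteq \mu\}$ (equivalently, the common zero locus of the complementary set), or one can simply verify the rank conditions directly from a row-echelon representative.

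The one genuine flaw is the final ``equivalently'' shortcut. Knowing that the reduced Richardson variety is Cohen--Macaulay and normal, plus an equidimensionality count, does not determine the scheme cut out by $\{p_\nu = 0 \mid \nu \notin \Lambda_{\lambda/\mu}\}$: a non-reduced scheme supported on $X_{\lambda/\mu}$ can have the same support and the same dimension, so nothing in that argument rules out nilpotents. What a Cohen--Macaulay-type argument would actually need is unmixedness of the quotient ring defined by those equations (so that reducedness follows from generic reducedness along the open Richardson cell), and establishing that the quotient ring itself is Cohen--Macaulay is essentially the content of the straightening/standard monomial (or Frobenius splitting) argument --- it cannot be deduced from properties of the reduced variety alone. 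So you should drop that alternative and rest the reducedness claim on the standard monomial basis comparison, which your first route already supplies.
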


\begin{proposition}
\label{prop:richardsonplucker3}
If $h(z) \in \pol{N}$, and $X_{\lambda/\mu}$ is
a compatible Richardson variety, then for any point in
$x \in X_{\lambda/\mu}(h(z)) := X_{\lambda/\mu} \cap X(h(z))$
we have $p_\mu(x) \neq 0$ and $p_\lambda(x) \neq 0$.
\end{proposition}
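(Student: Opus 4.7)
The plan is to extract $p_\lambda(x)$ and $p_\mu(x)$ as the leading and trailing coefficients of the Wronskian $\Wr(x;z)$, using Proposition~\ref{prop:wrplucker} together with the vanishing conditions coming from $X_{\lambda/\mu}$ in Theorem~\ref{thm:richardsonplucker2}.

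First I would unpack the compatibility hypothesis: since $x \in X(h(z))$, the equation $\Wr(x;z) = h(z)$ holds in $\PpolN$, so as elements of $\pol{N}$ we have equality up to a nonzero scalar, which implies that $\deg \Wr(x;z) = \deg h(z) = |\lambda|$ and $\mindeg \Wr(x;z) = \mindeg h(z) = |\mu|$. These two numerical facts are the only information from $h(z)$ that I will use.

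Next I would apply Proposition~\ref{prop:wrplucker} to write
\[
  \Wr(x;z) = \sum_{\nu \in \Lambda} q_\nu p_\nu(x) z^{|\nu|},
\]
so that the coefficient of $z^k$ in $\Wr(x;z)$ is the finite sum $\sum_{|\nu|=k} q_\nu p_\nu(x)$. Because $x \in X_{\lambda/\mu}$, Theorem~\ref{thm:richardsonplucker2} forces every $\nu$ with $p_\nu(x) \neq 0$ to satisfy $\mu \subseteq \nu \subseteq \lambda$. In particular such a $\nu$ satisfies $|\mu| \leq |\nu| \leq |\lambda|$, and the extreme values $|\nu|=|\lambda|$ and $|\nu|=|\mu|$ force $\nu = \lambda$ and $\nu = \mu$ respectively — the key point being that a partition contained in $\lambda$ and of the same size as $\lambda$ must equal $\lambda$, and similarly for $\mu$.

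Combining the two steps, the coefficient of $z^{|\lambda|}$ in $\Wr(x;z)$ collapses to the single term $q_\lambda p_\lambda(x)$, and the coefficient of $z^{|\mu|}$ collapses to $q_\mu p_\mu(x)$. Since $\deg \Wr(x;z) = |\lambda|$, the first of these is nonzero, and since the constants $q_\nu$ are positive, $p_\lambda(x) \neq 0$. Since $\mindeg \Wr(x;z) = |\mu|$, the second is nonzero, so $p_\mu(x) \neq 0$. There is no real obstacle here; the only thing to watch is the possibility of cancellation among Pl\"ucker coordinates of the same $|\nu|$, but the Richardson vanishing eliminates every competing $\nu$ at the extreme sizes, so no cancellation can occur.
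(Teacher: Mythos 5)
Your argument is correct and is essentially the paper's own proof: both expand $\Wr(x;z)$ via Proposition~\ref{prop:wrplucker}, use Theorem~\ref{thm:richardsonplucker2} to collapse the coefficients of $z^{|\mu|}$ and $z^{|\lambda|}$ to the single terms $q_\mu p_\mu(x)$ and $q_\lambda p_\lambda(x)$, and conclude from compatibility that these coefficients are nonzero. Your extra remark about the equality holding only up to a nonzero scalar is a harmless clarification that does not change the argument.
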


\begin{proof}
Since $\Wr(x;z) = h(z)$, taking coefficients of $z^{|\mu|}$ and
$z^{|\lambda|}$ in \eqref{eqn:pluckerwronskian}, we have
\[ 
  \sum_{|\nu| = |\mu|} q_\nu p_\nu(x) = [z^{|\mu|}]h(z)
  \qquad\text{and}\qquad
  \sum_{|\nu| = |\lambda|} q_\nu p_\nu(x) = [z^{|\lambda|}]h(z)\,.
\]
Since $X_{\lambda/\mu}$ is $h(z)$-compatible, the right hand sides 
are non-zero.
Finally, for $p_\nu(x)$ to be non-zero we must have 
$\nu \in \Lambda_{\lambda/\mu}$;
hence the first sum is just $q_\mu p_\mu(x)$, and
the second is $q_\lambda p_\lambda(x)$.  The result follows.
\end{proof}

From these facts, we deduce Proposition~\ref{prop:richardson}.

\begin{proof}[Proof of Proposition~\ref{prop:richardson}]
Let $x \in X(h(z))$.  Let $\mu$ be the unique minimal partition
such that $p_\mu(x) \neq 0$, and let $\lambda$ be the unique maximal
partition such that $p_\lambda(x) \neq 0$.  Then $x \in X_{\lambda/\mu}$.
Moreover from \eqref{eqn:pluckerwronskian}, 
we have $[x^k]h(z) = 0$ for $k > |\lambda|$ and $[x^{|\lambda|}]h(z) 
= q_\lambda p_\lambda(x) \neq 0$.  Thus $\deg h(z) = |\lambda|$,
and similarly $\mindeg h(z) = |\mu|$.  Thus $x$ lies in some 
$h(z)$-compatible Richardson variety.

If $x$ were in two $h(z)$-compatible Richardson varieties,
$x$ would lie in their intersection.  This is a proper Richardson
subvariety $X_{\lambda'/\mu'}$, where $\lambda' \subsetneq \lambda$,
or $\mu' \supsetneq \mu$.  But this contradicts
Theorem~\ref{thm:richardsonplucker2}, since $p_\mu(x) \neq 0$,
and $p_\lambda(x) \neq 0$.
\end{proof}

\subsection{Components of $X^r$}

The key to adapting the methods of~\cite{Pur-Gr} to the proof of
Theorem~\ref{thm:ribbon} is an understanding of the components 
of $X^r$.  We first show that these components are naturally
indexed by two sets.  The component of a point $x \in X^r$
can be indexed by an integer vector, determined by the
eigenvalues of the $C_r$-action; it can also be indexed by
an \emph{$r$-core} in $\Lambda$, determined by the non-vanishing 
Pl\"ucker coordinates.

Let $\zeta = e^{\pi i/r}$, and
let $c = 
\big(\begin{smallmatrix} 
\zeta & 0 \\ 0 & \zeta^{-1}
\end{smallmatrix} \big)$.  Viewed as an element of $\PGL_2(\CC)$,
$c$ is a generator for the cyclic group $C_r$.  However, viewed as an 
element of $\SL_2(\CC)$, it acts on the vector space $\poln$
as a semisimple operator.  The monomials are a basis of eigenvectors:
the action of $c$ on the polynomial $z^k \in \poln$ is given by
\[ 
   c z^k = \zeta^{n-1-2k} z^k\,.
\]
For $k=0, \dots, r-1$,
let $M_k$ denote $\zeta^{n-1-2k}$-eigenspace for the action of $c$
on $\poln$.  Thus $M_k$ has a basis of monomials 
$\{z^k, z^{k+r}, \dots, z^{k + (m_k-1)r}\}$, where 
$m_k := \dim M_k = \lceil\frac{n-k}{r}\rceil$.

If $x \in X^r$, then the $c$ action on $\poln$ restricts to the
invariant subspace $x$.
Let $\rspec(x) = (\rspec_0(x), \dots, \rspec_{r-1}(x))$, where 
$\rspec_k(x)$ is
the multiplicity of the eigenvalue $\zeta^{n-1-2k}$ for the
action of $c$ on $x$.  Note that we have
$\rspec_k(x) \leq m_k$, and as the action is semisimple,
$\rspec_0(x) + \dots +\rspec_{r-1}(x) = d$.
For any non-negative integer vector $\bolds = (s_0, \dots s_{r-1})$, 
with $s_0+ \dots + s_{r-1} = d$ and $s_k \leq m_k$ for $k=0, \dots, r-1$,
let 
\[
  X^\bolds := \{x \in X^r \mid \rspec(x) = \bolds\}\,.
\]

\begin{lemma}
\label{lem:grproduct}
Let $\bolds$ be as above.
Then  $X^\bolds$ is naturally isomorphic to the product of
Grassmannians $\prod_{k=0}^{r-1} \Gr(s_k,M_k)$.
\end{lemma}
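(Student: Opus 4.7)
The plan is to construct explicit mutually inverse algebraic morphisms between $X^\bolds$ and $\prod_{k=0}^{r-1} \Gr(s_k, M_k)$ using the $c$-eigenspace decomposition of a fixed subspace.

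First I would define the forward map $\Phi : X^\bolds \to \prod_k \Gr(s_k, M_k)$ by $\Phi(x) := (x \cap M_0, \dots, x \cap M_{r-1})$. The key observation is that $c$ acts semisimply on $\poln$ with eigenspace decomposition $\poln = \bigoplus_k M_k$. If $x \in X^r$, then $x$ is $c$-invariant as a subspace, so $c|_x$ is also semisimple with eigenvalues drawn from the set $\{\zeta^{n-1-2k}\}$. Since $M_k \cap x$ is precisely the $\zeta^{n-1-2k}$-eigenspace of $c|_x$, semisimplicity gives $x = \bigoplus_k (x \cap M_k)$, and hence $\dim(x \cap M_k) = \rspec_k(x) = s_k$. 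This makes $\Phi$ well-defined.

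Next I would define the reverse map $\Psi : \prod_k \Gr(s_k, M_k) \to X^\bolds$ by $\Psi(y_0, \dots, y_{r-1}) := y_0 + y_1 + \dots + y_{r-1}$. Since the $M_k$ are independent subspaces of $\poln$, this sum is direct, so $\dim \Psi(y_\bullet) = s_0 + \dots + s_{r-1} = d$ and $\Psi(y_\bullet) \in X$. Because each $y_k \subset M_k$ lies in a single eigenspace of $c$, the sum is $c$-invariant, and $\rspec_k(\Psi(y_\bullet)) = \dim y_k = s_k$, so $\Psi$ lands in $X^\bolds$. It is immediate that $\Phi \circ \Psi = \identity$ and $\Psi \circ \Phi = \identity$, the latter using the eigenspace decomposition $x = \bigoplus_k (x \cap M_k)$ established above.

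Finally I would verify that $\Phi$ and $\Psi$ are morphisms of schemes. For $\Psi$ this is standard: the operation of forming the direct sum of subspaces of complementary eigenspaces is a regular map between Grassmannians. For $\Phi$, the intersection $x \mapsto x \cap M_k$ is regular on the open locus where the dimension of the intersection is locally constant, and on $X^\bolds$ we have established that this dimension is constantly $s_k$. I do not expect any serious obstacle here; the only point requiring care is the semisimplicity argument, which reduces cleanly to the fact that $c$ is diagonalizable on $\poln$ via the monomial basis.
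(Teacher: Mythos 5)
Your proposal is correct and follows essentially the same route as the paper: decompose $x = \bigoplus_k (x \cap M_k)$ via the semisimplicity of the $c$-action, and invert by taking direct sums. The paper's proof is terser (it does not spell out the regularity of the two maps), but the underlying argument is identical.
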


In particular $X^\bolds$ is irreducible, which shows that 
the eigenvalues of the action of $c$ distinguish the components
of $X^r$.

\begin{proof}
Each $x \in X^r$ decomposes as 
\[ 
  x  = \bigoplus_{k=0}^{r-1}\ (x \cap M_k)\,.
\]
Now $x \in X^\bolds$ iff $ \dim (x \cap M_k) = s_k$, $k=0, \dots, r-1$,
hence we must have $s_k \leq m_k$ for all $k$.
In this case the $r$-tuple $(x \cap M_0, \dots, x \cap M_{r-1})$
is a point in $\Gr(s_0, M_0) \times \dots \times \Gr(s_{r-1}, M_{r-1})$.
Conversely any point $(\Tx_0, \dots, \Tx_{r-1})$ 
in this product of 
Grassmannians corresponds to the point 
$\Tx_0 \oplus \Tx_1 \oplus 
\dots\oplus \Tx_{r-1} \in X^\bolds$.
\end{proof}

A partition is called an \defn{$r$-core} if its diagram does not have
any hook lengths equal to $r$.  Equivalently $\kappa$ is an $r$-core
if there does not exist an $r$-ribbon tableau of shape $\kappa/\mu$
for any $\mu \subsetneq \kappa$.  It is well known that if $\lambda$
is any partition, there is a unique $r$-core, 
$\rcore(\lambda) \subseteq \lambda$ (called the $r$-core of $\lambda$), 
such that there exists an $r$-ribbon tableau of shape 
$\lambda/\rcore(\lambda)$.  
It can be obtained by successively deleting $r$-ribbons from the shape
$\lambda$ until it is no longer possible to do so.  From this
description it is not entirely obvious that $\rcore(\lambda)$ is well 
defined;
this will be more evident from the alternate descriptions
of $r$-cores that are given below.

Our next goal is to show that the components of $X^r$ are in bijection 
with the $r$-cores in $\Lambda$.

\begin{example}
Suppose $n=11$, $d=5$, and $r=3$, and consider the component $X^{(0,3,2)}$
of $X^r$. 
The points of this component are subspaces of $\pol{10}$ spanned 
by $3$ polynomials
of the form $\alpha z + \beta z^4 + \gamma z^7 + \delta z^{10}$, 
and $2$ polynomials of the form $\alpha z^2 + \beta z^5 + \gamma z^8$.
The matrix $A_{ij} = [z^j]f_i(z)$, as in Section~\ref{sec:plucker},
is of the form
\[
\begin{pmatrix}
0 & * & 0 & 0 & * & 0 & 0 & * & 0 & 0 & * \\
0 & * & 0 & 0 & * & 0 & 0 & * & 0 & 0 & * \\
0 & * & 0 & 0 & * & 0 & 0 & * & 0 & 0 & * \\
0 & 0 & * & 0 & 0 & * & 0 & 0 & * & 0 & 0 \\
0 & 0 & * & 0 & 0 & * & 0 & 0 & * & 0 & 0 
\end{pmatrix}\,.
\]
From this matrix, one can see that at most $12$ of the $462$
Pl\"ucker coordinates of a point $x \in X^{(0,3,2)}$ 
can be non-zero.  The partitions indexing these Pl\"ucker
coordinates all have $3$-core equal to $32211$.
\end{example}

\begin{lemma}
\label{lem:corecomponents}
Let $X^\bolds$ be a component of $X^r$.  There is a unique $r$-core
in $\kappa \in \Lambda$ with the following property: for every 
$x \in X^\bolds$ and every
partition $\lambda \in \Lambda$ whose $r$-core is not equal to $\kappa$,
the Pl\"ucker coordinate $p_\lambda(x)$ is zero.
Conversely, for every $r$-core $\kappa \in \Lambda$, there is a unique
component $X^\bolds \subset X^r$ whose associated $r$-core is $\kappa$.
\end{lemma}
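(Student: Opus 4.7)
\textbf{Proof plan for Lemma~\ref{lem:corecomponents}.} The plan is to translate the problem into the language of beta-sets and then invoke the standard abacus description of $r$-cores. First I would use Lemma~\ref{lem:grproduct} to write any $x \in X^\bolds$ as an internal direct sum $\Tx_0 \oplus \dots \oplus \Tx_{r-1}$ with $\Tx_k \in \Gr(s_k, M_k)$. Such a point admits a basis of polynomials in which each basis vector lies in a single $M_k$, i.e.\ is supported on monomials $z^j$ with $j \equiv k \pmod r$. Consequently the coefficient matrix $A_{ij} = [z^j]f_i(z)$ (after reordering rows) is block-diagonal with respect to the partition of rows by eigenspace and of columns by residue class mod $r$, the $k$\nth block having $s_k$ rows and $m_k$ columns.

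Set $J_k(\lambda) := J(\lambda) \cap (k + r\ZZ)$. From the block structure, the maximal minor $p_\lambda(x) = \det A_{J(\lambda)}$ factors (up to sign) as a product of minors coming from the individual blocks, and therefore vanishes unless $|J_k(\lambda)| = s_k$ for every $k = 0, \dots, r-1$. This gives the necessary condition on non-vanishing Pl\"ucker coordinates in terms of the residue-class profile of $J(\lambda)$.

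Next I would show that this residue-class profile is exactly what determines the $r$-core of $\lambda$. In the standard abacus interpretation, removing an $r$-ribbon from $\lambda$ corresponds to replacing a single element $j \in J(\lambda)$ by $j - r$ (when $j - r \geq 0$ and is not already in $J(\lambda)$); this does not change the multiset $\{j \bmod r : j \in J(\lambda)\}$, and in particular does not change the vector $(|J_k(\lambda)|)_{k=0}^{r-1}$. Since $\rcore(\lambda)$ is obtained from $\lambda$ by repeatedly removing $r$-ribbons, it follows that $|J_k(\lambda)| = |J_k(\rcore(\lambda))|$ for all $k$. Conversely, given any tuple $\bolds$ with $s_0 + \dots + s_{r-1} = d$ and $s_k \leq m_k$, the set $J := \bigcup_{k=0}^{r-1} \{k, k+r, \dots, k+(s_k-1)r\}$ is a $d$-element subset of $\{0, \dots, n-1\}$ realizing the profile $\bolds$, is minimal among such beta-sets under down-shifts by $r$ (so its partition is an $r$-core), and lies in $\Lambda$ because $k + (s_k-1)r \leq n-1$. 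Since the beta-set determines the partition, this exhibits a bijection between the admissible $\bolds$ and the $r$-cores in $\Lambda$, with associated core $\kappa$ satisfying $|J_k(\kappa)| = s_k$.

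Combining these two steps, the $r$-core associated to $X^\bolds$ is the unique $\kappa$ with $|J_k(\kappa)| = s_k$ for all $k$, and the non-vanishing condition $|J_k(\lambda)| = s_k$ is equivalent to $\rcore(\lambda) = \kappa$. The converse direction of the lemma (every $r$-core arises) is then immediate from the bijection just established, and existence of at least one non-vanishing Pl\"ucker coordinate on $X^\bolds$ is guaranteed by picking $\lambda = \kappa$ itself and a generic point of $X^\bolds$. The main obstacle is the bookkeeping in the second paragraph: matching the abacus conventions to the definition $J(\lambda) = \{i-1+\lambda^{d+1-i}\}$ and verifying that the bounds $s_k \leq m_k$ correspond exactly to the constraint $\lambda \in \Lambda$, so that the bijection between admissible $\bolds$ and $r$-cores stays inside the $d \times (n-d)$ rectangle.
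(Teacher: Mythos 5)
Your proposal is correct and follows essentially the same route as the paper's proof: the block structure of the coefficient matrix (equivalently, the row-dependence argument) forces $p_\lambda(x)=0$ unless the bead counts $|J_k(\lambda)|$ match $\bolds$, the standard abacus fact that ribbon removal preserves residue counts identifies the bead-count profile with the $r$-core, and the converse comes from the upwardly-justified abacus construction (the paper exhibits the explicit coordinate point with $p_\kappa=1$ rather than appealing to a generic point, but this is the same idea).
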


Thus the partitions indexing non-zero Pl\"ucker coordinates of a point 
$x \in X^r$ all have
the same $r$-core, and this $r$-core determines the component of $x$.
We will write $X^{r,\kappa} := X^\bolds$, if $\kappa$ is the $r$-core 
corresponding to the component $X^\bolds$.

To prove Lemma~\ref{lem:corecomponents}, we use the correspondence
between partitions and abacus diagrams.
An \defn{$r$-abacus} is a
an arrangement of $d$ beads inside an array with $r$ columns. 
The columns
of this array are called \defn{runners}, and are indexed $0, \dots, r-1$.  
The possible positions of the beads are indexed by 
non-negative integers,
where  $0,1, \dots, r-1$ are the positions in the first row (from left
to right), $r, r+1, \dots, 2r-1$ are the positions in the second row, and
so on.   Thus the possible positions for beads on the $k$\nth runner 
are congruent to $k$ modulo $r$.
The $r$-abacus associated to the partition $\lambda$ has beads
in positions $J(\lambda)$.  An example is given in Figure~\ref{fig:abacus}.

\begin{figure}[tb]
\centering
\begin{young}[bb]
 & & & & _{\tiny 9} & _{\tiny 10} & _{\tiny 11} & _{\tiny 12} 
                    & ,^_{\tiny 13}\hfill \\
 & & & & ,^_{\tiny 8} \hfill \\
 & & & & ,^_{\tiny 7}\hfill  \\
 & & _{\tiny 4}& _{\tiny 5} & ,^_{\tiny 6}\hfill   \\
 & _{\tiny 2} & ,^_{\tiny 3}\hfill  \\
_{\tiny 0} & ,^_{\tiny 1}\hfill
\end{young}
\qquad\qquad
\setlength\unitlength{1ex}
\newcommand{\bead}[1]{\begin{picture}(0,0)(0,0)
\put(0,0){\color{LightBlue} \circle*{2}}
\put(0,0){\circle{2}}
\put(-2,-.4){\makebox[4ex][c]{\tiny #1}}
\end{picture}}
\newcommand{\nobead}{\circle*{.5}}
\begin{picture}(16,18)(0,0)
{\linethickness{.6pt}
\put(1,16){\line(0,1){2}}
\put(15,16){\line(0,1){2}}
\put(1,16){\line(1,0){14}}
\put(1,18){\line(1,0){14}}}
\put(2,0){\line(0,1){16}}
\put(6,0){\line(0,1){16}}
\put(10,0){\line(0,1){16}}
\put(14,0){\line(0,1){16}}
\put(2,14){\nobead}
\put(6,14){\bead{1}}
\put(10,14){\nobead}
\put(14,14){\bead{3}}
\put(2,11){\nobead}
\put(6,11){\nobead}
\put(10,11){\bead{6}}
\put(14,11){\bead{7}}
\put(2,8){\bead{8}}
\put(6,8){\nobead}
\put(10,8){\nobead}
\put(14,8){\nobead}
\put(2,5){\nobead}
\put(6,5){\bead{13}}
\put(10,5){\nobead}
\put(14,5){\nobead}
\put(2,2){\nobead}
\put(6,2){\nobead}
\put(10,2){\nobead}
\put(14,2){\nobead}
\end{picture}
\caption{The partition $844421$, and the corresponding $4$-abacus. 
Here $d=6$.
The positions of the beads correspond to the up-steps along the boundary
of the partition.  The small dots in the abacus indicate positions 
without beads.}
\label{fig:abacus}
\end{figure}

It is not hard to show that deleting an $r$-ribbon from a partition
corresponds to sliding a bead up one position in the corresponding 
$r$-abacus (see \cite{JK}).  From this it follows that 
the $r$-core of a partition is determined by the number 
of beads
on each runner of the $r$-abacus.  A partition is an $r$-core if the
corresponding 
$r$-abacus has its beads upwardly justified.

\begin{proof}
Let $x \in X^r$, and let $\lambda \in \Lambda$.
Let $s_k$ be the number of beads on the $k$\nth runner of the $r$-abacus 
$J(\lambda)$, $k=0, \dots, r-1$.  
We claim that if $x \notin X^\bolds$, then $p_\lambda(x) = 0$.

Suppose $x \in X^{\bolds'}$, and $\bolds'\neq \bolds$.  
Then for some $k$, $s_k < s'_k$.
Let $f_1(z), \dots, f_d(z)$ be is a basis of $c$-eigenvectors for $x$,
where $f_1(z), \dots, f_{s'_k}(z) \in M_k$ are $\zeta^{n-1-2k}$-eigenvectors.
Then $p_\lambda(x)$ is given as the determinant of the minor of
the matrix $A_{ij}$ as in Section~\ref{sec:plucker} with
columns $J(\lambda)$.  Consider the first $s'_k$ rows of this
minor.  There are at most $s_k$ non-zero entries in each row ---
they are in columns corresponding to the beads on the $k$\nth runner of
the abacus.  But since $s_k < s'_k$ these rows cannot be linearly
independent, and so the Pl\"ucker coordinate is $p_\lambda(x)$ zero.

It follows that if $x \in X^\bolds$, then the $p_\lambda(x) = 0$
for every $\lambda \in \Lambda$ that does not have $s_k$ beads
on the $k$\nth runner of the associated $r$-abacus.  Let $\kappa$ be the
$r$-core corresponding to $r$-abacus with $s_k$ upwardly justified
beads on runner $k$.  If $\rcore(\lambda) \neq \kappa$, then
the $r$-abacus does not the correct number of beads on each runner,
and so $p_\lambda(x) = 0$, as required.  

Finally, let $\kappa \in \Lambda$ be an $r$-core.
The argument above shows that 
there there is at most one component $X^{r,\kappa}$ of $X^r$ 
associated to the $r$-core $\kappa$.  To see that such
a component is always non-empty, we note that there 
exists a point in $x \in X^r$
whose Pl\"ucker coordinates are: $p_\kappa(x) = 1$, $p_\lambda(x) = 0$
for $\lambda \neq \kappa$.  This $x$ is necessarily in $X^{r,\kappa}$.
\end{proof}

\begin{theorem}
\label{thm:fixedpointplucker}
Let $x \in X$ and let
$P_x := \{\lambda \in \Lambda \mid p_\lambda(x) \neq 0\}$.
Then $x \in X^{r, \kappa}$ if and only if all partitions in $P_x$
have $r$-core equal to $\kappa$.
\end{theorem}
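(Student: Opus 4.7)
The ``only if'' direction of the theorem is already contained in Lemma~\ref{lem:corecomponents}: if $x \in X^{r,\kappa}$, then every $\lambda \in P_x$ has $r$-core equal to $\kappa$. So the plan is entirely about the converse: assuming every $\lambda \in P_x$ has $r$-core $\kappa$, I want to show $x \in X^{r,\kappa}$. My strategy is to deduce that $cx = x$ by computing how $c$ acts on Pl\"ucker coordinates, and then invoke Lemma~\ref{lem:corecomponents} to identify the component.

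The first step is to write down the action of $c$ on Pl\"ucker coordinates. Because $c$ acts diagonally on the monomial basis by $cz^j = \zeta^{n-1-2j}z^j$, it acts on $\bigwedge^d \poln$ diagonally in the corresponding wedge basis, so for any $y \in X$,
\[
  p_\lambda(cy) \;=\; \chi(\lambda)\, p_\lambda(y)\,,
  \qquad\text{where}\qquad
  \chi(\lambda) \;:=\; \prod_{j \in J(\lambda)} \zeta^{n-1-2j}
  \;=\; \zeta^{d(n-1)}\prod_{j \in J(\lambda)} \zeta^{-2j}\,.
\]
The key combinatorial step is to show that $\chi(\lambda) = \chi(\rcore(\lambda))$. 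Since $\zeta^{2r} = 1$, the factor $\prod_j \zeta^{-2j}$ depends only on the multiset of residues of $J(\lambda)$ modulo $r$, i.e. on how the beads of the $r$-abacus are distributed across the runners. Removing an $r$-ribbon from $\lambda$ corresponds to sliding a single bead up by $r$ positions on its runner, which preserves residues modulo $r$. Hence successively removing $r$-ribbons to reach $\rcore(\lambda)$ leaves $\chi$ unchanged.

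Now suppose every $\lambda \in P_x$ has $r$-core $\kappa$. Then $p_\lambda(cx) = \chi(\kappa)\, p_\lambda(x)$ for every $\lambda \in P_x$, and this identity holds trivially for $\lambda \notin P_x$ since both sides vanish. Thus the Pl\"ucker coordinates of $cx$ and $x$ are proportional, so $cx = x$ in $X$; that is, $x \in X^r$. By Lemma~\ref{lem:corecomponents} there is a unique $r$-core $\kappa'$ such that $x \in X^{r,\kappa'}$, and every partition in $P_x$ has $r$-core $\kappa'$. Since $P_x$ is non-empty and all its elements have $r$-core $\kappa$, we conclude $\kappa' = \kappa$, finishing the proof. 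The only place where there is any real content is the abacus invariance of $\chi$; everything else is a short bookkeeping exercise, and I don't anticipate any serious obstacle.
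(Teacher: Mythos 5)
Your proposal is correct and follows essentially the same route as the paper: both compute the diagonal action of $c$ on Pl\"ucker coordinates (your $\chi(\lambda)$ equals the paper's $\zeta^{-2|\lambda|}$ up to a $\lambda$-independent factor, since $\sum_{j\in J(\lambda)}j = \binom{d}{2}+|\lambda|$), observe that this character is constant on partitions with a fixed $r$-core, conclude $cx=x$, and then identify the component via Lemma~\ref{lem:corecomponents}. The only cosmetic difference is that you verify the invariance of $\chi$ via bead residues on the abacus, whereas the paper uses the equivalent fact that $|\lambda|\equiv|\kappa|\pmod r$ when $\rcore(\lambda)=\kappa$.
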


\begin{proof}
We have already shown one direction in Lemma~\ref{lem:corecomponents}.
It remains to show that if every partition in $P_x$ has $r$-core $\kappa$,
then $x \in X^{r,\kappa}$. 
The action of $c$ on $X$ is given in Plucker coordinates by
\[
  p_\lambda(cx) = \zeta^{-2|\lambda|}p_\lambda(x)
  \qquad \text{for }\lambda \in \Lambda\,.
\]
If all partitions in $P_x$ have the same $r$-core $\kappa$, then
for all $\lambda \in \Lambda$ 
we have either $\zeta^{2|\lambda|} = \zeta^{2|\kappa|}$ 
(if $\rcore(\lambda) = \kappa$), or $p_\lambda(x) = 0$ (otherwise).
Thus $[p_\lambda(cx)]_{\lambda \in \Lambda}$ is a scalar multiple
of $[p_\lambda(x)]_{\lambda \in \Lambda}$, i.e. $cx = x$.  Finally,
$x \in X^{r, \kappa}$, since $p_\lambda(x) \neq 0$ for some partition
with $\rcore(\lambda) = \kappa$.
\end{proof}

Without any further work, we can already deduce a few facts about
fixed points in the fibres of the Wronski map.

\begin{corollary}
Let $h(z)$ be a $C_r$-fixed polynomial, and let
$X_{\lambda/\mu}$ be a compatible Richardson variety.
Every point in $X^r_{\lambda/\mu}(h(z))$ lies in the same component
of $X^r$.
\end{corollary}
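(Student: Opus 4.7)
The plan is to exploit the fact that membership in a component of $X^r$ is detected entirely by the $r$-core common to the indices of the non-vanishing Pl\"ucker coordinates (Theorem~\ref{thm:fixedpointplucker}), and that for any point in $X^r_{\lambda/\mu}(h(z))$ we know exactly which two Pl\"ucker coordinates must be non-zero.

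More precisely, fix an arbitrary $x \in X^r_{\lambda/\mu}(h(z))$. Since $X_{\lambda/\mu}$ is $h(z)$-compatible, Proposition~\ref{prop:richardsonplucker3} gives $p_\mu(x) \neq 0$ and $p_\lambda(x) \neq 0$. Hence both $\mu$ and $\lambda$ belong to $P_x := \{\nu \in \Lambda \mid p_\nu(x) \neq 0\}$. Because $x$ is $C_r$-fixed, Theorem~\ref{thm:fixedpointplucker} guarantees that every partition in $P_x$ has the same $r$-core. In particular $\rcore(\mu) = \rcore(\lambda)$; call this common $r$-core $\kappa$. Then again by Theorem~\ref{thm:fixedpointplucker}, $x \in X^{r,\kappa}$.

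The key observation is that $\kappa$ depends only on $\lambda$ and $\mu$, not on the choice of $x$. Therefore every point of $X^r_{\lambda/\mu}(h(z))$ lies in $X^{r,\kappa}$, proving the claim. As a by-product, we also see that $X^r_{\lambda/\mu}(h(z))$ is automatically empty unless $\rcore(\lambda) = \rcore(\mu)$, which is consistent with Theorem~\ref{thm:ribbon} since $\ribbon(\lambda/\mu)$ is non-empty only when $\lambda$ and $\mu$ share an $r$-core.

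There is no real obstacle here: the entire argument is a two-line deduction from results already established in Section~\ref{sec:plucker} and Lemma~\ref{lem:corecomponents}/Theorem~\ref{thm:fixedpointplucker}. The only conceptual point worth flagging is that compatibility of the Richardson variety with $h(z)$ is exactly what forces the two \emph{extremal} Pl\"ucker coordinates $p_\mu(x)$ and $p_\lambda(x)$ to be non-vanishing; without compatibility one could only conclude that $P_x$ is contained in $\Lambda_{\lambda/\mu}$, which would be insufficient to pin down the component.
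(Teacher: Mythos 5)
Your proposal is correct and follows essentially the same route as the paper: apply Proposition~\ref{prop:richardsonplucker3} to get $p_\mu(x)\neq 0$ and then invoke the correspondence between components of $X^r$ and $r$-cores to conclude that $x$ lies in the component determined by $\rcore(\mu)$, which is independent of $x$. The paper's proof is a one-liner using only $p_\mu(x)\neq 0$; your extra remarks about $p_\lambda(x)\neq 0$ and the emptiness when $\rcore(\lambda)\neq\rcore(\mu)$ are harmless and in fact reproduce the content of the subsequent Corollary~\ref{cor:notsamecore}.
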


\begin{proof}
Let $x \in X^r_{\lambda/\mu}(h(z))$.  By 
Proposition~\ref{prop:richardsonplucker3},
$p_\mu(x) \neq 0$, so $x$ lies in the component of $X^r$
corresponding to the $r$-core of $\mu$.
\end{proof}

The next corollary is a special case of Theorem~\ref{thm:ribbon}.

\begin{corollary}
\label{cor:notsamecore}
Let $h(z)$ be a $C_r$-fixed polynomial, and let
$X_{\lambda/\mu}$ be a compatible Richardson variety.
If $\lambda$ and $\mu$ do not have the same $r$-core, then the
$X^r_{\lambda/\mu}(h(z))$ is empty.  In this case there are no
standard $r$-ribbon tableaux of shape $\lambda/\mu$.
\end{corollary}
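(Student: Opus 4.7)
The plan is to prove the two statements separately, both as straightforward consequences of the results already established.

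For the first statement, suppose $x \in X^r_{\lambda/\mu}(h(z))$. Since $X_{\lambda/\mu}$ is $h(z)$-compatible, Proposition~\ref{prop:richardsonplucker3} gives that both $p_\mu(x)$ and $p_\lambda(x)$ are non-zero. On the other hand, since $x \in X^r$, Theorem~\ref{thm:fixedpointplucker} says that every partition indexing a non-zero Pl\"ucker coordinate of $x$ has the same $r$-core $\kappa$. In particular $\rcore(\lambda) = \rcore(\mu) = \kappa$. Contrapositively, if $\rcore(\lambda) \neq \rcore(\mu)$, the set $X^r_{\lambda/\mu}(h(z))$ must be empty.

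For the second statement, recall from the abacus description (or equivalently the definition recalled just before Lemma~\ref{lem:corecomponents}) that adding an $r$-ribbon to a partition corresponds to sliding a single bead down by one position on some runner of the $r$-abacus, which preserves the number of beads on each runner and hence preserves the $r$-core. Therefore, if $T \in \ribbon(\lambda/\mu)$, then the chain of intermediate shapes $\mu = \nu_0 \subset \nu_1 \subset \dots \subset \nu_\ell = \lambda$ obtained by reading off the ribbons one at a time satisfies $\rcore(\nu_i) = \rcore(\nu_{i-1})$ for all $i$, forcing $\rcore(\lambda) = \rcore(\mu)$. Contrapositively, if $\lambda$ and $\mu$ do not share an $r$-core, then $\ribbon(\lambda/\mu) = \varnothing$.

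Neither step presents any real obstacle: the first is a direct combination of Proposition~\ref{prop:richardsonplucker3} with Theorem~\ref{thm:fixedpointplucker}, and the second is essentially a restatement of the standard fact about $r$-cores on the abacus. The only minor subtlety is making sure to cite the compatibility of $X_{\lambda/\mu}$ with $h(z)$ in order to conclude non-vanishing of \emph{both} $p_\mu(x)$ and $p_\lambda(x)$ rather than just one of them.
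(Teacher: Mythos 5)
Your proof is correct and matches the paper's argument: the paper likewise combines Proposition~\ref{prop:richardsonplucker3} (non-vanishing of $p_\mu(x)$ and $p_\lambda(x)$ by compatibility) with the fact that all non-vanishing Pl\"ucker coordinates of a $C_r$-fixed point share an $r$-core, and then notes that the existence of a standard $r$-ribbon tableau of shape $\lambda/\mu$ forces $\lambda$ and $\mu$ to have the same $r$-core. No gaps.
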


\begin{proof}
If there exists a point $x \in X^r_{\lambda/\mu}(h(z))$ then 
$p_\mu(x) \neq 0$ and 
$p_\lambda(x) \neq 0$, so $\mu$ and $\lambda$ must have the same
$r$-core.  Since the $r$-core is obtained by deleting 
$r$-ribbons, if there exists an $r$-ribbon tableaux of shape
$\lambda/\mu$ then $\mu$ and $\lambda$ have the same $r$-core.
\end{proof}

\subsection{Coordinate rings of components and their initial ideals}
\label{sec:ideals}

Let $\Lambda^\bolds$ denote the set of $r$-tuples of partitions
$(\Tlambda_0, \dots, \Tlambda_{r-1})$, 
where $\Tlambda_k :
\Tlambda_k^0 \geq \dots \geq \Tlambda_k^{s_k}$
has at most $s_k$ parts and largest part at most $m_k -s_k$.  

The $r$-abacus $J(\lambda)$ can be encoded by a 
sequence of $r$ partitions in $\Lambda^\bolds$, where $s_k$
is the number of beads on the $k$\nth runner \cite{FS,JK,SW}.  
We do this by encoding the
sequence of beads on each runner into a partition in the standard way,
where the positions of beads correspond to up-steps along walking
along the boundary of the partition.
It is easier to state the reverse map, which sends
\[
  (\Tlambda_0, \dots, \Tlambda_{r-1}) \in \Lambda^\bolds
\]
to the abacus
\[
  J(\lambda) = 
  \big\{k+r\big(\Tlambda_k^{s_k+1-i}+i-1\big) \bigmid %
     i = 1, \dots, s_k,\ k=0, \dots, r-1\big\} \,.
\]
The $r$-tuple 
$(\Tlambda_0, \dots, \Tlambda_{r-1})$ is
called the \defn{$r$-quotient} of $\lambda$.

Let $\kappa$ be the unique $r$-core whose abacus has $s_k$ beads
on runner $k$.  Let $\Lambda^{r, \kappa} \subset \Lambda$ be the set of 
all partitions with $r$-core $\kappa$.  This is also the set of all
partitions whose $r$-abacus has $s_k$ beads on runner $k$.
hence the 
The $r$-quotient construction gives a bijection between $\Lambda^{r, \kappa}$
and $\Lambda^\bolds$.  Moreover, sliding a bead up the $k$\nth runner of
the abacus corresponds to deleting a box from the partition 
$\Tlambda_k$ and to deleting a $r$-ribbon from $\lambda$.  
It follows that 
$\lambda \mapsto (\Tlambda_0, \dots, \Tlambda_{r-1})$
is an isomorphism of partially ordered sets $\Lambda^{r, \kappa} \simeq
\Lambda^\bolds$.  Since the latter is a distributive lattice,
the former is too.  (Note, however, it is not a sublattice of $\Lambda$: 
the meet and join operations on $\Lambda^{r,\kappa}$ do not coincide 
with those of $\Lambda$).

The elements of $\Lambda^{r,\kappa}$ index the non-zero Pl\"ucker
coordinates on $X^{r, \kappa} = X^\bolds$, which by 
Lemma~\ref{lem:grproduct} is naturally identified with the product of 
$\prod_{k=0}^{r-1} \Gr(s_k, M_k)$.
The $r$-quotient construction
tells us how to identify the Pl\"ucker coordinates $p_\lambda$ with
coordinates of the Segre embedding
of this product of Grassmannians.

Let $\CC[\boldp]$ denote the polynomial ring generated by indeterminates
$\{p_\lambda\}_{\lambda \in \Lambda}$.  The homogeneous coordinate
ring of $X$ is $\CC[X] = \CC[\boldp]/I$, where $I$ is the Pl\"ucker ideal.
$I$ is generated by quadratics, and describes all relations among 
the Pl\"ucker coordinates.
Similarly, we write 
$\CC[\Gr(s_k,M_k)] = \CC[\boldp_{k}]/I_k$ for the homogeneous coordinate
ring of $\Gr(s_k,M_k)$.  The tensor product
$\CC[\Gr(s_0,M_0)] \otimes  \dots \otimes \CC[\Gr(s_{r-1},M_{r-1})]$ is
therefore the multihomogeneous ($\ZZ^r$-graded) coordinate ring 
for $\prod_{k=0}^{r-1} \Gr(s_k, M_k)$.  Abusing notation slightly,
we will write the multihomogeneous ideal for this product as 
\[
  I_0 + \dots + I_{r-1} := 
  \big(I_0 \otimes \CC[\boldp_1] \otimes \dots \otimes \CC[\boldp_{r-1}]\big)
   + \dots + 
  \big(\CC[\boldp_0] \otimes \dots \otimes \CC[\boldp_{r-2}] \otimes I_{r-1}\big)\,.
\]

There is a sign $\epsilon_\lambda \in \{\pm 1\}$ 
associated to each partition $\lambda \in \Lambda$, which is
easily obtained from the $r$-abacus.
First let $\pi_1(\lambda), \dots, \pi_d(\lambda)$, be the sequence 
obtained by listing
the positions of the beads on runner $0$ (in increasing
order), followed by those on runner $1$, and so on.  This list gives 
an ordering of the elements of $\widetilde J_\lambda$. 
We define $\epsilon_\lambda$ to be the sign of the permutation
that puts them in increasing order.  

\begin{lemma}
\label{lem:segre}
Consider the $\CC$-algebra homomorphism
$\Phi : \CC[\boldp] \to 
   \CC[\boldp_0] \otimes  \dots \otimes \CC[\boldp_{r-1}]$
defined by
\begin{equation}
\label{eqn:grsegre}
  \Phi(p_\lambda) =
  \begin{cases}
  \epsilon_\lambda\, p_{\Tlambda_0} \otimes 
  \dots \otimes p_{\Tlambda_{r-1}}
  & \quad\text{if }\lambda \in \Lambda^{r, \kappa}\text{ with $r$-quotient } 
  (\Tlambda_0, \dots, \Tlambda_{r-1}) \\
  0 & \quad\text{if $\lambda \notin \Lambda^{r, \kappa}$.}
  \end{cases}
\end{equation}
The homogeneous ideal of $X^{r,\kappa}$ is 
$I^{r,\kappa} := \Phi^{-1}(I_0 + \dots + I_{r-1}) \subset \CC[\boldp]$.
\end{lemma}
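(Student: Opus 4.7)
The plan is to use Lemma~\ref{lem:grproduct} to identify $X^{r,\kappa} = X^\bolds$ with the product of Grassmannians $Y := \prod_{k=0}^{r-1} \Gr(s_k, M_k)$, and then to show that the Pl\"ucker embedding of $X$ restricted to $X^{r,\kappa}$ factors through the Segre--Pl\"ucker embedding of $Y$ via the linear map of projective coordinates dual to $\Phi$. Granted this factorization, the homogeneous ideal of $X^{r,\kappa}$ in $\CC[\boldp]$ is automatically the pullback of the multihomogeneous ideal $I_0 + \dots + I_{r-1}$ of $Y$, namely $\Phi^{-1}(I_0 + \dots + I_{r-1})$.

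The key computation is to check that for any $x \in X^{r,\kappa}$ corresponding to $(\Tx_0, \dots, \Tx_{r-1}) \in Y$, and for any $\lambda \in \Lambda^{r,\kappa}$ with $r$-quotient $(\Tlambda_0, \dots, \Tlambda_{r-1})$, one has
\[
p_\lambda(x) \;=\; \epsilon_\lambda \, p_{\Tlambda_0}(\Tx_0) \cdots p_{\Tlambda_{r-1}}(\Tx_{r-1}),
\]
and $p_\lambda(x) = 0$ when $\lambda \notin \Lambda^{r,\kappa}$. The vanishing case is already contained in Lemma~\ref{lem:corecomponents}. For $\lambda \in \Lambda^{r,\kappa}$, I would choose a basis of $x$ by concatenating bases of $\Tx_0, \dots, \Tx_{r-1}$, with the $k$\nth group supported on the monomial basis $\{z^k, z^{k+r}, \dots, z^{k+(m_k-1)r}\}$ of $M_k$. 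The resulting $d \times n$ matrix $A$ is block-structured: the rows coming from $\Tx_k$ are supported only on those columns congruent to $k$ modulo $r$. Since $\lambda \in \Lambda^{r,\kappa}$, exactly $s_k$ columns of $J(\lambda)$ lie on the $k$\nth runner of the $r$-abacus, so after permuting the columns of the $J(\lambda)$-minor of $A$ into runner-sorted order the minor becomes block-diagonal. By construction of the $r$-quotient, the bead positions on runner $k$ translate (via $j \mapsto k+rj$) into the column indices $J(\Tlambda_k)$, so the $k$\nth block is precisely the matrix whose determinant is $p_{\Tlambda_k}(\Tx_k)$ under the identification $M_k \simeq \pol{m_k - 1}$.

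The main obstacle will be the sign bookkeeping. The scalar $\epsilon_\lambda$ was defined as the sign of the permutation taking the runner-sorted listing of $J(\lambda)$ to its natural increasing order; this is also the sign picked up when column-permuting the $J(\lambda)$-minor from natural order into runner-sorted order, since a permutation and its inverse have the same sign. Hence the displayed formula holds with precisely the sign appearing in the definition of $\Phi$. Once the Pl\"ucker formula is established, the restriction of the Pl\"ucker embedding of $X$ to $X^{r,\kappa}$ coincides with the composition of the isomorphism of Lemma~\ref{lem:grproduct}, the Segre--Pl\"ucker embedding of $Y$, and the linear injection of projective spaces induced by $\Phi$. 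The scheme-theoretic image of $Y$ is therefore $X^{r,\kappa}$, and its homogeneous ideal in $\CC[\boldp]$ equals $\Phi^{-1}(I_0 + \dots + I_{r-1})$, as claimed.
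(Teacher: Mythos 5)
Your proposal is correct and follows essentially the same route as the paper's proof: identify $X^{r,\kappa}$ with the product of Grassmannians via Lemma~\ref{lem:grproduct}, stack bases of the $\Tx_k$ into the matrix $A$, and observe that the $J(\lambda)$-minor becomes block-diagonal after runner-sorting the columns, with $\epsilon_\lambda$ accounting for the column permutation. The only difference is that you make explicit the final (standard) step passing from the coordinate formula to the identification of the homogeneous ideal as $\Phi^{-1}(I_0 + \dots + I_{r-1})$, which the paper leaves implicit.
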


Thus $\Phi$ descends to a homomorphism
\[
\hat \Phi: \CC[X^{r, \kappa}] \to
   \CC[\Gr(s_0,M_0)] \otimes  \dots \otimes \CC[\Gr(s_{r-1},M_{r-1})]
\]
from the homogeneous coordinate ring of $X^{r,\kappa}$ to the 
multihomogeneous coordinate ring of $\prod_{k=0}^{r-1} \Gr(s_k, M_k)$.
Hence \eqref{eqn:grsegre}
identifies the coordinates of $X^{r, \kappa}$ with the 
coordinates of Segre embedding of
$\prod_{k=0}^{r-1} \Gr(s_k, M_k)$.

\begin{proof}
Recall from the proof of Lemma~\ref{lem:grproduct} that the isomorphism
$\prod_{k=0}^{r-1} \Gr(s_k, M_k)  \simeq X^{r,\kappa}$, is given
by $(\Tx_0, \dots, \Tx_{r-1}) \mapsto x = 
\Tx_0 \oplus \dots \oplus \Tx_{r-1}$,
where $\Tx_k = x \cap M_k$.
We must show that the Pl\"ucker coordinates of $x$ are (up to the
specified sign) products of Pl\"ucker coordinates of 
$\Tx_0, \dots, \Tx_{r-1}$.

Let $x \in X^{r, \kappa} = X^\bolds$. 
$f_{k1}(z), \dots, f_{ks_k}(z)$ be a basis for $\Tx_k = x \cap M_k$.
The Pl\"ucker coordinates of $x \cap M_k$ are given indexed by
partitions $\Tlambda_k$ with at most $s_k$ parts,
and largest part $\leq m_k-s_k$.  Let
$\widetilde{A}^k_{ij}$ be the $s_k \times n$ matrix with entries 
$\widetilde{A}^k_{ij} = [z^j]f_{ki}(z)$.  Then 
$p_{\Tlambda_k}(\Tx_k)$ is the minor of 
$\widetilde{A}^k_{ij} = [z^j]f_{ki}(z)$ with columns specified
by the positions of the beads on runner $k$ of the $r$-abacus 
$J(\lambda)$.

On the other hand $p_\lambda(x)$ is computed as the minor of the
matrix
\[
    A = \begin{pmatrix}
           \widetilde A^0 \\ \vdots \\ \widetilde A^{r-1}
        \end{pmatrix}
\]
with column set $J(\lambda)$.  Suppose $\lambda \in \Lambda^{r, \kappa}$.
If we take the columns
of this minor in the order $\pi_1(\lambda), \dots, \pi_d(\lambda)$,
we see that this is a block diagonal matrix with determinant
equal to 
\[
p_{\Tlambda_0}(\Tx_0) 
p_{\Tlambda_1}(\Tx_1)
\dotsb p_{\Tlambda_{r-1}}(\Tx_{r-1})\,.  
\]
But reordering the columns in this way changes the sign of the minor
by $\epsilon_\lambda$, and thus we have
\[
  \raisebox{1.8ex}{$\displaystyle
  p_\lambda(x) = \begin{cases}
  \epsilon_\lambda\,
  p_{\Tlambda_0}(\Tx_0) 
  p_{\Tlambda_1}(\Tx_1) 
  \dotsb p_{\Tlambda_{r-1}}(\Tx_{r-1})
  &\quad\text{if }\lambda \in \Lambda^{r,\kappa}  \\
  0 &\quad\text{otherwise.}
  \end{cases}$}
  \qedhere
\]
\end{proof}

Consider a Richardson variety $X_{\lambda/\mu}$, where $\lambda$
and $\mu$ both have $r$-core equal to $\kappa$.  Then the intersection
$X^{r,\kappa}_{\lambda/\mu} := X^{r,\kappa} \cap X_{\lambda/\mu}$
is non-empty.  
Write $I^{r,\kappa}_{\lambda/\mu} \subset \CC[\boldp]$
for the homogeneous ideal of $X^{r,\kappa}_{\lambda/\mu}$.
Also put $\Lambda^{r, \kappa}_{\lambda/\mu} 
:= \Lambda^{r,\kappa} \cap \Lambda_{\lambda/\mu}$.

\begin{lemma}
\label{lem:richardsonquotient}
Suppose $\lambda$ and $\mu$ have $r$-quotients 
$(\Tlambda_0, \dots \Tlambda_{r-1})$,
and 
$(\Tmu_0, \dots \Tmu_{r-1})$ respectively.
Under the isomorphism
$X^{r, \kappa} \cong \prod_{k=0}^{r-1} \Gr(s_k, M_k)$,
$X^{r, \kappa}_{\lambda/\mu}$ is identified with
$\prod_{k=0}^{r-1} \richvar{k}$,
where $\richvar{k}$ denotes the
Richardson variety in $\Gr(s_k, M_k)$ corresponding to the skew
shape $\Tlambda_k/\Tmu_k$.
\end{lemma}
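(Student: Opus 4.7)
The plan is to combine the Pl\"ucker-coordinate description of Richardson varieties (Theorem~\ref{thm:richardsonplucker2}) with the Segre-type identification of $X^{r,\kappa}$ established in Lemma~\ref{lem:segre}, and then translate the combinatorics using the fact that the $r$-quotient map $\Lambda^{r,\kappa} \to \Lambda^\bolds$ is a poset isomorphism. Concretely, by Theorem~\ref{thm:richardsonplucker2}, $X^{r,\kappa}_{\lambda/\mu}$ is the subscheme of $X^{r,\kappa}$ cut out by the vanishing conditions $\{p_\nu = 0 \mid \nu \in \Lambda, \ \nu \notin \Lambda_{\lambda/\mu}\}$. So I would translate each of these conditions across the isomorphism $X^{r,\kappa} \cong \prod_{k=0}^{r-1} \Gr(s_k, M_k)$ and check that together they cut out exactly $\prod_{k=0}^{r-1} \richvar{k}$.

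For $\nu \notin \Lambda^{r,\kappa}$, the coordinate $p_\nu$ already vanishes identically on $X^{r,\kappa}$ by Theorem~\ref{thm:fixedpointplucker}, so such conditions are automatic and can be discarded. For $\nu \in \Lambda^{r,\kappa}$ with $r$-quotient $(\Tnu_0, \dots, \Tnu_{r-1})$, Lemma~\ref{lem:segre} identifies $p_\nu$ (up to sign) with the product $p_{\Tnu_0} \otimes \dots \otimes p_{\Tnu_{r-1}}$. By multihomogeneity, this product vanishes at a point $(\Tx_0, \dots, \Tx_{r-1}) \in \prod_k \Gr(s_k,M_k)$ if and only if $p_{\Tnu_k}(\Tx_k) = 0$ for at least one $k$.

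The combinatorial step is to verify that the system $\{p_{\Tnu_0} \otimes \dots \otimes p_{\Tnu_{r-1}} = 0 \mid \nu \in \Lambda^{r,\kappa} \setminus \Lambda_{\lambda/\mu}\}$ has the same zero locus as the union of conditions $\{p_{\Tnu_k} = 0 \mid \Tnu_k \notin \Lambda_{\Tlambda_k/\Tmu_k}\}$ for $k = 0, \dots, r{-}1$. Since $\lambda, \mu \in \Lambda^{r,\kappa}$ and the $r$-quotient is a poset isomorphism, for every $\nu \in \Lambda^{r,\kappa}$ one has
\[
   \mu \subseteq \nu \subseteq \lambda
   \iff
   \Tmu_k \subseteq \Tnu_k \subseteq \Tlambda_k \text{ for all } k,
\]
i.e. $\nu \in \Lambda_{\lambda/\mu}$ iff $\Tnu_k \in \Lambda_{\Tlambda_k/\Tmu_k}$ for every $k$. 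Thus the "bad" indices $\nu \in \Lambda^{r,\kappa} \setminus \Lambda_{\lambda/\mu}$ correspond to tuples $(\Tnu_0, \dots, \Tnu_{r-1})$ for which at least one $\Tnu_k$ fails to lie in $\Lambda_{\Tlambda_k/\Tmu_k}$, which is exactly the combined vanishing locus cutting out $\prod_k \richvar{k}$ via Theorem~\ref{thm:richardsonplucker2} applied to each factor.

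I do not foresee a serious obstacle here: the content is essentially bookkeeping. The only point that requires a little care is the equivalence of the two systems of Pl\"ucker vanishing conditions on the product of Grassmannians; but this follows from standard multihomogeneous reasoning (or equivalently from the fact that the Segre image is cut out by $I_0 + \dots + I_{r-1}$, so vanishing of the tensor $p_{\Tnu_0} \otimes \dots \otimes p_{\Tnu_{r-1}}$ modulo this ideal is equivalent to vanishing of one of its factors). Once this matches up with the poset isomorphism on the index sets, the lemma follows.
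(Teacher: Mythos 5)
Your proposal is correct and follows essentially the same route as the paper: translate the Pl\"ucker vanishing conditions through the identification of Lemma~\ref{lem:segre} and match the index sets via the poset isomorphism $\Lambda^{r,\kappa}\simeq\Lambda^{\bolds}$ given by the $r$-quotient, invoking Theorem~\ref{thm:richardsonplucker2} on both sides. The only presentational difference is that the paper argues at the level of homogeneous ideals, proving $I^{r,\kappa}_{\lambda/\mu}=\Phi^{-1}(\richid{0}+\dots+\richid{r-1})$ by comparing generators (the form actually used later in the initial-ideal computation), whereas you argue with zero loci; your ``one tensor factor must vanish'' step is the set-theoretic counterpart of the paper's observation that $\Phi(p_\nu)$ lies in the sum of the Richardson ideals precisely when some $p_{\Tnu_k}$ is among their generators.
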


\begin{proof}
For $k=0, \dots, r-1$, 
let $\richid{k} \subset \CC[\boldp_k]$ 
denote the homogeneous ideal of the Richardson variety 
$\richvar{k}$.  By Theorem~\ref{thm:richardsonplucker2}, this is generated
by $I_k$, together with 
\begin{equation}
\label{eqn:richardsonquotientgens}
\{p_\tau \mid \tau \nsupseteq \Tmu_k
\ \text{or}\ \tau \nsubseteq \Tlambda_k\}\,.
\end{equation}
The ideal $I^{r,\kappa}_{\lambda/\mu}$ is generated by
$I^{r,\kappa}$ and $\{p_\nu \mid \nu \notin \Lambda_{\lambda/\mu}\}$.

We must show that 
\begin{equation}
\label{eqn:richardsonfixedideal}
I^{r, \kappa}_{\lambda/\mu} = \Phi^{-1}(\richid{0} + \dots + \richid{r-1})\,.
\end{equation}
The right hand side of \eqref{eqn:richardsonfixedideal} is
generated by $\Phi^{-1}(I_0 + \dots + I_{r-1}) = I^{r,\kappa}$
together with all 
\[
  \{p_\nu \mid \Phi(p_\nu) \in \richid{0} + \dots + \richid{r-1} \}\,.
\]
By \eqref{eqn:grsegre}, $p_\nu$ is in this set if and only if
$\nu \notin \Lambda^{r,\kappa}$ or
$p_{\Tnu_k}$ is in the 
set \eqref{eqn:richardsonquotientgens} for some $k$.  The latter occurs
when $\nu \in \Lambda^{r, \kappa} \setminus \Lambda_{\lambda/\mu}$.
Thus the generators for the right hand side 
of \eqref{eqn:richardsonfixedideal} are the same as the generators for
the left hand side, as required.
\end{proof}

We will need some facts about the initial ideal of 
$I^{r,\kappa}$ --- and more generally the initial ideal of
$I^{r,\kappa}_{\lambda/\mu}$ --- for 
certain choices of weights on the Pl\"ucker coordinates.

Let $\boldw = (w_\lambda)_{\lambda \in \Lambda} \in \QQ^{\Lambda}$ 
be a vector of rational numbers.  The \defn{weight} of a monomial
$m(\boldp) = c \prod_{\lambda \in \Lambda} p_\lambda^{k_\lambda} 
\in \CC[\boldp]$ with respect to $\boldw$ is 
$$\weight_\boldw(m) 
:= \sum_{\lambda \in \Lambda} w_\lambda k_\lambda\,.$$
If $g(\boldp) \in \CC[\boldp]$, then the 
\defn{initial form} of $g$ with respect to $\boldw$,
denoted $\initial_\boldw(g)$,
is the sum of all monomial
terms in $g$ for which the weight of the term is minimized.
The \defn{initial ideal} of an ideal $H \subset \CC[\boldp]$ 
with respect to $\boldw$ is the ideal
$$\initial_\boldw(H) 
:= \{\initial_\boldw(g) \mid g \in H\}\,.$$
In this context, the vector $\boldw$ is called a \defn{weight vector}.

The weight vectors of interest to us will be constructed in the 
following way.  We begin with an $r$-ribbon tableau $T$ of shape
$\lambda/\mu$ and a sequence of rational numbers
$b_1, \dots, b_\ell$, where $r\ell = |\lambda/\mu|$.

Let $\smallrect_k := (m_k{-}s_k)^{s_k}$, so that
$(\smallrect_0, \dots, \smallrect_{r-1})$ is the maximal element
in the lattice $\Lambda^\bolds$.  For each $i=1, \dots, \ell$,
the entries labelled $i$ in $T$ form a ribbon.  This ribbon,
under the $r$-quotient correspondence, is identified with a single
box in some $\smallrect_k$.  Put the rational number $b_i$
in this box.  
We require our input data to be such that the following condition
holds.

\begin{condition}
\label{cond:square}
In the filling of $\smallrect_k$,
the entries are weakly decreasing along
rows and down columns, and no
$2 \times 2$ square has four equal entries.
\end{condition}

In every case we will consider, $b_1, \dots, b_\ell$ will be a weakly
decreasing sequence in which at most two of the $b_i$ are
equal; this guarantees that Condition~\ref{cond:square} will always 
satisfied.

If not all the boxes of all $\smallrect_k$ are filled,
we fill the remaining boxes with rational numbers that
are either less than $b_1$ or greater than $b_\ell$, so
that Condition~\ref{cond:square} is still satisfied.
For each partition
$\Tnu_k \subset \smallrect_k$,  let 
$\Tw_{\widetilde\nu_k}$ be the sum of all entries in the filling
of $\smallrect_k$ that are \emph{outside} the partition 
$\Tnu_k$.  
The weight vector we consider will be
\[
w_\nu =
\begin{cases}
\sum_{k=0}^{r-1}
(\Tw_{\widetilde \nu_k} - \Tw_{\widetilde \lambda_k})
&\quad\text{if $\nu \in \Lambda^{r,\kappa}$ with $r$-quotient 
$(\Tnu_0, \dots, \Tnu_{r-1})$}
\\
0 &\quad\text{if }\nu \notin \Lambda^{r,\kappa}\,.
\end{cases}
\]

\begin{example}
Suppose $r=2$, $d=3$, $n=8$.  Let us construct a weight vector $\boldw$
for the domino tableau
\[
   T\ =\ \begin{young}[c]
   , & ??2 & !3 & !!4 & !!4 \\
   ?1 & ??2 & !3 &  !!!6  \\
   ?1 & ???5 & ???5 & !!!6
   \end{young}\ .
\]
and the decreasing sequence $9,8,7,6,5,4$.  We start by placing these 
numbers in the boxes of $\smallrect_0$ and $\smallrect_1$ corresponding
to the ribbons in $T$:
\[
   \smallrect_0\ :\ \begin{young}[c] 
   9 & 8 &  
   \end{young}
   \qquad\quad
   \smallrect_1\ :\ \begin{young}[c] 
   7 & 6  \\
   5 & 4  
   \end{young}\ .
\]
For the empty box we have a choice --- we are allowed to fill it with any 
number less than $4$ --- here we will choose to fill it with a $1$.
To determine any
particular $w_{\nu}$, say for $\nu=322$, we shade the boxes 
corresponding to $(\Tnu_0, \Tnu_1) = (1, 11)$,
\[
   \smallrect_0\ :\ \begin{young}[c] 
   ?9 & 8 & 1 
   \end{young}
   \qquad\quad
   \smallrect_1\ :\ \begin{young}[c] 
   ?7 & 6  \\
   ?5 & 4  
   \end{young}\ ,
\]
and sum the numbers outside of the shaded boxes: $8+1+6+4=19$.  
We do the same procedure for the partition $\lambda = 544$: in
this case the sum is $1$.
Then $w_{322}$ is the difference of these two sums 
$w_{322} = 19-1 = 18$.
Note that $w_\lambda$ is always $0$, and the choice of numbers that
fill the empty boxes does
affect $w_\nu$ if $\nu \in \Lambda_{\lambda/\mu}$.
\end{example}

For such a weight vector, the initial ideals of $I^{r, \kappa}$ and
$I^{r,\kappa}_{\lambda/\mu}$ are
described by the lattice structure of $\Lambda^{r,\kappa}$.

\begin{lemma}
\label{lem:initialideal}
For every weight vector $\boldw$ obtained from the process above,
the initial ideal 
$\initial_\boldw(I^{r,\kappa})$  is generated by
\[
 \{(\epsilon_{\nu \vee \sigma} p_{\nu \vee \sigma})
(\epsilon_{\nu \wedge\sigma} p_{\nu \wedge \sigma})
    - (\epsilon_\nu  p_\nu) (\epsilon_\sigma p_\sigma)
  \mid \nu, \sigma \in \Lambda^{r,\kappa} \}
  \ \cup\ %
  \{ p_\nu  \mid \nu \notin \Lambda^{r,\kappa}\}\,,
\]
and the initial ideal $\initial_\boldw(I^{r,\kappa}_{\lambda/\mu})$ 
is generated by
\begin{equation}
\label{eqn:initgenerators}
 \{(\epsilon_{\nu \vee \sigma} p_{\nu \vee \sigma})
(\epsilon_{\nu \wedge\sigma} p_{\nu \wedge \sigma})
    - (\epsilon_\nu  p_\nu) (\epsilon_\sigma p_\sigma)
  \mid \nu, \sigma \in \Lambda^{r,\kappa} \}
  \ \cup\ %
  \{ p_\nu  \mid \nu \notin \Lambda^{r,\kappa}_{\lambda/\mu} \}\,.
\end{equation}
Here $\wedge$ and $\vee$ denote the meet and join operations on
the lattice $\Lambda^{r,\kappa}$.
\end{lemma}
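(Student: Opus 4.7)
The plan is to reduce the computation to a classical Gröbner degeneration of each Grassmannian factor in the product description of $X^{r,\kappa}$. By Lemma~\ref{lem:segre}, $\Phi$ identifies $I^{r,\kappa}$ with $\Phi^{-1}(I_0 + \dots + I_{r-1})$, and $\ker\Phi$ contains exactly the variables $\{p_\nu \mid \nu \notin \Lambda^{r,\kappa}\}$. For $\nu \in \Lambda^{r,\kappa}$ with $r$-quotient $(\Tnu_0, \dots, \Tnu_{r-1})$, the weight $w_\nu$ equals $\sum_k \Tw_{\Tnu_k}$ up to a constant independent of $\nu$, where $\Tw_{\Tnu_k} := \sum_{\alpha \in \smallrect_k \setminus \Tnu_k} b_\alpha$ is a weight function on the Plücker coordinates of the $k$-th factor $\Gr(s_k, M_k)$. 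So up to a global shift, $\boldw$ is pulled back through $\Phi$ from a sum of weights $\Tboldw_k$ acting on disjoint variable sets.

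The technical heart will be the per-factor claim: for each $k$, $\initial_{\Tboldw_k}(I_k)$ equals the Hibi ideal of the Young sublattice of subpartitions of $\smallrect_k$, generated by the binomials $p_{\Tnu \vee \Tsigma} p_{\Tnu \wedge \Tsigma} - p_{\Tnu} p_{\Tsigma}$. I would prove this via Hodge's straightening law, which expresses $p_{\Tnu} p_{\Tsigma}$ for incomparable $\Tnu, \Tsigma \subseteq \smallrect_k$ as $\pm p_{\Tnu \vee \Tsigma} p_{\Tnu \wedge \Tsigma}$ plus a signed sum of other standard monomials $p_\tau p_\rho$ (with $\tau \subsetneq \rho$), modulo $I_k$. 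Inclusion-exclusion of characteristic functions of partitions gives the identity $\Tw_{\Tnu} + \Tw_{\Tsigma} = \Tw_{\Tnu \vee \Tsigma} + \Tw_{\Tnu \wedge \Tsigma}$, so the two Hibi monomials have equal weight and the Hibi binomial is part of the initial form. The crucial combinatorial step — and the main obstacle of the proof — is to show that every other standard pair $(\tau, \rho)$ appearing in the straightening of $p_{\Tnu} p_{\Tsigma}$ has strictly larger weight $\Tw_\tau + \Tw_\rho > \Tw_{\Tnu} + \Tw_{\Tsigma}$; this is exactly what Condition~\ref{cond:square} is designed to enforce, and I would verify it by a direct argument on weakly decreasing fillings with no constant $2 \times 2$ square.

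Given the per-factor statement, the remainder assembles formally. Because the $I_k$ live in disjoint polynomial rings $\CC[\boldp_k]$ and $\Tboldw_k$ acts only on $\boldp_k$, we have $\initial_{\sum_k \Tboldw_k}(I_0 + \dots + I_{r-1}) = \sum_k \initial_{\Tboldw_k}(I_k)$. Under the $r$-quotient isomorphism $\Lambda^{r,\kappa} \cong \prod_k \Lambda(\smallrect_k)$, which preserves meets and joins, pulling these Hibi binomials back through $\Phi$ and using $\Phi(p_\nu) = \epsilon_\nu\, p_{\Tnu_0} \otimes \dots \otimes p_{\Tnu_{r-1}}$ yields exactly the signed binomials $(\epsilon_{\nu \vee \sigma} p_{\nu \vee \sigma})(\epsilon_{\nu \wedge \sigma} p_{\nu \wedge \sigma}) - (\epsilon_\nu p_\nu)(\epsilon_\sigma p_\sigma)$. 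Adjoining $\{p_\nu \mid \nu \notin \Lambda^{r,\kappa}\} \subset \ker\Phi$, which are their own initial forms of weight $0$, completes the generating set for $\initial_\boldw(I^{r,\kappa})$. For the Richardson case, Lemma~\ref{lem:richardsonquotient} adjoins to each $I_k$ the monomial generators $p_{\widetilde\tau}$ for $\widetilde\tau \notin \Lambda_{\Tlambda_k/\Tmu_k}$, and these pull back to the additional monomial generators $\{p_\nu \mid \nu \in \Lambda^{r,\kappa} \setminus \Lambda^{r,\kappa}_{\lambda/\mu}\}$ needed in~\eqref{eqn:initgenerators}.
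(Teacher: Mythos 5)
Your overall route is the paper's: identify $w_\nu$ (up to a constant) with a sum of per-factor weights $\Tboldw_k$ via the $r$-quotient, degenerate each factor $I_k$ (resp.\ $\richid{k}$) to the Hibi ideal of the sublattice of $\smallrect_k$, use additivity of initial ideals over disjoint variable sets, and pull back through $\Phi$. One stylistic difference: where you propose to reprove the per-factor degeneration from scratch via Hodge straightening, checking that Condition~\ref{cond:square} forces every non-extreme standard pair in a straightening relation to have strictly larger weight, the paper simply cites \cite[Theorem 14.16]{MS} and \cite[Theorem 7.1]{KLS}. Your plan is legitimate, but that strict-inequality verification is exactly where the combinatorial content lives and you have deferred it; be aware that the removed boxes of $\tau\wedge\upsilon$ and the added boxes outside $\tau\vee\upsilon$ are not pairwise comparable in general, so the ``direct argument'' needs an actual matching of boxes, not just monotonicity of the filling.

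The genuine gap is in the final assembly. $\Phi$ is not surjective --- its image is the coordinate ring of the Segre embedding --- so $\Phi^{-1}(J)$ is not $\ker\Phi$ plus preimages of generators of $J$; and $\ker\Phi$ is \emph{not} generated by the variables $\{p_\nu \mid \nu\notin\Lambda^{r,\kappa}\}$: it also contains every Segre relation
$(\epsilon_{\nu'}p_{\nu'})(\epsilon_{\sigma'}p_{\sigma'}) - (\epsilon_{\nu}p_{\nu})(\epsilon_{\sigma}p_{\sigma})$
for which the $r$-quotients of $\{\nu,\sigma\}$ and $\{\nu',\sigma'\}$ agree factorwise as multisets. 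These binomials are weight-homogeneous, hence lie in $\initial_\boldw(I^{r,\kappa})$, so your proposed generating set must be shown to contain them --- and, more generally, to generate all of $\Phi^{-1}\big(\initial_{\Tboldw}(\sum_k \richid{k})\big)$, not merely to be contained in it. The paper closes this by observing that the target ideal is generated in multidegrees $(1,\dots,1)$ and $(2,\dots,2)$, so the preimage is generated by preimages of elements of those degrees; any such degree-two binomial has indices satisfying $\nu'\vee\sigma'=\nu\vee\sigma$ and $\nu'\wedge\sigma'=\nu\wedge\sigma$, and is therefore the difference of two of the Hibi generators in \eqref{eqn:initgenerators}. You need this step (or an equivalent one) to conclude generation.
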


\begin{proof}
It is enough to prove the second statement, since 
$I^{r, \kappa} = I^{r,\kappa}_{\lambda/\mu}$ in the case where
$\mu = \kappa$ and $\lambda$ is the largest partition 
in $\Lambda^{r,\kappa}$.

As in the proof of Lemma~\ref{lem:richardsonquotient}, 
let $\richid{k} \subset \CC[\boldp_k]$ denote the ideal of the 
Richardson variety $\richvar{k}$.  
Each of the vectors 
$\Tboldw_k := \big(\Tw_{\widetilde\lambda_k}\big)
_{\Tlambda_k \subset \smallrect_k}$, $k=0, \dots, r-1$
defines a weight vector for $\CC[\boldp_k]$.
It is well known (see \cite[Theorem 14.16]{MS}) that for these weight
vectors, if Condition~\ref{cond:square} holds, the initial ideal
$\initial_{\Tboldw_k}(I_k)$
is generated by the binomials
\[
 \{
 p_{\tau \vee \upsilon} p_{\tau \wedge \upsilon}
 - p_\tau p_\upsilon \mid \tau, \upsilon \subseteq \smallrect_k \}\,.
\]
More generally, one can show (e.g. using \cite[Theorem 7.1]{KLS})
that
$\initial_{\Tboldw_k}(\richid{k})$ is generated by 
\[
 \{
 p_{\tau \vee \upsilon} p_{\tau \wedge \upsilon}
 - p_\tau p_\upsilon \mid \tau, \upsilon \subset \smallrect_k \}
 \cup 
 \{p_\tau \mid \tau \nsupseteq \Tmu_k
\ \text{or}\ \tau \nsubseteq \Tlambda_k\}
\]
The initial ideal of 
$\richid{0} + \dots + \richid{r-1}
\subset
\CC[\boldp_0] \otimes  \dots \otimes \CC[\boldp_{r-1}]$
with respect to the weight vector
$\Tboldw := (\Tboldw_0, \dots, \Tboldw_{r-1})$ 
is then the sum of these:
$\initial_{\Tboldw}(\richid{0}+ \dots + \richid{r-1}) = 
\initial_{\Tboldw_0}(\richid{0}) + \dots 
+ \initial_{\Tboldw_{r-1}}(\richid{r-1})$.  
This is a basic fact about sums of ideals in disjoint sets of variables,
which follows readily from Buchburger's criterion.

Now, for any $g(\boldp) \in \CC[\boldp]$, 
$\Phi(\initial_\boldw(g))$ is either $0$ or equal to
$\initial_{\Tboldw}(\Phi(g))$.  Moreover, any 
$\Tboldw$-initial form 
in the image of $\Phi$ is the image of some $\boldw$-initial form 
in $\CC[\boldp]$.  It follows that
\begin{align*}
  \initial_\boldw(I^{r,\kappa}_{\lambda/\mu}) 
  &= 
  \initial_\boldw(\Phi^{-1}(\richid{0}+ \dots + \richid{r-1})) \\ 
  &= 
  \Phi^{-1}(\initial_{\Tboldw}(\richid{0} + \dots + \richid{r-1}))\\
  &= 
  \Phi^{-1}(\initial_{\Tboldw_0}(\richid{0}) + \dots + 
          \initial_{\Tboldw_{r-1}}(\richid{r-1}))
  \,.
\end{align*}

Since each of the initial ideals 
$\initial_{\Tboldw_k}(\richid{k})$ is generated
in degree $1$ and $2$,
$\initial_\boldw(I^{r, \kappa}_{\lambda/\mu})$ is generated 
by the $\Phi$-preimage
of the multihomogeneous elements of degree 
$(1,1,\dots 1)$ and $(2,2, \dots, 2)$ in
$\initial_{\Tboldw_0}(\richid{0}) + \dots + 
\initial_{\Tboldw_{r-1}}(\richid{r-1})$.
Thus
$\initial_\boldw(I^{r, \kappa}_{\lambda/\mu})$ is generated by 
$\{ p_\lambda  \mid \lambda \notin \Lambda^{r,\kappa}_{\lambda/\mu}\}$
together with binomials of the form
\begin{equation}
\label{eqn:initialbinomial}
  (\epsilon_{\nu'} p_{\nu'}) (\epsilon_\sigma p_{\sigma'})
  -
  (\epsilon_\nu p_\nu) (\epsilon_\sigma p_\sigma)
\end{equation}
where the $r$-quotients 
of $\nu$, $\sigma$, $\nu'$ and $\sigma'$ satisfy 
\begin{equation}
\label{eqn:quotientbinomial}
  p_{\widetilde {\nu'}_k} p_{\widetilde {\sigma'}_k}
  - p_{\Tnu_k} p_{\Tsigma_k} \in 
  \initial_{\Tboldw_k}(I_k)
\qquad\text{for }k=0, \dots, r-1\,.
\end{equation}
Among these are the binomials from \eqref{eqn:initgenerators}, which
arise when 
$\widetilde {\nu'}_k = \Tnu_k \vee \Tsigma_k$
and
$\widetilde {\sigma'}_k = \Tnu_k \wedge \Tsigma_k$
for all $k$.
On the other hand, each binomial \eqref{eqn:initialbinomial}, is in the 
ideal
generated by \eqref{eqn:initgenerators}.  Indeed, 
\eqref{eqn:quotientbinomial} implies that
$\nu' \vee \sigma' = \nu \vee \sigma$
and $\nu' \wedge \sigma' = \nu \wedge \sigma$, 
and so \eqref{eqn:initialbinomial} can be written as
\[
  \big(
  (\epsilon_{\nu \vee \sigma} p_{\nu \vee \sigma})
  (\epsilon_{\nu \wedge\sigma} p_{\nu \wedge \sigma})
      - (\epsilon_\nu  p_\nu) (\epsilon_\sigma p_\sigma)
  \big)
  -
  \big(
  (\epsilon_{\nu' \vee \sigma'} p_{\nu' \vee \sigma'})
  (\epsilon_{\nu' \wedge\sigma'} p_{\nu' \wedge \sigma'})
      - (\epsilon_{\nu'}  p_{\nu'}) (\epsilon_{\sigma'} p_{\sigma'})
  \big)
\]
Thus $\initial_\boldw(I^{r,\kappa}_{\lambda/\mu})$ is generated 
by \eqref{eqn:initgenerators}, as required.
\end{proof}

\section{Fixed points of the fibre and ribbon tableaux}
\label{sec:ribbon}


\subsection{Fibres of the Wronski map over a field of Puiseux series}
\label{sec:fibretableau}

Let $\FF$ be an algebraically closed field of characteristic zero
containing $\CC$ as a subfield.
Let $\psK := \Fpuiseux{u} = \bigcup_{n \geq 1} \FF(\!(u^{\frac{1}{n}})\!)$ 
be the field of Puiseux series over $\FF$.  This is also algebraically
closed field of characteristic zero, which contains $\CC$ as
a subfield.  As noted in the introduction, it is enough to prove
Theorem~\ref{thm:ribbon} working over any algebraically closed
field, and we will do so working over the field $\psK$.
For most of our arguments it will suffice to consider $\FF= \CC$,
however, for others we will take $\FF = \puiseux{v}$, the field
of puiseux series over $\CC$.

From this point forward, all algebraic objects ($X$, $X_{\lambda/\mu}$, 
$\Wr$, etc.) will be considered over the field $\psK$, rather than over
$\CC$.  For example, $X := \Gr(d, \Kpoln)$ will be the Grassmannian
of $d$-planes in the vector space $\Kpoln$ over $\psK$.

The advantage of working over $\psK$ is that it is a complete valuation 
ring.
If $g = c_\ell u^\ell + \sum_{k>\ell} c_k u^k \in \psK^\times$, the 
\defn{valuation} of $g$ is defined to be
$$\val(g) := \ell\,.$$
The \defn{leading term} $\leadterm(g)$ and 
\defn{leading coefficient} $\leadcoeff(g)$
are 
\[
  \leadterm(g) := c_\ell u^\ell \qquad \qquad
  \leadcoeff(g) := [u^\ell]g = c_\ell\,.
\]
Additionally, we set $\val(0) := +\infty$, $\val(\infty) := -\infty$
and $\leadterm(0) := 0$.
Let $\psK_+ = \{ g \in \psK \mid \val(g) \geq 0\}$.

Let $X_{\lambda/\mu}$ be a Richardson variety in $X$.
Let $a_1, a_2, \dots, a_{|\lambda/\mu|} \in \psK^\times$ be non-zero
puiseux series, and put
\begin{equation}
\label{eqn:hfactored}
  h(z) = z^{|\mu|}\prod_{i=1}^{|\lambda/\mu|}(z+a_i)\,.
\end{equation}
It will be convenient to adopt the convention that 
$a_0 = 0$, and $a_{|\lambda/\mu|+1}= \infty$.
We will assume in what follows that the Pl\"ucker coordinates
of a point $x \in X_{\lambda/\mu}(h(z))$ are normalized so
that $p_\lambda(x) = 1$.

In the case where $a_1, a_2, \dots, a_{|\lambda/\mu|}$ have distinct
valuations, the 
points in the fibre of $X_{\lambda/\mu}(h(z))$ can be identified with 
standard Young tableaux of shape $\lambda/\mu$.

\begin{lemma}
\label{lem:valtableau}
Suppose that 
\[
  \val(a_1) > \val(a_2) > \dots > \val(a_{|\lambda/\mu|})\,.
\]
For every point $x \in X_{\lambda/\mu}(h(z))$,
there is a unique chain of partitions
 $\mu = \lambda_0 \subsetneq \lambda_1 \subsetneq \dots \subsetneq
    \lambda_{|\lambda/\mu|} = \lambda$
with the property that 
\begin{equation}
\label{eqn:valplucker}
  \val(p_{\lambda_k}(x)) = \sum_{i = k+1}^{|\lambda/\mu|} \val (a_i)
   \qquad \text{for }k=0, \dots, |\lambda/\mu|\,.
\end{equation}
Moreover, for this chain we have
\begin{equation}
\label{eqn:leadtermdescent}
  \leadterm(p_{\lambda_k}(x)) =  \frac{q_{\lambda}}{q_{\lambda_k}}
   \prod_{i=k+1}^{|\lambda/\mu|} \leadterm(a_i)\
   \qquad \text{for }k=0, \dots, |\lambda/\mu|\,.
\end{equation}
\end{lemma}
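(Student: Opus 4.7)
The plan is to extract the chain $\lambda_0 \subsetneq \dots \subsetneq \lambda_{|\lambda/\mu|}$ by matching the Wronskian of $x$, expanded in Pl\"ucker coordinates via Proposition~\ref{prop:wrplucker}, against the factored form \eqref{eqn:hfactored} of $h(z)$, and then using the distinct-valuation assumption to isolate a unique dominant Pl\"ucker coordinate at each degree.

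First, I would compare coefficients of $z^{|\mu|+k}$ on both sides of $\Wr(x;z) = h(z)$. The overall scalar in~\eqref{eqn:pluckerwronskian} is fixed by matching the coefficient of $z^{|\lambda|}$: under the normalization $p_\lambda(x) = 1$ this coefficient is $q_\lambda$, while the factored form gives leading coefficient $1$. Consequently, for every $k = 0, 1, \dots, |\lambda/\mu|$,
\begin{equation*}
\sum_{\substack{\nu \in \Lambda_{\lambda/\mu} \\ |\nu| = |\mu|+k}} q_\nu\, p_\nu(x)
\ =\ q_\lambda \cdot e_{|\lambda/\mu|-k}(a_1,\dots,a_{|\lambda/\mu|}),
\end{equation*}
where the restriction of the sum to $\Lambda_{\lambda/\mu}$ uses Theorem~\ref{thm:richardsonplucker2}. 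Under the hypothesis $\val(a_1) > \val(a_2) > \dots > \val(a_{|\lambda/\mu|})$, the elementary symmetric polynomial on the right has a unique monomial of lowest valuation, namely $a_{k+1}a_{k+2}\cdots a_{|\lambda/\mu|}$; so its valuation is $\sum_{i=k+1}^{|\lambda/\mu|}\val(a_i)$ and its leading term is $\prod_{i=k+1}^{|\lambda/\mu|}\leadterm(a_i)$.

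Next, I would construct the chain by downward induction on $k$, starting from $\lambda_{|\lambda/\mu|} := \lambda$, which trivially satisfies \eqref{eqn:valplucker} and \eqref{eqn:leadtermdescent} since $p_\lambda(x) = 1$. Given the statement for $\lambda_{k+1}$, the coefficient identity above immediately guarantees \emph{existence} of some $\nu \in \Lambda_{\lambda/\mu}$ with $|\nu| = |\mu|+k$ and $\val(p_\nu(x)) \leq \sum_{i=k+1}^{|\lambda/\mu|}\val(a_i)$, for otherwise every summand on the left would have strictly greater valuation than the right-hand side. The harder step is \emph{uniqueness} of the minimiser and the chain property $\lambda_k \subsetneq \lambda_{k+1}$. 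Here I would invoke the quadratic Pl\"ucker relations on $X$: for any two incomparable partitions $\nu, \nu'$ of the same size, the product $p_\nu(x)\,p_{\nu'}(x)$ can be rewritten (up to signs) as a sum of products of Pl\"ucker coordinates indexed by pairs involving $\nu \vee \nu'$ and $\nu \wedge \nu'$. Combined with the inductively known leading term of $p_{\lambda_{k+1}}(x)$, these relations should rule out any $\nu \not\subset \lambda_{k+1}$ from achieving the minimum valuation, and likewise prevent two distinct corners of $\lambda_{k+1}$ from both being minimisers. Making this precise, while tracking leading coefficients through the (signed) Pl\"ucker relations, is the main technical obstacle, and it is the step that genuinely uses the strict inequalities between the valuations of the $a_i$.

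Finally, once $\lambda_k$ is identified as the unique minimiser, the leading coefficient formula \eqref{eqn:leadtermdescent} follows by equating leading terms in the displayed coefficient identity: all other summands on the left have strictly larger valuation, so
\begin{equation*}
q_{\lambda_k}\,\leadterm(p_{\lambda_k}(x)) \ =\ q_\lambda \prod_{i=k+1}^{|\lambda/\mu|}\leadterm(a_i),
\end{equation*}
and dividing by $q_{\lambda_k}$ gives exactly \eqref{eqn:leadtermdescent}. The chain property from the preceding paragraph then upgrades this into the stated unique chain $\mu = \lambda_0 \subsetneq \lambda_1 \subsetneq \dots \subsetneq \lambda_{|\lambda/\mu|} = \lambda$.
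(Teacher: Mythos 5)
Your proposal has a genuine gap at exactly the step you flag as ``the main technical obstacle,'' and that step is the whole content of the lemma. From the coefficient identity
\[
\sum_{\nu \vdash (|\mu|+k)} q_\nu\, p_\nu(x) \;=\; q_\lambda\, e_{|\lambda/\mu|-k}(a_1,\dots,a_{|\lambda/\mu|})
\]
you can only conclude that \emph{some} $\nu$ of size $|\mu|+k$ satisfies $\val(p_\nu(x)) \leq \sum_{i>k}\val(a_i)$; you cannot conclude that the minimum is attained by a unique $\nu$, nor that this $\nu$ sits inside $\lambda_{k+1}$ and contains $\mu$ so as to form a chain. Without uniqueness of the minimiser, your final step (equating leading terms to get \eqref{eqn:leadtermdescent}) also fails, since two Pl\"ucker coordinates of equal minimal valuation could cancel at leading order, making the valuation of the sum strictly larger than the valuation of each term. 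The appeal to the quadratic Pl\"ucker relations is only a gesture: the relations constrain products $p_\nu p_{\nu'}$ for incomparable $\nu,\nu'$, but turning that into a statement that rules out ties in valuation, and that forces the minimisers at consecutive degrees to nest, is not routine and is not carried out. This is not a presentational issue --- it is the substance of the lemma.

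The paper avoids this difficulty entirely by running the argument in the opposite direction. Since $X_{\lambda/\mu}(h(z))$ is reduced of cardinality exactly $|\SYT(\lambda/\mu)|$, it suffices to \emph{construct}, for each tableau $T$ encoding a chain $\mu=\lambda_0\subsetneq\dots\subsetneq\lambda_{|\lambda/\mu|}=\lambda$, a point of the fibre whose Pl\"ucker coordinates have the prescribed leading terms \eqref{eqn:basicltplucker}. This is done by solving the explicit leading-term system \eqref{eqn:basicltsystem} and invoking \cite[Theorem 4.2]{Pur-Gr}, which converts such a solution into an actual point of the fibre. Distinct tableaux give distinct valuation profiles, so these points are pairwise distinct; the count then forces every point of the fibre to be one of them, which yields both existence and uniqueness of the chain simultaneously. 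If you want to salvage your forward-direction analysis, you would need an input of comparable strength to \cite[Theorem 4.2]{Pur-Gr} (or a tropical-Grassmannian argument controlling which valuation vectors of Pl\"ucker coordinates are realisable); the coefficient identity plus the Pl\"ucker relations alone do not suffice as written.
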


The chain of partitions $\lambda_0 \subsetneq \lambda_1 \subsetneq
\dots \subsetneq \lambda$ can be identified with
a standard Young tableaux in $T_x \in \SYT(\lambda/\mu)$, obtained by 
placing entry $k$ in the box $\lambda_k/\lambda_{k-1}$.
$T_x$ is the tableau corresponding to the point 
$x \in X_{\lambda/\mu}(h(z))$.

For any shape $\nu \in \Lambda_{\lambda/\mu}$, let $T_x - \nu$
denote the set of entries in $T_x$ that are outside the shape $\nu$.
Thus $T_x - \lambda_k = \{k{+}1, \dots, |\lambda/\mu|\}$.
The system \eqref{eqn:valplucker} can be therefore rewritten as
$\val(p_{\lambda_k}(x)) = \sum_{i \in (T_x -\lambda_k)} \val(a_i)$.
In fact, when written this way, these equations hold for all Pl\"ucker 
coordinates of $X_{\lambda/\mu}$, not just those in the chain.

\begin{lemma}
\label{lem:valothers}
Suppose that 
\[
  \val(a_1) > \val(a_2) > \dots > \val(a_{|\lambda/\mu|})\,.
\]
If $x \in X_{\lambda/\mu}(h(z))$, let
$T = T_x$ be the corresponding tableau, and let
$\lambda_0 \subsetneq \lambda_1 \subsetneq \dots \subsetneq \lambda$
be the associated chain of partitions.
We have
\begin{equation}
\label{eqn:valothers}
  \val(p_{\nu}(x)) = 
\sum_{i \in (T -\nu)} \val(a_i)
\qquad \text{for }\nu  \in \Lambda_{\lambda/\mu}\,.
\end{equation}
In particular, 
$p_{\lambda_k}(x)$ has the unique minimal valuation among all
Pl\"ucker coordinates of $x$ indexed by partitions of size $|\lambda_k|$.  
That is,
if $\nu \in \Lambda$ is any partition of size $|\lambda_k|$ but 
not equal to $\lambda_k$, we have
  $\val(p_\nu(x)) > \val(p_{\lambda_k}(x))$.
\end{lemma}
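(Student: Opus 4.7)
The argument combines Lemma~\ref{lem:valtableau} with the initial ideal analysis from Section~\ref{sec:ideals}. The key observation is that the weight vector $\boldw := (W(\nu))_{\nu \in \Lambda}$ with $W(\nu) := \sum_{i \in (T_x - \nu)} \val(a_i)$ is precisely of the form constructed before Lemma~\ref{lem:initialideal}, taking $r = 1$, $b_i = \val(a_i)$, and tableau $T = T_x$. Here Condition~\ref{cond:square} is satisfied because the $\val(a_i)$ are strictly decreasing while $T_x$ is a standard Young tableau, so the induced filling of $\smallrect$ is strictly decreasing along rows and down columns. I would first record the identity $W(\nu \vee \sigma) + W(\nu \wedge \sigma) = W(\nu) + W(\sigma)$, which is immediate from counting the contribution of each box of $T_x$ to both sides.

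The plan for the inequality $\val(p_\nu(x)) \geq W(\nu)$ is a degeneration-by-contradiction argument. Set $\widetilde{p}_\nu := u^{-W(\nu)} p_\nu(x)$. If some $\val(\widetilde p_\nu) < 0$, let $e := \min_\nu \val(\widetilde p_\nu) < 0$; then $(u^{-e}\widetilde p_\nu)_\nu$ has all coordinates in $\psK_+$ and specializing $u \to 0$ produces a nonzero point satisfying the relations of $\initial_\boldw(I_{\lambda/\mu})$. However, along the chain Lemma~\ref{lem:valtableau} gives $\val(\widetilde p_{\lambda_k}) = 0$ for every $k$, so the specialization kills every Pl\"ucker coordinate on the chain, including $p_\lambda$. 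This contradicts the fact that $p_\lambda \neq 0$ at any point of $X_{\lambda/\mu}(h(z))$ (Proposition~\ref{prop:richardsonplucker3}); more precisely, this property is preserved under initial degeneration because the element $p_\lambda$ is a variable rather than a leading term of a relation in $\initial_\boldw(I_{\lambda/\mu})$.

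To promote this inequality to the claimed equality, I would invoke Lemma~\ref{lem:initialideal} in the case $r=1$, where $\Lambda^{r,\kappa} = \Lambda$ and all $\epsilon_\nu = 1$. The leading coefficients $\overline{\widetilde p_\nu} := \leadcoeff(u^{-W(\nu)} p_\nu(x))$ --- with the convention that $\overline{\widetilde p_\nu} = 0$ when $\val(\widetilde p_\nu) > 0$ --- form a point of the initial scheme, which is cut out by the toric binomials $p_{\nu \vee \sigma} p_{\nu \wedge \sigma} - p_\nu p_\sigma$ for $\nu, \sigma \in \Lambda$, together with $p_\nu = 0$ for $\nu \notin \Lambda_{\lambda/\mu}$. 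By \eqref{eqn:leadtermdescent}, the chain values $\overline{\widetilde p_{\lambda_k}} = \tfrac{q_\lambda}{q_{\lambda_k}}\prod_{i>k}\leadcoeff(a_i)$ are all nonzero, and since every $\nu \in \Lambda_{\lambda/\mu}$ can be joined to two consecutive chain elements via the lattice operations, the binomial relations force $\overline{\widetilde p_\nu} \neq 0$ for every such $\nu$. Hence $\val(p_\nu(x)) = W(\nu)$ as required.

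Finally, the ``in particular'' assertion follows easily: if $\nu \in \Lambda$ has $|\nu| = |\lambda_k|$ and $\nu \neq \lambda_k$, then either $\nu \notin \Lambda_{\lambda/\mu}$, in which case $p_\nu(x) = 0$ and $\val(p_\nu(x)) = \infty$, or else $T_x - \nu$ is obtained from $T_x - \lambda_k = \{k{+}1, \dots, |\lambda/\mu|\}$ by trading some indices $>k$ for some indices $\leq k$; since the $\val(a_i)$ are strictly decreasing, each such trade strictly increases the valuation sum, giving $\val(p_\nu(x)) = W(\nu) > W(\lambda_k) = \val(p_{\lambda_k}(x))$. The main obstacle in the plan is the rigorous degeneration step: one must justify that the specialization of a rescaled point of $X_{\lambda/\mu}(h(z))$ indeed lands in the initial scheme and inherits the nonvanishing of $p_\lambda$. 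This should follow from standard flatness arguments over the valuation ring $\psK_+$ applied to the Richardson ideal and to the Pl\"ucker coordinate $p_\lambda$.
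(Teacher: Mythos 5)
Your overall strategy---analyzing a given point directly through the toric initial degeneration of the Richardson variety---is genuinely different from the paper's proof, which is an existence-plus-counting argument: for each $T \in \SYT(\lambda/\mu)$ one \emph{constructs} a point satisfying \eqref{eqn:valplucker}, \eqref{eqn:leadtermdescent} and \eqref{eqn:valothers} by solving the triangular system from \cite[Theorem 4.2]{Pur-Gr}, and since the fibre has exactly $|\SYT(\lambda/\mu)|$ points and distinct tableaux yield distinct points, every point of the fibre is accounted for. Your route would be attractive if it worked, but the central step fails.

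The gap is in your proof of the inequality $\val(p_\nu(x)) \geq W(\nu)$. After rescaling by $u^{-e}$ and reducing, you obtain a nonzero point of $V(\initial_\boldw(I_{\lambda/\mu}))$ on which every chain coordinate $p_{\lambda_k}$, including $p_\lambda$, vanishes --- and you declare this a contradiction. It is not: $V(\initial_\boldw(I_{\lambda/\mu}))$ is cut out by the binomials of Lemma~\ref{lem:initialideal} together with the vanishing of $p_\nu$ for $\nu \notin \Lambda_{\lambda/\mu}$, and it contains many nonzero points with $p_\lambda = 0$; for instance, for any single $\nu \in \Lambda_{\lambda/\mu}$ with $\nu \neq \lambda$, the coordinate point with $p_\nu = 1$ and all other coordinates zero satisfies every binomial $p_{\sigma\vee\tau}p_{\sigma\wedge\tau} - p_\sigma p_\tau$. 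The nonvanishing of $p_\lambda$ on $X_{\lambda/\mu}(h(z))$ (Proposition~\ref{prop:richardsonplucker3}) is an \emph{open} condition on the generic fibre and is simply not inherited by the specialization of a single rescaled point; your remark that ``$p_\lambda$ is a variable rather than a leading term'' and the appeal to flatness do not supply the missing constraint. (Including the Wronskian equations at leading order does not help either: they degenerate to identities of the form $q_{\lambda_k}\bar q_{\lambda_k} = c\,\bar q_\lambda$, which are satisfied when both sides vanish.) To make a direct argument work one would instead have to propagate the chain valuations of Lemma~\ref{lem:valtableau} through the three-term Pl\"ucker relations by an induction over $\Lambda_{\lambda/\mu}$ --- a real argument that you have not given. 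A secondary, smaller looseness: in your equality step, it is not true that every $\nu$ satisfies $\nu\vee\lambda_j,\ \nu\wedge\lambda_j \in \{\lambda_0,\dots,\lambda_\ell\}$ for some $j$, so the propagation of nonvanishing from the chain to all of $\Lambda_{\lambda/\mu}$ also requires an inductive argument rather than a single application of a binomial. As written, the proposal does not establish the lemma.
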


\begin{proof}[Proof of Lemmas~\ref{lem:valtableau} and~\ref{lem:valothers}]
This proof is little more than a trivial modification of the proof of
\cite[Corollary 4.4]{Pur-Gr}.  We sketch the main idea here, omitting
several details that are the same.

Since the number of points in $X_{\lambda/\mu}(h(z))$ is exactly
$|\SYT(\lambda/\mu)|$, it is enough to show that for
each tableau $T \in \SYT(\lambda/\mu)$ encoding a chain of partitions
$\mu = \lambda_0 \subsetneq \lambda_1 \subsetneq \dots \subsetneq \lambda$,
there exists a point 
$x \in X_{\lambda/\mu}(h(z))$ such that \eqref{eqn:valplucker},
\eqref{eqn:leadtermdescent} and \eqref{eqn:valothers} hold. 

To do this, we use \cite[Theorem 4.2]{Pur-Gr}, which tells us how to obtain
such a point from a solution to a certain system of equations.  
In our case, the system of equations we need to solve is
\begin{equation}
\label{eqn:basicltsystem}
    q_{\lambda_i} \omega_{i+1} \dotsb \omega_{|\lambda/\mu|}
    = q_{\lambda} \leadcoeff(a_{i+1}) \dotsb \leadcoeff(a_{|\lambda/\mu|})
    \qquad i=1, \dots, |\lambda/\mu|\,,
\end{equation}
in complex variables $\omega_1, \dots, \omega_{|\lambda/\mu|}$.
This has the unique solution
\begin{equation}
\label{eqn:basicltsolution}
     \omega_i = \frac{q_{\lambda_i}\leadcoeff(a_i)}{q_{\lambda_{i-1}}} \,.
\end{equation}
According to \cite[Theorem 4.2]{Pur-Gr}, every solution
$(\omega_1, \dots, \omega_{|\lambda/\mu|})$ of \eqref{eqn:basicltsystem}
gives rise to a point $x \in X_{\lambda/\mu}(h(z))$ satisfying
\begin{equation}
\label{eqn:basicltplucker}
     \leadterm(p_\nu(x)) = 
     \left(\prod_{i \in (T-\nu)} \omega_i\right) 
     u^{\left(\sum_{i \in (T{-}\nu)} \val(a_i)\right)}
     \qquad\text{for } \nu \in \Lambda_{\lambda/\mu}\,.
\end{equation}
Taking valuations of both sides of \eqref{eqn:basicltplucker}, 
we see that $x$ satisfies \eqref{eqn:valothers}.
In the case where $\nu = \lambda_k$, we substitute the
solution \eqref{eqn:basicltsolution} into \eqref{eqn:basicltplucker}, 
giving
\begin{align*}
     \leadterm(p_\nu(x)) 
     &= 
     \left(\prod_{i \in (T- \lambda_k)} \omega_i\right) 
     u^{\left(\sum_{i \in (T{-}\lambda_k)}\val(a_i)\right)} \\
     &= 
     \left(\prod_{i=k+1}^{|\lambda/\mu|} 
     \frac{q_{\lambda_i}\leadcoeff(a_i)}{q_{\lambda_{i-1}}}\right)
     u^{\left(\sum_{i=k+1}^{|\lambda/\mu|} \val(a_i)\right)} \\
     &= 
     \frac{q_{\lambda}}{q_{\lambda_k}}
     \prod_{i=k+1}^{|\lambda/\mu|} \leadcoeff(a_i)u^{\val(a_i)}
\end{align*}
which is equivalent to \eqref{eqn:leadtermdescent}.  
This in turn implies \eqref{eqn:valplucker}.

Finally, we turn to the last statement of Lemma~\ref{lem:valothers}.
Let $\nu \in \Lambda$ be any partition of size $|\lambda_k|$. 
If $\nu  \in \Lambda_{\lambda/\mu}$, then
since $\val(a_1) > \dots > \val(a_{|\lambda/\mu|})$,
\[
  \val(p_\nu(x)) = \sum_{i \in (T-\nu)} \val(a_i) \geq
   \sum_{i = k+1}^{|\lambda/\mu|} \val(a_i) = \val(p_{\lambda_k}(x))\,.
\]
Equality occurs if and only if $T - \nu = \{k{+}1, \dots, N\}$, i.e.
$\nu = \lambda_k$. 
If $\nu \notin \Lambda_{\lambda/\mu}$
then by Theorem \ref{thm:richardsonplucker2}
$\val(p_\nu(x)) = +\infty > \val(p_{\lambda_k}(x))$.
\end{proof}

\subsection{Beyond the generic case}

For a polynomial $h(z)$ of the 
form \eqref{eqn:fixedpoly} with coefficients in $\psK$, the roots 
of $h(z)$ do not have distinct valuations.  
Thus Lemmas~\ref{lem:valtableau} and~\ref{lem:valothers} cannot be
used directly in the case of interest to us.  Nevertheless, some 
parts of these
lemmas still hold when the valuations of the roots are non-distinct.

\begin{lemma}
\label{lem:nongenericval}
Suppose that
\[
  \val(a_1) \geq \val(a_2) \geq \dots \geq \val(a_{|\lambda/\mu|})\,.
\]
There exists a chain of partitions 
  $\mu = \lambda_0 \subsetneq \lambda_1 \subsetneq \dots \subsetneq
    \lambda_{|\lambda/\mu|} = \lambda$
(not necessarily unique)
encoded by a tableau $T \in \SYT(\lambda/\mu)$ with the following
properties:  
\begin{packedenum}
\item[(1)] For all $\nu \in \Lambda_{\lambda/\mu}$
we have
\begin{equation}
\label{eqn:valotherineq}
  \val(p_{\nu}(x)) \geq \sum_{i \in (T-\nu)} \val(a_i)\,.
\end{equation}
Equality holds if the leading coefficients of 
$a_1, \dots, a_{|\lambda/\mu|}$ are generic.
\item[(2)] If $\val(a_k) > \val(a_{k+1})$, then 
\eqref{eqn:valplucker} and \eqref{eqn:leadtermdescent} hold.
\item[(3)] If $\val(a_k) > \val(a_{k+1})$, then
$p_{\lambda_k}(x)$ has the unique minimal valuation among
all Pl\"ucker coordinates of $x$ indexed by partitions of size $|\lambda_k|$.
\end{packedenum}
\end{lemma}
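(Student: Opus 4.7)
I would deduce all three parts from Lemmas~\ref{lem:valtableau} and~\ref{lem:valothers} via an infinitesimal perturbation that strictly separates the $u$-valuations of the roots without disturbing their leading coefficients. Enlarge $\psK$ to an algebraically closed valued field $\psK^{(\varepsilon)}$ whose value group contains a positive infinitesimal $\varepsilon$ (for instance, the Hahn series field over $\FF$ with value group $\QQ \oplus \QQ\varepsilon$, ordered lexicographically), and set
\[
 a_i^{(\varepsilon)} := a_i\, u^{-i\varepsilon}\,.
\]
Then $\val(a_i^{(\varepsilon)}) = \val(a_i) - i\varepsilon$ is strictly decreasing in $i$, while $\leadcoeff(a_i^{(\varepsilon)}) = \leadcoeff(a_i)$. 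The perturbed polynomial $h_\varepsilon(z) := z^{|\mu|}\prod_i(z + a_i^{(\varepsilon)})$ is $X_{\lambda/\mu}$-compatible over $\psK^{(\varepsilon)}$, and Lemmas~\ref{lem:valtableau} and~\ref{lem:valothers} apply: each $x^{(\varepsilon)} \in X_{\lambda/\mu}(h_\varepsilon(z))$ determines a unique chain $\mu = \lambda_0 \subsetneq \cdots \subsetneq \lambda$, encoded by a tableau $T_{x^{(\varepsilon)}} \in \SYT(\lambda/\mu)$, for which the exact equalities \eqref{eqn:valplucker}--\eqref{eqn:valothers} hold with every $a_i$ replaced by $a_i^{(\varepsilon)}$.

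Since $\Wr|_{X_{\lambda/\mu}}$ is finite, every point $x \in X_{\lambda/\mu}(h(z))$ is the specialization at $\varepsilon = 0$ of some $x^{(\varepsilon)}$, and I would \emph{define} $T := T_{x^{(\varepsilon)}}$ via such a lift. For part~(1), the exact equality $\val(p_\nu(x^{(\varepsilon)})) = \sum_{i \in T - \nu} \val(a_i^{(\varepsilon)})$ specializes to the inequality \eqref{eqn:valotherineq} for $x$, the difference being possible cancellation of the leading term under $\varepsilon \to 0$. The explicit description of $\leadterm(p_\nu(x^{(\varepsilon)}))$ coming from \eqref{eqn:basicltsolution}--\eqref{eqn:basicltplucker}, evaluated at generic leading coefficients of the $a_i$, is a specific nonzero polynomial expression in those leading coefficients; hence no cancellation occurs and equality holds in the generic case.

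For parts~(2) and~(3), the strict gap $\val(a_k) > \val(a_{k+1})$ is preserved (in fact amplified by $\varepsilon$) for the perturbed roots, and $\leadterm(a_i^{(\varepsilon)})$ specializes to $\leadterm(a_i)$. Specializing \eqref{eqn:leadtermdescent} applied to $x^{(\varepsilon)}$, the right-hand side $\tfrac{q_\lambda}{q_{\lambda_k}}\prod_{i > k}\leadterm(a_i^{(\varepsilon)})$ descends to $\tfrac{q_\lambda}{q_{\lambda_k}}\prod_{i > k}\leadterm(a_i) \neq 0$, so the leading term survives and we recover \eqref{eqn:leadtermdescent}, and a posteriori \eqref{eqn:valplucker}, for $x$. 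Part~(3) is then a consequence of part~(1) and the strict jump at $k$: for any $\nu \in \Lambda$ with $|\nu| = |\lambda_k|$ but $\nu \neq \lambda_k$, the multiset $T - \nu$ must include some $a_j$ with $j \leq k$ in exchange for some $a_{j'}$ with $j' > k$, so
\[
 \val(p_\nu(x)) \,\geq\, \sum_{i \in T - \nu}\val(a_i) \,>\, \sum_{i > k}\val(a_i) \,=\, \val(p_{\lambda_k}(x))\,.
\]

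\textbf{The main obstacle} is making the specialization rigorous. Several $x^{(\varepsilon)}$'s can collide to a single $x$, and one must verify that the tableau of at least one such lift delivers the claimed inequalities for $x$ simultaneously for all $\nu$. Concretely, one must check that setting $\varepsilon = 0$ is compatible with the explicit formulas \eqref{eqn:basicltsolution}--\eqref{eqn:basicltplucker} used in the proof of Lemmas~\ref{lem:valtableau} and~\ref{lem:valothers}, and that nothing pathological happens at positions where the perturbation has separated roots of originally equal valuation (where the tableau assignment for $x$ is genuinely nonunique).
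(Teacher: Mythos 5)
Your overall strategy --- separate the valuations by an infinitesimal perturbation, apply Lemmas~\ref{lem:valtableau} and~\ref{lem:valothers} upstairs, and specialize back --- is essentially the paper's strategy (the paper realizes the infinitesimal concretely by working over $\FF=\puiseux{v}$ and substituting $v\mapsto vu^{-\varepsilon}$), and your derivation of the inequality in (1), and of (3) from (1) and (2), is sound. The genuine gap is in every place where you claim a leading term ``survives'' specialization. When $\varepsilon\to 0$, all terms of $p_\nu(x^{(\varepsilon)})$ whose valuations share the same $u$-part but differ in their $\varepsilon$-part collapse onto a single power of $u$, and the coefficient of $u^{\sum_{i\in T-\nu}\val(a_i)}$ in $p_\nu(x)$ is the \emph{sum} of all these contributions. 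Knowing that one summand --- the $\varepsilon$-leading one, $\prod_{i\in T-\nu}\omega_i$ from \eqref{eqn:basicltplucker} --- is nonzero says nothing about the sum. Indeed, your argument for the generic-equality clause of (1) proves too much: $\prod_{i\in T-\nu}\omega_i$ is nonzero for \emph{every} choice of nonzero leading coefficients, so if ``nonzero $\varepsilon$-leading coefficient'' implied ``no cancellation,'' equality in \eqref{eqn:valotherineq} would hold unconditionally and the genericity hypothesis in the statement would be superfluous; it is not. The same non sequitur undermines your derivation of (2), which moreover the lemma asserts with no genericity hypothesis at all.

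What is needed instead: for (1), the paper considers the finite flat family of fibres $X_{\lambda/\mu}(h_\boldb(z))$ over $\boldb\in(\FF^\times)^{|\lambda/\mu|}$ (rescaling the leading coefficients); on each component of the total space the valuation of each $p_\nu$ is bounded below by a constant attained on a Zariski-dense subset, and the perturbation argument identifies that constant with $\sum_{i\in T-\nu}\val(a_i)$ --- this simultaneously yields the inequality everywhere and the equality generically. For (2), after reducing to generic leading coefficients (the right-hand side of \eqref{eqn:leadtermdescent} is a nowhere-vanishing continuous function of the $\leadterm(a_i)$), one does not specialize leading terms at all: one uses the Wronskian identity itself. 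Writing $\Wr(x;z)=h(z)$ via \eqref{eqn:pluckerwronskian} and extracting the coefficient of $z^{|\lambda_k|}$ gives $\sum_{\nu\vdash|\lambda_k|}q_\nu p_\nu(x)=q_\lambda\,e_{|\lambda/\mu|-k}(a_1,\dots,a_{|\lambda/\mu|})$; when $\val(a_k)>\val(a_{k+1})$, the equality case of (1) forces $p_{\lambda_k}(x)$ to have strictly minimal valuation in the left-hand sum, so its leading term is $q_{\lambda_k}\leadterm(p_{\lambda_k}(x))$ with no possibility of cancellation, while the strict gap identifies the leading term of the right-hand side as $q_\lambda\prod_{i>k}\leadterm(a_i)$. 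Equating these gives \eqref{eqn:leadtermdescent} and hence \eqref{eqn:valplucker} directly for $x$, and (3) then follows as you indicate.
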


\begin{proof}
Fix $\bolda$, and consider the family of finite $\FF$-schemes over 
$(\FF^\times)^{|\lambda/\mu|}$, whose fibre at a point 
$\boldb = (b_1, \dots , b_{|\lambda/\mu|})$ is
$X_{\lambda/\mu}(h_\boldb(z))$, where
\[
  h_\boldb(z) :=  z^{|\mu|}\prod_{i=1}^{|\lambda/\mu}(z + a_ib_i)\,.
\]
For each component $C$ of the total
space, there is a unique vector $(m_\lambda)_{\lambda \in \Lambda}$,
such that $\val(p_\lambda(x)) \geq m_\lambda$ for all $x \in C$, 
with equality on Zariski-dense subset of $C$.

We claim that there is a Zariski-dense subset
$\calU \subset (\FF^\times)^{|\lambda/\mu|}$
such that if $\boldb \in \calU$, 
then for any $x^* \in X_{\lambda/\mu}(h_\boldb(z))$,
\eqref{eqn:valothers} holds for the point $x^*$ and some
tableau $T \in \SYT(\lambda/\mu)$.
This shows that the vector $(m_\lambda)_{\lambda \in \Lambda}$
above is given by the right hand side of \eqref{eqn:valothers},
which implies (1).  This claim will also be used in the proof of (2).

To prove the claim, we work over $\FF = \puiseux{v}$, 
though still assuming $a_1, \dots, a_{|\lambda/\mu|} \in \puiseux{u}$.
Consider the set 
\[
   \calU = \{(v^{\gamma_1}, \dots, v^{\gamma_{|\lambda/\mu|}})
   \mid \gamma_k \in \ZZ,\ \gamma_1 > \dots > \gamma_{|\lambda/\mu|}\}
\]
which is Zariski-dense in $\FF^{|\lambda/\mu|}$.
Fix $\boldb = (v^{\gamma_1}, \dots, v^{\gamma_{|\lambda/\mu|}}) \in \calU$,
and let $\varepsilon > 0$ be a small positive rational number.
Put
\[
  \hat h(z) :=  
  z^{|\mu|}\prod_{i=1}^{|\lambda/\mu}(z + \hat a_i)\,.
\]
where 
$\hat a_k = a_kv^{\gamma_k}u^{\varepsilon \gamma_k}$.
Since $\varepsilon$ is assumed to be small, we have 
$\val(\hat a_1) > \dots > \val(\hat a_{|\lambda/\mu|})$, so for
any point 
$\hat x \in X_{\lambda/\mu}(\hat h(z))$, \eqref{eqn:valothers}
holds: in particular, there is a tableau $T$ such that
\begin{align*}
  \val(p_{\nu}(\hat x)) &= 
\sum_{i \in (T -\nu)} \val(\hat a_i) \\
&= \sum_{i \in (T -\nu)} \val(a_i) + O(\varepsilon)
\end{align*}
for $\nu \in \Lambda_{\lambda/\mu}$.

Now consider the substitution $\Sigma : v \mapsto vu^{-\varepsilon}$.
Although this does not give a well defined transformation from $\psK$
to itself we do have 
$\Sigma(p_\nu(\hat x)) \in \psK$, because the valuations of
the Pl\"ucker coordinates of $\hat x$ are uniformly 
bounded below for all positive $\varepsilon$ near $0$.  Thus we
obtain a point 
\[
  x^* := \Sigma(\hat x) 
  \in X_{\lambda/\mu}(\Sigma(\hat h(z))) 
  = X_{\lambda/\mu}(h_\boldb(z))\,,
\]
and every point in $X(h_\boldb(z))$ is of this form.
The Pl\"ucker coordinates of $x^*$ satisfy
\begin{align*}
\val(p_\nu(x^*)) &= \val(p_\nu(\hat x)) + O(\varepsilon) \\
 &= \sum_{i \in (T -\nu)} \val(a_i) + O(\varepsilon)
\end{align*}
for $\nu \in \Lambda_{\lambda/\mu}$.
Since $\varepsilon$ can be made arbitrarily small, we see 
that \eqref{eqn:valothers} holds for the point $x^*$.

Next we prove (2).  Since the right hand size 
of \eqref{eqn:leadtermdescent}
is a continuous nowhere-vanishing function of 
$\leadterm(a_1), \dots, \leadterm(a_{|\lambda/\mu|})$,
it is enough to prove this under the assumption that
the leading coefficients 
$\leadcoeff(a_1), \dots \leadcoeff(a_{|\lambda/\mu|})$ are generic. 
Our claim shows that in this case \eqref{eqn:valothers} holds
for $x$ and some tableau $T \in \SYT(\lambda/\mu)$.  Suppose $T$ 
encodes the chain of partitions 
$\mu = \lambda_0 \subsetneq \lambda_1 \subsetneq \dots \subsetneq
    \lambda_{|\lambda/\mu|} = \lambda$.
As in the proof of Lemma~\ref{lem:valothers}, 
it follows that when $\val(a_k) > \val(a_{k+1})$, 
$p_{\lambda_k}(x)$ has the unique minimal valuation among
all Pl\"ucker coordinates of $x$ indexed by partitions of 
size $|\lambda_k|$.  

Now $h(z) = \Wr(x;z)$, which we can rewrite 
using \eqref{eqn:pluckerwronskian} as
\[
  q_\lambda z^{|\mu|} \prod_{k=1}^{|\lambda/\mu|} (z+a_k)
  = \sum_{\nu \in \Lambda}  q_\nu p_\nu(x) z^{|\nu|}\,.
\]
(Since the Pl\"ucker coordinates are assumed to be normalized so that 
$p_\lambda(x) = 1$, we need the factor $q_\lambda$ on the left hand side
to ensure that the coefficients of $z^{|\lambda|}$ agree on both 
sides.) Taking the coefficient
of $z^{|\mu|+k}$ on both sides, we see that
\[
  q_\lambda e_{|\lambda/\mu|-k}(a_1, \dots, a_{|\lambda/\mu|}) 
  = \sum_{\nu \vdash |(\mu|+k)}  q_\nu p_\nu(x)\,,
		\]
where $e_i$ denotes the $i$\nth elementary symmetric function.
Now take the leading term of both sides.  In the case where 
$\val(a_k) > \val(a_{k+1})$, 
the leading term of the right hand side 
is
\begin{equation}
\label{eqn:ltrightside}
   q_{\lambda_k} \leadterm(p_{\lambda_k})\,,
\end{equation}
as all other monomials in the sum have strictly larger valuation.
By similar reasoning, the leading term of the left hand side is
\begin{equation}
\label{eqn:ltleftside}
   q_\lambda \leadterm(a_{k+1}) \dotsb \leadterm(a_{|\lambda/\mu|})\,.
\end{equation}
Equating \eqref{eqn:ltrightside} and \eqref{eqn:ltleftside}, we 
obtain \eqref{eqn:leadtermdescent}.  Taking valuations 
of both sides gives \eqref{eqn:valplucker}.

Finally, we use the last argument in the proof of Lemma~\ref{lem:valothers}
once more to deduce (3) from (1) and (2).
\end{proof}

The chain of partitions in Lemma~\ref{lem:nongenericval} 
is not necessarily uniquely determined by the point $x$. 
However if $0 = k_0 < k_1 < \dots < k_{\ell-1} < k_\ell = |\lambda/\mu|$ 
are the indices for which $\val({a_{k_i}}) > \val(a_{k_i+1})$, 
then the subchain
\[
\mu = \lambda_0 \subsetneq \lambda_{k_1} \subsetneq \dots \subsetneq
\lambda_{k_\ell} = \lambda
\]
is characterized by the property (3) in 
Lemma~\ref{lem:nongenericval}.  Hence this particular subchain of partitions
is uniquely determined by $x$.  We can encode this subchain into a tableau,
again denoted $T_x$, by placing entry $i$ in each of the boxes of 
$\lambda_{k_i}/\lambda_{k_i-1}$.

In this way we can associate a tableau to every point $x \in X$.
This is not always an interesting tableau: for example if 
$h(z) \in \FF[z]$, then all roots have the same valuation and so $T_x$ 
is the tableau in which every entry is $1$.  In particular, the
reader should note that this
definition of $T_x$ is not does \emph{not} agree with the definition 
given in Section~\ref{sec:circle}.
We now show that in the case of interest to us, when $x$
is a generic $C^r$-fixed point,
this tableau is a standard $r$-ribbon tableau.

\begin{lemma}
\label{lem:alwaysribbon}
Suppose that $h(z)$ is $C_r$-fixed polynomial of the 
form \eqref{eqn:fixedpoly}, where
\[
  \val(h_1) > \val(h_2) > \dots > \val(h_\ell)\,.
\]
Let $X_{\lambda/\mu}$ be an
$h(z)$-compatible Richardson variety.
If $x \in X^r_{\lambda/\mu}(h(z))$, then the corresponding tableau $T_x$ 
is a standard $r$-ribbon tableau of shape $\lambda/\mu$.
\end{lemma}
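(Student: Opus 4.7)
The plan is to combine the structural information about $C_r$-fixed points from Theorem~\ref{thm:fixedpointplucker} with the tableau-producing mechanism of Lemma~\ref{lem:nongenericval}. Write $h(z) = z^{|\mu|}\prod_{i=1}^{r\ell}(z+a_i)$ where the roots $a_1, \dots, a_{r\ell}$ are grouped as $r$-tuples: for $j=1, \dots, \ell$, the roots $a_{(j-1)r+1}, \dots, a_{jr}$ are the $r$ distinct $r$-th roots of $-h_j$, all having valuation $\tfrac{1}{r}\val(h_j)$. Under the hypothesis on the $h_j$, the valuations of the $a_i$ therefore occur in strictly decreasing blocks of $r$ equal values, so that the indices $k$ for which $\val(a_k) > \val(a_{k+1})$ are precisely $k = r, 2r, \dots, r\ell$.

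Next, I would apply Lemma~\ref{lem:nongenericval} to produce a chain $\mu = \lambda_0 \subsetneq \lambda_1 \subsetneq \dots \subsetneq \lambda_{r\ell} = \lambda$ together with property (3), which uniquely pins down the subchain
\[
\mu = \lambda_0 \subsetneq \lambda_r \subsetneq \lambda_{2r} \subsetneq \dots \subsetneq \lambda_{r\ell} = \lambda
\]
as the subchain on which $p_{\lambda_{rj}}(x)$ has the minimal valuation (in particular, is non-zero) among Pl\"ucker coordinates of degree $|\lambda_{rj}|$. The tableau $T_x$ is then defined by placing entry $j$ in the $r$ boxes of $\lambda_{rj}/\lambda_{r(j-1)}$, and the problem reduces to showing each such skew shape is a connected $r$-ribbon.

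This is where the $C_r$-fixed hypothesis does the work. Since $x \in X^r$, Theorem~\ref{thm:fixedpointplucker} implies that every partition indexing a non-zero Pl\"ucker coordinate of $x$ has the same $r$-core $\kappa$. In particular $\rcore(\lambda_{rj}) = \kappa$ for every $j = 0, \dots, \ell$. Now I would invoke the standard fact, visible on the $r$-abacus, that if $\sigma \subset \tau$ are two partitions with $\rcore(\sigma) = \rcore(\tau)$ and $|\tau/\sigma| = r$, then $\tau$ is obtained from $\sigma$ by adding a single connected $r$-ribbon: on the abacus, $J(\tau)$ is obtained from $J(\sigma)$ by moving one bead $r$ positions down its runner, and the corresponding difference in partition shape is exactly one $r$-ribbon. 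Applying this to each consecutive pair $(\lambda_{r(j-1)}, \lambda_{rj})$ shows that $T_x$ satisfies conditions (1) and (2) in the definition of a standard $r$-ribbon tableau.

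The potential obstacle is ensuring that the chain we produce is genuinely canonical — that is, that the positions of the ribbons in $T_x$ are well-defined invariants of $x$ and not artifacts of a non-unique choice in Lemma~\ref{lem:nongenericval}. This is handled by part (3) of that lemma, which singles out $\lambda_{rj}$ as the unique partition of its size achieving the minimal Pl\"ucker valuation. After that, the argument is essentially a translation between two well-known descriptions of $r$-cores (non-vanishing Pl\"ucker coordinates versus bead configurations on the abacus), and no further computation is needed.
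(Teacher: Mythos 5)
Your proposal is correct and follows essentially the same route as the paper's own proof: group the roots into blocks of $r$ equal valuations, use Lemma~\ref{lem:nongenericval}(2)--(3) to get a well-defined chain of non-vanishing Pl\"ucker coordinates, conclude from the $C_r$-fixedness (via Lemma~\ref{lem:corecomponents}/Theorem~\ref{thm:fixedpointplucker}) that all partitions in the chain share the same $r$-core, and finish with the abacus fact that nested partitions with equal $r$-cores differing by $r$ boxes differ by a single $r$-ribbon. The only cosmetic difference is that you spell out the bead-moving argument that the paper leaves implicit.
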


\begin{proof}
Rewrite $h(z)$ in the form \eqref{eqn:hfactored}. 
Since each binomial $(z^r+h_i)$
has $r$ roots, each with valuation $\frac{1}{r}\val(h_i)$, we may assume 
that
\begin{align*}
  \val(a_1) = \dots = \val(a_r) &> \val(a_{r+1}) = \dots = \val(a_{2r}) \\
     &> \val(a_{2r+1}) = \dots = \val(a_{3r}) \\
     &\qquad\quad\vdots \\
     &> \val(a_{r\ell-r+1}) = \dots = \val(a_{r\ell})\,.
\end{align*}
Thus the tableau $T_x$ has entries $1,2, \dots, \ell$, each appearing
$r$ times.  

Let $\mu = \lambda_0 \subsetneq \lambda_1 \subsetneq \dots 
\subsetneq \lambda_\ell = \lambda$ be the chain of partitions associated 
to $T_x$.  By Lemma~\ref{lem:nongenericval}(2), each of Pl\"ucker
coordinates $p_{\lambda_k}(x)$ satisfies \eqref{eqn:leadtermdescent}
and hence is non-zero, for $k = 0, \dots, \ell$.
By Lemma~\ref{lem:corecomponents}, this implies that the each of
the partitions in the chain have the same $r$-core.  In particular,
$\lambda_{k+1}/\lambda_k$ is a partition of size $r$ where
$\lambda_{k+1}$ and $\lambda_k$ have the same $r$-core.  Any
such partition must be an $r$-ribbon, and the result follows.
\end{proof}

\subsection{Existence and uniqueness}

Suppose that $h(z)$ is $C_r$-fixed polynomial of the 
form \eqref{eqn:fixedpoly}, where $\val(h_1) > \dots > \val(h_\ell)$.
We have shown that every point $x \in X_{\lambda/\mu}(h(z))$ has
some associated tableau $T_x$, and that for a point in $X^r$, 
this tableau is a standard $r$-ribbon tableau.

Now, fix $T \in \ribbon(\lambda/\mu)$, and let 
$\mu = \lambda_0 \subsetneq \lambda_1 \subsetneq \dots \subsetneq
\lambda_\ell = \lambda$
be the corresponding chain of partitions.
To complete the proof of Theorem~\ref{thm:ribbon},
we need to prove that there exists a unique point 
$x \in X^r_{\lambda/\mu}(h(z))$ such that $T_x = T$, and moreover
that $x$ is a reduced point of $X^r_{\lambda/\mu}(h(z))$.

Suppose that such a point $x$ exists.  Then the following must be true:
\begin{packedenum}
\item[(1)] $x \in X^r_{\lambda/\mu}$; equivalently by
Theorem~\ref{thm:richardsonplucker2} 
and Theorem~\ref{thm:fixedpointplucker}, $p_\nu(x) = 0$ 
for $\nu \notin \Lambda^{r, \kappa}_{\lambda/\mu}$;
\item[(2)] by Lemma~\ref{lem:nongenericval}(2),
\begin{equation}
\label{eqn:ribbonltdescent}
  \leadterm(p_{\lambda_k}(x)) =  \frac{q_{\lambda}}{q_{\lambda_k}}
   \prod_{i=k+1}^{\ell} \leadterm(h_i)\
   \qquad \text{for }k=0, \dots, \ell\,.
\end{equation}
\end{packedenum}
We will first show that there exists a point in $X^r_{\lambda/\mu}$ 
satisfying these two conditions.  Then we will show that the 
additional condition $\Wr(x;z) = h(z)$ makes this point unique and
reduced in $X^r_{\lambda/\mu}(h(z))$.

Let $\boldw$ be a weight vector constructed 
(as in Section~\ref{sec:ideals}) from the tableau $T$ and the
decreasing sequence of rational numbers $\val(h_1), \dots, \val(h_\ell)$.
Note that $w_{\lambda_k} = \sum_{i=k+1}^\ell \val(h_i)$.
Let
\begin{equation}
\label{eqn:defY}
   Y := 
  \{x \in X^r_{\lambda/\mu} \mid 
  \val(p_\nu(x)) = w_\nu
  \text{ for all }\nu \in \Lambda^{r,\kappa}_{\lambda,\mu}\}\,.
\end{equation}
As we will see, the point $x \in  X^r_{\lambda/\mu}(h(z))$ corresponding
to $T$ will be in $Y$.
Consider a vector 
$\boldalpha = (\alpha_\nu)_{\nu \in \Lambda} \in \FF^\Lambda$ 
satisfying the following conditions:
\begin{alignat}{2}
   \label{eqn:alpha1}
   \alpha_\nu &= 0
    &&\qquad \text{if and only if }\nu \notin \Lambda^{r,\kappa}_{\lambda/\mu} \,, \\
   \label{eqn:alpha2}
   (\epsilon_\nu \alpha_\nu)(\epsilon_\sigma \alpha_\sigma)
   &= 
   (\epsilon_{\nu \vee \sigma}\alpha_{\nu \vee \sigma})
   (\epsilon_{\nu \wedge\sigma} \alpha_{\nu \wedge \sigma})
    &&\qquad \text{for }\nu, \sigma \in \Lambda^{r,\kappa}_{\lambda/\mu} \,.
\end{alignat}
Here $\epsilon_\nu = \pm 1$ is the sign defined in Section~\ref{sec:ideals}.
For any such $\boldalpha$, let
\begin{equation}
\label{eqn:defYalpha}
  Y_{\boldalpha} := 
  \{x \in Y \mid 
  \leadterm(p_\nu(x)) = \alpha_\nu u^{w_\nu} 
  \text{ for all }\nu \in \lambda\}\,.
\end{equation}

\begin{lemma}
\label{lem:alphachain}
There for any sequence $\beta_0, \dots, \beta_\ell \in \FF^\times$,
there is a unique vector $\boldalpha \in \FF^\Lambda$ 
satisfying \eqref{eqn:alpha1}, \eqref{eqn:alpha2} and
$\alpha_{\lambda_k} = \beta_k$ for $k= 0, \dots, \ell$.
\end{lemma}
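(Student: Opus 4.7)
The plan is to rephrase the problem as one about multiplicative modular functions on a finite distributive lattice, where the result follows from a standard structural fact. Recall from Section~\ref{sec:ideals} that, via the $r$-quotient correspondence, $\Lambda^{r,\kappa}_{\lambda/\mu}$ is isomorphic to the product of partition intervals $\prod_{k=0}^{r-1} \Lambda(\Tlambda_k/\Tmu_k)$ and is therefore a finite distributive lattice; the given chain $\mu = \lambda_0 \subsetneq \lambda_1 \subsetneq \dots \subsetneq \lambda_\ell = \lambda$, whose successive quotients are single $r$-ribbons (i.e., single boxes in the $r$-quotient), is a maximal chain in it.

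I would then make the substitution $\tilde\alpha_\nu := \epsilon_\nu \alpha_\nu$ for $\nu \in \Lambda^{r,\kappa}_{\lambda/\mu}$. Under this substitution, condition \eqref{eqn:alpha2} becomes the multiplicative modular relation
\[
\tilde\alpha_\nu \tilde\alpha_\sigma = \tilde\alpha_{\nu \vee \sigma}\tilde\alpha_{\nu \wedge \sigma}\,,
\]
while \eqref{eqn:alpha1} becomes $\tilde\alpha_\nu \in \FF^\times$ on $\Lambda^{r,\kappa}_{\lambda/\mu}$ and $\alpha_\nu = 0$ elsewhere, and the prescribed values become $\tilde\alpha_{\lambda_k} = \epsilon_{\lambda_k}\beta_k \in \FF^\times$. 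Hence the lemma reduces to the assertion that on a finite distributive lattice, a multiplicative modular function into $\FF^\times$ is uniquely determined by, and may be arbitrarily prescribed on, any maximal chain.

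The structural step I would prove is: by Birkhoff's theorem, a finite distributive lattice is $J(P)$ for its poset $P$ of join-irreducibles, and a maximal chain corresponds to a linear extension $p_1, \dots, p_\ell$ of $P$ with $\nu_i = \{p_1, \dots, p_i\}$. I claim every multiplicative modular $\tilde\alpha: J(P) \to \FF^\times$ has the form $\tilde\alpha(I) = c \prod_{p \in I} g(p)$, with $c = \tilde\alpha(\emptyset)$ and $g: P \to \FF^\times$. Existence is a direct check, using the multiset identity $I \sqcup J = (I\cup J) \sqcup (I\cap J)$ on $P$; uniqueness is by induction on $|I|$: pick $p$ maximal in $I$, and apply modularity to $I' := I\setminus\{p\}$ and $J_p := \{q\in P : q \leq p\}$, whose join is $I$ and whose meet is $\{q : q < p\}$, to express $\tilde\alpha(I)$ in terms of $\tilde\alpha(I')$ and a ratio of values already determined on smaller down-sets. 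Prescribing the $\ell+1$ chain values $\gamma_0, \dots, \gamma_\ell \in \FF^\times$ then pins down $c := \gamma_0$ and $g(p_i) := \gamma_i/\gamma_{i-1}$ uniquely, so such a $\tilde\alpha$ exists and is unique.

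Finally, setting $\alpha_\nu := \epsilon_\nu \tilde\alpha_\nu$ on $\Lambda^{r,\kappa}_{\lambda/\mu}$ and $\alpha_\nu := 0$ elsewhere produces the desired $\boldalpha$. The main point is conceptual rather than computational: the sign factors $\epsilon_\nu$ are absorbed by the substitution, and the rest is standard distributive-lattice theory; there is no genuine obstacle once the lattice structure of $\Lambda^{r,\kappa}_{\lambda/\mu}$ from Section~\ref{sec:ideals} is in hand.
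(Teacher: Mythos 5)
Your proposal is correct and follows essentially the same route as the paper: the paper's proof is a two-sentence appeal to the fact that $\Lambda^{r,\kappa}_{\lambda/\mu}$ is a distributive lattice and that the modular function $\nu \mapsto \epsilon_\nu\alpha_\nu$ is determined by its values on the maximal chain $\lambda_0 \subsetneq \dots \subsetneq \lambda_\ell$. You simply supply the details of that standard fact (via Birkhoff's theorem and the product form $\tilde\alpha(I) = c\prod_{p\in I}g(p)$), which the paper leaves implicit.
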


\begin{proof}
This follows from the fact that $\Lambda^{r,\kappa}_{\lambda/\mu}$
is a distributive lattice.  Condition \eqref{eqn:alpha1} asserts
that $\nu \mapsto \epsilon_\nu \alpha_\nu$ is a modular function, and so this
function is uniquely by its values on the maximal chain
$\lambda_0 \subsetneq \dots \subsetneq \lambda_\ell$.
\end{proof}

\begin{lemma}
\label{lem:inYcondition}
If $x \in X^r_{\lambda/\mu}$ is a point such that 
\begin{equation}
\label{eqn:inYcondition}
  \leadterm(p_{\lambda_k}(x)) =  \alpha_{\lambda_k} u^{w_{\lambda_k}}
   \qquad \text{for }k=0, \dots, \ell\,,
\end{equation}
then $x \in Y_\boldalpha$.
\end{lemma}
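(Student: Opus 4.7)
The plan is to split into the case $\nu \notin \Lambda^{r,\kappa}_{\lambda/\mu}$, which is immediate, and the case $\nu \in \Lambda^{r,\kappa}_{\lambda/\mu}$, which is the substance. For the latter I will first establish the valuation bound $\val(p_\nu(x)) \geq w_\nu$, and then extract the leading coefficients by combining the binomial initial Pl\"ucker relations of Lemma~\ref{lem:initialideal} with the uniqueness provided by Lemma~\ref{lem:alphachain}.

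For $\nu \notin \Lambda^{r,\kappa}_{\lambda/\mu}$, the identity $\leadterm(p_\nu(x)) = \alpha_\nu u^{w_\nu} = 0$ is immediate: since $x \in X^r_{\lambda/\mu} = X_{\lambda/\mu} \cap X^{r,\kappa}$, Theorem~\ref{thm:richardsonplucker2} forces $p_\nu(x) = 0$ whenever $\nu \notin \Lambda_{\lambda/\mu}$ and Theorem~\ref{thm:fixedpointplucker} forces $p_\nu(x) = 0$ whenever $\rcore(\nu) \neq \kappa$, which together exhaust the complement of $\Lambda^{r,\kappa}_{\lambda/\mu}$.

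Granting the bound $\val(p_\tau(x)) \geq w_\tau$ (the main obstacle, addressed below), set $\tilde p_\tau := [u^{w_\tau}] p_\tau(x) \in \FF$. For each pair $\nu, \sigma \in \Lambda^{r,\kappa}_{\lambda/\mu}$, Lemma~\ref{lem:initialideal} supplies an element $g_{\nu,\sigma} \in I^{r,\kappa}_{\lambda/\mu}$ whose $\boldw$-initial form is the binomial $(\epsilon_{\nu \vee \sigma} p_{\nu \vee \sigma})(\epsilon_{\nu \wedge \sigma} p_{\nu \wedge \sigma}) - (\epsilon_\nu p_\nu)(\epsilon_\sigma p_\sigma)$. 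Writing $g_{\nu,\sigma}$ as this binomial plus a strictly $\boldw$-higher-weight tail and extracting the coefficient of $u^{w_\nu + w_\sigma}$ from $g_{\nu,\sigma}(p(x)) = 0$ yields
\begin{equation*}
(\epsilon_{\nu \vee \sigma} \tilde p_{\nu \vee \sigma})(\epsilon_{\nu \wedge \sigma} \tilde p_{\nu \wedge \sigma}) = (\epsilon_\nu \tilde p_\nu)(\epsilon_\sigma \tilde p_\sigma),
\end{equation*}
which is precisely~\eqref{eqn:alpha2}. Because $\Lambda^{r,\kappa}_{\lambda/\mu}$ is a distributive lattice and $\tilde p_{\lambda_k} = \alpha_{\lambda_k} \neq 0$ on the maximal chain, iterating this multiplicative modular identity against chain elements forces $\tilde p_\nu \neq 0$ for every $\nu \in \Lambda^{r,\kappa}_{\lambda/\mu}$ — any $\nu$ can be paired with a suitable $c_k$ on the chain so that both $\nu \vee c_k$ and $\nu \wedge c_k$ are expressible from chain values — so~\eqref{eqn:alpha1} also holds for $\tilde p$, and Lemma~\ref{lem:alphachain} then pins down $\tilde p_\nu = \alpha_\nu$ for all $\nu$.

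The main obstacle is thus the valuation bound itself. My plan is to reduce it to a single-Grassmannian statement via the $r$-quotient. By Lemma~\ref{lem:richardsonquotient}, $X^r_{\lambda/\mu} \simeq \prod_k \richvar{k}$; under the identification~\eqref{eqn:grsegre} the point $x$ corresponds to a tuple $(\Tx_0, \ldots, \Tx_{r-1})$ with $p_\nu(x) = \pm \prod_k p_{\Tnu_k}(\Tx_k)$, the weight splits additively as $w_\nu = \sum_k(\Tw_{\Tnu_k} - \Tw_{\Tlambda_k})$, and each covering step $\lambda_{k-1} \subsetneq \lambda_k$ of the chain adds a single $r$-ribbon, hence modifies exactly one component of the $r$-quotient; thus the chain $\lambda_0 \subsetneq \cdots \subsetneq \lambda_\ell$ projects onto a maximal chain in each factor $[\Tmu_j, \Tlambda_j]$ dictated by the restriction of the ribbon tableau $T$. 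After rescaling each $\Tx_j$ so that the chain hypothesis splits cleanly across factors, the bound reduces to the following single-Grassmannian statement: for a point of a Richardson variety whose Pl\"ucker coordinates along a chosen maximal chain have the valuations prescribed by a standard Young tableau filling, every other Pl\"ucker valuation is at least the corresponding filling weight. This is the valuation content of the toric Gr\"obner degeneration from \cite[Theorem~14.16]{MS} that underlies the proof of Lemma~\ref{lem:initialideal}: the initial binomial $p_{\tau \vee \upsilon} p_{\tau \wedge \upsilon} - p_\tau p_\upsilon$ tropicalizes to $\val(p_{\tau \vee \upsilon}) + \val(p_{\tau \wedge \upsilon}) \geq \val(p_\tau) + \val(p_\upsilon)$, and pairing each lattice element against successive chain elements propagates the lower bound from the chain to the full lattice by induction.
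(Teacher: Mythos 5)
Your second half is sound and essentially reproduces the paper's argument: once $\val(p_\nu(x))\geq w_\nu$ is known, extracting the coefficient of $u^{w_\nu+w_\sigma}$ gives \eqref{eqn:alpha2} for the truncated vector, nonvanishing propagates through the distributive lattice from the chain, and Lemma~\ref{lem:alphachain} forces agreement with $\boldalpha$. (The paper phrases this as: the vector $\big(p_\nu(x)u^{-w_\nu}|_{u=0}\big)_{\nu}$ lies in the variety of $\initial_\boldw(I^{r,\kappa}_{\lambda/\mu})$, which sidesteps your unproved assertion that each binomial generator of the initial ideal is itself the initial form of an element $g_{\nu,\sigma}$ of $I^{r,\kappa}_{\lambda/\mu}$; that claim is true for these straightening relations but does not follow from Lemma~\ref{lem:initialideal} as stated, and it is not needed.)

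The genuine gap is in the step you yourself flag as the main obstacle, the bound $\val(p_\nu(x))\geq w_\nu$. The binomial $p_{\tau\vee\upsilon}p_{\tau\wedge\upsilon}-p_\tau p_\upsilon$ is an element of the \emph{initial} ideal, not a relation satisfied by the point $x$, so it does not ``tropicalize'' to the inequality $\val(p_{\tau\vee\upsilon}(x))+\val(p_{\tau\wedge\upsilon}(x))\geq\val(p_\tau(x))+\val(p_\upsilon(x))$. The relations $x$ actually satisfies are straightening relations carrying additional quadratic terms, and the ultrametric inequality only says that the minimum valuation among \emph{all} of their terms is attained at least twice; to discard the extra terms you need lower bounds on precisely the off-chain Pl\"ucker valuations you are trying to establish, so the ``pairing against successive chain elements'' induction is circular as sketched. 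Moreover \cite[Theorem~14.16]{MS} is a statement about the initial (Hibi) ideal of the degeneration; it says nothing about valuations of coordinates of an arbitrary $\psK$-point with prescribed chain valuations, so even after your $r$-quotient reduction the single-Grassmannian statement you need remains unproved. The paper does not obtain the bound from the quadratic relations at all: it cites Lemma~\ref{lem:nongenericval}(1), i.e.\ inequality \eqref{eqn:valotherineq}, which was established earlier by a perturbation/degeneration argument and which concerns points of the fibre $X_{\lambda/\mu}(h(z))$ --- and in the lemma's actual use (inside Lemma~\ref{lem:existunique}) $x$ is such a fibre point, with \eqref{eqn:inYcondition} pinning down the minimal-valuation chain. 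Your proposal never uses the Wronskian or the fibre structure, so you are attempting a strictly stronger, purely Pl\"ucker-theoretic statement with a mechanism that does not close; to repair it, use the fibre hypothesis and Lemma~\ref{lem:nongenericval}, as the paper does, rather than the initial binomials.
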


\begin{proof}
Let $\alpha'_\nu = [u^{w_\nu}] p_\nu(x)$.  
Since $p_\nu(x) = 0$ for $\nu \notin \Lambda^{r,\kappa}_{\lambda/\mu}$, 
it is enough to show that
$\leadterm(p_\nu(x)) = 
   \alpha_\nu u^{w_\nu}$
for
$\nu \in \Lambda^{r,\kappa}_{\lambda/\mu}$. 
Equivalently, we must show that $\val(p_\lambda(x)) \geq w_\nu$ and
$\alpha'_\nu = \alpha_\nu \neq 0$
for $\nu \in \Lambda^{r,\kappa}_{\lambda/\mu}$.

Note that
$\alpha'_\nu = 0$ for $\nu \notin \Lambda^{r,\kappa}_{\lambda/\mu}$
and
$\alpha'_{\lambda_k} = \alpha_{\lambda_k}$ for $k= 0, \dots, \ell$.
By Lemma~\ref{lem:nongenericval}(1), \eqref{eqn:valotherineq} holds,
which can be re-expressed as
\[
  \val(p_\nu(x)) \geq w_\nu \qquad \text{for all }\nu \in \Lambda\,.
\]
Thus $\alpha'_\nu = p_\nu(x)u^{-w_\nu}|_{u = 0}$.
This implies that
$(\alpha'_\nu)_{\nu \in \Lambda}$ is in the variety of the initial ideal
$\initial_\boldw(I_{\lambda/\mu}^{r, \kappa})$.  
By Lemma~\ref{lem:initialideal} we see that
$\boldalpha' = (\alpha'_\nu)_{\nu \in \Lambda}$ is a vector 
satisfying \eqref{eqn:alpha1}, \eqref{eqn:alpha2} and
$\alpha'_{\lambda_k} = \alpha_{\lambda_k}$ for $k= 0, \dots, \ell$.
By Lemma~\ref{lem:alphachain}, we deduce that 
and we deduce that $\boldalpha' = \boldalpha$, as required.
\end{proof}

\begin{lemma}
\label{lem:Ylocalcoords}
There exist local coordinates $y_1, \dots, y_\ell$ on an open
subset of $X$ containing $Y$
with the following properties:
\begin{packedenum}
\item[(1)] 
     For each $\nu \in \Lambda^{r,\kappa}_{\lambda/\mu}$,
     there exists a polynomial
     $P_\nu(y_1, \dots, y_\ell) \in \psK_+[y_1, \dots, y_\ell]$
     such that $p_\nu(x) = u^{w_\nu}P_\nu(y_1(x), \dots, y_\ell(x))$
     for all $x \in Y$.
\item[(2)] 
     $\leadterm(p_{\lambda_k}(x))  
     = \epsilon_{\lambda_k}
       \leadterm(y_{k+1}(x) \dots y_\ell(x)) u^{w_{\lambda_k}}$
     for all $x \in Y$.
\item[(3)] The map $x \mapsto (y_1(x), \dots, y_\ell(x))$ gives
 a bijection
  \[
     Y_\boldalpha \leftrightarrow 
       \Big\{(y_1, \dots, y_\ell) \in (\psK_+)^\ell 
       \Bigmid \leadterm(y_k) 
     = \tfrac{\epsilon_{\lambda_k}\alpha_{\lambda_k}}
       {\epsilon_{\lambda_{k-1}}\alpha_{\lambda_{k-1}}}
     \text{ for }k=1, \dots, \ell\Big\}\,.
  \]
  In particular $Y_\boldalpha$ is non-empty, and 
  $\val(y_1(x)) = \dots = \val(y_\ell(x)) = 0$ 
      for all $x \in Y$.
\end{packedenum}
\end{lemma}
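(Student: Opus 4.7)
The plan is to construct the coordinates $y_1, \dots, y_\ell$ via the product decomposition of Lemma~\ref{lem:richardsonquotient}, which identifies $X^{r,\kappa}_{\lambda/\mu}$ with $\prod_{k=0}^{r-1} \richvar{k}$. Under the $r$-quotient correspondence, each $r$-ribbon of the tableau $T$ corresponds to a single box in one of the skew shapes $\Tlambda_k/\Tmu_k$; let $b_j$ denote the box associated to the ribbon labelled $j$ in $T$, lying in $\Tlambda_{k_j}/\Tmu_{k_j}$, and take $y_j$ to be a ``box coordinate'' on the Richardson variety $\richvar{k_j}$, defined on the affine chart $\{p_{\Tlambda_{k_j}} \neq 0\}$. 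Concretely, one may take the Hibi-style coordinates implicit in the initial ideal of Lemma~\ref{lem:initialideal}, then rescale by elements of $\FF^\times$ and by a power of $u$ so that on $Y$ every $y_j$ has $u$-valuation exactly zero, absorbing the constants $q_\lambda/q_{\lambda_j}$ and signs $\epsilon_{\lambda_j}$ from~\eqref{eqn:ribbonltdescent} and~\eqref{eqn:grsegre}.

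With the $y_j$ so defined, combining the factorwise expansions through the Segre-style formula of Lemma~\ref{lem:segre}, each Pl\"ucker coordinate $p_\nu$ with $\nu \in \Lambda^{r,\kappa}_{\lambda/\mu}$ becomes, after normalizing $p_\lambda(x) = 1$, a polynomial in $y_1, \dots, y_\ell$ with Puiseux-series coefficients, of the shape
\[
p_\nu(x) \;=\; u^{w_\nu}\Big(\,\epsilon_\lambda\epsilon_\nu\!\!\!\prod_{j \in S_\nu}\!\! y_j(x) \;+\; \text{higher-order terms in } u\,\Big),
\]
where $S_\nu \subseteq \{1,\dots,\ell\}$ is the set of ribbons lying outside $\nu$, i.e.\ $S_\nu = \{j : b_j \notin \Tnu_{k_j}\}$. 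This is forced by the fact that $\initial_\boldw(I^{r,\kappa}_{\lambda/\mu})$ from Lemma~\ref{lem:initialideal} is precisely the toric ideal of the distributive lattice $\Lambda^{r,\kappa}_{\lambda/\mu}$, whose generic affine chart has exactly the monomial parameterization $p_\nu \mapsto \epsilon_\nu\prod_{j \in S_\nu} y_j$. Property~(1) is immediate from this expansion. For $\nu = \lambda_k$ we have $S_\nu = \{k{+}1,\dots,\ell\}$, and the rescalings have been arranged so that the leading monomial becomes $\epsilon_{\lambda_k}\,y_{k+1}\dotsb y_\ell$, giving property~(2).

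For property~(3), on the chart $\{p_{\lambda_0},\dots,p_{\lambda_\ell} \neq 0\}$ (which contains $Y$ by Proposition~\ref{prop:richardsonplucker3}) the map $x \mapsto (y_1(x),\dots,y_\ell(x))$ restricts to an isomorphism from an open subscheme of $X^{r,\kappa}_{\lambda/\mu}$ onto a product of box-coordinate charts on the Richardson factors, hence identifies $X^{r,\kappa}_{\lambda/\mu}$ locally with $\FF^\ell$. Property~(2) forces $\val(y_k(x)) = 0$ for $x \in Y$, since otherwise $\val(p_{\lambda_{k-1}}(x))$ would strictly exceed $w_{\lambda_{k-1}}$, contradicting the definition~\eqref{eqn:defY}; conversely any tuple of units $(y_1,\dots,y_\ell) \in \psK_+^\ell$ with all $\val(y_j)=0$ is the image of a point of $Y$. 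The leading-term identity in~(3) then follows by dividing the two instances of~(2) for $\lambda_k$ and $\lambda_{k-1}$. The only real difficulty is bookkeeping: tracking how the signs $\epsilon_\nu$ and the constants $q_\nu$ are absorbed into the rescalings of the $y_j$ so that the formulas in~(2) and~(3) come out with exactly the stated signs; this is routine given the explicit toric data of Lemma~\ref{lem:initialideal}, but requires some care.
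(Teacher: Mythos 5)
Your overall architecture is the paper's: use the identification $X^{r,\kappa}_{\lambda/\mu} \cong \prod_{k=0}^{r-1}\richvar{k}$ of Lemma~\ref{lem:richardsonquotient}, assign one coordinate to each box of the $r$-quotient shapes (equivalently, to each ribbon of $T$), and rescale by $u^{-\val(h_j)}$ so the coordinates are units on $Y$. Your derivations of (2) and (3) from the claimed expansion, including the leading-term ratios obtained by dividing consecutive instances of (2), are also in line with what the paper intends.

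The gap is in how you establish the expansion $p_\nu(x) = u^{w_\nu}\bigl(\epsilon_\lambda\epsilon_\nu\prod_{j\in S_\nu}y_j + \cdots\bigr)$, and hence property (1). You assert this is ``forced by'' the fact that $\initial_\boldw(I^{r,\kappa}_{\lambda/\mu})$ is the Hibi toric ideal of the lattice $\Lambda^{r,\kappa}_{\lambda/\mu}$. That fact controls only the special fibre of the Gr\"obner degeneration --- i.e.\ the leading coefficients $[u^{w_\nu}]p_\nu(x)$ of points of $Y$, which is exactly the use made of it in Lemma~\ref{lem:inYcondition} --- but it neither produces a coordinate system on an open set containing $Y$ nor says anything about the higher-order terms in $u$. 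Property (1) is a statement about the \emph{entire} Puiseux expansion of $p_\nu$ in specific regular coordinates: after dividing by $u^{w_\nu}$ one must get a polynomial with coefficients in $\psK_+$, and this integrality is precisely what feeds Hensel's lemma in Lemma~\ref{lem:existunique}. To prove it one has to actually write down the parameterization and compute the minors, as in Example~\ref{ex:Ylocalcoords}: the cell is parameterized by a matrix whose entries are explicit monomials in variables $\Hy_k$ indexed by the entries of $T$, one sets $y_k = u^{-\val(h_k)}\Hy_k$, and one checks that every term in the determinant expansion of $p_\nu$ has $u$-valuation at least $w_\nu$, with the distinguished term contributing exactly $\prod_{j\in T-\nu}\Hy_j$. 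Your ``box coordinates on the chart $\{p_{\Tlambda_{k_j}}\neq 0\}$'' gesture at this construction, but without carrying it out the key claim (1) remains unproved; the initial-ideal computation cannot substitute for it.
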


\begin{example}
\label{ex:Ylocalcoords}
We illustrate the construction of the local coordinates $y_1, \dots, y_\ell$,
in the case where $r=1$.  Suppose $d=3$, $n=7$, and 
\[
   T\ =\ %
   \begin{young}[c]
    , & 2 & 4 & 7 \\
    1 & 5 & 6 & 9 \\
    3 & 8 
   \end{young}\ .
\]
For any point $x \in Y$, we can choose a basis for $x$ so that
the matrix $A_{ij}$ of Section~\ref{sec:plucker} has the form
\[
  A = \begin{pmatrix}
   0 & 0 & 0 & \Hy_2\Hy_4\Hy_7 & \Hy_4\Hy_7 & \Hy_7 & 1 \\
   0 & \Hy_1\Hy_5\Hy_6\Hy_9 & \Hy_5\Hy_6\Hy_9 & \Hy_6\Hy_9 & \Hy_9 & 1 & 0 \\
  \Hy_3\Hy_8 & \Hy_8 & 1 & 0 & 0 & 0 & 0 \\
  \end{pmatrix}\,,
\]
in which the entries are $0$ or products of indeterminates:
\[
   A_{ij} = \prod_{m=j+i-d+1}^{\lambda^i+i-d+1} \Hy_{T_{im}}\,,
\]
where
$T_{im}$ denotes the entry of $T$ in row $i$, column $m$.
The coordinates $y_1, \dots, y_\ell$ are defined to be
\[
     y_k(x) := u^{-\val(h_k)}\Hy_k \qquad\text{for }k=1, \dots, \ell\,.
\]
The reader
can check that for any $h(z)$, with $\val(h_1) > \dots > \val(h_\ell)$,
properties (1)--(3) of Lemma~\ref{lem:Ylocalcoords} are
satisfied.
\end{example}

\begin{proof}
When $r=1$, the construction of $y_1, \dots, y_\ell$ is illustrated 
in Example~\ref{ex:Ylocalcoords}.  The 
proof that properties (1)--(3) hold for this construction is routine.
When $r>1$, we use the identification 
\[
   X^{r,\kappa}_{\lambda/\mu} \cong \prod_{k=0}^{r-1} \richvar{k}
\]
of Lemma~\ref{lem:richardsonquotient},
and define $y_1, \dots, y_\ell$ to be the local coordinates on
appropriate subsets of $\richvar{0}, \dots, \richvar{r-1}$.
Specifically, if $i_1, \dots, i_m$ are the entries of $T$ that 
correspond to
boxes of $\smallrect_k$ under the $r$-quotient construction,
then $y_{i_1}, \dots, y_{i_m}$ are the local coordinates
(defined as in the $r=1$ case) on $\richvar{k}$.
We leave it to the reader to verify that properties (1)--(3) can
be deduced from the $r=1$ case.
\end{proof}

We note that Lemmas~\ref{lem:alphachain},~\ref{lem:inYcondition}
and~\ref{lem:Ylocalcoords} (and their proofs) are still valid under 
the weaker condition
$\val(h_1) \geq \dots \geq \val(h_\ell)$, provided that  
Condition~\ref{cond:square} is satisfied in the construction of
the weight vector $\boldw$.  We will need this fact
in Section~\ref{sec:dihedral}.

\begin{lemma}
\label{lem:Ytableau}
If $x \in Y$, then $T_x = T$.
\end{lemma}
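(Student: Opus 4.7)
The plan is to verify that the subchain of partitions characterizing $T_x$ via Lemma~\ref{lem:nongenericval}(3) coincides with the chain $\mu = \lambda_0 \subsetneq \lambda_1 \subsetneq \cdots \subsetneq \lambda_\ell = \lambda$ encoded by $T$. Rewriting $h(z)$ in the form~\eqref{eqn:hfactored}, the roots fall into $\ell$ valuation blocks of $r$ equal valuations, with strictly decreasing block values $\val(h_1) > \cdots > \val(h_\ell)$. Hence the relevant jumping indices of Lemma~\ref{lem:nongenericval} are $k_i = ir$, with corresponding sizes $|\lambda_i| = |\mu|+ir$, and it suffices to prove that for each $i$, the partition $\lambda_i$ is the \emph{unique} $\nu \in \Lambda$ with $|\nu| = |\lambda_i|$ minimizing $\val(p_\nu(x))$.

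Since $x \in Y \subseteq X^{r,\kappa}_{\lambda/\mu}$, Theorems~\ref{thm:richardsonplucker2} and~\ref{thm:fixedpointplucker} give $p_\nu(x) = 0$ for $\nu \notin \Lambda^{r,\kappa}_{\lambda/\mu}$, while the defining equations of $Y$ give $\val(p_\nu(x)) = w_\nu$ for $\nu \in \Lambda^{r,\kappa}_{\lambda/\mu}$. The claim therefore reduces to showing that $w_{\lambda_i}$ is the strict minimum of $\{w_\nu : \nu \in \Lambda^{r,\kappa}_{\lambda/\mu},\ |\nu| = |\lambda_i|\}$. I will prove this after passing to the $r$-quotient. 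Write $(\Tmu_k)$, $(\Tsigma_k)$, $(\Tlambda_k)$, and $(\Tnu_k)$ for the $r$-quotients of $\mu$, $\lambda_i$, $\lambda$, and a competitor $\nu$. The conditions $\mu \subseteq \nu \subseteq \lambda$ and $|\nu|=|\lambda_i|$ translate termwise to $\Tmu_k \subseteq \Tnu_k \subseteq \Tlambda_k$ and $\sum_k |\Tnu_k| = \sum_k |\Tsigma_k|$, and unwinding the definition of $\boldw$ yields, after cancellation,
\[
   w_\nu - w_{\lambda_i}
   \ =\ \sum_k \bigl(\text{sum of filling entries on }A_k\bigr)
   \ -\ \sum_k \bigl(\text{sum of filling entries on }B_k\bigr),
\]
where $A_k := \Tsigma_k \setminus \Tnu_k \subseteq \Tsigma_k \setminus \Tmu_k$ and $B_k := \Tnu_k \setminus \Tsigma_k \subseteq \Tlambda_k \setminus \Tsigma_k$, with $\sum_k|A_k| = \sum_k|B_k|$.

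Under the $r$-quotient bijection, each box of $\Tlambda_k \setminus \Tmu_k$ corresponds to a unique ribbon of $T$, and the filling assigns to that box the value $b_j = \val(h_j)$ where $j$ is the label of that ribbon. Boxes of $\Tsigma_k \setminus \Tmu_k$ correspond to ribbons labelled $1, \dots, i$, so entries contributing to $A_k$ lie in $\{b_1, \dots, b_i\}$; boxes of $\Tlambda_k \setminus \Tsigma_k$ correspond to ribbons labelled $i{+}1, \dots, \ell$, so entries contributing to $B_k$ lie in $\{b_{i+1}, \dots, b_\ell\}$. Writing $|A| = \sum_k|A_k| = \sum_k|B_k| = |B|$ and using $b_1 > \cdots > b_\ell$,
\[
   w_\nu - w_{\lambda_i}\ \geq\ |A|\,b_i - |B|\,b_{i+1}\ =\ |A|\,(b_i - b_{i+1})\ \geq\ 0,
\]
with equality only when $|A|=0$, i.e., when $\Tnu_k = \Tsigma_k$ for every $k$, i.e., $\nu = \lambda_i$. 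This establishes the strict minimum, completing the argument.

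The main obstacle is the combinatorial translation into the $r$-quotient picture: one must keep careful track of which boxes of $\Tlambda_k \setminus \Tmu_k$ correspond to which ribbons of $T$, and thereby which values $b_j$ can appear in $A_k$ versus $B_k$. Once the identifications $A_k \subseteq \Tsigma_k \setminus \Tmu_k$ and $B_k \subseteq \Tlambda_k \setminus \Tsigma_k$ are recognized, the strict gap $b_i > b_{i+1}$ forces $\nu = \lambda_i$ immediately.
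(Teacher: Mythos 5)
Your proof is correct and follows the same route as the paper: the definition of $Y$ reduces the claim to showing that $w_{\lambda_k}$ is the strict minimum of $w_\nu$ over $\nu\in\Lambda^{r,\kappa}_{\lambda/\mu}$ with $|\nu|=|\lambda_k|$. The paper dismisses this inequality with ``follows from the definition of $\boldw$,'' whereas you carry out the $r$-quotient bookkeeping explicitly (and correctly); no substantive difference in approach.
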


\begin{proof}
For $x \in Y$ we have 
\[
   \val(p_{\lambda_k}(x)) = w_{\lambda_k} 
     = \sum_{i=k+1}^\ell \val(h_i) < w_\nu = \val(p_\nu(x))
\]
for every partition $\nu \in \Lambda^{r,\kappa}_{\lambda/\mu}$
such that $|\nu| = |\lambda_k|$, $\nu \neq \lambda_k$.  The inequality
here
follows from the definition of $\boldw$.  This shows that $T_x = T$.
\end{proof}

\begin{lemma}
\label{lem:existunique}
With $h(z)$ as above, let $X_{\lambda/\mu}$ be a compatible Richardson
variety.
For every $T \in \ribbon(\lambda/\mu)$, there
exists a unique point $x \in X^r_{\lambda/\mu}(h(z))$ whose corresponding
tableau is $T_x = T$.  Moreover $X^r_{\lambda/\mu}(h(z))$ is reduced.
\end{lemma}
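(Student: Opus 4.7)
The plan is to reduce the problem, via Lemmas~\ref{lem:nongenericval}, \ref{lem:alphachain}, \ref{lem:inYcondition}, \ref{lem:Ylocalcoords} and \ref{lem:Ytableau}, to a triangular system of equations in the local coordinates $y_1, \dots, y_\ell$, and then to solve it by a Hensel's lemma argument.

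First I would show that any point $x \in X^r_{\lambda/\mu}(h(z))$ with $T_x = T$ is forced to lie in a single $Y_\boldalpha$. Writing $h(z) = z^{|\mu|}\prod_i(z+a_i)$ and ordering the roots so that valuations are weakly decreasing and ties group into ribbons, Lemma~\ref{lem:nongenericval}(2) yields precisely the leading term conditions \eqref{eqn:ribbonltdescent}, which by Lemma~\ref{lem:alphachain} determine a unique vector $\boldalpha$ with $\alpha_{\lambda_k} = \frac{q_\lambda}{q_{\lambda_k}}\prod_{i>k}\leadcoeff(h_i)$. Lemma~\ref{lem:inYcondition} then places $x$ in $Y_\boldalpha$, and Lemma~\ref{lem:Ytableau} ensures that conversely every $x \in Y_\boldalpha$ has tableau $T$. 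Thus the problem becomes: show that $Y_\boldalpha \cap \Wr^{-1}(h(z))$ consists of exactly one reduced point.

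Next I would use Lemma~\ref{lem:Ylocalcoords} to identify $Y_\boldalpha$ with tuples $(y_1, \dots, y_\ell) \in \psK_+^\ell$ having prescribed leading terms $\beta_k$. Substituting $p_\nu(x) = u^{w_\nu} P_\nu(y)$ into \eqref{eqn:pluckerwronskian}, and using that $x \in X^r$ forces $\Wr(x;z)$ to be $C_r$-fixed (only partitions $\nu$ with $r$-core $\kappa$ contribute, and these have $|\nu| \equiv |\mu| \pmod r$), we obtain
\[
\Wr(x;z) \;=\; \sum_{j=0}^{\ell}\biggl(\sum_{\nu \in \Lambda^{r,\kappa}_{\lambda/\mu},\, |\nu|=|\mu|+rj}\!\! q_\nu u^{w_\nu}P_\nu(y)\biggr) z^{|\mu|+rj}.
\]
Setting this equal to $h(z) = z^{|\mu|}\prod_i(z^r+h_i)$ produces $\ell+1$ polynomial equations in $y$; one is the $j=\ell$ normalization (automatic from $p_\lambda(x)=1$), leaving $\ell$ substantive equations.

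The key step is to verify that this system is upper-triangular at leading order in $u$. By Lemma~\ref{lem:Ylocalcoords}(2), the $\nu = \lambda_j$ summand contributes $\epsilon_{\lambda_j}q_{\lambda_j} \leadterm(y_{j+1}\cdots y_\ell)\,u^{w_{\lambda_j}}$, and all other $\nu$ contribute at strictly higher valuation since $w_\nu > w_{\lambda_j}$ for $\nu \neq \lambda_j$ with $|\nu|=|\mu|+rj$ (by the construction of $\boldw$ from the tableau $T$). Matching against the corresponding leading term of $h(z)$, namely $\leadterm\bigl(e_{\ell-j}(h_1, \dots, h_\ell)\bigr) = \leadterm(h_{j+1}h_{j+2}\cdots h_\ell)$, determines $\leadterm(y_{j+1}\cdots y_\ell)$ and hence, inductively from $j=\ell-1$ down to $j=0$, each $\leadterm(y_k)$; the resulting values are exactly the $\beta_k$ required by Lemma~\ref{lem:Ylocalcoords}(3), confirming consistency. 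The Jacobian of the $\ell$ equations with respect to $(y_1, \dots, y_\ell)$ at this leading-order solution is upper-triangular with nonvanishing diagonal, so Hensel's lemma lifts the unique leading-order solution to a unique $(y_1, \dots, y_\ell) \in \psK_+^\ell$, and the invertibility of the Jacobian gives reducedness. Combined with Lemma~\ref{lem:alwaysribbon}, this completes the proof.

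The main obstacle I expect is the leading-order/Jacobian bookkeeping: one must verify cleanly that the ``cross terms'' coming from $\nu \neq \lambda_j$ with $|\nu| = |\mu|+rj$ are strictly of higher valuation so they only contribute to higher-order corrections, and that the resulting linearization in $\psK_+$ is genuinely invertible. Both ultimately rest on the strict inequality $w_\nu > w_{\lambda_j}$ for such $\nu$ and on the product formula for $P_{\lambda_j}$ in Lemma~\ref{lem:Ylocalcoords}(2), but making this rigorous requires care.
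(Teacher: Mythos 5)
Your proposal is correct and follows essentially the same route as the paper: pin down the unique $\boldalpha$ via Lemmas~\ref{lem:nongenericval}(2), \ref{lem:alphachain} and \ref{lem:inYcondition}, use Lemma~\ref{lem:Ytableau} for the converse containment, and then solve $\Wr(x;z)=h(z)$ in the local coordinates of Lemma~\ref{lem:Ylocalcoords} by Hensel's lemma, using the strict inequality $w_\nu > w_{\lambda_k}$ for $\nu\neq\lambda_k$ of the same size to isolate the leading term and the upper-triangular Jacobian with unit diagonal to get existence, uniqueness and reducedness. The only cosmetic difference is that you re-derive the leading terms of the $y_k$ inductively and check consistency, whereas the paper simply observes that every point of $Y_\boldalpha$ is already a first-order solution by Lemma~\ref{lem:Ylocalcoords}(3).
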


\begin{proof}
By Lemma~\ref{lem:alphachain} there is a unique $\boldalpha \in \FF^\Lambda$ 
satisfying \eqref{eqn:alpha1}, \eqref{eqn:alpha2} and
\begin{equation}
\label{eqn:alpha3}
   \alpha_{\lambda_k} = \frac{q_\lambda}{q_{\lambda_k}}\prod_{i=k+1}^\ell \leadcoeff(h_i) 
   \qquad \text{for }k=0,\dots, \ell\,.  
\end{equation}
For this $\boldalpha$, \eqref{eqn:ribbonltdescent} is equivalent
to \eqref{eqn:inYcondition}.  Thus if $x \in X^r_{\lambda/\mu}(h(z))$
is a point such that $T_x = T$, then by Lemma~\ref{lem:inYcondition}
we must have $x \in Y_\boldalpha$.  Conversely, by 
Lemma~\ref{lem:Ytableau} for every point $x \in Y_\boldalpha$ we have
$T_x = T$.  It suffices therefore to show that there exists a unique,
reduced point in $Y_\boldalpha \cap X^r_{\lambda/\mu}(h(z))$.

Let 
\[
   H := \big\{g(z) \in \Kpol{N} \bigmid %
         \leadterm\big([z^j]g(z)\big) = \leadterm\big([z^j]h(z)\big) 
             \text{ for }j=0, \dots, N\big\}\,.
\]
We now show that $\Wr(Y_\boldalpha) \subset H$.
For any $x \in Y_\boldalpha$ we have 
\begin{align*}
\leadterm\big([z^j]\Wr(x;z)\big) &= 
\tfrac{1}{q_\lambda}\,
\leadterm\Big([z^j] \sum_{\nu \vdash j} q_\nu p_\nu(x)\Big) \\
&= 
{\begin{cases}
\frac{q_{\lambda_k}}{q_\lambda}\, \leadterm (p_{\lambda_k}(x)) & 
\quad\text {if $j = |\lambda_k|$ for some $k$} \\
0 & \quad \text{otherwise}\,,
\end{cases}}
\end{align*}
since, in the first case, $T_x = T$, so $p_{\lambda_k}(x)$ has unique 
minimal valuation among all Pl\"ucker coordinates indexed by partitions 
of size $j = |\lambda_k|$.  On the other hand, if $j = |\lambda_k|$,
\begin{align} 
\nonumber
\leadterm\big([z^j]h(z) \big)
&=
\prod_{i=k+1}^\ell \leadterm(h_i) 
\\
\label{eqn:hcoeffalpha}
&=
\tfrac{q_{\lambda_k}}{q_\lambda}\, \alpha_{\lambda_k} u^{w_{\lambda_k}}  \\
\nonumber
&=
\tfrac{qi{\lambda_k}}{q_\lambda}\, \leadterm(p_{\lambda_k}(x))\,,
\end{align}
and $[z^j]h(z) = 0$ otherwise.
Thus $\Wr(x;z) \in H$, for all $x \in Y_\boldalpha$.  To finish, we show
that the map $\Wr : Y_\boldalpha \to H$ is invertible.  For this we
write $\Wr(x;z) = h(z)$ as a system of equations in the local coordinates
$y_1, \dots, y_\ell$ of Lemma~\ref{lem:Ylocalcoords}, and use Hensel's
Lemma (see e.g. \cite[Exer. 7.25]{Eis}),
which is the analogue of the inverse function theorem for valuation rings.

Comparing non-zero coefficients of both sides, $\Wr(x;z) = h(z)$
can be written as
\begin{equation}
\label{eqn:inverseequations}
     \sum_{\nu \vdash (|\mu|+rk)} q_\nu p_\nu(x)
     = [z^{|\mu|+rk}] h(z)
     \qquad\text{for }k=0, \dots, \ell\,.
\end{equation}
By Lemma~\ref{lem:Ylocalcoords}(2), the leading term of the left hand 
side of \eqref{eqn:inverseequations} is
\begin{equation}
\label{eqn:ltinverselhs}
     q_{\lambda_k} \leadterm(p_{\lambda_k}(x)) 
     = q_{\lambda_k} 
     \epsilon_{\lambda_k}\leadterm(y_{k+1}(x) \dotsb y_\ell(x)) 
     u^{w_{\lambda_k}}
    \,.
\end{equation}
From \eqref{eqn:hcoeffalpha}, the leading term of the right hand side 
of \eqref{eqn:inverseequations} is
\begin{equation}
\label{eqn:ltinverserhs}
   q_{\lambda_k} \alpha_{\lambda_k} u^{w_{\lambda_k}}\,.
\end{equation}
Dividing both sides by $\epsilon_{\lambda_k}q_{\lambda_k}u^{w_{\lambda_k}}$,  the system of 
equations \eqref{eqn:inverseequations} 
can be written as
\begin{equation}
\label{eqn:firstorderinverse}
   y_{k+1} \dotsb y_\ell = \epsilon_{\lambda_k} \alpha_{\lambda_k} 
    + u^{\delta_k}G_k(y_1, \dots, y_\ell)
   \qquad\text{for }k=0, \dots \ell\,,
\end{equation}
where $\delta_k > 0$, and by Lemma~\ref{lem:Ylocalcoords}(1)
$G_k(y_0, \dots, y_\ell) \in \psK_+[y_1, \dots, y_\ell]$.
To apply Hensel's Lemma to the system \eqref{eqn:firstorderinverse}, 
we need to check two things.
\begin{packedenum}
\item[(1)]
Every $x \in Y_\boldalpha$ is a first order solution
to \eqref{eqn:firstorderinverse};
that is, $\leadterm(y_{k+1}(x) \dotsb y_\ell(x)) 
= \epsilon_{\lambda_k}\alpha_{\lambda_k}$
for $x \in Y_\boldalpha$.
This follows from Lemma~\ref{lem:Ylocalcoords}(3).

\item[(2)]
The Jacobian matrix 
$\big(\tfrac{\partial}{\partial y_j} y_{i+1} \dotsb y_\ell \big)_{i,j=1, \dots, \ell}$ is invertible over $\psK_+$,
for $x \in Y_\boldalpha$.  This is true because the Jacobian matrix
is upper-triangular, and the diagonal entries are invertible 
by Lemma~\ref{lem:Ylocalcoords}(3).
\end{packedenum}
We deduce
that the system \eqref{eqn:firstorderinverse} has a unique, reduced
solution $(y_1, \dots, y_\ell) \in (\psK_+)^{\ell+1}$. 
 This solution gives the local coordinates 
of the point $x \in Y_\boldalpha \cap X^r_{\lambda/\mu}(h(z))$,
as required.
\end{proof}

Lemma~\ref{lem:existunique} is the final ingredient in the proof of
Theorem~\ref{thm:ribbon}.

\begin{proof}[Proof of Theorem~\ref{thm:ribbon}]
Since $h(z)$ is assumed to be a generic polynomial of the 
form \eqref{eqn:fixedpoly}, we may assume that
$\val(h_1) > \dots > \val(h_\ell)$.  
Lemmas~\ref{lem:alwaysribbon} and~\ref{lem:existunique} show
that in this case, $x \mapsto T_x$
gives a bijection between the reduced finite scheme 
$X^r_{\lambda/\mu}(h(z))$ and $\ribbon(\lambda/\mu)$.
\end{proof}

\begin{remark} 
Although $X^r_{\lambda/\mu}(h(z))$ is reduced, it is not always
true that $X_{\lambda/\mu}(h(z))$ is reduced when $h(z)$ is a generic
polynomial of the form~\eqref{eqn:fixedpoly}.  One can see this 
from the combinatorics.  For example, suppose 
$r=3$, $h(z)=z^3+1$ and $\lambda/\mu = 21$.  Then
$|\SYT(\lambda/\mu)| = 2$, and $|{\ribbonplain^3}(\lambda/\mu)| = 1$,
which means that $X_{\lambda/\mu}(h(z))$ is a finite scheme of length $2$,
and $C_3$ acts on it with exactly $1$ fixed point.  Since $C_3$
cannot act on a set of size $2$ with only one fixed point, this
shows that $X_{\lambda/\mu}(h(z))$ is supported on a single point,
hence non-reduced.  More generally, if $r$ is prime and
$|\SYT(\lambda/\mu)| \not \equiv |\ribbon(\lambda/\mu)| \pmod r$, then for
every $C_r$-fixed $h(z)$ compatible with $X_{\lambda/\mu}$,
we have that
$X_{\lambda/\mu}(h(z))$ is non-reduced.
\end{remark}


\section{Dihedral group actions}
\label{sec:dihedral}

\subsection{$D_r$-fixed points}

Our final goal in this paper is to prove Theorem~\ref{thm:D-ribbon}.
We begin with some simple observations about $D_r$-fixed points
in $X$ and $D_r$-fixed polynomials in $\KPpolN$.

For any point $x \in X$, let $x^\vee := \stdreflect x$.

\begin{proposition}
The Pl\"ucker coordinates of $x^\vee$ are
\begin{equation}
\label{eqn:reflectX}
  p_\lambda(x^\vee) = p_{\lambda^\vee}(x)\,,
  \qquad \lambda \in \Lambda\,.
\end{equation}
\end{proposition}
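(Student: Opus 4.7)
The plan is to unpack the definitions and carry out a direct computation using the explicit formula for Pl\"ucker coordinates in terms of the coefficient matrix, as set up in Section~\ref{sec:plucker}.

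First I would compute the action of $\stdreflect$ on $\pol{n-1}$. Applying the M\"obius-transformation formula from Section~\ref{sec:introwronskian} with $\phi = \stdreflect$ gives $\stdreflect f(z) = z^{n-1} f(1/z)$ for $f \in \pol{n-1}$, i.e. $\stdreflect$ reverses the coefficient sequence: if $f(z) = \sum_{j=0}^{n-1} c_j z^j$, then $\stdreflect f(z) = \sum_{j=0}^{n-1} c_{n-1-j} z^j$. Consequently, if $x \in X$ is represented by the $d \times n$ coefficient matrix $A$ of a basis $f_1, \dots, f_d$, then $x^\vee$ is represented by the matrix $A'$ with $A'_{ij} = A_{i,\, n-1-j}$, obtained from $A$ by reversing the order of its columns.

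By the definition of Pl\"ucker coordinates, $p_\lambda(x^\vee)$ is the minor of $A'$ with column set $J(\lambda) = \{j_1 < \dots < j_d\}$. Rewriting this in terms of $A$ and then reindexing the columns in increasing order produces a sign $(-1)^{d(d-1)/2}$ (independent of $\lambda$) and exhibits $p_\lambda(x^\vee)$ as $(-1)^{d(d-1)/2}$ times the minor of $A$ with column set $\{n-1-j \mid j \in J(\lambda)\}$.

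The main (very small) step is then the index identity
\[
  \{n-1-j \mid j \in J(\lambda)\} = J(\lambda^\vee),
\]
which I would verify directly from the definitions: substituting $i' = d+1-i$ rewrites the left-hand side as $\{(i'-1) + (n-d-\lambda^{i'}) \mid 1 \leq i' \leq d\}$, which is precisely $J(\lambda^\vee)$ by the definition $(\lambda^\vee)^{i'} = n-d-\lambda^{d+1-i'}$ and the formula for $J$. This yields $p_\lambda(x^\vee) = (-1)^{d(d-1)/2} p_{\lambda^\vee}(x)$ for every $\lambda$, and the uniform sign $(-1)^{d(d-1)/2}$ is absorbed into the projective scaling of Pl\"ucker coordinates, giving~\eqref{eqn:reflectX}. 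The whole argument is essentially bookkeeping, and no step poses a genuine obstacle.
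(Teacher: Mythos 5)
Your proposal is correct and is essentially the same argument as the paper's: $\stdreflect$ reverses the coefficient sequence, hence reverses the columns of the matrix $A$, so each minor $p_\lambda(x^\vee)$ equals $(-1)^{d(d-1)/2}p_{\lambda^\vee}(x)$, with the constant sign discarded by projective scaling. Your explicit check of the index identity $\{n-1-j \mid j \in J(\lambda)\} = J(\lambda^\vee)$ is a detail the paper leaves implicit, but it does not change the route.
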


We note once again, that the Pl\"ucker coordinates are homogeneous
coordinates, and hence \eqref{eqn:reflectX} must be viewed as a system
of equations that holds up to a scalar multiple.

\begin{proof}
The action of $\stdreflect$ on a polynomial $\poln$ is given by
\[
   \stdreflect(\alpha_{n-1}z^{n-1} + \alpha_{n-2} z^{n-2} + \dots + \alpha_0)
    =  \alpha_0 z^{n-1} + \dots + \alpha_{n-2}z + \alpha_{n-1}\,.
\]
In terms of the matrix
$A_{ij}$ used to defined the Pl\"ucker coordinates (see
Section~\ref{sec:plucker}), the action of $\stdreflect$ simply
reverses the order of the columns; hence the minor $A_{J(\lambda^\vee)}$ 
for $x$ becomes $(-1)^{d(d-1)/2}A_{J(\lambda)}$ 
for $x^\vee$.  These are the Pl\"ucker coordinates $p_{\lambda^\vee}(x)$
and $p_\lambda(x^\vee)$ respectively.  The sign is a constant scalar
multiple, so it can be discarded.
\end{proof}

\begin{proposition}
\label{prop:D-compatible}
If $x \in X$ is a $D^r$ fixed point, and $X_{\lambda/\mu}$
is the minimal Richardson variety containing $x$.  
Then $\mu = \lambda^\vee$.
\end{proposition}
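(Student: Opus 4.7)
The plan is to extract the statement from the Pl\"ucker-coordinate description of $\stdreflect$ given in~\eqref{eqn:reflectX}, combined with the characterization of the minimal Richardson variety containing $x$ provided by Proposition~\ref{prop:richardsonplucker1}.

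First I would record the elementary observation that the involution $\nu \mapsto \nu^\vee$ on the indexing lattice $\Lambda$ is order-reversing: this is immediate from the fact that $\nu^\vee$ is obtained by rotating the diagram of $\nu$ inside the $d \times (n-d)$ rectangle, so $\nu \subseteq \sigma$ iff $\sigma^\vee \subseteq \nu^\vee$. In particular, if $S \subseteq \Lambda$ is any subset closed under $\nu \mapsto \nu^\vee$, then the minimum of $S$ and the maximum of $S$ are $\vee$-duals of each other.

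Next, since $x$ is $D_r$-fixed it is in particular fixed by $\stdreflect$, so $x^\vee = x$. Therefore the Pl\"ucker vectors $[p_\nu(x)]_{\nu\in\Lambda}$ and $[p_{\nu^\vee}(x)]_{\nu\in\Lambda}$ coincide up to a common scalar by~\eqref{eqn:reflectX}, which means the set
\[
  P_x := \{\nu \in \Lambda \mid p_\nu(x) \neq 0\}
\]
is closed under the involution $\nu \mapsto \nu^\vee$. By Proposition~\ref{prop:richardsonplucker1}, the minimal Richardson variety containing $x$ is $X_{\lambda/\mu}$, where $\mu$ is the minimum of $P_x$ and $\lambda$ is the maximum of $P_x$. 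Combining these facts with the order-reversing observation from the previous paragraph, one concludes $\mu = \lambda^\vee$, as required.

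There is essentially no obstacle here; the only point that requires a line of care is that the relation~\eqref{eqn:reflectX} holds only up to a common nonzero scalar (since Pl\"ucker coordinates are homogeneous), but this is precisely what one needs to identify the vanishing sets of $\{p_\nu\}$ and $\{p_{\nu^\vee}\}$ on $x$.
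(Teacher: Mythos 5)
Your proposal is correct and follows essentially the same route as the paper's proof: use $x = x^\vee$ together with \eqref{eqn:reflectX} to see that the set of nonvanishing Pl\"ucker coordinates is closed under $\nu \mapsto \nu^\vee$, and then apply Proposition~\ref{prop:richardsonplucker1}. Your explicit remarks about the order-reversal of $\nu \mapsto \nu^\vee$ and the scalar ambiguity are left implicit in the paper but are exactly the right points to make.
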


\begin{proof}
Since $x = x^\vee = \stdreflect x$, $p_{\nu^\vee}(x) = p_\nu(x)$
for all $\nu \in \Lambda$.
In particular, if 
$\lambda$ is the maximal partition such that $p_\lambda(x) \neq 0$,
then $\lambda^\vee$ is the minimal partition such that
$p_{\lambda^\vee}(x) \neq 0$.  By Proposition~\ref{prop:richardsonplucker1},
we must have $\mu = \lambda^\vee$.
\end{proof}

This establishes one easy special case of Theorem~\ref{thm:D-ribbon}: if
$\mu \neq \lambda^\vee$, then there are no $D_r$-fixed points
in $X_{\lambda/\mu}(h(z))$, and the combinatorial sets
$\rotribbon(\lambda/\mu)$ and $\longribbon(\lambda/\mu)$ are
both empty.

Let $h(z) \in \KPpolN$ be a $D_r$-fixed polynomial, and let
$X_{\lambda/\mu}$ be a compatible Richardson variety.  
Henceforth,
we will assume that $\mu = \lambda^\vee$.
Working over the field $\psK$, we will write $h(z)$ in the 
form \eqref{eqn:fixedpoly}, where
where $m = \mu$, $r\ell = |\lambda/\mu|$, and 
\[
  \val(h_1) \geq \val(h_2) \geq \dots \geq \val(h_\ell)\,.
\]
The space of all $h(z)$ that are compatible with $X_{\lambda/\mu}$ 
always has two components.  If $\ell$
is odd, the analysis will be the same for both components, and
we treat them as a single case.  If $\ell$ is even, the analysis
is different for the two components.
Hence will need to consider three different cases, corresponding
to the three different generic types of $D_r$-fixed polynomials
described in the introduction.

In type (1), $\ell$ is odd, and we can assume 
the $h_1,\dots,h_\ell$ have distinct valuations.  
Since $\{h_1, \dots, h_\ell\}$ is invariant
under $z \mapsto \frac{1}{z}$, this implies that 
$h_{(\ell+1)/2} = \pm 1$, and $ h_{\ell+1-k} =(h_k)^{-1}$
for $k=1, \dots, \frac{\ell-1}{2}$.
Similarly in type (2), where $\ell$ is even, 
we may assume that $h_1,\dots,h_\ell$ have distinct valuations;
hence $h_{\ell+1-k} = (h_k)^{-1} $ for $k=1, \dots, \frac{\ell}{2}$,
and none are equal to $\pm 1$. 
In type (3), $\ell$ is even, and we are required to have 
$h_{\ell/2} = 1$ and $h_{(\ell/2)+1} = -1$,
which both have valuation $0$.
The rest of the $h_i$ can be assumed to have 
distinct valuations, and hence
$h_{\ell+1-k} =(h_k)^{-1} $, 
for $k=1, \dots, \frac{\ell}{2}-1$.  A summary is given in 
Table~\ref{tab:generictypes}.

\begin{table}[tb]
\centering
\begin{tabular}{|c|c|c|}
\hline 
\rule{0pt}{2.5ex}
Type (1) & Type (2) & Type (3) \\[.5ex] \hline
\rule{0pt}{3.5ex}
$\ell$ is odd & $\ell$ is even & $\ell$ is even \\[1.5ex]
$\val(h_1) > \dots > \val(h_\ell)$ &
$\val(h_1) > \dots > \val(h_\ell)$ &
$\val(h_1) > \dots > \val(h_{\ell/2})$ \\[1.5ex]
$h_{(\ell+1)/2} = \pm 1$ & $h_{\ell/2},\,h_{(\ell/2)+1} \neq \pm 1$ &
$h_{\ell/2} = 1$ and
$h_{(\ell/2)+1} = -1$ \\[1.75ex]
\parbox{1.5in}{\centering
  $h_{\ell+1-k} = (h_k)^{-1}$ \\for $k=1, \dots, \frac{\ell-1}{2}$}
  &
\parbox{1.5in}{\centering
  $h_{\ell+1-k}= (h_k)^{-1}$ \\for $k=1, \dots, \frac{\ell}{2}$} &
\parbox{1.5in}{\centering
  $h_{\ell+1-k} =(h_k)^{-1} $ \\for $k=1, \dots, \frac{\ell}{2}-1$}
\\[2.25ex] \hline
\end{tabular}
\caption{The three generic types of $D_r$-fixed polynomials over $\psK$.}
\label{tab:generictypes}
\end{table}

\begin{proposition}
The $D_r$-fixed polynomials of types (1), (2) or (3) form
a Zariski-dense subset of all $D_r$-fixed polynomials in $\KPpolN$
that are compatible with $X_{\lambda/\mu}$.
\end{proposition}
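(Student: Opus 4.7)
The plan is to parameterize the compatible $D_r$-fixed polynomials and verify that each generic type is Zariski-dense in an irreducible component of this parameter space. First I would observe that such a polynomial has the form $h(z) = z^m f(z^r)$, where $f(u) \in \psK[u]$ is a polynomial of degree $\ell$ with $f(0) \neq 0$. Normalizing $f$ to be monic, the $\stdreflect$-invariance of $h$ (which forces the root multiset of $f$ to be stable under $u \mapsto 1/u$) is equivalent to the functional equation $u^\ell f(1/u) = c f(u)$ for some $c \in \psK^\times$. Comparing leading and constant coefficients forces $c = \pm 1$, so the parameter space decomposes as the disjoint union of a palindromic part ($c=1$, conditions $a_{\ell-k} = a_k$) and an antipalindromic part ($c=-1$, conditions $a_{\ell-k} = -a_k$). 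Each is cut out by linear equations on the coefficients $a_k$ and so is an affine linear subvariety, in particular irreducible; these give the two irreducible components of the full parameter space.

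Next I would match these components with the three generic types. Evaluating the functional equation at $u = \pm 1$ shows that for $\ell$ odd, a palindromic $f$ must have $u = -1$ as a root (so some $h_i = 1$), while an antipalindromic $f$ must have $u = 1$ as a root (so some $h_i = -1$); both cases correspond to type (1). For $\ell$ even, a palindromic $f$ has no forced roots at $\pm 1$, so its generic stratum is type (2), whereas an antipalindromic $f$ is forced to vanish at both $\pm 1$, corresponding to type (3). Writing $f(u) = (u+1)^\alpha (u-1)^\beta g(u)$ with $g(\pm 1) \neq 0$, the generic values of $\alpha, \beta$ in each component are exactly the values $0$ or $1$ required by the corresponding type.

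Finally, I would verify that the defining condition for the generic type is Zariski-open and nonempty in the corresponding component. The multiplicity conditions $\alpha \leq 1$ and $\beta \leq 1$ are open since higher multiplicity imposes additional polynomial equations (such as $f(-1) = 0$ together with $f'(-1) = 0$) on the coefficients; and the distinctness of the remaining inverse pairs is the nonvanishing of the discriminant of $g$, again Zariski-open. Nonemptiness is witnessed by taking $g(u) = \prod_{i=1}^k (u^2 + s_i u + 1)$ with distinct $s_i \in \psK \setminus \{\pm 2\}$, multiplied by the appropriate $(u+1)^\alpha (u-1)^\beta$ factor. Hence the generic stratum is Zariski-dense in each irreducible component, and the union of types (1), (2), (3) is Zariski-dense in the full space of compatible $D_r$-fixed polynomials.

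The principal complication is purely bookkeeping: tracking which parity of $\ell$ and which of the two values of $c$ correspond to which generic type. This resolves cleanly from the palindromic/antipalindromic dichotomy and raises no genuine difficulty.
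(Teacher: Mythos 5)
Your proof is correct and reaches the same conclusion by a somewhat different organization of the same underlying dichotomy. The paper's proof is a pointwise closure argument: for an arbitrary compatible $D_r$-fixed $h(z)$ it records the multiplicities $M(1)$, $M(-1)$ of $\pm 1$ among the $h_i$, observes that $\ell - M(1) - M(-1)$ is even, and in each of the four resulting parity cases exhibits $h(z)$ as lying in the closure of the polynomials of the appropriate type. You instead argue globally: the functional equation $u^\ell f(1/u) = \pm f(u)$ identifies the two irreducible components of the parameter space as the palindromic and antipalindromic loci (this in fact supplies a proof of the paper's unproved assertion, made just before the proposition, that the space of compatible $D_r$-fixed polynomials has exactly two components), and density of each generic type then follows from irreducibility once that type is seen to contain a nonempty Zariski-open subset, which your explicit witnesses provide. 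The bookkeeping is the same in both arguments: $M(1)=\alpha$ and $M(-1)=\beta$ in your notation, and your evaluation of the functional equation at $u=\pm 1$ is precisely the paper's parity observation. One caveat applies equally to both write-ups: as listed in the table of generic types, types (1)--(3) also impose valuation conditions such as $\val(h_1) > \dots > \val(h_\ell)$, which are not Zariski-open. Density still holds because the valuation-constrained locus is Zariski-dense inside your open stratum (a nonzero polynomial cannot vanish on all tuples $h_i = c_i u^{\gamma_i}$ with $\gamma_1 > \dots > \gamma_\ell$ fixed and the $c_i$ ranging over $\FF^\times$), but neither your argument nor the paper's spells this step out.
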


\begin{proof}
Let $h(z) \in \KPpolN$ be polynomial compatible with $X_{\lambda/\mu}$.
For $a \in \psK$, let $M(a)$ denote the largest integer for 
which $(z^r + a)^{M(a)}$ divides $h(z)$.  If $a \neq \pm 1$ then
$M(a) = M(a^{-1})$.  It follows that $\ell - M(1) - M(-1)$ is even.
There are four cases.

\begin{packedenum}
\item[(1)]
If $\ell$ is odd, $M(1)$ is odd, $M(-1)$ is even, then $h(z)$ is in
the closure of the polynomials of type (1) for which $h_{(\ell+1)/2} = 1$.

\item[(2)]
If $\ell$ is odd, $M(1)$ is even, $M(-1)$ is odd, then $h(z)$ is in
the closure of the polynomials of type (1) for which $h_{(\ell+1)/2} = -1$.

\item[(3)]
If $\ell$ is even, $M(1)$ is even, $M(-1)$ is even, then $h(z)$ is in
the closure of the polynomials of type (2).

\item[(4)]
If $\ell$ is even, $M(1)$ is odd, $M(-1)$ is odd, then $h(z)$ is in
the closure of the polynomials of type (3).  \qedhere
\end{packedenum}
\end{proof}

\subsection{Reflection on the fibre models rotation of tableaux}

We continue to assume that $h(z)$ is a $D_r$-fixed polynomial,
and $X_{\lambda/\mu}$ is a 
compatible Richardson variety with $\mu = \lambda^\vee$.
For any tableau $T$ of shape $\lambda/\mu$, let $T^\vee$ denote the 
tableau obtained by rotating $T$ by $180^\circ$, 
and replacing each entry $k$ by $m{+}1{-}k$, where $m$ is 
the largest entry.

\begin{lemma}
\label{lem:reflectrotate}
Let $x \in X(h(z))$ be any point, and $T_x$ be the associated tableau,
as in Section~\ref{sec:fibretableau}.
Then the tableau associated to $x^\vee$ is $(T_x)^\vee$.
\end{lemma}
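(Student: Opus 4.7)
The plan is to deduce Lemma~\ref{lem:reflectrotate} from the Pl\"ucker-coordinate identity $p_\nu(x^\vee) = p_{\nu^\vee}(x)$ together with the minimum-valuation characterization of the chain underlying $T_x$ provided by Lemma~\ref{lem:nongenericval}(3). The basic idea is that applying $\stdreflect$ at the level of Pl\"ucker coordinates is exactly the lattice-theoretic operation $\nu \mapsto \nu^\vee$, so the chain defining $T_{x^\vee}$ is forced to be the ``dual-and-reverse'' of the chain defining $T_x$.

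First I would set up notation. Let $X_{\lambda/\mu}$ be the minimal $h(z)$-compatible Richardson variety containing $x$, so that $x^\vee$ lies in the (also $h(z)$-compatible) Richardson variety $X_{\mu^\vee/\lambda^\vee}$. Factor $h(z)$ as in \eqref{eqn:hfactored}, order the roots so that $\val(a_1) \geq \dotsb \geq \val(a_{r\ell})$, and let $0 = k_0 < k_1 < \dotsb < k_{\ell'} = r\ell$ be the indices at which the valuations strictly drop. By Lemma~\ref{lem:nongenericval}(3), the chains underlying $T_x$ and $T_{x^\vee}$ can be written as
\[
   \mu = \nu^x_0 \subsetneq \nu^x_1 \subsetneq \dotsb \subsetneq \nu^x_{\ell'} = \lambda
   \qquad\text{and}\qquad
   \lambda^\vee = \nu^{x^\vee}_0 \subsetneq \nu^{x^\vee}_1 \subsetneq \dotsb \subsetneq \nu^{x^\vee}_{\ell'} = \mu^\vee,
\]
with the same jump sizes $k_i$ in both chains (they are determined by the common polynomial $h(z)$), where $\nu^x_i$ is the unique minimizer of $\val(p_\nu(x))$ over partitions $\nu$ of size $|\mu|+k_i$, and similarly for $\nu^{x^\vee}_i$.

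Next I would transfer the minimization for $x^\vee$ over to $x$: by $p_\nu(x^\vee) = p_{\nu^\vee}(x)$, the minimizer $\nu^{x^\vee}_i$ is uniquely characterized by its dual $(\nu^{x^\vee}_i)^\vee$ being the unique minimizer of $\val(p_\eta(x))$ over partitions $\eta$ of size $N - |\lambda^\vee| - k_i = |\lambda| - k_i$. The crucial combinatorial input is the palindrome property $k_{\ell'-i} = k_{\ell'} - k_i$, which follows from the $D_r$-symmetry of $h(z)$: the relation $h_{\ell+1-j} = h_j^{-1}$ forces the multiset of valuations of the roots of $h(z)$ to be symmetric under negation, so the step multiplicities $k_i - k_{i-1}$ are palindromic. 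This identifies $|\lambda| - k_i$ with $|\mu| + k_{\ell'-i}$, so by uniqueness $(\nu^{x^\vee}_i)^\vee = \nu^x_{\ell'-i}$, i.e.\ $\nu^{x^\vee}_i = (\nu^x_{\ell'-i})^\vee$.

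Finally I would read off the tableau. The boxes labelled $j$ in $T_{x^\vee}$ form $\nu^{x^\vee}_j/\nu^{x^\vee}_{j-1} = (\nu^x_{\ell'-j})^\vee / (\nu^x_{\ell'-j+1})^\vee$, which is the $180^\circ$ rotation inside $\Rect$ of the skew shape $\nu^x_{\ell'-j+1}/\nu^x_{\ell'-j}$---precisely the boxes labelled $\ell'+1-j$ in $T_x$. This is the defining relation for $(T_x)^\vee$. The main obstacle is the index bookkeeping, specifically isolating the palindrome property of the jump sizes as the combinatorial avatar of the dihedral symmetry of $h(z)$; once that is in place, the lemma follows directly from Lemma~\ref{lem:nongenericval}(3) and the Pl\"ucker formula for $\stdreflect$, with no further analytic input.
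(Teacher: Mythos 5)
Your proposal is correct and follows essentially the same route as the paper: combine the identity $p_\nu(x^\vee)=p_{\nu^\vee}(x)$ with the unique-minimal-valuation characterization of the chain from Lemma~\ref{lem:nongenericval}(3) to conclude that the dualized, reversed chain encodes $T_{x^\vee}$. The only difference is that you spell out the palindromicity of the valuation jumps coming from the inversion symmetry of $h(z)$, a bookkeeping step the paper's proof leaves implicit.
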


Thus the action of $\stdreflect$ on the fibre $X(h(z))$
models the action of $180^\circ$ rotation on tableaux.  We
note that this result does not require $h(z)$ to be generic, of
one of the three types, though for our applications it will be.

\begin{proof}
Let 
$\lambda_0 \subsetneq \lambda_1 \subsetneq \dots \subsetneq \lambda_\ell$
be the chain of partitions associated to $T_x$.  Then
$p_{\lambda_k}(x)$ has unique minimal valuation among all Pl\"ucker
coordinates of $x$ indexed by partitions of size $|\lambda_k|$.
By \eqref{eqn:reflectX}, $p_{\lambda_k^\vee}(x^\vee)$ has unique minimal 
valuation among all Pl\"ucker coordinates of $x^\vee$ indexed by
partitions of size $|\lambda_k^\vee|$.  Thus
$\lambda_\ell^\vee \subsetneq \lambda_{\ell-1}^\vee 
\subsetneq \dots \subsetneq \lambda_0^\vee$ encodes $T_{x^\vee}$,
i.e. $(T_x)^\vee = T_{x^\vee}$.
\end{proof}

We immediately obtain the first case of Theorem~\ref{thm:D-ribbon}.  

\begin{proof}[Proof of Theorem~\ref{thm:D-ribbon}(i)]
If $h(z)$ is of type (1) or (2), 
then $h_1, \dots, h_\ell$ have distinct valuations, and hence
by Lemma~\ref{lem:existunique}
we have a bijective correspondence between the reduced finite
scheme $X^r_{\lambda/\mu}(h(z))$
and $\ribbon(\lambda/\mu)$.  For any $x \in X^r_{\lambda/\mu}(h(z))$,
 we have $x = x^\vee$ iff
$T_x = T_{x^\vee} = (T_x)^\vee$, where the last equality is by 
Lemma~\ref{lem:reflectrotate}.
Hence $\stdreflect$-fixed points in $X^r_{\lambda/\mu}(h(z))$ are
in bijection with the set $\rotribbon(\lambda/\mu)$ of
rotationally-invariant $r$-ribbon tableaux.
\end{proof}

If $h(z)$ is of type (3), then a bit more work is required,
since $h_{\ell/2}$ and $h_{(\ell/2)+1}$ have the same valuation.
In this case, the corresponding tableau is not a standard $r$-ribbon 
tableau,
but something close.
Let $T$ be a tableau of shape $\lambda/\mu$, where $|\lambda/\mu| = r \ell$
and $\ell$ is even.  We say $T$ is 
an \defn{almost-standard} $r$-ribbon tableau if the entries of
$T$ not equal to $\frac{\ell}{2}$ form $r$-ribbons, and
the entries equal to $\frac{\ell}{2}$ form a skew shape with
$2r$-boxes that supports an $r$-ribbon.
We will write the chain of partitions corresponding to such a 
tableau $T$ as
\begin{equation}
\label{eqn:chainmissingone}
   \mu = \lambda_0 \subsetneq \dots  \subsetneq
    \lambda_{(\ell/2)-1} \subsetneq
    \lambda_{(\ell/2)+1} \subsetneq \dots \subsetneq \lambda_\ell = \lambda\,.
\end{equation}

\begin{lemma}
\label{lem:almostribbon}
Let $x \in X^r_{\lambda/\mu}(h(z))$ where $h(z)$ is of type (3).
Then the corresponding $T_x$ is an almost-standard $r$-ribbon tableau.
\end{lemma}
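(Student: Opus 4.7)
The plan is to mimic the proof of Lemma~\ref{lem:alwaysribbon}, accounting for the one coincidence of valuations in the middle of the factorization.

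First I would factor $h(z)$ as in \eqref{eqn:hfactored}, ordering $a_1, \dots, a_{r\ell}$ by weakly decreasing valuation. Each binomial $(z^r + h_i)$ contributes $r$ roots of valuation $\tfrac{1}{r}\val(h_i)$. Since for a type (3) polynomial $h_{\ell/2} = 1$ and $h_{(\ell/2)+1} = -1$ both have valuation $0$, while the remaining $h_i$ have pairwise distinct valuations, the $r\ell$ roots split into $\ell-1$ valuation classes: blocks of $r$ roots of valuation $\tfrac{1}{r}\val(h_i)$ for $i = 1, \dots, \tfrac{\ell}{2}-1$, then one block of $2r$ roots of valuation $0$, then blocks of $r$ roots for $i = \tfrac{\ell}{2}+2, \dots, \ell$. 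Hence the strict-inequality indices that appear in the discussion following Lemma~\ref{lem:nongenericval} are $k_i = ri$ for $0 \leq i \leq \tfrac{\ell}{2}-1$, $k_{\ell/2} = r(\tfrac{\ell}{2}+1)$, and $k_i = r(i+1)$ for $\tfrac{\ell}{2}+1 \leq i \leq \ell-1$.

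By Lemma~\ref{lem:nongenericval}(3), each $\lambda_{k_i}$ in the chain is uniquely determined by $x$ and satisfies $p_{\lambda_{k_i}}(x) \neq 0$. The tableau $T_x$ is obtained by placing entry $i$ in the boxes of $\lambda_{k_i}/\lambda_{k_{i-1}}$; by construction it has entries $1, \dots, \ell-1$, every skew piece labelled $i \neq \tfrac{\ell}{2}$ has $r$ boxes, and the piece labelled $\tfrac{\ell}{2}$ has $2r$ boxes. It remains to identify the combinatorial type of these skew pieces.

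The key step is Theorem~\ref{thm:fixedpointplucker}: because $x \in X^r$, all partitions $\nu$ with $p_\nu(x) \neq 0$ share a common $r$-core $\kappa$, so in particular every $\lambda_{k_i}$ has $r$-core $\kappa$. A skew shape of size $r$ whose two endpoint partitions have equal $r$-core must, via the $r$-abacus description (sliding a single bead upward on some runner), be a connected $r$-ribbon; this handles the pieces labelled $i \neq \tfrac{\ell}{2}$. For $i = \tfrac{\ell}{2}$, the skew shape has $2r$ boxes and its two endpoint partitions have the same $r$-core, so the abacus description yields an intermediate partition $\nu$ with $\lambda_{k_{(\ell/2)-1}} \subsetneq \nu \subsetneq \lambda_{k_{\ell/2}}$ such that $\nu/\lambda_{k_{(\ell/2)-1}}$ is an $r$-ribbon; this is precisely the assertion that the middle skew shape supports an $r$-ribbon.

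I do not expect any significant obstacle: the entire argument is a bookkeeping exercise on valuations, together with the standard abacus characterization of $r$-cores. The only subtlety is that the middle $2r$-box skew shape need not itself be a single connected $2r$-ribbon — only a shape that splits into two $r$-ribbons — which matches the definition of an almost-standard $r$-ribbon tableau exactly.
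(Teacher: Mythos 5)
Your proposal is correct and is essentially the argument the paper intends: the paper's proof of this lemma is the single sentence ``the argument is essentially the same as the proof of Lemma~\ref{lem:alwaysribbon},'' and your write-up is precisely that adaptation, with the $2r$-block of equal valuations handled via the same $r$-core/abacus reasoning (a size-$2r$ skew shape between partitions of equal $r$-core admits an intermediate partition obtained by one bead move, i.e.\ it supports an $r$-ribbon).
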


\begin{proof}
The argument is essentially the same as the proof of 
Lemma~\ref{lem:alwaysribbon}.
\end{proof}

One possibility, if $T$ is an almost-standard $r$-ribbon tableau of
shape $\lambda/\mu$, is that the entries equal to $\frac{\ell}{2}$ actually
form a $2r$-ribbon.  In this case we say that $T$ has a 
\defn{long ribbon}.

\begin{lemma}
\label{lem:oneortwopoints}
Let $T$ be  an almost-standard $r$-ribbon tableau, and
suppose $h(z)$ is of type (3).  
\begin{packedenum}
\item[(i)]
If $T$ has a long ribbon, then there is exactly one point
$x \in X^r_{\lambda/\mu}(h(z))$ such that $T_x = T$; moreover, $x$
is a reduced point of $X^r_{\lambda/\mu}(h(z))$.
\item[(ii)]
If $T$ does not have a long ribbon, then
there are at most two points in 
$X^r_{\lambda/\mu}(h(z))$ corresponding to $T$.  
\end{packedenum}
\end{lemma}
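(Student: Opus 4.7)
I would adapt the strategy of Lemma~\ref{lem:existunique} to handle the weakly-decreasing sequence $\val(h_1) \geq \cdots \geq \val(h_\ell)$ with $\val(h_{\ell/2}) = \val(h_{(\ell/2)+1}) = 0$. The first step is to enumerate the ways of refining the almost-standard chain \eqref{eqn:chainmissingone} into a full chain $\mu = \lambda_0 \subsetneq \cdots \subsetneq \lambda_\ell = \lambda$ in $\Lambda^{r,\kappa}_{\lambda/\mu}$. Under the $r$-quotient, the $2r$-box skew shape $\lambda_{(\ell/2)+1}/\lambda_{(\ell/2)-1}$ corresponds to two boxes $P_1, P_2$ in the rectangles $(\smallrect_0, \dots, \smallrect_{r-1})$. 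If $T$ has a long ribbon, $P_1$ and $P_2$ form a domino, and only one ordering of their removal yields a valid sub-partition of some $\Tlambda_k$; hence there is a unique intermediate $\lambda_{\ell/2}$. Otherwise $P_1, P_2$ are non-adjacent and both orderings are valid, producing two distinct intermediates $\lambda^{(1)}_{\ell/2}, \lambda^{(2)}_{\ell/2}$.

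Second, I would construct the weight vector $\boldw$ from the refined chain using the sequence $b_i = \val(h_i)$, so that $P_1$ and $P_2$ both receive value $0$. Condition~\ref{cond:square} holds because the only pair of equal entries in $(\smallrect_0, \dots, \smallrect_{r-1})$ sits either in a $1 \times 2$ domino (case (i)) or at two non-adjacent boxes (case (ii)), neither of which can complete a $2 \times 2$ square of equal entries; the rows and columns of each $\smallrect_k$ remain weakly decreasing. In case (ii), the two refinements produce the same $\boldw$, since the two $0$-values are interchangeable.

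Third, I would analyze the possible $\boldalpha$. By Lemma~\ref{lem:nongenericval}, any $x \in X^r_{\lambda/\mu}(h(z))$ with $T_x = T$ lies in $Y$ of \eqref{eqn:defY}, and the vector $\boldalpha_x = ([u^{w_\nu}]p_\nu(x))_{\nu \in \Lambda}$ satisfies \eqref{eqn:alpha1} and \eqref{eqn:alpha2}. For each $k \neq \ell/2$, $\lambda_k$ is the unique minimum-weight partition of its size in $\Lambda^{r,\kappa}_{\lambda/\mu}$, so matching the leading coefficient of $[z^{|\mu|+rk}]\Wr(x;z)$ with that of $[z^{|\mu|+rk}]h(z)$ determines $\alpha_{\lambda_k}$ uniquely, exactly as in \eqref{eqn:alpha3}. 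In case (i) the same reasoning applies at $k = \ell/2$, and $\boldalpha$ is fully determined. In case (ii), $\lambda^{(1)}_{\ell/2}$ and $\lambda^{(2)}_{\ell/2}$ tie for minimum weight, so matching $[z^{|\mu|+r\ell/2}]$ yields the single linear equation
\[
q_{\lambda^{(1)}_{\ell/2}}\,\alpha_{\lambda^{(1)}_{\ell/2}} + q_{\lambda^{(2)}_{\ell/2}}\,\alpha_{\lambda^{(2)}_{\ell/2}} = q_\lambda \prod_{i>\ell/2}\leadcoeff(h_i),
\]
while \eqref{eqn:alpha2} applied with $\nu = \lambda^{(1)}_{\ell/2}, \sigma = \lambda^{(2)}_{\ell/2}$ (using $\nu \vee \sigma = \lambda_{(\ell/2)+1}$ and $\nu \wedge \sigma = \lambda_{(\ell/2)-1}$, whose $\alpha$'s are already fixed) pins down the product $\alpha_{\lambda^{(1)}_{\ell/2}}\,\alpha_{\lambda^{(2)}_{\ell/2}}$. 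A sum-and-product system has at most two solutions, so there are at most two valid $\boldalpha$'s, and both components $\alpha_{\lambda^{(i)}_{\ell/2}}$ are nonzero (else \eqref{eqn:alpha2} would force $\alpha_{\lambda_{(\ell/2)\pm 1}} = 0$).

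Finally, for each such $\boldalpha$ I would apply the Hensel's Lemma argument of Lemma~\ref{lem:existunique} to obtain a unique reduced point of $Y_\boldalpha \cap X^r_{\lambda/\mu}(h(z))$. Lemmas~\ref{lem:alphachain},~\ref{lem:inYcondition} and~\ref{lem:Ylocalcoords} remain valid in the weakly-decreasing setting because Condition~\ref{cond:square} holds, so the local coordinates $y_1, \dots, y_\ell$ exist and the Jacobian of \eqref{eqn:firstorderinverse} is upper triangular with nonvanishing diagonal. This produces one reduced point in case (i) and at most two in case (ii). The main obstacle is the sum-and-product analysis of the third step in case (ii): interpreting the level $k = \ell/2$ correctly requires combining the linear Wronskian constraint with the quadratic Pl\"ucker relation \eqref{eqn:alpha2}, and verifying that both together cut out only the expected pairs and yield nondegenerate $\boldalpha$'s to which Hensel's Lemma can be invoked.
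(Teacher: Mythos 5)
There is a genuine gap, and it sits exactly at the point your argument relies on most: the determination of $\alpha_{\lambda_{\ell/2}}$ by matching leading coefficients of $[z^{N/2}]\Wr(x;z)$ and $[z^{N/2}]h(z)$. Because $h(z)$ is of type (3) we have $\leadcoeff(h_{\ell/2})=1$ and $\leadcoeff(h_{(\ell/2)+1})=-1$, so the two minimal-valuation terms of $e_{\ell/2}(h_1,\dots,h_\ell)$ cancel (this is \eqref{eqn:middletermh}): $\val\big([z^{N/2}]h(z)\big)$ is \emph{strictly greater} than $w_{\lambda_{\ell/2}}=\sum_{i>\ell/2}\val(h_i)$. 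In case (i), where $\lambda_{\ell/2}$ is the unique minimal-weight partition of its size, matching leading terms therefore forces $[u^{w_{\lambda_{\ell/2}}}]p_{\lambda_{\ell/2}}(x)=0$, i.e.\ $\val(p_{\lambda_{\ell/2}}(x))>w_{\lambda_{\ell/2}}$. So the long-ribbon point does \emph{not} lie in any $Y_\boldalpha$ with $\boldalpha$ satisfying \eqref{eqn:alpha1}, and your claim that ``the same reasoning applies at $k=\ell/2$ and $\boldalpha$ is fully determined'' breaks down: the Hensel system would demand $\leadterm(y_{(\ell/2)+1}\dotsb y_\ell)=0$, which no unit solution satisfies. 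The framework of Lemma~\ref{lem:existunique} cannot be applied verbatim to produce, or even to locate, the case (i) point. In case (ii) the same cancellation means your ``sum'' equation has right-hand side $0$ rather than $q_\lambda\prod_{i>\ell/2}\leadcoeff(h_i)$ (compare \eqref{eqn:twomiddlealphas}); the ``at most two $\boldalpha$'s'' conclusion survives, but the level-$\ell/2$ Wronskian equation now has \emph{two} minimal terms, so the Jacobian is not upper triangular with invertible diagonal --- it acquires a $2\times 2$ block whose invertibility must be checked separately (this is \eqref{eqn:firstorderinverse2} in Lemma~\ref{lem:twopoints}).

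The paper avoids all of this in Lemma~\ref{lem:oneortwopoints} by a degeneration argument: it perturbs $h(z)$ to $h_\boldb(z)=z^{|\mu|}\prod(z^r+b_ih_i)$ with $\boldb$ generic, where the roots have distinct valuations and the points with tableau $T$ are counted by the extensions of the chain \eqref{eqn:chainmissingone} (one extension in case (i), two in case (ii)); it then lets $\boldb\to(1,\dots,1)$, using that the leading-term identities \eqref{eqn:ltalmostribbon} involve only the levels $k\neq\ell/2$ and persist in the limit. The Hensel-type analysis you propose is reserved for Lemma~\ref{lem:twopoints}, where the correct value of $\alpha_{\lambda_{\ell/2}}$ is pinned down (up to the sign of a square root) by the quadratic relation \eqref{eqn:alpha2} together with the vanishing of $[u^{w_{\lambda_{\ell/2}}}][z^{N/2}]h(z)$, not by naive coefficient matching. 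To salvage your approach you would need to replace the level-$\ell/2$ step by exactly this analysis in case (ii), and find an entirely different mechanism (such as the paper's limiting argument) for case (i).
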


\begin{proof}
Consider a polynomial $h_\boldb(z)$ of the form
\[
    h_\boldb(z) = z^{|\mu|}\prod_{i=1}^\ell (z^r + b_ih_i)
\]
where $\boldb = (b_1, \dots, b_\ell) \in (\FF^\times)^\ell$.
We claim
that if $b_1, \dots, b_\ell$ are generic,
then statements (i) and (ii) are true with $h(z)$ replaced by $h_\boldb(z)$.

To prove the claim, we work over $\FF=\puiseux{v}$.  Let
\[
   \calU = \{(v^{\gamma_1}, \dots, v^{\gamma_\ell})
   \mid \gamma_k \in \ZZ,\ \gamma_1 > \dots > \gamma_\ell\}\,.
\]
Fix $\boldb = (v^{\gamma_1}, \dots, v^{\gamma_\ell}) \in \calU$ 
and a small positive rational number $\varepsilon > 0$.  Put
\[
   \hat h(z) := 
    z^{|\mu|}\prod_{i=1}^\ell(z^r + v^{\gamma_i}u^{\varepsilon\gamma_i}h_i)
     \,.
\]
Arguing as in the proof
of Lemma~\ref{lem:nongenericval}, we see that for any point
$\hat x \in X_{\lambda/\mu}(\hat h(z))$ such that
\begin{equation}
\label{eqn:valalmostribbon}
   \val(p_{\lambda_k}(\hat x)) 
   = \sum_{i=k+1}^\ell \val(h_i) + O(\varepsilon)
   \qquad\text{for }k = 0, \dots, \tfrac{\ell}{2}{-}1, 
                       \tfrac{\ell}{2}{+}1, \dots, \ell\,.
\end{equation}
there is a corresponding point $x^* := \Sigma(\hat x)$ such 
that $T_{x^*} = T$.  Conversely, if $\hat x$ does not satisfy
\eqref{eqn:valalmostribbon}, then $T_{x^*} \neq T$.  Points
$\hat x \in X_{\lambda/\mu}(\hat h(z))$ 
satisfying \eqref{eqn:valalmostribbon} are in bijection with
tableaux $T \in \ribbon(\lambda/\mu)$ whose associated chain
of partitions is of the form
\[
   \mu = \lambda_0 \subsetneq \dots 
    \lambda_{(\ell/2)-1} \subsetneq \lambda_{\ell/2} \subsetneq
    \lambda_{(\ell/2)+1} \subsetneq \dots \subsetneq 
      \lambda_\ell = \lambda\,,
\]
for some $\lambda_{\ell/2}$;
in other words, these points correspond to
extensions of the chain \eqref{eqn:chainmissingone}
to a maximal chain in $\Lambda^{r,\kappa}_{\lambda/\mu}$.  If $T$
has a long ribbon, there is a unique way to extend this chain, 
hence one point
in $X^r_{\lambda/\mu}(h_\boldb(z))$ corresponding to $T$.  
If $T$ does not have a long ribbon, there are two ways to extend the 
chain, hence two corresponding points.  This proves the claim.

Now, if $x^* \in X^r_{\lambda/\mu}(h_\boldb(z))$ is a point such 
that $T_{x^*} = T$, then by Lemma~\ref{lem:nongenericval}(2), we have
\begin{equation}
\label{eqn:ltalmostribbon}
   \leadterm(p_{\lambda_k}(x^*)) 
   = \frac{q_\lambda}{q_{\lambda_k}} \prod_{i=k+1}^\ell \leadterm(b_ih_i)
   \qquad\text{for }k = 0, \dots, \tfrac{\ell}{2}{-}1, 
                       \tfrac{\ell}{2}{+}1, \dots, \ell\,.
\end{equation}
The right hand side of \eqref{eqn:ltalmostribbon} is a 
nowhere-vanishing
function of $b_1, \dots b_\ell$.  This
implies that property \eqref{eqn:ltalmostribbon} is preserved when we 
take the limit as $\boldb \to (1, \dots, 1)$.
Thus in case (i) the limiting fibre $X^r_{\lambda/\mu}(h(z))$ contains 
exactly one reduced point $x$ such that $T_x = T$,
which proves (i).
In case (ii) the limiting fibre $X^r_{\lambda/\mu}(h(z))$ contains
either:
\begin{packedenum}
\item[(1)] two distinct reduced points $x, x'$ such 
that $T_x = T_{x'} = T$;\quad or
\item[(2)] a single point $x$ of multiplicity two such that $T_x = T$,
\end{packedenum}
which proves (ii).
\end{proof}

\begin{lemma}
\label{lem:twopoints}
Let $T$ be an almost-standard $r$-ribbon tableau, and
suppose $h(z)$ is of type (3).  
If $T$ does not have a long ribbon, then
there are exactly two points
$x, x' \in X^r_{\lambda/\mu}(h(z))$ such that $T_x = T_{x'} = T$,
and both are reduced points of $X^r_{\lambda/\mu}(h(z))$.
\end{lemma}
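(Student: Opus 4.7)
The plan is to construct two distinct reduced points in $X^r_{\lambda/\mu}(h(z))$ corresponding to $T$ by extending the machinery of Section~\ref{sec:ribbon} to the degenerate setting $\val(h_{\ell/2}) = \val(h_{(\ell/2)+1}) = 0$, and then to invoke Lemma~\ref{lem:oneortwopoints}(ii) to conclude that exactly two such points exist and that both are reduced. Since $T$ has no long ribbon, the two $r$-ribbons of $T$ carrying the label $\ell/2$ correspond under the $r$-quotient to two adjacent boxes of a single $\smallrect_k$, arranged as a horizontal or vertical domino. Hence there are exactly two refinements $T^{(1)}, T^{(2)} \in \ribbon(\lambda/\mu)$ of $T$, differing only in the intermediate partition $\lambda^{(i)}_{\ell/2}$, and in the lattice $\Lambda^{r,\kappa}_{\lambda/\mu}$ these are precisely the two atoms of the interval $[\lambda_{(\ell/2)-1}, \lambda_{(\ell/2)+1}]$, so that $\lambda^{(1)}_{\ell/2} \vee \lambda^{(2)}_{\ell/2} = \lambda_{(\ell/2)+1}$ and $\lambda^{(1)}_{\ell/2} \wedge \lambda^{(2)}_{\ell/2} = \lambda_{(\ell/2)-1}$.

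I would form the common weight vector $\boldw$ (equal for both refinements, since $\val(h_{\ell/2}) = \val(h_{(\ell/2)+1})$) from the weakly decreasing sequence $\val(h_1) \geq \dots \geq \val(h_\ell)$. Condition~\ref{cond:square} is satisfied because the two equal-valued entries sit in adjacent boxes of a domino, never a $2 \times 2$ square; so by the remark following Lemma~\ref{lem:Ylocalcoords}, the space $Y$, the parameter slices $Y_\boldalpha$, and the local coordinate formalism remain valid in this degenerate setting. The Wronski equations $\Wr(x;z) = h(z)$, after extracting leading terms, become first-order conditions of the form \eqref{eqn:firstorderinverse}, one for each $k$. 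For $k \neq \ell/2$ these recover $\alpha_{\lambda_k} = \tfrac{q_\lambda}{q_{\lambda_k}}\prod_{j>k}\leadcoeff(h_j)$ exactly as in \eqref{eqn:alpha3}. At the critical index $k = \ell/2$, however, the identity $h_{\ell/2}+h_{(\ell/2)+1}=1+(-1)=0$ cancels the naive leading term of $[z^{|\mu|+r(\ell/2)}]h(z) = e_{\ell/2}(h_1,\dots,h_\ell)$ at valuation $w_{\lambda^{(1)}_{\ell/2}} = w_{\lambda^{(2)}_{\ell/2}}$, and because precisely the two partitions $\lambda^{(1)}_{\ell/2}, \lambda^{(2)}_{\ell/2}$ achieve this minimum valuation on the left-hand side, the first-order equation at $k = \ell/2$ reduces to
\[
q_{\lambda^{(1)}_{\ell/2}}\,\alpha_{\lambda^{(1)}_{\ell/2}} + q_{\lambda^{(2)}_{\ell/2}}\,\alpha_{\lambda^{(2)}_{\ell/2}} = 0.
\]

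Combined with the modularity relation supplied by \eqref{eqn:alpha2} on the diamond $\{\lambda_{(\ell/2)-1}, \lambda^{(1)}_{\ell/2}, \lambda^{(2)}_{\ell/2}, \lambda_{(\ell/2)+1}\}$, this linear relation produces a quadratic in $\alpha_{\lambda^{(1)}_{\ell/2}}$ whose constant term is a nonzero multiple of the already-determined product $\alpha_{\lambda_{(\ell/2)+1}}\alpha_{\lambda_{(\ell/2)-1}}$. The quadratic therefore has exactly two distinct nonzero roots, yielding two admissible vectors $\boldalpha^{(1)} \neq \boldalpha^{(2)}$, one matched to each refinement. At each $\boldalpha^{(i)}$ the remaining non-degenerate equations in the local coordinates of Lemma~\ref{lem:Ylocalcoords} form a triangular system whose Jacobian has invertible diagonal, so Hensel's Lemma lifts each first-order solution to a unique reduced point $x^{(i)} \in Y_{\boldalpha^{(i)}} \cap X^r_{\lambda/\mu}(h(z))$. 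Disjointness of $Y_{\boldalpha^{(1)}}$ and $Y_{\boldalpha^{(2)}}$ forces $x^{(1)} \neq x^{(2)}$, and Lemma~\ref{lem:oneortwopoints}(ii) ensures these are all the points corresponding to $T$. The main obstacle will be careful bookkeeping at $k = \ell/2$: verifying that the leading-term cancellation in the Wronskian truly reduces the equation to the stated linear relation, and that the resulting quadratic has nonzero discriminant---both ultimately resting on the diamond structure of the sublattice and the nonvanishing of $\alpha_{\lambda_{(\ell/2)\pm 1}}$.
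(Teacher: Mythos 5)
Your proposal is correct and follows essentially the same route as the paper: the two points arise from the two solutions of the degenerate leading-term equation $q_{\lambda_{\ell/2}}\alpha_{\lambda_{\ell/2}} + q_{\lambda'_{\ell/2}}\alpha_{\lambda'_{\ell/2}} = 0$ coupled with the diamond (modularity) relation --- the paper parametrizes the two roots by a choice of sign $\epsilon_\circ$ of a square root rather than as roots of your quadratic, and then applies Hensel's Lemma and Lemma~\ref{lem:oneortwopoints}(ii) exactly as you propose. The only detail you gloss over is that the Jacobian of the modified system is not triangular but block upper-triangular with one genuine $2\times 2$ block at rows $\tfrac{\ell}{2},\tfrac{\ell}{2}{+}1$, whose invertibility must be checked separately (it follows from $q_{\lambda'_{\ell/2}}\alpha_{\lambda'_{\ell/2}} = -q_{\lambda_{\ell/2}}\alpha_{\lambda_{\ell/2}} \neq 0$).
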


\begin{proof}
The proof uses essentially the same idea as the proof of 
Lemma~\ref{lem:existunique}.  First, we identify a 
vector $\boldalpha \in \FF^\Lambda$, such that one of the
points will lie in $Y_\boldalpha$.  Then we apply Hensel's
Lemma, to show that the point exists and is unique.

Let $\lambda_{\ell/2}$, and $\lambda'_{\ell/2}$ denote the two 
partitions of size $\tfrac{N}{2}$ that extend the
chain \eqref{eqn:chainmissingone} to a maximal chain.
Let $T^+ \in \ribbon(\lambda/\mu)$ be the tableau corresponding to the
extended chain of partitions
   $\lambda_0 \subsetneq \dots 
    \subsetneq \lambda_{\ell/2} \subsetneq \dots \subsetneq 
      \lambda_\ell$,
and let $\boldw$ be a weight vector obtained from $T^+$ and
the weakly decreasing sequence $\val(h_1), \dots, \val(h_\ell)$.
Then with $Y$ defined as in \eqref{eqn:defY}, we have $T_x = T$ for
every $x \in Y$: the proof
is analogous to that of Lemma~\ref{lem:Ytableau}.

By Lemma~\ref{lem:alphachain} there is a unique $\boldalpha \in \FF^\Lambda$ 
satisfying \eqref{eqn:alpha1}, \eqref{eqn:alpha2} and
\begin{align}
\label{eqn:alpha4}
   \alpha_{\lambda_k} 
   &= \frac{q_\lambda}{q_{\lambda_k}}\prod_{i=k+1}^\ell \leadcoeff(h_i)
   \qquad\text{for }k = 0, \dots, \tfrac{\ell}{2}{-}1, 
                       \tfrac{\ell}{2}{+}1, \dots, \ell  \\
\label{eqn:alpha5}
   \alpha_{\lambda_{\ell/2}} 
   &= 
   \epsilon_\circ\,q_\lambda
   \left(\frac{
   q_{\lambda'_{\ell/2}}}{
   q_{\lambda_{(\ell/2)-1}}
   q_{\lambda_{\ell/2}}
   q_{\lambda_{(\ell/2)+1}}
   }\right)^{\frac{1}{2}}
   \prod_{i=\frac{\ell}{2}+2}^\ell  \leadcoeff(h_i)\,,
\end{align}
where $\epsilon_\circ = (\epsilon_{\lambda_{(\ell/2)-1}}
   \epsilon_{\lambda_{\ell/2}}
   \epsilon_{\lambda_{(\ell/2)+1}}
   \epsilon_{\lambda'_{\ell/2}})^{1/2}$; we may choose either
square root.

Note that 
\[
   \lambda_{\ell/2} \wedge \lambda'_{\ell/2}
= \lambda_{(\ell/2)-1} \qquad\text{and} \qquad
\lambda_{\ell/2} \vee \lambda'_{\ell/2} = \lambda_{(\ell/2)+1}\,,
\]
so from \eqref{eqn:alpha2} we have
\begin{equation}
\label{eqn:twomiddlealphas}
\begin{aligned}
\alpha_{\lambda'_{\ell/2}}
&= (\epsilon_\circ)^2\, 
   \frac{ \alpha_{\lambda_{(\ell/2)-1}}
  \alpha_{\lambda_{(\ell/2)+1}} }
  {\alpha_{\lambda_{\ell/2}}} \\
&= 
   \epsilon_\circ\,q_\lambda
   \left(\frac{
   q_{\lambda_{\ell/2}}}{
   q_{\lambda_{(\ell/2)-1}}
   q_{\lambda'_{\ell/2}}
   q_{\lambda_{(\ell/2)+1}}
   }\right)^{\frac{1}{2}}
   \prod_{i=\frac{\ell}{2}}^\ell  \leadcoeff(h_i)  
   \,=\, - \frac{q_{\lambda_{\ell/2}}}{q_{\lambda'_{\ell/2}}} 
         \alpha_{\lambda_{\ell/2}}
 \,,
\end{aligned}
\end{equation}
where the last step uses $h_{\ell/2}=1$, $h_{(\ell/2)+1}=-1$.

For $j=0, \dots, N$, and $g \in \Kpol{N}$, let
\[
   L_j(g(z)) := \begin{cases}
          \leadterm\big([z^j]g(z)\big)
          & \quad \text{if }j=|\lambda_k|,\ %
          \val\big([z^j]g(z)\big) \leq \sum_{i=k+1}^\ell \val(h_i) \\
          0 & \quad \text{otherwise}\,.
   \end{cases}
\]
Note that 
$\val\big([z^{|\lambda_k|}]h(z)\big)\geq\sum_{i=k+1}^\ell \val(h_i)$,
with equality when $k \neq \frac{\ell}{2}$.  On the other hand, there
are two terms in the expansion of $[z^{N/2}]h(z)$ that have this
minimal valuation: 
\begin{multline}
\label{eqn:middletermh}
   \big[u^{\sum_{i=(\ell/2)+1}^\ell \val(h_i)}z^{N/2}\big]h(z) \\
   =
   \big[u^{\sum_{i=(\ell/2)+1}^\ell \val(h_i)}]
   \Big( \big(h_{\ell/2} h_{(\ell/2)+2} \dotsb h_\ell\big)\,+\,
   \big(h_{(\ell/2)+1} h_{(\ell/2)+2} \dotsb h_\ell\big) \Big)
   = 0\,,
\end{multline}
and so we have
\[
   L_j(h(z)) = \begin{cases} 
   \leadterm\big([z^j]h(z)\big)
   &\quad \text{if }j \neq \tfrac{N}{2} \\
   0 & \quad\text{if }j=\tfrac{N}{2}\,.
   \end{cases}
\]
Define
\[
   H := \big\{g(z) \in \Kpol{N} \bigmid %
         L_j(g(z)) = L_j(h(z)) 
             \text{ for }j=0, \dots, N\big\}\,.
\]
We now show that the $\Wr(Y_\boldalpha) \subset H$.  
Let
$x \in Y_\boldalpha$.  For $j \neq N/2$,
we have
\[
   L_j(\Wr(x;z)) = \leadterm\big([z^j]\Wr(x;z)\big)
   = \leadterm\big([z^j]h(z)\big) = L_j(h(z))\,;
\]
this is the same calculation as in the proof of Lemma~\ref{lem:existunique}.
For $j=N/2$, we have 
\[
  \val (p_{\lambda_{\ell/2}}(x)) = \val (p_{\lambda'_{\ell/2}}(x))
   = w_{\lambda_{\ell/2}}  
   = w_{\lambda'_{\ell/2}} 
   = \sum_{i=\frac{\ell}{2}+1}^\ell \val(h_i)\,,
\] 
and all other Pl\"ucker coordinates of $x$
indexed by partitions of size $N/2$ have strictly larger valuation.
Thus we have
\begin{alignat*}{2}
 L_{N/2}(\Wr(x;z))
  &= \sum_{\nu \vdash \frac{N}{2}} q_\nu 
     \big[u^{\sum_{i=(\ell/2)+1}^\ell\val(h_i)}\big] p_\nu(x) &\qquad&\\
  & = 
   q_{\lambda_{\ell/2}} 
   \big[u^{w_{\lambda_{\ell/2}}}\big]p_{\lambda_{\ell/2}}(x)
   +
   q_{\lambda'_{\ell/2}} 
   \big[u^{w_{\lambda'_{\ell/2}}}\big]p_{\lambda'_{\ell/2}}(x) \\
  & = 
   q_{\lambda_{\ell/2}} 
   \alpha_{\lambda_{\ell/2}}(x)
   +
   q_{\lambda'_{\ell/2}} 
   \alpha_{\lambda'_{\ell/2}}(x)  
    &&\text{(by \eqref{eqn:defYalpha})}\\
  &= 0  &&\text{(by \eqref{eqn:twomiddlealphas})}\,,
\end{alignat*}
and hence $\Wr(x;z) \in H$.

To complete the proof, we use Hensel's Lemma to show 
that $\Wr: Y_\boldalpha \to H$ is invertible.  First, we write the 
system of equations \eqref{eqn:inverseequations}
in terms of the local coordinates $y_1, \dots, y_\ell$ of 
Lemma~\ref{lem:Ylocalcoords}. 

For $k \neq \frac{\ell}{2}$, leading term of the left hand side 
and right hand side of \eqref{eqn:inverseequations} are given
by \eqref{eqn:ltinverselhs} and \eqref{eqn:ltinverserhs} 
respectively.
When $k=\frac{\ell}{2}$, the leading term of the left hand side
is
\begin{multline}
\label{eqn:middletermy}
   q_{\lambda_{\ell/2}} 
   \leadterm(p_{\lambda_{\ell/2}}(x))
   +
   q_{\lambda'_{\ell/2}} 
   \leadterm(p_{\lambda'_{\ell/2}}(x)) \\
   =\ %
   q_{\lambda_{\ell/2}} 
   \epsilon_{\lambda_{\ell/2}}
   \leadterm(y_{(\ell/2)+1}(x) y_{(\ell/2)+2}(x) \dotsb y_\ell(x))
   u^{w_{\lambda_{\ell/2}}} \\
   +
   q_{\lambda'_{\ell/2}} 
   \epsilon_{\lambda'_{\ell/2}}
   \leadterm(y_{(\ell/2)}(x) y_{(\ell/2)+2}(x) \dotsb y_\ell(x))
   u^{w_{\lambda'_{\ell/2}}}
   \,.
\end{multline}
(Here we have used $\leadterm(p_{\lambda'_{\ell/2}}(x)) 
   = \epsilon_{\lambda'_{\ell/2}}
\leadterm(y_{(\ell/2)}(x) y_{(\ell/2)+2}(x) \dotsb y_\ell(x))
   u^{w_{\lambda'_{\ell/2}}}$, 
which
comes from applying Lemma~\ref{lem:Ylocalcoords} with the chain of
partitions
\[
   \mu = \lambda_0 \subsetneq \dots \subsetneq
    \lambda_{(\ell/2)-1} \subsetneq \lambda'_{\ell/2} \subsetneq
    \lambda_{(\ell/2)+1} \subsetneq \dots \subsetneq 
      \lambda_\ell = \lambda\,.
\]
The local coordinates for this chain are the same as those for
the chain $\lambda_0 \subsetneq \dots \subsetneq \lambda_{\ell/2}
\subsetneq \dots \subsetneq \lambda_\ell$, but the roles of $y_\ell$
and $y_{\ell/2}$ are reversed.)
By \eqref{eqn:middletermh}, 
the right hand side of \eqref{eqn:inverseequations}
has valuation
strictly greater than $w_{\lambda_{\ell/2}}$ when $k= \frac{\ell}{2}$.
Thus we can write the system \eqref{eqn:inverseequations} as:
\begin{equation}
\label{eqn:firstorderinverse2}
\begin{gathered}
   y_{k+1} \dotsb y_\ell = \epsilon_{\lambda_k} \alpha_{\lambda_k} 
    + u^{\delta_k}G_k(y_1, \dots, y_\ell)
   \qquad\text{for }k \neq \tfrac{\ell}{2} \\
  \big(
   \epsilon_{\lambda'_{\ell/2}}
    q_{\lambda'_{\ell/2}} 
   y_{\ell/2}
  + 
   \epsilon_{\lambda_{\ell/2}}
   q_{\lambda_{\ell/2}} 
 y_{(\ell/2)+1} \big) 
  y_{(\ell/2)+2} \dotsb y_\ell 
   = 
     u^{\delta_{\ell/2}}G_{\ell/2}(y_1, \dots, y_\ell)
\end{gathered}
\end{equation}
where $\delta_k > 0$, and 
$G_k(y_0, \dots, y_\ell) \in \psK_+[y_1, \dots, y_\ell]$, for
$k=0, \dots, \ell$.
Let
\[
   F_k(y_1, \dots, y_\ell)
   :=
   \begin{cases}
   y_{k+1} \dotsb y_\ell &\quad\text{if }k \neq \frac{\ell}{2} \\
  \big(
   \epsilon_{\lambda'_{\ell/2}}
    q_{\lambda'_{\ell/2}} 
   y_{\ell/2}
  + 
   \epsilon_{\lambda_{\ell/2}}
   q_{\lambda_{\ell/2}} 
 y_{(\ell/2)+1} \big) 
  y_{(\ell/2)+2} \dotsb y_\ell 
   &\quad\text{if }k =\frac{\ell}{2}\,.
   \end{cases}
\]
We need to check two things.
\begin{packedenum}
\item[(1)]
Every $x \in Y_\boldalpha$ is a first order solution
to \eqref{eqn:firstorderinverse};
that is, for $x \in Y_\boldalpha$,
\[
  [u^0]F_k(y_1(x), \dotsb ,y_\ell(x)) = 
 \begin{cases}
   \alpha_{\lambda_k} & \quad\text{if }k \neq \frac{\ell}{2} \\
   0 & \quad\text{if }k = \frac{\ell}{2} \,.
\end{cases}
\]
For $k\neq \frac{\ell}{2}$, this follows from 
Lemma~\ref{lem:Ylocalcoords}(3).  For $k=\frac{\ell}{2}$, we calculate
\begin{alignat*}{2}
  \leadterm\big(F_k(y_1(x), &\dots ,y_\ell(x))\big) &\qquad&\\
   &= 
    \leadterm\big(q_{\lambda_{\ell/2}}p_{\lambda_{\ell/2}}(x) 
    u^{-w_{\lambda_{\ell/2}}} +
    q_{\lambda'_{\ell/2}}p_{\lambda'_{\ell/2}}(x)
    u^{-w_{\lambda_{\ell/2}}} 
    \big) 
    &&\text{(by \eqref{eqn:middletermy})} \\
   &= 
    \leadterm\big(q_{\lambda_{\ell/2}}\alpha_{\lambda_{\ell/2}}(x)  +
    q_{\lambda'_{\ell/2}}\alpha_{\lambda'_{\ell/2}}(x)
    \big)
    &&\text{(by \eqref{eqn:defYalpha})} \\
   &=  0
    &&\text{(by \eqref{eqn:twomiddlealphas})}\,. \\
\end{alignat*}
\item[(2)]
The Jacobian matrix 
$\big(\tfrac{\partial}{\partial y_j} F_i(y_1, \dots, y_\ell) \big)_{i,j=1, \dots, \ell}$ is invertible over $\psK_+$,
for $x \in Y_\boldalpha$.  This is a block upper triangular matrix
in which
most of the diagonal blocks are $1\times 1$ blocks and are 
invertible by Lemma~\ref{lem:Ylocalcoords}(3); however, there is one
$2 \times 2$ diagonal block (in rows $\frac{\ell}{2}$, $\frac{\ell}{2}{+}1$),
which is
\[
   \begin{pmatrix}
   \epsilon_{\lambda'_{\ell/2}}
    q_{\lambda'_{\ell/2}} 
  y_{(\ell/2)+2} \dotsb y_\ell 
  & 
   \epsilon_{\lambda_{\ell/2}}
   q_{\lambda_{\ell/2}} 
  y_{(\ell/2)+2} \dotsb y_\ell \\
  y_{(\ell/2)+1} 
  y_{(\ell/2)+2} \dotsb y_\ell 
  &
  y_{\ell/2} 
  y_{(\ell/2)+2} \dotsb y_\ell 
   \end{pmatrix}\,.
\]
This $2 \times 2$ matrix is invertible over $\psK_+$ if and only if
\[
   \epsilon_{\lambda'_{\ell/2}}
    q_{\lambda'_{\ell/2}} \leadcoeff(y_{\ell/2})
    \neq 
   \epsilon_{\lambda_{\ell/2}}
   q_{\lambda_{\ell/2}} 
   \leadcoeff(y_{(\ell/2)+1)})\,.
\]
But by Lemma~\ref{lem:Ylocalcoords}(3), this is equivalent to 
\[
    q_{\lambda'_{\ell/2}} 
    \alpha_{\lambda'_{\ell/2}} 
    \neq 
    q_{\lambda_{\ell/2}} 
    \alpha_{\lambda_{\ell/2}} \,,
\]
which follows from \eqref{eqn:twomiddlealphas}, as
 $\alpha_{\lambda_{\ell/2}} \neq 0$.  
Thus all diagonal blocks of the
Jacobian matrix are invertible over $\psK_+$, as required.
\end{packedenum}
Therefore, by Hensel's Lemma the system \eqref{eqn:firstorderinverse2} has a 
unique, reduced solution $(y_1, \dots, y_\ell) \in (\psK_+)^{\ell+1}$.
This solution gives the local coordinates of a point
$x \in Y_\boldalpha \cap X^r_{\lambda/\mu}(h(z))$; since $x \in Y$, we
have $T_x = T$.  We obtain the second point $x' \in X^r_{\lambda/\mu}(h(z))$,
by replacing $\epsilon_\circ$ by $-\epsilon_\circ$ in the definition 
\eqref{eqn:alpha5} of $\alpha_{\lambda_{\ell/2}}$.  This shows
that there are at least two reduced points in $X^r_{\lambda/\mu}(h(z))$
corresponding to $T$.  By Lemma~\ref{lem:oneortwopoints}(ii), these 
are the only such points.
\end{proof}

\begin{lemma}
\label{lem:longribbon}
Let $h(z)$ be a polynomial of type (3), and let 
$x \in X^r_{\lambda/\mu}(h(z))$.
If $x = x^\vee$ then $T_x$ 
has a long ribbon.
\end{lemma}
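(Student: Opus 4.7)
The plan is to derive a contradiction from the assumption $x = x^\vee$ together with $T_x$ having no long ribbon. By Lemma~\ref{lem:reflectrotate}, $x = x^\vee$ forces $T_x = (T_x)^\vee$, so $T_x$ is a rotationally invariant almost-standard $r$-ribbon tableau (Lemma~\ref{lem:almostribbon}). Assuming $T_x$ has no long ribbon, the chain \eqref{eqn:chainmissingone} contains a valid intermediate $\lambda_{\ell/2}$ splitting the $2r$-box region $\Gamma := \lambda_{(\ell/2)+1}/\lambda_{(\ell/2)-1}$ into $r$-ribbons $R_1 \sqcup R_2$, and rotational invariance of the outer chain gives $\lambda_{(\ell/2)-1}^\vee = \lambda_{(\ell/2)+1}$, so $\Gamma$ is rotationally invariant inside $\Rect$.

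First I would establish the geometric claim $\lambda_{\ell/2}^\vee = \lambda_{\ell/2}$. Suppose not; then $\lambda'_{\ell/2} := \lambda_{\ell/2}^\vee$ is a distinct valid intermediate. Both partitions have the same size $|\lambda_{(\ell/2)-1}|+r$ and lie in the interval $[\lambda_{(\ell/2)-1}, \lambda_{(\ell/2)+1}]$, so the identity $|A\vee B|+|A\wedge B|=|A|+|B|$ in the distributive lattice $\Lambda$ forces
\[
  \lambda_{\ell/2}\wedge\lambda'_{\ell/2}=\lambda_{(\ell/2)-1},
  \qquad
  \lambda_{\ell/2}\vee\lambda'_{\ell/2}=\lambda_{(\ell/2)+1}.
\]
A direct computation yields
\[
  \mathrm{rot}(R_1)
  = \lambda_{(\ell/2)+1}\setminus\lambda_{\ell/2}^\vee
  = (\lambda_{\ell/2}\vee\lambda'_{\ell/2})\setminus\lambda'_{\ell/2}
  = \lambda_{\ell/2}\setminus(\lambda_{\ell/2}\wedge\lambda'_{\ell/2})
  = R_1,
\]
and likewise $\mathrm{rot}(R_2)=R_2$. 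Thus $R_1$ and $R_2$ are disjoint connected $r$-ribbons both invariant under the $180^\circ$ rotation of $\Rect$. A case analysis on the location of the rotation center (a box, an edge midpoint, or a vertex, according to the parities of $d$ and $n-d$) shows that a connected rotation-invariant $r$-ribbon must contain the central box(es): a connected path symmetric under a central involution necessarily meets the fixed locus, and the no-$2\times 2$ condition rules out ``diagonal'' configurations around a vertex. Since $R_1$ and $R_2$ are disjoint, this is a contradiction, establishing $\lambda_{\ell/2}^\vee=\lambda_{\ell/2}$.

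Next I would finish the proof algebraically. The identity $q_\nu = q_{\nu^\vee}$ follows from the defining product via the substitution $(i,j)\mapsto(d{+}1{-}j, d{+}1{-}i)$; with $\mu = \lambda^\vee$ this gives $q_\mu=q_\lambda$. Using \eqref{eqn:alpha4} and the type~(3) fact that $\prod_{i=1}^\ell \leadcoeff(h_i) = 1\cdot(-1)\cdot\prod(h_i h_{\ell+1-i})=-1$, we obtain $\alpha_\mu=-q_\lambda/q_\mu\cdot(-1)\cdot(\cdots)=-1$. Normalizing Pl\"ucker coordinates so $p_\lambda\equiv 1$, \eqref{eqn:reflectX} gives $p_\nu(x^\vee)=p_{\nu^\vee}(x)/p_\mu(x)$ for every $\nu$. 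Because $T^+$ is rotation-invariant in case~A, the weight vector $\boldw$ satisfies $w_\nu = w_{\nu^\vee}$, and $w_\mu=\sum\val(h_i)=0$; comparing leading terms therefore yields
\[
  \alpha_\nu(x^\vee) = -\alpha_{\nu^\vee}(x) \qquad\text{for all }\nu.
\]
The assumption $x=x^\vee$ then becomes $\alpha_\nu(x)=-\alpha_{\nu^\vee}(x)$. Taking $\nu=\lambda_{\ell/2}$ and using $\lambda_{\ell/2}^\vee=\lambda_{\ell/2}$ yields $2\alpha_{\lambda_{\ell/2}}(x)=0$, contradicting the nonvanishing leading coefficient prescribed by \eqref{eqn:alpha5}.

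The principal obstacle is the geometric claim that a connected rotation-invariant $r$-ribbon contains the rotation center; although intuitively transparent, a rigorous verification requires checking the three parity cases for the position of the center of $\Rect$.
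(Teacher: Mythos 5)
Your proof follows the same route as the paper's: reduce via Lemma~\ref{lem:reflectrotate} to $T_x=(T_x)^\vee$, establish $(\lambda_{\ell/2})^\vee=\lambda_{\ell/2}$, and then extract a sign contradiction from \eqref{eqn:reflectX} together with the explicit leading coefficients \eqref{eqn:alpha4}, the crucial $-1$ coming from $\leadcoeff(h_{\ell/2})\leadcoeff(h_{(\ell/2)+1})=-1$. Your algebraic endgame is correct and is essentially the paper's computation in different clothing: the paper evaluates the constant ratio $p_\nu(x)/p_{\nu^\vee}(x)$ at $\nu=\lambda_{\ell/2}$ (forcing it to be $1$) and at $\nu=\lambda_{(\ell/2)-1}$ (finding a negative leading coefficient), while you evaluate it at $\nu=\lambda$ (finding leading coefficient $-1$ via $\alpha_\mu=-1$) and at $\nu=\lambda_{\ell/2}$. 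Two cosmetic issues: ``case A'' is never defined, and the blanket claim $w_\nu=w_{\nu^\vee}$ is asserted without proof, though you only actually need it at the self-dual $\lambda_{\ell/2}$ and at $\mu$, $\lambda$, where it is immediate.

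The genuine gap is in your justification of $(\lambda_{\ell/2})^\vee=\lambda_{\ell/2}$, which the paper dismisses as ``not hard to see.'' Assuming $\lambda_{\ell/2}^\vee=\lambda'_{\ell/2}$, what actually follows is $\mathrm{rot}(R_1)=\lambda_{(\ell/2)-1}^\vee/\lambda_{\ell/2}^\vee=\lambda_{(\ell/2)+1}/\lambda'_{\ell/2}=\Gamma\setminus R_1'$, where $R_1'=\lambda'_{\ell/2}/\lambda_{(\ell/2)-1}$. Your chain of equalities converts this into $R_1$ by treating $\vee$ and $\wedge$ as union and intersection of diagrams and taking $\lambda_{\ell/2}\wedge\lambda'_{\ell/2}=\lambda_{(\ell/2)-1}$ and $\lambda_{\ell/2}\vee\lambda'_{\ell/2}=\lambda_{(\ell/2)+1}$. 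Those identities hold in the lattice $\Lambda^{r,\kappa}$, but, as the paper warns at the end of the first paragraph of Section~\ref{sec:ideals}, the meet and join there do not coincide with those of $\Lambda$: the two addable $r$-ribbons $R_1$ and $R_1'$ need not be disjoint. For instance, with $r=2$ and $\Gamma$ a $2\times2$ block, the two intermediates are the horizontal and the vertical domino, which overlap in a box, and their union is a proper subset of $\Gamma$; your modular-identity size count does not exclude this, since $|{\wedge}|=|\lambda_{\ell/2}|-1$, $|{\vee}|=|\lambda_{\ell/2}|+1$ is numerically consistent with all the stated containments. Consequently $\mathrm{rot}(R_1)=R_1$ is not established, and the ``two disjoint rotation-invariant ribbons'' contradiction never gets off the ground in the overlapping configuration. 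The conclusion is still true (in the $2\times2$ example both intermediates are visibly self-dual), but it needs a separate argument; for example, when $\Gamma$ is disconnected the two ribbons are genuinely disjoint and your rotation-center analysis does apply, while the connected non-ribbon case must be handled directly.
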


\begin{proof}
Suppose to the contrary that $x = x^\vee$ but the tableau $T = T_x$ 
does not have a long ribbon.
Let $\lambda_0, \dots, \lambda_\ell$ and $\boldalpha$ be as in the
proof of Lemma~\ref{lem:twopoints}.
By Lemma~\ref{lem:reflectrotate} we must have $T= T^\vee$, so
$(\lambda_k)^\vee = \lambda_{\ell-k}$, for $k \neq \frac{\ell}{2}$.
It is not hard to see that we must also have
$(\lambda_{\ell/2})^\vee = \lambda_{\ell/2}$.
By \eqref{eqn:reflectX}, $x = x^\vee$ implies that
\[
    \frac{p_{\lambda_k}(x)}
    {p_{(\lambda_k)^\vee}(x)}
\]
is constant for all $k$.  In particular we must have
\[
    \frac{p_{\lambda_{(\ell/2)-1}}(x)}
    {p_{(\lambda_{(\ell/2)+1}}(x)}
    =
    \frac{p_{\lambda_{\ell/2}}(x)}
    {p_{\lambda_{\ell/2}}(x)}
    = 1\,.
\]
But from \eqref{eqn:defYalpha} and \eqref{eqn:alpha4}, we have
\[
    \leadcoeff\left(\frac{p_{\lambda_{(\ell/2)-1}}(x)}
    {p_{(\lambda_{(\ell/2)+1}}(x)}\right)
    =
    \frac{\alpha_{\lambda_{(\ell/2)-1}}}
    {\alpha_{(\lambda_{(\ell/2)+1}}} 
    = \frac{q_{\lambda_{(\ell/2)+1}}
       \leadcoeff(h_{\ell/2})\leadcoeff(h_{(\ell/2)+1})}
         {q_{\lambda_{(\ell/2)-1}}} < 0\,,
\]
which gives a contradiction.
\end{proof}

We now obtain the second case of Theorem~\ref{thm:D-ribbon}.  

\begin{proof}[Proof of Theorem~\ref{thm:D-ribbon}(ii)]
If $h(z)$ is of type (3), then Lemmas~\ref{lem:oneortwopoints}
and~\ref{lem:twopoints} show that $X^r_{\lambda/\mu}(h(z))$ is reduced.
Suppose $x = x^\vee$.  Then $T_x$ must be an almost-standard
ribbon tableau (by Lemma~\ref{lem:almostribbon}) with
a long ribbon (by Lemma~\ref{lem:longribbon}), and be 
rotationally-invariant (by Lemma~\ref{lem:reflectrotate}).  
In other words, $T_x \in \longribbon(\lambda/\mu)$. 
Conversely, if $T_x \in \longribbon(\lambda/\mu)$, then 
$T_x = (T_x)^\vee = T_{x^\vee}$.  But by the uniqueness in
Lemma~\ref{lem:oneortwopoints}(i)
we have $x = x^\vee$.  Thus $\stdreflect$-fixed points in
$X^r_{\lambda/\mu}(h(z))$ are in bijection with 
$\longribbon(\lambda/\mu)$.
\end{proof}



\end{document}